\documentclass{amsart}

\usepackage[all]{xy}

\usepackage{hyperref}

\usepackage{cleveref}

\usepackage{amssymb}

\theoremstyle{plain}
\newtheorem{theorem}[subsection]{Theorem}
\newtheorem{proposition}[subsection]{Proposition}
\newtheorem{lemma}[subsection]{Lemma}
\newtheorem{corollary}[subsection]{Corollary}
\theoremstyle{definition}
\newtheorem{definition}[subsection]{Definition}
\newtheorem{example}[subsection]{Example}

\newcommand{\ad}{\mathrm{ad}}
\newcommand{\Aut}{\mathrm{Aut}}
\newcommand{\Bun}{\mathrm{Bun}}
\newcommand{\crys}{\mathrm{crys}}
\newcommand{\dR}{\mathrm{dR}}
\newcommand{\F}{\mathbb{F}}

\newcommand{\Hom}{\mathrm{Hom}}
\newcommand{\Id}{\textrm{Id}}

\renewcommand{\inf}{\mathrm{inf}}

\newcommand{\Q}{\mathbb{Q}}
\newcommand{\Rep}{\mathrm{Rep}}
\newcommand{\rk}{\mathrm{rk}}
\newcommand{\Spa}{\mathrm{Spa}}
\newcommand{\Spec}{\textrm{Spec}}
\newcommand{\Z}{\mathbb{Z}}

\begin{document}

\title{Breuil-Kisin-Fargues modules with complex multiplication}
\author{Johannes Ansch\"{u}tz}
\email{ja@math.uni-bonn.de}
\date{\today}

\begin{abstract}
We prove that the category of (rigidified) Breuil-Kisin-Fargues modules up to isogeny is Tannakian. We then introduce and classify Breuil-Kisin-Fargues modules with complex multiplication mimicking the classical theory for rational Hodge structures. In particular, we compute an avatar of a ``p-adic Serre group''.
\end{abstract}

\maketitle

\section{Introduction}

In \cite{fargues_quelques_resultats_et_conjectures_concernant_la_courbe} L.\ Fargues introduced an analog, called Breuil-Kisin-Fargues modules, of Breuil-Kisin modules (cf.\  \cite{kisin_crystalline_representations_and_f_crystals}) over Fontaine's first period ring
$$
A_\inf=W(\mathcal{O}_{C^\flat})
$$
of Witt vectors of the ring of integers $\mathcal{O}_{C^\flat}$ inside $C^\flat$ where $C$ denotes a non-archimedean, complete and algebraically closed extension of $\Q_p$ and 
$$
C^\flat=\varprojlim\limits_{x\mapsto x^p}C
$$ 
its tilt.
To define Breuil-Kisin-Fargues modules let $\xi\in A_\inf$ be a generator of the kernel of Fontaine's map
$$
\theta\colon A_\inf\to \mathcal{O}_C
$$
(cf.\ \Cref{section: breuil_kisin_fargues_modules_up_to_isogeny})
and let
$$
\varphi\colon A_\inf\to A_\inf
$$ 
be the Frobenius of $A_\inf$ (induced by the Frobenius of $\mathcal{O}_{C^\flat}$).
Concretely, a Breuil-Kisin-Fargues modules $(M,\varphi_M)$ is then a finitely presented $A_\inf$-module $M$ together with an isomorphism
$$
\varphi_M\colon \varphi^\ast(M)[\frac{1}{\varphi(\xi)}]\cong M[\frac{1}{\varphi(\xi)}]
$$
such that $M[\frac{1}{p}]$ is finite projective over $A_\inf[\frac{1}{p}]$ (cf.\ \Cref{definition: bkf_module}).
The study of Breuil-Kisin-Fargues modules, which are mixed-characteristic analogs of Drinfeld's shtuka, was taken over in \cite{scholze_weinstein_lecture_notes_on_p_adic_geometry} and \cite{bhatt_morrow_scholze_integral_p_adic_hodge_theory}. More precisely, in \cite{bhatt_morrow_scholze_integral_p_adic_hodge_theory} to every proper smooth formal scheme $\mathfrak{X}$ over $\mathcal{O}_C$ and every $i\geq 0$ there is associated a Breuil-Kisin-Fargues module
$$
H^i_{A_\inf}(\mathfrak{X})
$$
interpolating, at least rationally, various cohomology groups attached to $\mathfrak{X}$. Namely, Breuil-Kisin-Fargues modules admit various realizations (cf.\ \Cref{definition: realizations_of_bkf_modules}):
Let $(M,\varphi_M)$ be a Breuil-Kisin-Fargues module and denote by $k$ the residue field of $\mathcal{O}_C$. We can associate to $M$
\begin{itemize}
\item its ``\'etale realization'' 
$$
T:=(M\otimes_{A_\inf}W(C^\flat))^{\varphi_M=1},
$$ 
a finitely generated $\Z_p$-module,
\item its ``crystalline realization'' 
$$
D:=M\otimes_{A_\inf}W(k),
$$ 
a finitely generated $W(k)$-module equipped with a $\varphi$-semilinear isomorphism $\varphi_D:\varphi^\ast(D[\frac{1}{p}])\cong D[\frac{1}{p}]$ after inverting $p$,
\item and its ``de Rham realization'' 
$$
V:=M\otimes_{A_\inf,\theta}\mathcal{O}_C,
$$ a finitely generated $\mathcal{O}_C$-module. 
\end{itemize}
If $(M,\varphi)=H^i_{A_\inf}(\mathfrak{X})$ for a proper smooth formal scheme $\mathfrak{X}/\mathcal{O}_C$, then the main result of \cite{bhatt_morrow_scholze_integral_p_adic_hodge_theory} shows that these various realizations are, at least after inverting $p$, given by the \'etale cohomology $H^i_{\acute{e}t}(X,\Q_p)$ of the generic fiber $X:=\mathfrak{X}_C$ of $\mathfrak{X}$, the crystalline cohomology $H^i_{\crys}(\mathfrak{X}_0/W(k))[\frac{1}{p}]$ of the special fiber $\mathfrak{X}_0\subseteq \mathfrak{X}$, and the deRham cohomology $H^i_{\dR}(X/C)$ of $X$.
Thus we see that Breuil-Kisin-Fargues modules have a ``motivic flavour''.
But the general picture surrounding motives is looking for the existence of a tensor functor from smooth projective schemes into some \textit{Tannakian} category, and the category of Breuil-Kisin-Fargues modules is not Tannakian. In fact, it is even not abelian (cf.\ \cite[Remark 4.25.]{bhatt_morrow_scholze_integral_p_adic_hodge_theory}). To remedy this we follow an idea in \cite{bhatt_morrow_scholze_integral_p_adic_hodge_theory} and introduce \textit{rigidifications} of Breuil-Kisin-Fargues modules.
Fix a section $k\to \mathcal{O}_C/p$ of the projection $\mathcal{O}_C/p\to k$.
If $(M,\varphi_M)$ is a Breuil-Kisin-Fargues module then a rigidification for $(M,\varphi_M)$ (cf.\ \cite[Lemma 4.27.]{bhatt_morrow_scholze_integral_p_adic_hodge_theory} and \Cref{definition: rigidified_bkf_modules}) is an isomorphism
$$
\alpha\colon M\otimes_{A_\inf}B^+_\crys\cong (M\otimes_{A_\inf}W(k))\otimes_{W(k)}B^+_\crys
$$
of $\varphi$-modules over $B^+_\crys$ inducing the identity when base changed to $W(k)[\frac{1}{p}]$. The first main theorem of this paper is the following (the result is already stated in \cite[Remark 4.25.]{bhatt_morrow_scholze_integral_p_adic_hodge_theory}).

\begin{theorem}[cf.\ \Cref{theorem: rigidified_bkf_modules_abelian}]
\label{theorem:rigidified-bkf-modules-abelian-intro}
The category $$\mathrm{BKF}_{\mathrm{rig}}$$ of rigidified Breuil-Kisin-Fargues modules is abelian.   
\end{theorem}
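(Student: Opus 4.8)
The plan is to compute kernels and cokernels at the level of underlying $A_\inf$-modules and then check that the resulting objects again lie in $\mathrm{BKF}_{\mathrm{rig}}$; the only condition that is not essentially formal --- and the one that genuinely fails in the un-rigidified category --- is finite projectivity after inverting $p$, and arranging it is exactly what the rigidification is for.

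First, $\mathrm{BKF}_{\mathrm{rig}}$ is additive, with the zero module as zero object and finite direct sums formed on underlying $A_\inf$-modules with Frobenius and rigidification taken componentwise. Hence it suffices, for a morphism $f\colon(M,\varphi_M,\alpha_M)\to(N,\varphi_N,\alpha_N)$, to produce a kernel and a cokernel in $\mathrm{BKF}_{\mathrm{rig}}$ and to check that the canonical map from the coimage to the image is an isomorphism. Set $K:=\Ker(f)$, $Q:=\mathrm{coker}(f)$ and $J:=\Im(f)$, formed in $A_\inf$-modules; they are finitely presented because $A_\inf$ is coherent. Since $\mathcal{O}_{C^\flat}$ is perfect the Frobenius of $A_\inf$ is an isomorphism, so $\varphi^\ast(-)$ is exact; combining this with the $\varphi$-compatibility of $f$ shows that $\varphi_M$ and $\varphi_N$ restrict to isomorphisms $\varphi^\ast(K)[\tfrac1{\varphi(\xi)}]\xrightarrow{\ \sim\ }K[\tfrac1{\varphi(\xi)}]$, and similarly for $Q$ and $J$.

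The substantive point is that $K[\tfrac1p]$, $Q[\tfrac1p]$ and $J[\tfrac1p]$ are finite projective over $A_\inf[\tfrac1p]$, which a priori need not hold for finitely presented sub- or quotient modules of finite projective modules over the two-dimensional ring $A_\inf[\tfrac1p]$. Here the rigidifications enter: $B^+_\crys$ is flat over $A_\inf[\tfrac1p]$, and by definition of a morphism in $\mathrm{BKF}_{\mathrm{rig}}$ the isomorphisms $\alpha_M,\alpha_N$ identify $f\otimes_{A_\inf}B^+_\crys$ with $\overline f\otimes_{W(k)}B^+_\crys$, where $\overline f:=f\otimes_{A_\inf}W(k)$. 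Since $W(k)[\tfrac1p]$ is a field, $\Ker(\overline f)$, $\mathrm{coker}(\overline f)$ and $\Im(\overline f)$ are, up to $p$-power torsion, finite free over $W(k)$; therefore $K\otimes_{A_\inf}B^+_\crys$, $Q\otimes_{A_\inf}B^+_\crys$ and $J\otimes_{A_\inf}B^+_\crys$ --- which by flatness coincide with $\Ker$, $\mathrm{coker}$, $\Im$ of $\overline f\otimes_{W(k)}B^+_\crys$ --- are finite free over $B^+_\crys$. The same argument applies after base change to the complete discrete valuation ring $B^+_\dR$ (a $B^+_\crys$-algebra), and over $W(C^\flat)[\tfrac1p]$ everything is finite free simply because that ring is a field. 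Choosing a suitable faithfully flat cover of $\Spec A_\inf[\tfrac1p]$ assembled from such rings (e.g.\ $W(C^\flat)[\tfrac1p]$ at the generic point, and the $\varphi$-twists of $B^+_\crys$ and of $B^+_\dR$ around the crystalline point and the legs $V(\varphi^n(\xi))$) and invoking faithfully flat descent of finite projectivity, one concludes that $K[\tfrac1p]$, $Q[\tfrac1p]$ and $J[\tfrac1p]$ are finite projective over $A_\inf[\tfrac1p]$.

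In particular the terms $\mathrm{Tor}_1^{A_\inf}(J,W(k))$ and $\mathrm{Tor}_1^{A_\inf}(Q,W(k))$ are $p$-power torsion, hence vanish after $\otimes_{A_\inf}B^+_\crys$, so $\alpha_M$ and $\alpha_N$ restrict to rigidifications on $K$, $Q$ and $J$. Thus $K$ and $Q$ are objects of $\mathrm{BKF}_{\mathrm{rig}}$, and since the forgetful functor to $A_\inf$-modules is faithful and all the extra structure is transported uniquely, they are a kernel and a cokernel of $f$ in $\mathrm{BKF}_{\mathrm{rig}}$. The coimage and image of $f$ are $M/K$ and $J\subseteq N$, the canonical isomorphism $M/K\xrightarrow{\ \sim\ }J$ of $A_\inf$-modules visibly respects Frobenius and rigidification, so it is an isomorphism in $\mathrm{BKF}_{\mathrm{rig}}$; hence the category is abelian. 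I expect the main obstacle to be the third paragraph: extracting from the constancy of $M\otimes B^+_\crys$, over a suitable faithfully flat cover, enough rigidity of $M[\tfrac1p]$ as a coherent sheaf on the surface $\Spec A_\inf[\tfrac1p]$ to exclude the non-reflexive sub- and quotient sheaves that obstruct abelianness of the un-rigidified category; the remaining steps are routine bookkeeping with flatness and exactness.
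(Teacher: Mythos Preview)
Your outline has the right shape, but two of the steps you treat as routine are in fact the hard content, and as written they are genuine gaps.

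First, you assert that $K$, $Q$, $J$ are finitely presented ``because $A_\inf$ is coherent''. Coherence of $A_\inf$ is not known (and certainly not proved in the paper or its references); this is precisely why the paper develops special tools. The cokernel $Q$ is of course finitely presented for free, but for $K$ the paper argues indirectly: once $Q[\tfrac1p]$ is shown to be free, the two-term perfect complex $M\to N$ has cohomology free after inverting $p$, and then \cite[Corollary 4.17]{bhatt_morrow_scholze_integral_p_adic_hodge_theory} forces $K=H^0$ to be finitely presented. You cannot shortcut this with a coherence claim.

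Second, your mechanism for obtaining finite projectivity of $Q[\tfrac1p]$ is too loose. You invoke flatness of $B^+_\crys$ over $A_\inf[\tfrac1p]$ (not obvious, and not used in the paper) and then a ``suitable faithfully flat cover of $\Spec A_\inf[\tfrac1p]$'' assembled ad hoc from period rings; but producing such a cover and checking the descent hypotheses is exactly the nontrivial input. The paper instead uses a precise two-part criterion, \Cref{lemma: criterion_for_a_inf_modules_to_be_free_after_inverting_p} (= \cite[Lemma 4.19]{bhatt_morrow_scholze_integral_p_adic_hodge_theory}): it first shows $Q[\tfrac1{p\mu}]$ is free by trivialising both $M$ and $N$ over $A_\inf[\tfrac1\mu]$ via their \'etale realisations (\Cref{corollary: cokernel_of_bkf_modules_finite_projective_after_inverting_mu}), and separately shows $Q\otimes B^+_\crys$ is free using only right-exactness of the tensor product together with the rigidifications; the cited lemma then yields $Q[\tfrac1p]$ free. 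Note in particular that the rigidification on $K$ is obtained \emph{after} freeness of $K[\tfrac1p],M[\tfrac1p],N[\tfrac1p],Q[\tfrac1p]$ is established, not from any flatness of $B^+_\crys$.

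So the architecture of your argument is close to the paper's, but the two load-bearing inputs---coherence of $A_\inf$ and the faithfully-flat-cover step---need to be replaced by the specific results from \cite{bhatt_morrow_scholze_integral_p_adic_hodge_theory} that the paper invokes.
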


We remark that the analogous statement for Breuil-Kisin modules is true, but much simpler. The problems for Breuil-Kisin-Fargues modules arise as the ring $A_\inf$ is highly non-noetherian. But luckily we can profit from \cite{bhatt_morrow_scholze_integral_p_adic_hodge_theory}, where enough commutative algebra of $A_\inf$-modules is developed.
From \Cref{theorem:rigidified-bkf-modules-abelian-intro} it is not difficult to deduce that the $\Q_p$-linear category
$$
\mathrm{BKF}_{\mathrm{rig}}^\circ:=\Q_p\otimes_{\Z_p}\mathrm{BKF}_{\mathrm{rig}}
$$
of rigidified Breuil-Kisin-Fargues modules up to isogeny is Tannakian (cf.\ \Cref{theorem: rigidified_bkf_modules_form_tannakian_category}). The statement is known for Breuil-Kisin modules, and again it is much simpler.
We investigate the Tannakian category $\mathrm{BKF}_{\mathrm{rig}}^\circ$ a bit: for example, we prove that it is ``connected'' (cf.\ \Cref{lemma:band-of-bkf-modules-connected}) and of homological dimension $1$ (cf.\ \Cref{lemma:homological-dimension}).

In \Cref{section: cm_breuil_kisin_fargues_modules} we start to classify rigidified Breuil-Kisin-Fargues modules admitting ``complex multiplication'', i.e., Breuil-Kisin-Fargues modules whose Mumford-Tate group in the Tannakian category $\mathrm{BKF}_{\mathrm{rig}}^\circ$ is a torus (cf.\ \Cref{definition:cm-object} and \Cref{lemma:characterization-of-cm-objects}). More concretely, a rigidified Breuil-Kisin-Fargues module $(M,\varphi,\alpha)$ up to isogeny admits CM if and only if there exists a commutative, semisimple $\Q_p$-algebra $E$ and an injection $E\hookrightarrow \mathrm{End}_{\mathrm{BKF}_{\mathrm{rig}}^\circ}((M,\varphi,\alpha))$ of $\Q_p$-algebras such that
$$
\mathrm{dim}_{\Q_p}E=\mathrm{rk}_{A_\inf}(M)
$$ 
(cf.\ \Cref{lemma:characterization-of-cm-objects}).
Using the crucial theorem about different descriptions of (finite free) Breuil-Kisin-Fargues modules (cf.\ \Cref{theorem: equivalent_descriptions_of_bkf_modules}) due to Fargues/Kedlaya-Liu/Scholze we can then prove our main theorem about the classification of Breuil-Kisin-Fargues modules admitting CM.

\begin{theorem}[cf.\ \Cref{lemma:integral-cm-bkf-modules}]
\label{theorem:classification-cm-bkf-modules-intro}
For every finite dimensional commutative, semisimple $\Q_p$-algebra $E$ there exists a (natural) bijection between isomorphism classes of (rigidified) Breuil-Kisin-Fargues modules (up to isogeny) admitting CM by $E$ and functions $\Phi\colon \mathrm{Hom}_{\Q_p}(E,C)\to \Z$.
\end{theorem}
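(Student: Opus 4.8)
The plan is to mimic the classical description of a (not necessarily pure) rational Hodge structure with complex multiplication by its CM type: read the analogue of the CM type off the de Rham realization, and show that the Frobenius datum — the associated vector bundle on the Fargues--Fontaine curve — carries no further information. By \Cref{lemma:characterization-of-cm-objects} we may work with a fixed object $(M,\varphi,\alpha)$ of $\mathrm{BKF}_{\mathrm{rig}}^\circ$ equipped with an embedding $E\hookrightarrow \mathrm{End}_{\mathrm{BKF}_{\mathrm{rig}}^\circ}((M,\varphi,\alpha))$ of $\Q_p$-algebras with $\dim_{\Q_p}E=\rk_{A_{\inf}}(M)=:n$. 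Writing $E=\prod_i E_i$ as a product of fields and using the idempotents of $E$ to split $(M,\varphi,\alpha)=\bigoplus_i(M_i,\varphi_i,\alpha_i)$ — compatibly with $\Hom_{\Q_p}(E,C)=\coprod_i\Hom_{\Q_p}(E_i,C)$ and with the additivity of the function $\Phi$ — reduces everything to the case that $E$ is a field of degree $n$. In that case one checks, using that the $E$-action commutes with $\varphi$ and that $(A_{\inf}[\tfrac1p])^{\varphi=1}=\Q_p$, that $M[\tfrac1p]$ is an invertible $E\otimes_{\Q_p}A_{\inf}[\tfrac1p]$-module; in other words, up to isogeny $M$ is ``of rank one over $E$'', and the problem is to classify the $\varphi$- and lattice-structures on this $E$-module.

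Next I would construct the invariant $\Phi$ from the de Rham side. Since $E/\Q_p$ is finite \'etale and $B_{\dR}^+$ is a complete valuation ring with algebraically closed residue field $C$, there is a canonical decomposition
\[
E\otimes_{\Q_p}B_{\dR}^+\ \cong\ \prod_{\tau\in\Hom_{\Q_p}(E,C)}B_{\dR}^+,
\]
with the factors indexed by the embeddings $\tau$. Therefore the $E$-stable $B_{\dR}^+$-lattice $M\otimes_{A_{\inf}}B_{\dR}^+$ inside the $B_{\dR}$-vector space $M\otimes_{A_{\inf}}B_{\dR}$ decomposes as a direct sum $\bigoplus_\tau\Xi_\tau$ with each $\Xi_\tau$ free of rank one over $B_{\dR}^+$; comparing $\Xi_\tau$ with the $\tau$-component of the ``crystalline'' lattice, the one obtained from $M\otimes_{A_{\inf}}B_{\crys}^+$ (equivariant for the same decomposition, and identified via $\alpha$ with $(M\otimes_{A_{\inf}}W(k))\otimes_{W(k)}B_{\dR}^+$), their relative position is a single integer, and we let $\Phi(\tau)$ be this integer. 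The resulting function $\Phi\colon\Hom_{\Q_p}(E,C)\to\Z$ is visibly an isomorphism invariant of $(M,\varphi)$, is independent of the choice of $\alpha$ (relative positions of lattices are unchanged by the torsor of rigidifications), and is recovered from the de Rham and crystalline realizations together with the $E$-action; in particular, once we show that $\Phi$ determines $(M,\varphi,\alpha)$ up to isogeny and that all $\Phi$ occur, injectivity of $(M,\varphi,\alpha)\mapsto\Phi$ is automatic.

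The core of the argument is that $\Phi$ is a complete invariant. Here I would invoke \Cref{theorem: equivalent_descriptions_of_bkf_modules}, which replaces $M$ by the $\varphi$-module $M\otimes_{A_{\inf}}W(C^\flat)$ — equivalently a vector bundle $\mathcal{E}$ on the Fargues--Fontaine curve $X$ — together with the $B_{\dR}^+$-lattice $M\otimes_{A_{\inf}}B_{\dR}^+$, the two being tied by the condition that $M$ is an honest Breuil--Kisin--Fargues module, i.e.\ $M[\tfrac1p]$ is finite projective over $A_{\inf}[\tfrac1p]$ and $\varphi_M$ is an isomorphism only after inverting $\varphi(\xi)$. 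The $E$-action turns $\mathcal{E}$ into a rank-$n$ bundle with $E$-action, equivalently a line bundle $\mathcal{L}$ on the Fargues--Fontaine curve $X_E$ ``with coefficients in $E$''; the distinguished point of $X$ with residue field $C$ splits completely in $X_E$ into points $\{\infty_\tau\}_{\tau\in\Hom_{\Q_p}(E,C)}$, and the lattice datum exhibits precisely the modification of $\mathcal{L}$ at these points with local exponents $\Phi(\tau)$, so that $\mathcal{L}\cong\mathcal{O}_{X_E}(\sum_\tau\Phi(\tau)[\infty_\tau])$ up to a line bundle pulled back from $X$. A priori this still leaves one further integer parameter, namely $\deg\mathcal{L}$, and \textbf{the main obstacle is to show that this degree is forced by $\Phi$}: this is exactly where one genuinely uses that $\varphi_M$ is required to be an isomorphism over $A_{\inf}[\tfrac1{\varphi(\xi)}]$ but not over a larger localization — so that the modification points ``in the effective direction'' — together with finite projectivity of $M[\tfrac1p]$; granting the resulting relation $\deg\mathcal{L}=\sum_\tau\Phi(\tau)$ (up to a universal sign and the normalization of $\Phi$), the pair $(\mathcal{L},M\otimes_{A_{\inf}}B_{\dR}^+)$, hence $(M,\varphi)$ up to isogeny, is determined by $\Phi$. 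Conversely, for any $\Phi$ the line bundle $\mathcal{O}_{X_E}(\sum_\tau\Phi(\tau)[\infty_\tau])$ together with the lattice built from the $\Phi(\tau)$ satisfies this compatibility, so \Cref{theorem: equivalent_descriptions_of_bkf_modules} produces a Breuil--Kisin--Fargues module $M_\Phi$ with CM by $E$ realizing $\Phi$.

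Finally, the rigidification contributes no further invariant. The underlying $W(k)[\tfrac1p]$-isocrystal $(M\otimes_{A_{\inf}}W(k))[\tfrac1p]$ of a CM object is itself ``of CM type'' over $E\otimes_{\Q_p}W(k)[\tfrac1p]$ and hence rigid, so the set of rigidifications on a fixed $(M,\varphi)$ is a torsor under a group each of whose elements is induced by an isogeny automorphism of $(M,\varphi)$ (namely by an $(E\otimes_{\Q_p}W(k)[\tfrac1p])^\times$-scalar); passing to isogeny classes kills this torsor, so the bijection for rigidified modules is the same as for $(M,\varphi)$, and, using the canonical $\mathcal{O}_E$-lattice of a CM object, also agrees with the integral statement. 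Reassembling a general semisimple $E=\prod_iE_i$ from the field case via $\Hom_{\Q_p}(E,C)=\coprod_i\Hom_{\Q_p}(E_i,C)$ and the naturality of all constructions in $E$ finishes the proof. The one step I expect to require real work is the degree computation on $X_E$ — extracting from the defining integrality property of Breuil--Kisin--Fargues modules the exact relation between $\deg\mathcal{L}$ and $\Phi$ — everything else being bookkeeping around \Cref{theorem: equivalent_descriptions_of_bkf_modules} and the \'etale splitting of $E$ over $B_{\dR}^+$.
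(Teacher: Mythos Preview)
Your strategy is workable in spirit, but you have misread \Cref{theorem: equivalent_descriptions_of_bkf_modules} and, as a result, invented an obstacle that is not there. In description (ii) the Breuil--Kisin--Fargues module is \emph{completely} encoded by the pair $(T,\Xi)$: the bundle $\mathcal{F}$ is by construction $T\otimes_{\Z_p}\mathcal{O}_{X_{\mathrm{FF}}}$ (hence trivial), and $\mathcal{F}'$ is \emph{defined} as the modification of $\mathcal{F}$ at $\infty$ by the lattice $\Xi$. There is no independent degree parameter for your line bundle $\mathcal{L}$ on $X_E$: since $T\cong E$ as $E$-module (when $E$ is a field), the line bundle on $X_E$ corresponding to $\mathcal{F}$ is $\mathcal{O}_{X_E}$ on the nose, so $\mathcal{L}$ is determined exactly by $\Phi$, not merely ``up to a line bundle pulled back from $X$''. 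The ``degree computation'' you flag as the main obstacle simply does not arise; the relation $\deg\mathcal{L}=\sum_\tau\Phi(\tau)$ is automatic from the definition of a modification, and no appeal to finite projectivity of $M[\tfrac1p]$ or to the precise localization in the Frobenius condition is needed.

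The paper exploits this and the argument collapses to a few lines: pass to description (ii), observe that a faithful $E$-module $T$ with $\dim_{\Q_p}T=\dim_{\Q_p}E$ is forced to be $E$ itself (or $\mathcal{O}_E$ integrally when $\mathcal{O}=\mathcal{O}_E$), and then classify $E\otimes_{\Q_p}B_{\dR}^+$-stable lattices in $E\otimes_{\Q_p}B_{\dR}\cong\prod_{\tau}B_{\dR}$ by the tuple of valuations $(\Phi(\tau))_\tau$. No passage to $X_E$, no degree bookkeeping. For the rigidification, the paper's argument (\Cref{lemma-rigidifications-of-cm-bkf-modules}) is also cleaner than your torsor discussion: when $E$ is a field, the Harder--Narasimhan filtration of $\mathcal{F}'$ is $E$-stable, hence gives a flag in a one-dimensional vector space over the function field of $X_E$, hence is trivial; thus $\mathcal{F}'$ is semistable and the unique $E$-linear rigidification is the identity.
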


Moreover, we can write down for given pair $(E,\Phi\colon\mathrm{Hom}_{\Q_p}(E,C)\to \Z)$ the corresponding (rigidified) Breuil-Kisin-Fargues module (up to isogeny) explicitly (cf.\ \Cref{theorem-rigidified-bkf-modules-with-cm}). 
We remark that a similar result has been obtained by Lucia Mocz in the case of Breuil-Kisin modules associated with $p$-divisible groups (cf.\ \cite{mocz_a_new_northcott_property}).

Finally, let $\mathcal{T}\subseteq \mathrm{BKF}^\circ_{\mathrm{rig}}$ be the full Tannakian subcategory spanned by rigidified Breuil-Kisin-Fargues modules up to isogeny admitting CM and let 
$$
D_{\Q_p}
$$
be the pro-torus over $\Q_p$ with group of characters the coinduced discrete Galois module
$$
X^\ast(D_{\Q_p})=\mathrm{Coind}^{\mathrm{Gal}(\overline{\Q}_p/\Q_p)}\Z
$$
(cf.\ \Cref{lemma-character-group-of-pro-torus}). We prove the following description of the category $\mathcal{T}$ of Breuil-Kisin-Fargues modules admitting CM.

\begin{theorem}[cf.\ \Cref{proposition-tannakian-group-for-cm-bkf-modules}]
\label{theorem:tannakian-group-of-cm-bkf-modules}
The \'etale realization defines an equivalence
$$
\mathcal{T}\cong \mathrm{Rep}_{\Q_p}(D_{\Q_p})
$$  
of Tannakian categories.
\end{theorem}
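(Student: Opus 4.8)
The plan is to realize $\mathcal{T}$ as a neutral Tannakian category over $\Q_p$ via the \'etale realization and then to identify its Tannakian group with $D_{\Q_p}$. First I would verify that the \'etale realization restricts on $\mathcal{T}$ to an exact, faithful, $\Q_p$-linear tensor functor $\omega\colon\mathcal{T}\to\mathrm{Vec}_{\Q_p}$: faithfulness is immediate from \Cref{theorem: equivalent_descriptions_of_bkf_modules}, which describes a finite free Breuil-Kisin-Fargues module functorially in terms of its \'etale realization together with a $B_{\mathrm{dR}}^+$-lattice, so a morphism killing \'etale realizations vanishes; exactness then follows because $\mathcal{T}$ is semisimple, each of its objects lying in $\langle X\rangle^{\otimes}\cong\Rep_{\Q_p}(\mathrm{MT}(X))$ for a CM object $X$ whose Mumford-Tate group (relative to $\omega$) is a torus by \Cref{definition:cm-object}/\Cref{lemma:characterization-of-cm-objects}, and representation categories of tori are semisimple. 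Hence $\mathcal{T}\cong\Rep_{\Q_p}(G)$ with $G:=\underline{\Aut}^{\otimes}(\omega)$, and since the CM objects generate $\mathcal{T}$ and include $X\oplus Y$ whenever $X,Y$ are CM, $G=\varprojlim_{X}\mathrm{MT}(X)$ is a cofiltered limit of tori, hence a pro-torus; it remains to identify the discrete $\Gal(\overline{\Q}_p/\Q_p)$-module $X^{\ast}(G)=\varinjlim_{X}X^{\ast}(\mathrm{MT}(X))$ with $\mathrm{Coind}^{\Gal(\overline{\Q}_p/\Q_p)}\Z=X^{\ast}(D_{\Q_p})$.

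This identification is the heart of the matter, and here I would feed in the classification \Cref{theorem:classification-cm-bkf-modules-intro}. For the inclusion $\supseteq$: for each finite Galois $K/\Q_p$ inside $\overline{\Q}_p\subseteq C$, a generic $\Phi\colon\Hom_{\Q_p}(K,C)\to\Z$ (one whose $\Gal$-translates span $\Z[\Gal(K/\Q_p)]$ rationally) yields by \Cref{theorem:classification-cm-bkf-modules-intro} a CM module with Mumford-Tate group the full torus $\mathrm{Res}_{K/\Q_p}\mathbb{G}_m$; the explicit formula of \Cref{theorem-rigidified-bkf-modules-with-cm} lets one choose these compatibly (along norm maps) so that the resulting surjections $G\twoheadrightarrow\mathrm{Res}_{K/\Q_p}\mathbb{G}_m$ induce on character groups the canonical embeddings $\Z[\Gal(K/\Q_p)]\hookrightarrow\varinjlim_{K}\Z[\Gal(K/\Q_p)]=\mathrm{Coind}^{\Gal(\overline{\Q}_p/\Q_p)}\Z\hookrightarrow X^{\ast}(G)$. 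For $\subseteq$: an arbitrary object of $\mathcal{T}$ has CM by a semisimple $E=\prod_iK_i$, so $\mathrm{MT}(X)\subseteq\mathrm{Res}_{E/\Q_p}\mathbb{G}_m$; one must then show that, despite this, the image of $X^{\ast}(\mathrm{MT}(X))$ in $X^{\ast}(G)$ already lies inside $\mathrm{Coind}^{\Gal(\overline{\Q}_p/\Q_p)}\Z$ — equivalently, that $X^{\ast}(\mathrm{MT}(X))$ embeds into the regular representation of the finite Galois quotient through which it factors, the point being that the rigidity forces the trivial submodule to occur only once, corresponding to the unique ``weight'' direction. Both bounds together give $X^{\ast}(G)=\mathrm{Coind}^{\Gal(\overline{\Q}_p/\Q_p)}\Z$ (\Cref{lemma-character-group-of-pro-torus}), hence $G\cong D_{\Q_p}$, and the equivalence $\mathcal{T}\cong\Rep_{\Q_p}(G)\cong\Rep_{\Q_p}(D_{\Q_p})$ is by construction the one induced by $\omega$, i.e.\ by the \'etale realization.

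The main obstacle is precisely this last computation: the bijection of \Cref{theorem:classification-cm-bkf-modules-intro} has to be upgraded so as to track the Mumford-Tate tori and their compatibility under norm maps, and one must rule out the ``extra'' $\Q_p$-split directions that a general subtorus of $\mathrm{Res}_{E/\Q_p}\mathbb{G}_m$ could carry. An alternative and perhaps cleaner organization that I would also consider avoids computing $G$ directly: using \Cref{theorem:classification-cm-bkf-modules-intro} and \Cref{theorem-rigidified-bkf-modules-with-cm} one builds a $\Q_p$-linear tensor functor $\Rep_{\Q_p}(D_{\Q_p})=\varinjlim_{K}\Rep_{\Q_p}(\mathrm{Res}_{K/\Q_p}\mathbb{G}_m)\to\mathcal{T}$ carrying the irreducible representation attached to a $\Gal(\overline{\Q}_p/\Q_p)$-orbit of characters to the CM Breuil-Kisin-Fargues module attached to the corresponding pair $(E,\Phi)$; since both categories are semisimple, one is then reduced to matching simple objects together with their (division-algebra) endomorphism rings, and to checking compatibility with $\otimes$ and with the \'etale realization — the same bookkeeping, but more transparent. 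In either approach, $\Gal(\overline{\Q}_p/\Q_p)$ enters only through its action on $\Hom_{\Q_p}(E,C)=\Hom_{\Q_p}(E,\overline{\Q}_p)$, on which the CM type $\Phi$ is a function.
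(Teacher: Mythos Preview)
Your strategy is workable in outline, but it bypasses the piece of structure that makes the paper's argument go through cleanly, and as a result your ``$\subseteq$'' direction has a genuine gap.

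The paper does not try to compute $X^*(G)$ by assembling Mumford--Tate tori of chosen CM objects. Instead it exploits the \emph{filtered} fiber functor $\omega_{\acute{e}t}\otimes_{\Q_p}C\colon\mathrm{BKF}^\circ_{\mathrm{rig}}\to\mathrm{FilVec}_C$ of \Cref{lemma: filtered_fiber_functor_on_rigidified_bkf_modules}: the associated graded gives a canonical cocharacter $\mu\colon\mathbb{G}_{m,\overline{\Q}_p}\to G_{\overline{\Q}_p}$, and the universal property of coinduction turns $\mu^*\colon X^*(G)\to\Z$ into a $\Gal$-equivariant map $\widetilde{\mu^*}\colon X^*(G)\to\mathrm{Coind}^{\Gal}\Z$, i.e.\ a morphism $D_{\Q_p}\to G$ over $\Q_p$. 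This is the reflex norm of \Cref{lemma-serre-group-via-reflex-fields}, and it requires no compatibility bookkeeping along norm maps: it is intrinsic to the filtration. The criterion \Cref{corollary:reflex-norm-defines-equivalence} then reduces everything to two checks: (a) objects with trivial filtration on $\omega_{\acute{e}t}\otimes C$ are sums of the unit, which is exactly \Cref{lemma:band-of-bkf-modules-connected} and gives injectivity of $\widetilde{\mu^*}$ (full faithfulness); and (b) for each finite $E/\Q_p$ and each $\tau\colon E\hookrightarrow C$ there is a CM object of type $(E,\Phi_\tau)$, immediate from \Cref{theorem-rigidified-bkf-modules-with-cm}, giving surjectivity of $\widetilde{\mu^*}$.

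Your construction runs the map the other way: you build $\mathrm{Coind}^{\Gal}\Z\hookrightarrow X^*(G)$ from specific ``generic'' objects $X_K$, and then for ``$\subseteq$'' you must show that an \emph{arbitrary} CM object $X$ lies in $\langle X_K\rangle^{\otimes}$ for suitable $K$ --- equivalently, that the surjection $G\twoheadrightarrow\mathrm{MT}(X)$ factors through your particular $G\twoheadrightarrow\mathrm{Res}_{K/\Q_p}\mathbb{G}_m$. Nothing you have written forces this: a priori $G$ could surject onto $\mathrm{Res}_{K/\Q_p}\mathbb{G}_m$ in two inequivalent ways, one via $X_K$ and one via $X$. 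The remark that ``rigidity forces the trivial submodule to occur only once'' does not pin this down. What does pin it down is precisely the filtration: once one uses $\mu$, the type $\Phi$ of any CM object is the restriction of $\mu^*$ to the characters of its Mumford--Tate torus, and all the compatibilities you are worried about (including those along norm maps in your ``$\supseteq$'' step) become tautological. Your alternative plan of building a tensor functor $\Rep_{\Q_p}(D_{\Q_p})\to\mathcal{T}$ from the classification faces the same issue when you try to verify it is a \emph{tensor} functor compatible with $\omega_{\acute{e}t}$: without the cocharacter $\mu$ to normalize identifications, there is no canonical way to match characters of $D_{\Q_p}$ with simple CM objects.
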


This theorem can be understood as the computation of a ``$p$-adic Serre group'', analogous to the case of rational Hodge structures (cf.\ \cite{fargues_abelian_motives}).

In order to prove \Cref{theorem:tannakian-group-of-cm-bkf-modules} we develop in \Cref{section: formal_cm_theory} some language concerning CM-objects and reflex norms in an arbitrary Tannakian category to formalize the known case of rational Hodge structures admitting CM.

\subsection*{Acknowledgement} The author wants to thank Lucia Mocz heartily for sharing her notes \cite{mocz_a_new_northcott_property} on Breuil-Kisin modules with CM, which eventually led to the question of defining and classifying Breuil-Kisin-Fargues modules with CM answered in this paper. Moreover, the author wants to thank Bhargav Bhatt, Peter Scholze and Sebastian Posur for discussions surrounding this paper. Especial thanks go to Peter Scholze for providing the hint to the remark following \cite[Lemma 27.]{bhatt_morrow_scholze_integral_p_adic_hodge_theory} (i.e., \Cref{theorem: rigidified_bkf_modules_abelian}) which lead to the construction of the Tannakian category $\mathrm{BKF}^\circ_{\mathrm{rig}}$.

\section{Formal CM-theory}
\label{section: formal_cm_theory}

In this section we write down a general theory of ``CM-objects'' in a Tannakian category $\mathcal{T}$, mainly fixing terminology. Concretely this means that we reformulate the maximal torus quotient of the band of the category $\mathcal{T}$ internal in $\mathcal{T}$. We will apply this theory to the case that $\mathcal{T}=\mathrm{BKF}^\circ_{\mathrm{rig}}$ is the category of rigidified Breuil-Kisin-Fargues modules up to isogeny (cf.\ \Cref{definition: rigidified_bkf_modules_up_to_isogeny}).
The formalism of this section is a straightforward translation of CM-theory for rational Hodge structures or categories of (CM) motives to the case of general Tannakian categories. Therefore we expect it to be known in principle and do not claim any originality.
We advise the reader to have a look at \cite{fargues_abelian_motives}.

Let $k$ be a field (later it will assumed to have characteristic $0$) and let $\mathcal{T}$ be a Tannakian category over $k$ (not necessarily assumed to be neutral). For an object $X\in \mathcal{T}$ we denote by $\langle X\rangle^{\otimes}$ the full Tannakian subcategory spanned by $X$.

\begin{definition}
\label{definition:cm-object}
An object $X\in \mathcal{T}$ is called a CM-object, or to admit CM, if the connected component of the band of the Tannakian category $\langle X\rangle^{\otimes}$ is multiplicative, i.e., for every fiber functor
$$
\omega\colon \langle X\rangle^{\otimes}\to \mathrm{Bun}_{S},
$$
where $S/k$ is a scheme, the connected component $G^\circ$ of the group scheme
$$
G:=\mathrm{Aut}^{\otimes}(\omega)
$$
is a multiplicative group scheme over $S$.   
\end{definition}

Equivalently, the condition can be required only for the case $S=\Spec(k^\prime)$ is the spectrum of a field extension $k^\prime/k$, or even only for one fiber functor over some field extension $k^\prime/k$.
Moreover, if $k^\prime/k$ is a finite field extension and $\mathcal{T}_{k^\prime}$ the base change of $\mathcal{T}$ from $k$ to $k^\prime$ (cf.\ \cite[Section 2.5.]{ziegler_graded_and_filtered_fiber_functors}), then $X\in \mathcal{T}$ is a CM-object if and only if $k^\prime\otimes_k X\in \mathcal{T}_{k^\prime}$ is a CM-object, because a fiber functor $\omega\colon \langle X\rangle^\otimes \to \mathrm{Vec}_{k^{\prime\prime}}$ where $k^{\prime\prime}$ is a field extension of $k$ containing $k^\prime$ extends to a fiber functor $\omega^\prime\colon \langle k^\prime\otimes_k X\rangle^\otimes$ (cf.\ \cite[Section 2.5.]{ziegler_graded_and_filtered_fiber_functors}).  

If $k$ is of characteristic $0$, then a connected multiplicative group scheme of finite type is automatically a torus. 

Definition~\Cref{definition:cm-object} is a formalization of the definition of a CM-rational Hodge structure (cf.\ \cite{griffiths_mumford_tate_groups}). Namely, let $V$ be a (polarisable) rational Hodge structure and let $\mathbb{S}:=\mathrm{Res}_{\mathbb{C}/\mathbb{R}}\mathbb{G}_m$ be the Deligne torus. Then $V$ is called of $CM$-type if the ``Mumford-Tate group'' of $V$, i.e., the minimal closed subgroup $G\subseteq \mathrm{GL}(V)$ over $\Q$ containing the image of the morphism $h\colon \mathbb{S}\to \mathrm{GL}(V)$ induced by the Hodge structure $V_{\mathbb{C}}=\bigoplus\limits_{p,q\in \Z}V^{p,q}$ on $V$, is a torus (cf.\ \cite{griffiths_mumford_tate_groups}).
This definition agrees with ours as the Mumford-Tate group of $V$ is precisely the automorphism group of the canonical fiber functor
$$
\omega\colon \langle V\rangle^{\otimes}\to \mathrm{Vec}_{\Q}. 
$$

\begin{lemma}
\label{lemma:representations-of-tori-are-cm-objects}
Let $T/k$ be an affine group scheme of finite type which is an extension of a finite discrete group $G$ and multiplicative group $T^0$, i.e., there exists an exact sequence (of fppf-sheaves)
$$
1\to T^0\to T\to G\to 1.
$$
Then every $V\in \Rep_k(T)$ is a CM-object.   
\end{lemma}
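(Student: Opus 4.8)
The plan is to reduce the statement to a fact about the Tannakian group of $\langle V\rangle^\otimes$. By \Cref{definition:cm-object}, together with the remark immediately following it that it suffices to test the condition on a single fiber functor over a single field extension of $k$, it is enough to produce one fiber functor whose automorphism group scheme has multiplicative identity component; the forgetful fiber functor $\omega\colon \langle V\rangle^\otimes\to \mathrm{Vec}_k$ will serve. So the task becomes: show that the identity component of $\mathrm{Aut}^\otimes(\omega)$ is multiplicative.

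First I would identify $\mathrm{Aut}^\otimes(\omega)$ with the scheme-theoretic image $T_V$ of the homomorphism $T\to \mathrm{GL}(V)$. Every object of $\langle V\rangle^\otimes$ is a subquotient of a finite direct sum of objects of the form $V^{\otimes a}\otimes (V^\vee)^{\otimes b}$, and $T$ acts on all of these through $T_V$; since a homomorphism of affine group schemes of finite type over the field $k$ factors as a faithfully flat surjection followed by a closed immersion, $T_V$ is again affine of finite type, $T\twoheadrightarrow T_V$ is faithfully flat, and $\langle V\rangle^\otimes\simeq \Rep_k(T_V)$ with $\mathrm{Aut}^\otimes(\omega)\cong T_V$. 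In particular $T_V$ is a quotient of $T$.

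It then remains to analyze $T_V$. Let $T^0_V\subseteq T_V$ denote the image of $T^0$. As a quotient of the multiplicative group $T^0$ it is again multiplicative, and $T_V/T^0_V$ is a quotient of $T/T^0\cong G$, hence finite. Therefore $T^0_V$ is an open subgroup scheme of $T_V$ and so contains the identity component $T_V^\circ$; being a closed subgroup scheme of the multiplicative group $T^0_V$, the group $T_V^\circ$ is itself multiplicative. (If $\mathrm{char}\,k=0$ this identity component, being connected, multiplicative and of finite type, is moreover a torus.) Hence $V$ is a CM-object.

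The argument is essentially formal; the only points requiring care are bookkeeping ones: keeping the multiplicative group $T^0$ from the given extension distinct from the genuine identity component $T_V^\circ$ of the quotient (so that what is really used is the stability of multiplicative group schemes under passing to subgroups \emph{and} to quotients), and invoking the standard facts that the image of a homomorphism of affine group schemes of finite type over a field is a closed subgroup onto which the map is faithfully flat, and that subgroups and quotients of multiplicative group schemes are multiplicative (SGA3). Neither is a genuine obstacle.
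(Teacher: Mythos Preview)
Your proof is correct and follows essentially the same route as the paper's: take the forgetful fiber functor on $\langle V\rangle^\otimes$, use that $T\to \mathrm{Aut}^\otimes(\omega)$ is a faithfully flat quotient map, and conclude that the identity component of the target is multiplicative. You are in fact slightly more careful than the paper in distinguishing the given $T^0$ from the genuine identity component of $T$ (the paper's proof tacitly conflates them), but the underlying argument is the same.
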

\begin{proof}
Let $V\in \Rep_k(T)$ and set $\mathcal{T}:=\langle V\rangle^\otimes$ to be the Tannakian subcategory generated by $V$. Let $\omega\colon \mathcal{T}\to \mathrm{Vec}_k$ be the restriction of the canonical fiber functor. Then the canonical morphism
$$
T\to \mathrm{Aut}^\otimes(\omega)
$$  
is faithfully flat and of finite presentation. This implies that this morphism is open and hence the connected component $T^0$ of $T$ surjects onto the connected component $\mathrm{Aut}^\otimes(\omega)^\circ$ of $\mathrm{Aut}^\otimes(\omega)$. In particular, the group $\mathrm{Aut}^\otimes(\omega)^\circ$ is multiplicative, i.e., $V$ is a CM-object. 
\end{proof}

\begin{lemma}
\label{lemma:subquotients-of-cm-objects-again-cm}
Let $\mathcal{T}$ be a Tannakian category over $k$ and let $X\in\mathcal{T}$ be a CM-object. Then every $Y\in \langle X\rangle^\otimes$ is again a CM-object.
Moreover, if $X,Y\in \mathcal{T}$ are CM-objects, then $X\oplus Y$ and $X\otimes Y$ are CM-objects as well.  
\end{lemma}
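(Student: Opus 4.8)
The plan is to reduce both assertions to standard facts about groups of multiplicative type --- stability under quotients, closed subgroup schemes and finite products --- via the Tannakian dictionary; throughout, by the remark following \Cref{definition:cm-object}, it is enough to test the CM-condition on a single fiber functor over a single field extension $k^\prime/k$. First I would prove the assertion about $\langle X\rangle^\otimes$. Since $Y\in\langle X\rangle^\otimes$ we have $\langle Y\rangle^\otimes\subseteq\langle X\rangle^\otimes$, and the latter inclusion is one of Tannakian subcategories, in particular closed under subquotients. Fix a fiber functor $\omega\colon\langle X\rangle^\otimes\to\mathrm{Vec}_{k^\prime}$; then $G:=\mathrm{Aut}^\otimes(\omega)$ is of finite type (as $\langle X\rangle^\otimes$ is generated by one object) and, by hypothesis, $G^\circ$ is multiplicative. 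Restriction of $\omega$ identifies $\langle Y\rangle^\otimes$ with $\Rep_{k^\prime}(G_Y)$ for $G_Y:=\mathrm{Aut}^\otimes(\omega|_{\langle Y\rangle^\otimes})$, and the canonical homomorphism $G\to G_Y$ is faithfully flat precisely because $\langle Y\rangle^\otimes$ is closed under subquotients in $\langle X\rangle^\otimes$. The image of $G^\circ$ is then a closed, connected, normal subgroup of finite index in $G_Y$, hence equals $G_Y^\circ$; so $G_Y^\circ$ is a quotient of the multiplicative group $G^\circ$ and is therefore itself multiplicative, i.e.\ $Y$ is a CM-object.

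For the second assertion, observe that $X$ and $Y$ are direct summands of $X\oplus Y$, hence lie in $\langle X\oplus Y\rangle^\otimes$, so $X\otimes Y\in\langle X\oplus Y\rangle^\otimes$ as well; by the first part it thus suffices to show that $X\oplus Y$ is a CM-object. Fix a fiber functor $\omega$ on $\langle X\oplus Y\rangle^\otimes$ over a field $k^\prime$ and set $G:=\mathrm{Aut}^\otimes(\omega)$, $G_X:=\mathrm{Aut}^\otimes(\omega|_{\langle X\rangle^\otimes})$, $G_Y:=\mathrm{Aut}^\otimes(\omega|_{\langle Y\rangle^\otimes})$. Expanding the tensor constructions defining $\langle X\oplus Y\rangle^\otimes$, every object there is a subquotient of a finite direct sum of objects $P\otimes Q$ with $P\in\langle X\rangle^\otimes$ and $Q\in\langle Y\rangle^\otimes$; as these lie in the image of the natural tensor functor $\Rep_{k^\prime}(G_X\times G_Y)\to\langle X\oplus Y\rangle^\otimes$, the Tannakian dictionary shows that the induced homomorphism $G\to G_X\times G_Y$ is a closed immersion. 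By the case over fields established above, $G_X^\circ$ and $G_Y^\circ$ are multiplicative, hence so is $(G_X\times G_Y)^\circ=G_X^\circ\times G_Y^\circ$; therefore $G^\circ$, being a closed subgroup scheme of a group of multiplicative type, is of multiplicative type. Thus $X\oplus Y$ is a CM-object.

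The two inputs from the Tannakian dictionary --- full subcategories closed under subquotients correspond to faithfully flat quotients of the Tannaka group, and a category generated by two subcategories corresponds to a closed subgroup of the product of their Tannaka groups --- together with the stability properties of groups of multiplicative type used above are all standard, so the bulk of the write-up is routine. The one point deserving genuine care is the bookkeeping in the non-neutral setting: one must make sure that all fiber functors occurring in a given step are defined over a common extension $k^\prime/k$, which is exactly what makes the reduction recorded after \Cref{definition:cm-object} applicable.
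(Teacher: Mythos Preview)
Your proof is correct and follows essentially the same route as the paper: for the first part you pass to a fiber functor and use that the Tannaka group of $\langle Y\rangle^\otimes$ is a quotient of that of $\langle X\rangle^\otimes$ (the paper packages this step as an appeal to \Cref{lemma:representations-of-tori-are-cm-objects}, but the content is the same), and for the second part you reduce $X\otimes Y$ to $X\oplus Y$ and then use the closed immersion $\mathrm{Aut}^\otimes(\omega)\hookrightarrow\mathrm{Aut}^\otimes(\omega_1)\times\mathrm{Aut}^\otimes(\omega_2)$ exactly as the paper does.
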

\begin{proof}
For the first statment we may replace $\mathcal{T}$ by $\langle X\rangle^\otimes$, and, after enlarging $k$, assume moreover that $\mathcal{T}$ is neutral. As $X$ is a CM-object, the category $\mathcal{T}\cong \Rep_k(T)$ is thus equivalent to the category of representations of an affine group scheme $T$, which is an extension of a finite discrete group by a multiplicative group as in \Cref{lemma:representations-of-tori-are-cm-objects}. \Cref{lemma:representations-of-tori-are-cm-objects} implies that $Y\in \mathcal{T}=\langle X\rangle^\otimes$ is again a CM-object. 
Now let $X,Y\in \mathcal{T}$ (with $\mathcal{T}$ arbitrary) be CM-objects. Then $X\otimes Y\in \langle X\oplus Y\rangle^\otimes$ and thus it suffices to proof that $X\oplus Y$ is again a CM-object by what has already been shown.
But if 
$$
\omega\colon \langle X\oplus Y\rangle^\otimes\to \mathrm{Bun}_{S}
$$
is a fiber functor, and we let $\omega_1$, resp.\ $\omega_2$, be the restrictions of $\omega$ to $\langle X\rangle^\otimes\subseteq \langle X\oplus Y\rangle^\otimes$, resp.\ $\langle Y\rangle^\otimes\subseteq \langle X\oplus Y\rangle^\otimes$, then the canonical morphism
$$
\mathrm{Aut}^\otimes(\omega)\hookrightarrow \mathrm{Aut}^\otimes(\omega_1)\times \mathrm{Aut}^\otimes(\omega_2)
$$
is a closed immersion. In particular, the connected component of $\mathrm{Aut}^\otimes(\otimes)$ is again multiplicative.  
\end{proof}

\begin{definition}
\label{definition:subcategory-of-cm-objects}
Let $\mathcal{T}$ be a Tannakian category over $k$.
We denote by $\mathcal{T}_{\mathrm{CM}}\subseteq \mathcal{T}$ the full subcategory of CM-objects of $\mathcal{T}$.
\end{definition}

By \Cref{lemma:subquotients-of-cm-objects-again-cm}, the category $\mathcal{T}_{\mathrm{CM}}$ is a Tannakian subcategory. In general, it is not closed under extensions, e.g., for representations of unipotent groups.

\begin{lemma}
\label{lemma: exact_tensor_functors_preserve_cm_objects}
Let $\eta\colon \mathcal{T}\to \mathcal{T}^\prime$ be an exact tensor functor and let $X\in \mathcal{T}$ be a CM object. Then $\eta(X)\in \mathcal{T}^\prime$ is a CM object. In particular, $\eta$ induces an exact tensor functor $\eta\colon \mathcal{T}_{\mathrm{CM}}\to \mathcal{T}^\prime_{\mathrm{CM}}$ on the full subcategories of CM-objects.   
\end{lemma}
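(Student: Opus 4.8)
The plan is to reduce the statement to a purely group-theoretic fact via Tannakian duality, working inside the subcategories $\langle X\rangle^\otimes$ and $\langle \eta(X)\rangle^\otimes$. First I would note that, since $\eta$ is an exact tensor functor between rigid abelian tensor categories, it preserves duals and carries subquotients to subquotients; hence it restricts to a tensor functor $\bar\eta\colon \langle X\rangle^\otimes\to \langle \eta(X)\rangle^\otimes$ (if $Y\in\langle X\rangle^\otimes$ is a subquotient of $\bigoplus_i X^{\otimes a_i}\otimes(X^\vee)^{\otimes b_i}$, then $\eta(Y)$ is a subquotient of the corresponding sum of $\eta(X)^{\otimes a_i}\otimes(\eta(X)^\vee)^{\otimes b_i}$). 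Moreover this restriction has the property that \emph{every} object of $\langle\eta(X)\rangle^\otimes$ is a subquotient of $\bar\eta(Y)$ for some $Y\in\langle X\rangle^\otimes$, because objects of $\langle\eta(X)\rangle^\otimes$ are subquotients of finite direct sums of objects $\eta(X)^{\otimes a}\otimes(\eta(X)^\vee)^{\otimes b}=\eta\big(X^{\otimes a}\otimes (X^\vee)^{\otimes b}\big)$.

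Next, invoking the equivalent ``field-valued'' formulation of \Cref{definition:cm-object}, I would pick a fiber functor $\omega'\colon\langle\eta(X)\rangle^\otimes\to\mathrm{Vec}_{k'}$ over some field extension $k'/k$ and set $\omega:=\omega'\circ\bar\eta$, a fiber functor on $\langle X\rangle^\otimes$. Writing $G:=\mathrm{Aut}^\otimes(\omega)$ and $H:=\mathrm{Aut}^\otimes(\omega')$, Tannakian duality identifies $\bar\eta$ with restriction of representations along a homomorphism $f\colon H\to G$ of affine group schemes of finite type over $k'$, and the ``generating'' property established above translates exactly into $f$ being a closed immersion. Since $X$ is a CM-object, $G^\circ$ is multiplicative; as $f$ is a closed immersion, $H^\circ$ is a closed subgroup scheme of $G$ landing in $G^\circ$, and a closed subgroup scheme of a group scheme of multiplicative type over a field is again of multiplicative type. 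Hence $H^\circ$ is multiplicative, i.e.\ $\eta(X)$ is a CM-object. The final assertion then follows formally: by \Cref{lemma:subquotients-of-cm-objects-again-cm} the subcategories $\mathcal{T}_{\mathrm{CM}}$ and $\mathcal{T}^\prime_{\mathrm{CM}}$ are Tannakian subcategories, $\eta$ maps the former into the latter by what we just proved, and the restriction of an exact tensor functor is again exact and tensor.

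The step I expect to be the main obstacle is the bookkeeping in the Tannakian dictionary: making sure the direction of $f\colon H\to G$ is correct and that the hypothesis ``every object of the target category is a subquotient of one in the essential image'' is precisely what yields a closed immersion (and not merely a monomorphism that fails to be closed). The accompanying input --- that closed subgroup schemes of groups of multiplicative type over a field are again of multiplicative type, so that passing to $H^\circ\subseteq G^\circ$ is harmless --- is standard but should be cited with care.
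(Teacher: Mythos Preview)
Your proposal is correct and follows essentially the same route as the paper: pick a fiber functor $\omega'$ on $\langle\eta(X)\rangle^\otimes$, precompose with $\eta$ to get $\omega$ on $\langle X\rangle^\otimes$, and observe that the induced homomorphism $H=\Aut^\otimes(\omega')\to G=\Aut^\otimes(\omega)$ is injective, so $H^\circ\hookrightarrow G^\circ$ is multiplicative. The paper simply asserts injectivity of $H\to G$ without spelling out the ``every object is a subquotient of something in the essential image'' criterion that you carefully verify, but otherwise the arguments coincide.
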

\begin{proof}
Let $\omega^\prime\colon \langle \eta(X)\rangle^{\otimes}\to \mathrm{Vec}_{k^\prime}$ 
be a fiber functor where $k^\prime/k$ is a field extension. Set $\omega:=\omega^\prime\circ \eta$. The morphism
$$
\eta^\ast\colon H:=\mathrm{Aut}(\omega^\prime)\to G:=\mathrm{Aut}(\omega)
$$
is then injective. As the connected component of $G$ is multiplicative, the connected component $H^\circ$ of $H$ will therefore be multiplicative as well.
\end{proof}

The following direct corollary can be useful to prove that certain Breuil-Kisin-Fargues modules admit CM.

\begin{corollary}
\label{corollary: constructing_objects_with_cm_using_tori}
Let $T/k$ be a torus and let $\omega\colon \Rep_k(T)\to \mathcal{T}$ be an exact tensor functor. Then for every object $V\in \Rep_k(T)$ the object $\omega(V)$ has CM.  
\end{corollary}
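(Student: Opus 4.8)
The plan is to deduce the statement immediately from the two lemmas preceding it. First I would observe that a torus $T$ over $k$ is in particular a multiplicative group scheme of finite type, so it fits into the (degenerate) exact sequence $1\to T\to T\to 1\to 1$ of the shape appearing in \Cref{lemma:representations-of-tori-are-cm-objects}; that lemma then shows that every $V\in\Rep_k(T)$ is a CM-object of the Tannakian category $\Rep_k(T)$.

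Next I would apply \Cref{lemma: exact_tensor_functors_preserve_cm_objects} to the exact tensor functor $\eta:=\omega\colon\Rep_k(T)\to\mathcal{T}$ and to the CM-object $V\in\Rep_k(T)$: the lemma asserts precisely that $\omega(V)\in\mathcal{T}$ is again a CM-object. This finishes the proof, the only thing to check being that the hypotheses of \Cref{lemma: exact_tensor_functors_preserve_cm_objects} — exactness and compatibility with the tensor structure of $\omega$ — are exactly those imposed on $\omega$ in the corollary.

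Since the statement is a purely formal consequence of the preceding results I do not expect any genuine obstacle. If one prefers not to invoke \Cref{lemma:representations-of-tori-are-cm-objects} with a trivial finite group, one can instead argue directly inside $\Rep_k(T)$: for $V\in\Rep_k(T)$ the subcategory $\langle V\rangle^\otimes$ is equivalent to $\Rep_k(T')$ for a quotient torus $T'$ of $T$, which is connected and multiplicative, so $V$ is a CM-object by \Cref{definition:cm-object}; then \Cref{lemma: exact_tensor_functors_preserve_cm_objects} applies to $\omega$ as before to conclude that $\omega(V)$ has CM.
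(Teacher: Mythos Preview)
Your proposal is correct and follows exactly the paper's own argument: apply \Cref{lemma:representations-of-tori-are-cm-objects} to see that every $V\in\Rep_k(T)$ is a CM-object, then invoke \Cref{lemma: exact_tensor_functors_preserve_cm_objects} to transport CM along the exact tensor functor $\omega$. The alternative direct argument you sketch is also fine but unnecessary here.
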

\begin{proof}
This is a special case of \Cref{lemma: exact_tensor_functors_preserve_cm_objects} as any object in the category $\mathrm{Rep}_k(T)$ of representations of $T$ admits CM (cf.\ \Cref{lemma:representations-of-tori-are-cm-objects}).  
\end{proof}

We now want to give a more explicit definition of CM objects.
From now on assume that $k$ has characteristic $0$.
Recall that every object $X\in \mathcal{T}$ in a Tannakian category $\mathcal{T}$ has a rank
$$
\mathrm{rk}(X)\in \mathrm{End}(1_\mathcal{T})\cong k
$$
defined as the trace of the identity on $X$. In general, the rank is an endomorphism of the unit object $1_\mathcal{T}$.
But as we assumed that $k$ is of characteristic $0$, the rank of $X$ equals the dimension (over $k^\prime$) of $\omega(X)$ for one, or equivalently any, fiber functor $\omega\colon \mathcal{T}\to \mathrm{Vec}_{k^\prime}$ for $k^\prime/k$ some field extension.

We call a Tannakian category $\mathcal{T}$ connected if its band is connected. For a neutral Tannakian category $\mathcal{T}\cong \Rep_k(G)$, where $G/k$ is an affine group scheme, this is equivalent to saying that $G$ is connected.

\begin{lemma}
\label{lemma:characterization-of-cm-objects}
Let $\mathcal{T}$ be a neutral, connected Tannakian category over $k$ (where $k$ is assumed to have characteristic $0$). Then an object $X\in \mathcal{T}$ admits CM if and only if there exists a commutative, semisimple $k$-algebra $E$ of dimension $\mathrm{rk}(X)$ and an injection (of $k$-algebras)
$$
E\hookrightarrow \mathrm{End}_{\mathcal{T}}(X).
$$   
\end{lemma}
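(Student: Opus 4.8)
The plan is to use neutrality to write $\mathcal{T}\cong\Rep_k(G)$ for a connected affine group scheme $G$ over $k$ (connectedness of $\mathcal{T}$ being precisely that of $G$), to realize $X$ as a representation $V$ of $G$ with $\dim_k V=\rk(X)=:n$, and to prove the two implications separately. Throughout I would use that $\langle X\rangle^\otimes\subseteq\mathcal{T}$ is a full Tannakian subcategory, so $\mathrm{End}_{\mathcal{T}}(X)=\mathrm{End}_G(V)$, the algebra of $G$-equivariant $k$-linear endomorphisms of $V$.

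For the implication that a CM-object carries such an $E$, I would note that the faithfully flat quotient $G\twoheadrightarrow\bar G$ onto the image of $G$ in $\mathrm{GL}(V)$ identifies $\langle X\rangle^\otimes\cong\Rep_k(\bar G)$ with $\bar G$ connected and, by \Cref{definition:cm-object}, multiplicative; being also of finite type over a field of characteristic $0$, $T:=\bar G$ is a torus, and $V$ is a faithful $T$-representation. I would then decompose $V\cong\bigoplus_{j=1}^{r}\rho_j^{\oplus m_j}$ into $T$-isotypic components and invoke the structure theory of torus representations: for a torus over $k$ every irreducible $\rho_j$ becomes, over $\bar k$, a multiplicity-free sum of the characters in a single Galois orbit, so $L_j:=\mathrm{End}_T(\rho_j)$ is a finite separable field extension of $k$ with $[L_j:k]=\dim_k\rho_j$. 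Hence $\mathrm{End}_{\mathcal{T}}(X)=\mathrm{End}_T(V)\cong\prod_{j=1}^{r}M_{m_j}(L_j)$, and the $k$-subalgebra $E:=\prod_{j=1}^{r}L_j^{\oplus m_j}$ of block-diagonal matrices with scalar entries is commutative semisimple with
\[
\dim_k E=\sum_{j=1}^{r}m_j[L_j:k]=\sum_{j=1}^{r}m_j\dim_k\rho_j=\dim_k V=\rk(X),
\]
the desired algebra.

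For the converse, given $E\hookrightarrow\mathrm{End}_{\mathcal{T}}(X)=\mathrm{End}_G(V)$ commutative semisimple of dimension $n=\rk(X)$, I would first note that $E$ is étale over the characteristic-$0$ field $k$, say $E=\prod_{i=1}^{s}L_i$ with $L_i/k$ finite separable, and that $E\hookrightarrow\mathrm{End}_G(V)\hookrightarrow\mathrm{End}_k(V)$ makes $V$ a faithful $E$-module. Decomposing $V=\bigoplus_i V_i$ along the idempotents of $E$, faithfulness forces $V_i\neq 0$, hence $\dim_k V_i\geq[L_i:k]$, and comparison with $\sum_i\dim_k V_i=\dim_k V=\dim_k E=\sum_i[L_i:k]$ forces $\dim_{L_i}V_i=1$ for all $i$; thus $V$ is free of rank one over $E$, its commutant in $\mathrm{End}_k(V)$ equals $E$, and the group scheme of $E$-linear automorphisms of $V$ is the torus $T_E:=\prod_{i=1}^{s}\mathrm{Res}_{L_i/k}\mathbb{G}_m$. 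Since $G$ acts commuting with $E$, the map $G\to\mathrm{GL}(V)$ factors through $T_E$, so $\langle X\rangle^\otimes\cong\Rep_k(\bar G)$ with $\bar G$ a closed subgroup scheme of the torus $T_E$, hence of multiplicative type and with $\bar G^\circ$ a subtorus — in particular multiplicative. So $X$ admits CM.

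I expect the first implication to be the main obstacle, and within it the only genuinely arithmetic point: that the endomorphism algebra of an irreducible representation of a torus over $k$ is a \emph{field} of degree over $k$ equal to the dimension of the representation. This is exactly what pins $\dim_k E$ to $\rk(X)$ rather than something smaller, and it is also where connectedness of $\mathcal{T}$ is indispensable — for a general, even finite, group the endomorphism algebra may be a noncommutative division algebra and the count breaks down; already the $2$-dimensional irreducible representation of $S_3$ over $\Q$ is a CM-object in the sense of \Cref{definition:cm-object} (its band has trivial, hence multiplicative, connected component) while its endomorphism algebra is merely $\Q$, of dimension $1<2$. The remaining ingredients — the equivalence $\mathcal{T}\cong\Rep_k(G)$, the identification of $\mathrm{End}$ with equivariant endomorphisms, and recognizing $V$ as a free rank-one $E$-module — are routine Tannakian bookkeeping.
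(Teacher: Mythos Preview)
Your proof is correct. The forward implication follows the paper's approach closely: both reduce to $\langle X\rangle^\otimes\cong\Rep_k(T)$ for a torus $T$ (using neutrality and connectedness), then use that irreducible torus representations become multiplicity-free sums of characters over $\bar k$ so that their endomorphism algebras are fields of the right degree. Your treatment spells out the isotypic decomposition and the resulting matrix-algebra description of $\mathrm{End}_T(V)$ more explicitly than the paper, but the mechanism is identical.

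For the converse you take a genuinely different route. The paper base-changes to a finite extension of $k$ over which $E$ splits, uses the idempotents to reduce to rank-one objects, and then observes that a rank-one representation has multiplicative image. You instead stay over $k$: from $\dim_k E=\dim_k V$ and faithfulness you deduce that $V$ is free of rank one over $E$, identify the commutant of $E$ in $\mathrm{End}_k(V)$ with $E$ itself, and conclude that $G\to\mathrm{GL}(V)$ factors through the torus $T_E=\mathrm{Res}_{E/k}\mathbb{G}_m$. This is slightly slicker in that it avoids both the base-change and the appeal to the remark after \Cref{definition:cm-object} about CM being insensitive to finite extensions of $k$; the paper's reduction, on the other hand, makes more transparent why rank-one objects are the atomic building blocks. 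Either argument is short and neither buys anything substantial over the other.
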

\begin{proof}
Assume that $X$ is a CM-object. Then, replacing $\mathcal{T}$ by $\langle X\rangle^\otimes$, we may assume that $\mathcal{T}\cong \Rep_k(T)$ for $T/k$ a torus (as $\mathcal{T}$ is neutral and connected). In other words, $X$ corresponds to a representation $V$ of the torus $T$. Decomposing $X$ into a sum of simple objects, we may assume that $X$ is simple. Over an algebraic closure $\overline{k}$ of $k$ the representation $X$ splits into a direct sum 
$$
X\otimes_{k}\overline{k}\cong X_{\chi_1}\oplus\ldots \oplus X_{\chi_n}
$$ 
of eigenspaces for distinct characters $\chi_i\colon T_{\overline{k}}\to \mathbb{G}_m$. As $X$ is simple all eigenspaces over $\overline{k}$ must be one-dimensional. In particular, we see that the endomorphism algebra $\mathrm{End}_{\mathcal{T}}(X)$ of $X$, the formation of which commutes with extensions of $k$, is a commutative semisimple algebra of dimension the rank of $X$.
Actually, $\mathrm{End}_{\mathcal{T}}(X)$ is a field in this case as $X$ is simple.

Conversely, assume that there exists an injection $E\hookrightarrow \mathrm{End}_{\mathcal{T}}(X)$ of a commutative, semisimple $k$-algebra $E$ such that $\mathrm{dim}_kE=\mathrm{rk}(X)$. We may pass to a finite extension of $k$ (cf.\ the remarks after \Cref{definition:cm-object}) and assume that $E\cong\prod\limits_{i=1}^nk$ is isomorphic to copies of $k$. The idempotents in $E$ define a decomposition $X\cong \bigoplus\limits_{i=1}^n X_i$ with $X_i$ of rank $1$ over $k$. Hence, replacing $X$ by one of the $X_i$, we may assume that $E=k$ and $\mathrm{rk}(X)=1$ (cf.\ \Cref{lemma:subquotients-of-cm-objects-again-cm}).
But as $\mathcal{T}$ is neutral, i.e., $\mathcal{T}\cong \Rep_k(G)$ for some affine group scheme $G$, a representation $\rho\colon G\to \mathbb{G}_m$ of rank $1$ defines a CM-object $X\in \mathcal{T}$ because the automorphism group scheme of the canonical fiber functor on $\langle X\rangle^\otimes$ is given by the image $\rho(G)$ of $\rho$ which is multiplicative.  
\end{proof}

In general it can happen that for a simple CM-object $X$ in a (non-neutral) Tannakian category in characteristic $0$ the endomorphisms
$$
\mathrm{End}_{\mathcal{T}}(X)
$$  
do not form a field. For example, this happens if $\mathcal{T}$ is the category of isocrystals over $\overline{\F}_p$. 

We record the following terminology.
\begin{definition}
\label{definition: cm-by-some-algebra}
If $X\in \mathcal{T}$ is an object in the Tannakian category $\mathcal{T}$ and $\iota\colon E\hookrightarrow \mathrm{End}_{\mathcal{T}}(X)$ an injection of a commutative semisimple $k$-algebra $E$ such that $\mathrm{rk}(X)=\mathrm{dim}_kE$, we say that $X$ admits CM by $E$.   
\end{definition}

Now we assume that the Tannakian category $\mathcal{T}$ is neutral (of characteristic $0$) and we fix a fiber functor
$$
\omega_0\colon\mathcal{T}\to \mathrm{Vec}_{k}. 
$$
Moreover, we assume that for some field extension $C/k$ the base extension
$$
\omega_0\otimes_k C\colon \mathcal{T}\to \mathrm{Vec}_C
$$
is equipped with a filtration, i.e., we fix a filtered fiber functor
$$
\omega\colon \mathcal{T}\to \mathrm{FilVec}_C
$$
such that $\omega\cong \omega_0\otimes_k C$.
We assume furthermore that $C$ is algebraically closed.
Of course this situation models the case of rational Hodge structures. But it will apply as well to the case of (rigidified) Breuil-Kisin-Fargues modules up to isogeny (cf.\ \Cref{lemma: filtered_fiber_functor_on_rigidified_bkf_modules}).

In this situation we can define the type of a CM-object.

\begin{definition}
\label{definition:type-of-a-cm-object}
Let $E/k$ be a commutative, semisimple $k$-algebra and let $\mathcal{T}$ be a Tannakian category over $k$, equipped with fiber functors as above. For a pair $(X,\iota)$ with $X\in \mathcal{T}$ an object, necessarily admitting CM, and an injection $\iota\colon E\to \mathrm{End}_{\mathcal{T}}(X)$ such that $n:=\mathrm{rk}(X)=\mathrm{dim}_kE$ we define the type 
$$
\Phi\colon \mathrm{Hom}_k(E,C)\to \Z
$$
of $(X,\iota)$ to be the unique function such that for every $i\in \Z$
$$
\mathrm{gr}^i(\omega(X))\cong \prod_{\tau\in \Phi^{-1}(i)}C_\tau
$$
as a representation of 
$$
E\otimes_{k}C\cong \prod\limits_{\tau\in \mathrm{Hom}_k(E,C)} C_\tau.
$$
\end{definition}

If $E/k$ is a commutative semisimple algebra and $\Phi\colon \mathrm{Hom}_{k}(E,C)\to \Z$ a function, then we call $(E,\Phi)$ a CM-type (over $k$).
Note that
$$
\mathrm{Hom}_{k}(E,C)\cong \mathrm{Hom}_k(E,\overline{k})
$$
where $\overline{k}\subseteq C$ denotes the algebraic closure of $k$ in $C$. In particular, the Galois group $\mathrm{Gal}(\overline{k}/k)$ acts naturally on the set
$$
\mathrm{Hom}_{k}(E,C).
$$

\begin{definition}
\label{definition-reflex-field}
Let $(E,\Phi)$ be a CM-type. Then the reflex field $E_\Phi\subseteq C$ of $(E,\Phi)$ is defined to be the fixed field $E_\Phi\subseteq \overline{k}$ of the stabilizer of $\Phi\colon \mathrm{Hom}_k(E,C)\to\Z$.
\end{definition}

If we write $E=\prod E_i$ as a product of fields and thus accordingly $\Phi=\coprod \Phi_i$ for functions $\Phi\colon \mathrm{Hom}_{k}(E_i,C)\to \Z$, then the reflex field $E_\Phi$ of $(E,\Phi)$ is the composite (in $\overline{k}$) of the reflex fields $E_{\Phi_i}$ of the CM-types $(E_i,\Phi_i)$.

Let $E$ be a commutative semisimple algebra over $k$ and set $T:=\mathrm{Res}_{E/k}\mathbb{G}_m$ to be the Weil restriction of the torus $\mathbb{G}_m$ over $E$.
Then the group
$$
X_\ast(T):=\mathrm{Hom}_{\overline{k}}(\mathbb{G}_{m,\overline{k}},T_{\overline{k}})
$$
of cocharacters of $T$ is isomorphic (as a Galois module) to the module
$$
\{\Phi\colon \mathrm{Hom}_k(E,C)\to \Z\}
$$
of types $\Phi$.
Indeed, given a type $\Phi\colon \mathrm{Hom}_k(E,C)\to \Z$ we get the associated cocharacter
$$
\mu_\Phi\colon \mathbb{G}_{m,\overline{k}}\to T_{\overline{k}}\cong\prod\limits_{\tau\in \mathrm{Hom}_k(E,C)}\mathbb{G}_{m,\overline{k}},\ t\mapsto (t^{\Phi(\tau)})_\tau
$$
of $T$ over $\overline{k}$.
By definition the reflex field $E_\Phi$ of the type $(E,\Phi)$ is the minimal subfield of $\overline{k}$ over which the cocharacter $\mu_\Phi$ is defined. In particular, we obtain a cocharacter
$$
\mu_\Phi\colon \mathbb{G}_{m,E_\Phi}\to T_{E_\Phi}.
$$
In the end, we obtain the reflex norm of $(E,\Phi)$ as the composition
$$
r_\Phi\colon \mathrm{Res}_{E_\Phi/k}(\mathbb{G}_m)\xrightarrow{\mathrm{Res}(\mu_\Phi)} \mathrm{Res}_{E_\Phi/k}(T_{E_\Phi})\xrightarrow{N} T
$$
where the second morphism denotes the natural norm map.

Let $L/k$ be a finite field extension contained in $\overline{k}$. Then we denote by
$$
L^\ast:=\mathrm{Res}_{L/k}(\mathbb{G}_m)
$$
the Weil restriction of the multiplicative group $\mathbb{G}_m$ over $L$ to $k$.
The character group of the torus $L^\ast$ is naturally isomorphic to the Galois module
$$
\Z[\mathrm{Hom}_{k}(L,\overline{k})].
$$
Concretely, let $\tau\colon L\to \overline{k}$ be an embedding (over $k$) and let $R/\overline{k}$ be a $\overline{k}$-algebra. Then the character $\chi_\tau\colon L^\ast_{\overline{k}}\to \mathbb{G}_{m,\overline{k}}$ is given on $R$-points by
$$
L^\ast_{\overline{k}}(R)=(R\otimes_k L)^\times\xrightarrow{\mathrm{Id}_R\otimes \tau}(R\otimes_k\overline{k})^\times\xrightarrow{\text{mult.}}R^\times
$$
where the right arrow denotes the multiplication $R\otimes_k\overline{k}\to R$ of the $\overline{k}$-algebra $R$.

Let $k\subseteq L_1\subseteq L_2\subseteq \overline{k}$ be a tower of field extensions.
Then there are natural norm maps
$$
N_{L_2/L_1}\colon L_2^\ast\to L_1^\ast
$$
which on $R$-valued points for a $k$-algebra $R$ are given by
$$
\begin{matrix}
L_2^\ast(R)=(R\otimes_k L_2)^\times&\to & (R\otimes_k L_1)^\times=L_1^\ast(R)\\
x& \mapsto & \mathrm{det}_{R\otimes_k L_1}(x|R\otimes_k L_2)  
\end{matrix}
$$
where the determinant is taken of the multiplication by $x$ on the finite free $R\otimes_k L_1$-module $R\otimes_k L_2$.
We define the pro-torus
$$
D_k:=\varprojlim\limits_{L\subseteq \overline{k}}L^\ast
$$
where the transition maps are given by the norms $N_{L_2/L_1}$ for $L_1\subseteq L_2$. In particular, the group of characters of $D_k$ is given by
$$
\varinjlim\limits_{L\subseteq \overline{k}} \Z[\mathrm{Hom}_k(L,\overline{k})]
$$
where for $L_1\subseteq L_2$ the transition morphism
$$
\Z[\mathrm{Hom}_k(L_1,\overline{k})]\to \Z[\mathrm{Hom}_k(L_2,\overline{k})]
$$
send $\tau\colon L_1\to \overline{k}$ to the sum
$$
\sum\limits_{\tau^\prime\colon L_2\to \overline{k} \atop \tau^\prime_{|L_1}=\tau}\tau^\prime.
$$
In particular, we see that the norm morphisms of tori
$$
N_{L_2/L_1}\colon L^\ast_2\to L^\ast_1
$$
are surjective (as fppf-sheaves).

\begin{lemma}
\label{lemma-character-group-of-pro-torus}
Let $G:=\mathrm{Gal}(\overline{k}/k)$.
The character group $X^\ast(D_k)$ of the pro-torus $D_k$ is canonically isomorphic to the coinduced module
$$
\mathrm{Coind}^{G}\Z:=\{f\colon G\to \Z\ |\ f \text{ has open stabilizer in } G \}
$$  
\end{lemma}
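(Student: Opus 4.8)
The plan is to compute the direct limit $\varinjlim_{L\subseteq \overline{k}} \Z[\mathrm{Hom}_k(L,\overline{k})]$ explicitly and identify it with $\mathrm{Coind}^G\Z$ as a $G$-module. First I would fix, for each finite extension $L/k$ inside $\overline{k}$, the standard $G$-equivariant identification $\mathrm{Hom}_k(L,\overline{k})\cong G/G_L$, where $G_L=\mathrm{Gal}(\overline{k}/L)$ is the (open) stabilizer; under this, $\Z[\mathrm{Hom}_k(L,\overline{k})]\cong \Z[G/G_L]$, the free $\Z$-module on the coset space, with $G$ acting by left translation. So the character group is $\varinjlim_L \Z[G/G_L]$ over all open normal (equivalently, all open) subgroups $G_L$.

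Next I would pin down the transition maps under this identification. The map $\Z[\mathrm{Hom}_k(L_1,\overline{k})]\to \Z[\mathrm{Hom}_k(L_2,\overline{k})]$ sends $\tau\colon L_1\to\overline{k}$ to $\sum_{\tau'|_{L_1}=\tau}\tau'$; translating via cosets, this is the map $\Z[G/G_{L_1}]\to \Z[G/G_{L_2}]$ sending a coset $gG_{L_1}$ to the sum $\sum_{hG_{L_2}\subseteq gG_{L_1}} hG_{L_2}$ of the cosets of $G_{L_2}$ it contains. This is exactly the classical transfer/corestriction-type map $\mathrm{Coind}^G_{G_{L_1}}\Z\to \mathrm{Coind}^G_{G_{L_2}}\Z$ induced by the projection $G/G_{L_2}\twoheadrightarrow G/G_{L_1}$; concretely, viewing $\Z[G/G_L]\cong\{f\colon G\to\Z\mid f \text{ is } G_L\text{-invariant under right translation, and } f \text{ has finite support mod } G_L\}$ — but since $G/G_L$ is finite, ``finite support'' is automatic — the transition map is just the inclusion of right-$G_{L_1}$-invariant functions into right-$G_{L_2}$-invariant functions. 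Hence the colimit is the union $\bigcup_L \{f\colon G\to\Z\mid f \text{ is right-invariant under some open } G_L\}$, which is precisely $\{f\colon G\to\Z\mid f \text{ has open stabilizer in } G\}=\mathrm{Coind}^G\Z$, with the $G$-action by left translation matching the $G$-action on the character group. That gives the canonical isomorphism.

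One point requiring a little care is the direction of the identification $\Z[G/G_L]\cong(\text{functions }G/G_L\to\Z)$: since $G/G_L$ is a finite set, the free module on it and the module of $\Z$-valued functions on it are canonically isomorphic via the ``indicator function'' map $gG_L\mapsto \mathbf{1}_{gG_L}$, and one must check that under this isomorphism the fibrewise-sum transition map becomes the honest inclusion of functions (pullback along $G/G_{L_2}\to G/G_{L_1}$ composed with the indicator identification indeed sends $\mathbf{1}_{gG_{L_1}}$ to $\mathbf{1}_{\text{preimage}}=\sum\mathbf{1}_{hG_{L_2}}$), which is the content of the previous paragraph. The only genuinely nontrivial input is the compatibility of the $G$-action — left translation on cosets versus the Galois action on embeddings $\tau\mapsto g\circ\tau$ — but this is immediate from the definition of the identification $\tau\leftrightarrow gG_L$. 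I do not expect any serious obstacle; the statement is essentially a repackaging of the definition of $D_k$ together with the standard description of coinduced modules as ``smooth'' (locally constant) functions on the profinite group $G$.
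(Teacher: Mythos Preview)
Your proposal is correct and follows essentially the same approach as the paper: identify $\Z[\Hom_k(L,\overline{k})]\cong\Z[G/G_L]\cong\mathrm{Coind}^G_{G_L}\Z$ via indicator functions (the paper uses the inverse map $f\mapsto\sum_{g\in G/H}f(g)g$), check that the transition maps become the inclusions of function spaces, and pass to the colimit. Your write-up is in fact more careful than the paper's, which simply asserts that ``one checks that the transition maps agree.''
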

\begin{proof}
This is a general statemenet about discrete $G$-modules for $G$ a profinite group. Namely, for $H\subseteq G$ open,
$$
\mathrm{Coind}^{G}_H\Z:=\{f\colon G\to \Z\ |\ f \text{ is constant on } H-\text{cosets} \}
$$
is isomorphic to the free abelian group $\Z[G/H]$ via
$$
f\mapsto \sum\limits_{g\in G/H}f(g)g
$$
and
$$
\mathrm{Coind}^G\Z\cong \varinjlim\limits_{H\subseteq G \text{ open}}\mathrm{Coind}^G_H\Z
$$
by the continuity requirement in the definition of $\mathrm{Coind}^G\Z$.
Moreover, one checks that the transition maps agree under the isomorphism $\mathrm{Coind}^G_H\Z\cong \Z[G/H]$.
\end{proof}

Recall that we have fixed a fiber functor 
$$
\omega_0\colon \mathcal{T}\to \mathrm{Vec}_k
$$
such that its base extension 
$$
\omega:=\omega_0\otimes_k C\colon \mathcal{T}\to \mathrm{Vec}_C
$$
underlies a filtered fiber functor.

\begin{lemma}
\label{lemma-serre-group-via-reflex-fields}
There is a natural tensor functor
$$
r\colon \mathcal{T}_{\mathrm{CM}}\to \mathrm{Rep}_{k}(D_k),
$$
which can be explicitly described as follows:
Let $X\in \mathcal{T}_{\mathrm{CM}}$ be a CM-object with CM-type $(E,\Phi)$ and let $E_\Phi\subseteq \overline{k}$ be the reflex field of $(E,\Phi)$. Then the reflex norm
$$
r_\Phi\colon E_\Phi^\ast\to E^\ast
$$ 
defines an action of the torus $E_\Phi^\ast$ on $\omega_0(X)$, which defines the action of $D_k$ on $\omega_0(X)$ as $D_k\twoheadrightarrow E_{\Phi}^\ast$ surjects onto $E_\Phi^\ast$.
\end{lemma}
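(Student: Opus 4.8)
The plan is to define $r$ by the explicit reflex-norm recipe of the statement and then to check that it is a well-defined, exact tensor functor; the one substantial point is that the recipe does not depend on the auxiliary CM-structure. First, a lemma on modules: if $\iota\colon E\hookrightarrow\mathrm{End}_\mathcal{T}(X)$ is a CM-structure, i.e.\ $E$ is commutative semisimple with $\dim_kE=\mathrm{rk}(X)$, then $\omega_0(X)$ is free of rank $1$ over $E$. Indeed, writing $E=\prod_jE_j$ with $E_j$ fields and $1=\sum_je_j$, the idempotents $\iota(e_j)$ decompose $X=\bigoplus_jX_j$; each $X_j\neq 0$ since $\iota$ is injective, so $\dim_k\omega_0(X_j)=\mathrm{rk}(X_j)\geq[E_j:k]$, and summing over $j$ forces equality. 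In particular $\omega(X)=\omega_0(X)\otimes_kC$ is free of rank $1$ over $E\otimes_kC$, so the type $\Phi$ of $(X,\iota)$ is well-defined by \Cref{definition:type-of-a-cm-object}; moreover $E$ equals its own centraliser in $\mathrm{End}_k(\omega_0(X))$ and hence is a maximal commutative subalgebra of $\mathrm{End}_\mathcal{T}(X)$. Since $E\subseteq\mathrm{End}_\mathcal{T}(X)$ acts on $\omega_0(X)$, the torus $E^\ast$ acts on $\omega_0(X)$ through $E^\ast\hookrightarrow\mathrm{GL}(\omega_0(X))$, and composing with $r_\Phi\colon E_\Phi^\ast\to E^\ast$ and the surjection $D_k\twoheadrightarrow E_\Phi^\ast$ (available since $E_\Phi\subseteq\overline{k}$ is a finite field extension of $k$ and $D_k\twoheadrightarrow L^\ast$ for every such $L$) one obtains a representation $r(X)$ of $D_k$ on $\omega_0(X)$, factoring through the torus $E_\Phi^\ast$ and so lying in $\Rep_k(D_k)$; the unit object, of type $0$, goes to the trivial representation.

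Now the crux: $r(X)$ depends only on $X$. I would prove this by exhibiting the resulting $D_k$-action through the centre $Z_X:=Z(\mathrm{End}_\mathcal{T}(X))$. As $X$ admits CM and $\mathcal{T}$ is neutral, the Tannakian group of $\langle X\rangle^\otimes$ is an extension of a finite group by a torus, hence linearly reductive in characteristic $0$; thus $\mathrm{End}_\mathcal{T}(X)$ is semisimple, $Z_X$ is commutative \'etale, $\omega_0(X)$ is a faithful $Z_X$-module, and the filtration on $\omega(X)$ is by $Z_X\otimes_kC$-submodules. Let $\mu_X\in X_\ast(Z_X^\ast)$ be the unique cocharacter inducing that filtration after $\otimes_kC$; it is manifestly intrinsic to $X$, as are the reflex field $(Z_X)_{\mu_X}$, the reflex norm $r_{\mu_X}\colon(Z_X)_{\mu_X}^\ast\to Z_X^\ast$, and the composite $D_k\twoheadrightarrow(Z_X)_{\mu_X}^\ast\xrightarrow{r_{\mu_X}}Z_X^\ast\hookrightarrow\mathrm{GL}(\omega_0(X))$. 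I claim $r(X)$ is $\omega_0(X)$ with this $D_k$-action. Indeed, for any CM-structure $\iota$ the algebra $E$ contains $Z_X$ (being maximal commutative in $\mathrm{End}_\mathcal{T}(X)$, by the first paragraph); because the filtration on $\omega(X)$ is $Z_X$-stable, the type $\Phi$ factors through the restriction $\mathrm{Hom}_k(E,C)\to\mathrm{Hom}_k(Z_X,C)$, so $\mu_\Phi$ is the image of $\mu_X$ under $(Z_X^\ast)_{\overline{k}}\hookrightarrow(E^\ast)_{\overline{k}}$; hence $r_\Phi=(Z_X^\ast\hookrightarrow E^\ast)\circ r_{\mu_X}$, the reflex field of $(E,\Phi)$ is $(Z_X)_{\mu_X}$, and $r_\Phi$ followed by $E^\ast\hookrightarrow\mathrm{GL}(\omega_0(X))$ equals the displayed composite, independent of $\iota$.

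Functoriality and the tensor structure are then best seen by reorganising: $\mathcal{T}_{\mathrm{CM}}$ is neutral via $\omega_0$, say $\mathcal{T}_{\mathrm{CM}}\cong\Rep_k(G)$ with $G^\circ$ a pro-torus; the filtration on $\omega_0\otimes_kC$ over $\mathcal{T}_{\mathrm{CM}}$ determines a cocharacter of $G_C$ landing in $G^\circ$, and, using $\Hom(D_k,T)\cong X_\ast(T)$ for tori $T$, a homomorphism $\rho\colon D_k\to G^\circ\hookrightarrow G$. Pullback along $\rho$ is tautologically an exact tensor functor $\Rep_k(G)\to\Rep_k(D_k)$, and on any $X$ the induced $D_k$-action on $\omega_0(X)$ commutes with $\mathrm{End}_\mathcal{T}(X)$ and induces the given filtration on $\omega(X)$; by uniqueness of such a cocharacter (which holds because $\mathrm{End}_\mathcal{T}(X)$ is semisimple and the graded pieces of the filtration, being $Z_X$-stable with pairwise distinct weights, share no simple $\mathrm{End}_\mathcal{T}(X)$-constituent) it agrees with the construction of the first two paragraphs, and on morphisms $\omega_0(f)$ is then $D_k$-equivariant. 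The cocharacter underlying $\rho$ is determined only up to the action of the component group of $G$, so $r$ is canonical up to natural isomorphism — and strictly canonical when $\mathcal{T}$ is connected, e.g.\ for $\mathcal{T}=\mathrm{BKF}^\circ_{\mathrm{rig}}$, where moreover every CM-object admits a CM-structure (\Cref{lemma:characterization-of-cm-objects}) and the recipe applies verbatim. Finally $r$ is faithful, being a refinement of $\omega_0$; its compatibility with the tensor structure reflects the additivity of types and the multiplicativity of reflex norms of CM-types.

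The step I expect to be the main obstacle is the well-definedness in the second paragraph: a priori the recipe depends on the auxiliary algebra $E$, and one must recognise that, after the reflex norm, only the centre $Z_X$ and the filtration on $\omega(X)$ survive. This uses semisimplicity of $\langle X\rangle^\otimes$ (to pin down $Z_X$ and the $Z_X$-stability of the filtration, and to get the uniqueness of the cocharacter in the third paragraph). Everything else — freeness of rank one, the surjectivity $D_k\twoheadrightarrow E_\Phi^\ast$, functoriality via $G$, exactness, faithfulness, and the tensor constraint — is then routine.
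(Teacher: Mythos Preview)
Your argument is correct, and the third paragraph is essentially the paper's approach: define $r$ via the cocharacter $\mu\colon\mathbb{G}_{m,\overline{k}}\to G_{\overline{k}}$ induced by the graded fiber functor, pass to the (pro-)torus quotient, and use the universal property of coinduction to obtain a map $D_k\to G$ (the paper writes this dually as $\widetilde{\mu^\ast}\colon X^\ast(T)\to\mathrm{Coind}^G\Z$). So at the level of the abstract construction you and the paper agree.

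Where you genuinely diverge is in the identification with the reflex-norm recipe. The paper defines $r$ abstractly \emph{first}, so well-definedness, functoriality and the tensor structure are automatic; it then only needs to match with the reflex norm on \emph{simple} $X$, and there the CM-algebra is canonical ($E=\mathrm{End}_\mathcal{T}(X)$) and one checks a single commutative diagram of character groups. You instead start from the recipe, so you must prove independence of the CM-structure $E$ directly; your route through the centre $Z_X$ is correct (the key point being that for a CM-object each simple constituent $X_j$ has $\mathrm{End}(X_j)$ a field of degree $\mathrm{rk}(X_j)$, so every $Z_X$-isotypic piece $V_\sigma$ of $\omega(X)$ sits in a single filtration degree, whence $\Phi$ factors through $\mathrm{Hom}_k(Z_X,C)$ and $\mu_\Phi$ lands in $Z_X^\ast$). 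This is more work than the paper's reduction to simples, but it is more explicit about \emph{why} the choice of $E$ is irrelevant: only the central cocharacter $\mu_X$ of $Z_X^\ast$ survives after composing with the reflex norm. Your remark about the component-group ambiguity in $\rho$ is a fair caveat that the paper suppresses by passing to the maximal torus quotient $T$ of $G$ from the outset.
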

\begin{proof}
Let $G:=\mathrm{Aut}^\otimes(\omega_0)$ be the automorphism group of the fiber functor $\omega_0$. Then the graded fiber functor
$$
\mathrm{gr}(\omega)\colon \mathcal{T}\to \mathrm{GrVec}_C,\ X\mapsto \bigoplus\limits_{n\in\Z} \mathrm{gr}^n(\omega(X))
$$ 
defines a cocharacter
$$
\mu\colon \mathbb{G}_{m,\overline{k}}\to G_{\overline{k}}
$$
over $\overline{k}$ (this is spelled out more generally in \cite[Construction 3.4.]{ziegler_graded_and_filtered_fiber_functors}).
Let $T$ be the maximal torus quotient of $G$, i.e., 
$$
T=\mathrm{Aut}^\otimes({\omega_{0}}_{|\mathcal{T}_{\mathrm{CM}}})
$$
is the Tannakian fundamental group of the Tannakian category $\mathcal{T}_{\mathrm{CM}}\subseteq \mathcal{T}$ of CM-objects in $\mathcal{T}$. Composition with the canonical morphism $G\to T$ yields the cocharacter
$$
\mu\colon \mathbb{G}_{m,\overline{k}}\to T_{\overline{k}}
$$
of $T$ (which we denote by the same letter).
On character groups this corresponds to a morphism
$$
\mu^\ast\colon X^\ast(T)\to \Z
$$
of abelian groups. By the universal property of coinduction this defines a Galois-equivariant morphism
$$
\widetilde{\mu^\ast}\colon X^\ast(T)\to \mathrm{Coind}^{\mathrm{Gal}(\overline{k}/k)}\Z,\ \chi\mapsto (g\mapsto \chi\circ \alpha_{g^{-1}}\circ \mu)
$$
where for $g\in \mathrm{Gal}(\overline{k}/k)$ the morphism $\alpha_g\colon T_{\overline{k}}\to T_{\overline{k}}$ denotes the action of $g$ on $T_{\overline{k}}$.
In particular, we obtain (using \Cref{lemma-character-group-of-pro-torus}) the canonical tensor functor 
$$
r\colon \mathcal{T}_{\mathrm{CM}}\colon \mathrm{Rep}_k(T)\to \mathrm{Rep}_k(D_k)
$$
as the morphism induced by the morphism (over $k$)
$$
\widetilde{\mu^\ast}\colon D_k\to T
$$
associated with $\widetilde{\mu^\ast}\colon X^\ast(T)\to \mathrm{Coind}^{\mathrm{Gal}(\overline{k}/k)}\Z\cong X^\ast(D_k)$. We now identify the functor $r$ with the construction given in the lemma. Let $X\in \mathcal{T}$ be a CM-object with CM-type $(E,\Phi)$. Without loss of generality let $X$ be simple. By naturality of the construction in $\mathcal{T}$ we may further assume that $\mathcal{T}=\langle X\rangle^\otimes$. Then
$$
E=\mathrm{End}_{\mathcal{T}}(X)
$$
(cf.\ the proof of \Cref{lemma:characterization-of-cm-objects})
and we get
$$
T=\Aut^\otimes(\omega_0)\cong E^\ast
$$
with $T$ acting on $X$ via the action of $E^\ast$.
Indeed, $T$ equals the centralizer $C_{\mathrm{End}_{k}(\omega_0(X))}(E^\ast)$ of $E^\ast$ in the endomorphisms of $\omega_0(X)$, which itself is $E^\ast$. The containment 
$$
T\subseteq C_{\mathrm{End}_{k}(\omega_0(X))}(E^\ast)
$$ 
is clear and equality follows after base change to $\overline{k}$ where $X$ splits into $1$-dimensional representations with distinct characters. Let $E_\Phi$ be the reflex field of $(E,\Phi)$ (which is given by the stabilizer of $\mu$), and set $H:=\mathrm{Gal}(\overline{k}/E_\Phi)\subseteq G:=\mathrm{Gal}(\overline{k}/k)$.
In particular, 
$$
\mu\colon \mathbb{G}_{m,\overline{k}}\to T_{\overline{k}}=E^\ast_{\overline{k}}
$$ 
is already defined over $E_\Phi$.
We have to show that the diagram
$$
\xymatrix{
D_k\ar@{->>}[r]\ar[rrd]^{\widetilde{\mu^\ast}} & E^\ast_\Phi \ar[r]^-{\mathrm{Res}_{E_\Phi/k}(\mu)} & \mathrm{Res}_{E_\Phi/k}(T_{E_\Phi}) \ar[d]^{\text{Norm}} \\
& & T=E^\ast
}
$$
commutes. On character groups it gives rise to the diagram
$$
\xymatrix{
\mathrm{Coind}^G\Z & \mathrm{Coind}^G_H(\Z)\ar[l] & \mathrm{Coind}^G_H(X^\ast(T))\ar[l]_{\mu^\ast} \\
& & X^\ast(T)\ar[llu]^{\widetilde{\mu^\ast}}\ar[u]
}
$$
where the arrows without a label are the canonical ones. Here we note that $\mu^\ast\colon X^\ast(T)\to \Z$ defines an $H$-equivariant map by the definition of the reflex field (note that the $G$-module of maps $\Psi\colon \mathrm{Hom}_k(E,C)\to \Z$ is canonically isomorphic to the cocharacter group $X_\ast(T)$ of $T$ because $T=E^\ast$). Using the universal property of coinduction it suffices to check that the diagram commutes after composition with the canonical map
$$
\mathrm{Coind}^G(\Z)\to \Z,\ f\mapsto f(1).
$$ 
But then the commutativity follows directly from the definitions.
\end{proof}

In the case of rational Hodge structures the above functor yields the classical (connected) Serre group (cf.\ \cite{fargues_abelian_motives}) as a quotient of the pro-torus $D_\Q$.
In the case of Breuil-Kisin-Fargues modules we will show that the functor $r$ in \Cref{lemma-serre-group-via-reflex-fields} is an equivalence using the following criterion.
We define the full Tannakian subcategory
$$
\mathcal{T}_0\subseteq \mathcal{T}
$$
consisting of all objects $X\in \mathcal{T}$ such that
$$
\mathrm{gr}^i(\omega(X))=0
$$
for $i\neq 0$, where
$$
\omega\colon \mathcal{T}\to \mathrm{FilVec}_C
$$
is our fixed filtered fiber functor.
Moreover, for a finite field extension $E/k$ and an embedding $\tau\colon E\to C$ we denote by 
$$
\Phi_\tau\colon \mathrm{Hom}_k(E,C)\to \Z
$$ 
the map sending $\tau$ to $1$ and all $\tau^\prime\neq \tau$ to $0$. Clearly, the reflex field of $(E,\Phi_\tau)$ is $\tau(E)\subseteq \overline{k}\subseteq C$. 

\begin{corollary}
\label{corollary:reflex-norm-defines-equivalence}
Assume that the canonical functor $\mathrm{Vec}_k\to \mathcal{T}_0$ is an equivalence. Then the functor
$$
r\colon \mathcal{T}_{\mathrm{CM}}\to \mathrm{Rep}_kD_k
$$  
from \Cref{lemma-serre-group-via-reflex-fields} is fully faithful. Assume moreover that for every finite field extension $E/k$ and every embedding $\tau\colon E\to C$ there exists an object $X\in \mathcal{T}$ with CM by $(E,\Phi_\tau)$. Then the functor $r$ is essentially surjective. If this is the case, then furthermore $\mathcal{T}_{\mathrm{CM}}$ is generated by objects $X\in \mathcal{T}$ with CM by $(E,\Phi_\tau)$ for $E/k$ a finite field extension and $\tau\colon E\to C$ an embedding.
\end{corollary}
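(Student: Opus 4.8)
The plan is to rephrase the statement entirely in terms of the homomorphism of pro-tori
$$
f:=\widetilde{\mu^\ast}\colon D_k\longrightarrow T,\qquad T:=\mathrm{Aut}^\otimes(\omega_0|_{\mathcal{T}_{\mathrm{CM}}}),
$$
built in the proof of \Cref{lemma-serre-group-via-reflex-fields}: under the identification $\mathcal{T}_{\mathrm{CM}}\cong\Rep_k(T)$, the functor $r$ is restriction of representations along $f$. Since restriction along a homomorphism of affine group schemes is fully faithful whenever that homomorphism is faithfully flat, and since a homomorphism of groups of multiplicative type (in characteristic $0$) is faithfully flat exactly when the induced map on character groups is injective, the first assertion amounts to showing that
$$
f^\ast\colon X^\ast(T)\longrightarrow X^\ast(D_k)=\mathrm{Coind}^G\Z
$$
is injective; granting this, $f$ is an isomorphism, hence $r$ an equivalence, exactly when $f^\ast$ is in addition surjective. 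Recall from the construction that $f^\ast(\chi)$ is the function $g\mapsto\langle\chi,\alpha_{g^{-1}}\mu\rangle$, where $\mu$ is the Hodge cocharacter attached to $\mathrm{gr}(\omega)$, viewed as a cocharacter of $G_{\overline{k}}$ with $G:=\mathrm{Aut}^\otimes(\omega_0)$ or, via the faithfully flat quotient $q\colon G\twoheadrightarrow T$, of $T_{\overline{k}}$.

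For the first assertion I would unravel the hypothesis that $\mathrm{Vec}_k\to\mathcal{T}_0$ is an equivalence. An object $X$ lies in $\mathcal{T}_0$ exactly when $\mu$ acts trivially on $\omega_0(X)_{\overline{k}}$, so $\mathcal{T}_0\cong\Rep_k(G/N)$ with $N\trianglelefteq G$ the smallest normal $k$-subgroup satisfying $N_{\overline{k}}\supseteq\Im(\mu)$, i.e.\ the normal closure of the subgroups $\Im(\alpha_g\mu)$, $g\in G$. The hypothesis says precisely $N=G$. Applying $q$, which commutes with base change to $\overline{k}$ and with forming normal closures, we obtain that $T_{\overline{k}}$ is the sub-pro-torus generated by the $\Im(\alpha_g\mu)$; equivalently $\bigcap_{g\in G}\Ker(\langle-,\alpha_g\mu\rangle\colon X^\ast(T)\to\Z)=0$. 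That is exactly the injectivity of $f^\ast$, so $r$ is fully faithful.

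For the second assertion and the final statement I would use the additional hypothesis, together with the observation that $\mathrm{Coind}^G\Z=\varinjlim_{H_0}\mathrm{Coind}^G_{H_0}\Z$ is generated as a $G$-module by the characteristic functions $\mathbf{1}_H$ of the open subgroups $H\subseteq G$ (for $H_0$ open normal, the isomorphism $\mathrm{Coind}^G_{H_0}\Z\cong\Z[G/H_0]$ of \Cref{lemma-character-group-of-pro-torus} exhibits $\mathbf{1}_{H_0}$ as a $\Z[G]$-generator). Given an open $H\subseteq G$, set $E:=\overline{k}^H\subseteq\overline{k}\subseteq C$ and let $\tau\colon E\hookrightarrow C$ be the inclusion, so that $\mathrm{Stab}_G(\tau)=\Gal(\overline{k}/E)=H$. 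By hypothesis there is $X\in\mathcal{T}$ with CM by $(E,\Phi_\tau)$; it lies in $\mathcal{T}_{\mathrm{CM}}$, and its endomorphisms furnish homomorphisms $T\twoheadrightarrow T_X\hookrightarrow E^\ast$ through which $\omega_0(X)\cong E$ is the standard representation. The coordinate character $\chi_\tau\in X^\ast(E^\ast)$ pulls back to some $\psi_\tau\in X^\ast(T)$; as the CM-type of $X$ is $\Phi_\tau$, the image of $\mu$ in $X_\ast(E^\ast_{\overline{k}})$ is $\mu_{\Phi_\tau}$, whence $\langle\chi_\sigma,\mu_{\Phi_\tau}\rangle=\delta_{\sigma\tau}$, and, carrying the Galois action through the compatibility of the pairing with $T\to E^\ast$, one computes $f^\ast(\psi_\tau)(g)=\langle\chi_{g\tau},\mu_{\Phi_\tau}\rangle=\delta_{g\tau,\tau}$, i.e.\ $f^\ast(\psi_\tau)=\mathbf{1}_{\mathrm{Stab}_G(\tau)}=\mathbf{1}_H$. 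Thus the image of $f^\ast$ is a $G$-submodule of $\mathrm{Coind}^G\Z$ containing every $\mathbf{1}_H$, hence all of $\mathrm{Coind}^G\Z$; so $f^\ast$ is an isomorphism, $f$ is an isomorphism of pro-tori, and $r$ is an equivalence. The same computation (for $E/k$ finite Galois and $\tau$ the inclusion, the $\overline{k}$-weights of $r(X)$ run over all cosets of $\Gal(\overline{k}/E)$) shows that the objects $r(X)$, as $(E,\tau)$ vary, have $\overline{k}$-weights generating $X^\ast(D_k)$ and so tensor-generate $\Rep_k(D_k)$; transporting back along the equivalence $r$, the category $\mathcal{T}_{\mathrm{CM}}$ is generated by objects with CM by $(E,\Phi_\tau)$.

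The step needing the most care is the identification $f^\ast(\psi_\tau)=\mathbf{1}_H$: one must keep the three Galois actions consistent (on $X^\ast(T)$, on cocharacters of $T_{\overline{k}}$, and on $\mathrm{Coind}^G\Z$) and use the adjunction between the character map and the cocharacter map of $q$ and of $T\to E^\ast$; and in the first assertion the ``generated by subtori'' reasoning must be phrased via intersections of kernels of characters rather than $\Q$-spans, since $T$ is only a pro-torus. The remaining ingredients — the equivalence of full faithfulness with faithful flatness, the translation of the $\mathcal{T}_0$-hypothesis, and the $G$-module generation of $\mathrm{Coind}^G\Z$ by the $\mathbf{1}_H$ — are formal.
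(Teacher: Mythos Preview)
Your argument is correct and follows the same overall strategy as the paper: reduce everything to the character map $X^\ast(T)\to\mathrm{Coind}^{\Gal(\overline{k}/k)}\Z$, deduce injectivity from the $\mathcal{T}_0$ hypothesis, and deduce surjectivity by producing explicit preimages of a generating set using the CM objects of type $\Phi_\tau$. The only notable differences are cosmetic: the paper first replaces $\mathcal{T}$ by $\mathcal{T}_{\mathrm{CM}}$, so the Tannakian group is already the pro-torus $T$ and the injectivity step becomes the one-line observation that $\mathrm{Ker}(\widetilde{\mu^\ast})$ is Galois-stable and sits inside $\mathrm{Ker}(\mu^\ast)$, which by hypothesis contains no nontrivial Galois orbit (your normal-closure argument through the full $G:=\mathrm{Aut}^\otimes(\omega_0)$ is equivalent but heavier, and your double use of the letter $G$ for both this group and the Galois group should be disambiguated); for surjectivity the paper routes through the reflex-norm description of \Cref{lemma-serre-group-via-reflex-fields} rather than your direct computation $f^\ast(\psi_\tau)=\mathbf{1}_H$, but the content is identical.
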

\begin{proof}
We can assume $\mathcal{T}_{\mathrm{CM}}=\mathcal{T}$.
Define
$$
T:=\mathrm{Aut}^\otimes(\omega_0)
$$
as the torus representing the tensor automorphisms of the fixed fiber functor
$$
\omega_0\colon \mathcal{T}\to \mathrm{Vec}_k.
$$
Then $\mathcal{T}\cong \mathrm{Rep}_k(T)$ and the functor $r$ in \Cref{lemma-serre-group-via-reflex-fields} is given by the morphism
$$
\widetilde{\mu^\ast}\colon D_k\to T
$$
(in the notation of \Cref{lemma-serre-group-via-reflex-fields}) corresponding to the morphism
$$
\widetilde{\mu^\ast}\colon X^\ast(T)\to X^\ast(D_k)\cong \mathrm{Coind}^G(\Z)
$$
on character groups. Let
$$
\mu\colon \mathbb{G}_{m,\overline{k}}\to T_{\overline{k}}
$$
be the cocharacter induced by the (graded fiber functor associated with the) filtered fiber functor $\omega$ and let
$$
\mu^\ast\colon X^\ast(T)\to \Z
$$
be the induced morphism on characters.
The assumption that the subcategory $\mathcal{T}_0$ is equivalent to the category $\mathrm{Vec}_k$ of $k$-vector spaces implies that the kernel $\mathrm{Ker}(\mu^\ast)$ contains no non-trivial orbit under the Galois group 
$$
G:=\mathrm{Gal}(\overline{k}/k).
$$ 
Indeed, if the orbit under some non-trivial $\chi\in X^\ast(T)$ under $G$ lies in $\mathrm{ker}(\mu^\ast)$, then the sum
$$
\sum\limits_{g\in G/\mathrm{Stab}_{G}(\chi)}g\chi
$$
defines a representation $X\in \mathrm{Rep}_k(T)\cong \mathcal{T}$ of $T$ over $k$ such that $\mathrm{gr}^i(\omega(X))=0$ for $i\neq 0$. In other words, $X\in \mathcal{T}_0$. But this is a contradiction as $T$ acts by assumption trivially on any object in $\mathcal{T}_0$. Thus the kernel $\mathrm{Ker}(\mu^\ast)$ does not contain a non-trivial $G$-orbit.
The kernel $\mathrm{Ker}(\mu^\ast)$ contains the kernel $\mathrm{Ker}(\widetilde{\mu^\ast})$. Moreover, $\mathrm{Ker}(\widetilde{\mu^\ast})$ is stable under $G$ and hence trivial as it does not contain a non-trivial $G$-orbit, as well. Hence, the morphism
$$
X^\ast(T)\to \mathrm{Coind}^G(\Z)
$$
is injective and therefore the morphism $D_k\to T$ surjective (as a morphism of fppf-sheaves). This implies that the functor $r\colon \mathcal{T}\to \mathrm{Rep}_k(D_k)$ is fully faithful. The character group 
$$
\mathrm{Coind}^G(\Z)\cong \varinjlim\limits_{L/k}\Z[\mathrm{Hom}_k(L,C)]
$$ 
of $D_k$ is generated (as a Galois module) by embeddings $\tau\colon E\to C$ for finite field extensions $E/k$ (more precisely, by the canonical embedding $L\subseteq C$ for $L:=\tau(E)$). Let $X\in \mathcal{T}_{\mathrm{CM}}$ be an object with CM by $(E,\Phi_\tau)$. Then the reflex field $E_{\Phi_\tau}$ of the CM-type $(E,\Phi_\tau)$ is given by
$$
E_{\Phi_\tau}:=\tau(E)\subseteq \overline{k}\subseteq C.
$$
Moreover, the reflex norm
$$
r_{\Phi_\tau}\colon E_{\Phi_\tau}^\ast\xrightarrow{\tau^{-1}}E^\ast
$$
is induced by the inverse of $\tau\colon E\cong E_{\Phi_\tau}$. Indeed, let $H=\mathrm{Gal}(\overline{k}/E_{\Phi_\tau})\subseteq G$ be the stabilizer of $\Phi_\tau$.
Then the reflex norm $E_{\Phi_\tau}^\ast\to E^\ast$ corresponds to the $G$-invariant morphism
$$
\Z[\mathrm{Hom}_k(E,C)]\to \mathrm{Coind}^G_H(\Z[\mathrm{Hom}_k(E,C)])\xrightarrow{\Phi_\tau} \mathrm{Coind}^G_H\Z\cong \Z[\mathrm{Hom}_k(E_{\Phi_\tau},C)]
$$
on character lattices and $\tau\in \mathrm{Hom}_k(E,C)$ is sent to the canonical inclusion 
$$
E_{\Phi_\tau}\subseteq C.
$$
In particular, we can conclude that $X$ must be simple. Namely, its image 
$$
r(X)\in \mathrm{Rep}_k(D_k)
$$
is given by the $k$-vector space $E$ with $D_k$ through its quotient $E_{\Phi_\tau}^\ast$ acting via $\tau^{-1}\colon E_{\Phi_\tau}^\ast\to E^\ast$.
In particular, $r(X)$ and hence $X$ are simple.
As in the proof of \Cref{lemma-serre-group-via-reflex-fields} we see that $T\twoheadrightarrow E^\ast$ surjects onto the torus $E^\ast$ and thus the functor
$$
\mathrm{Rep}_k(E^\ast)\to \mathcal{T}
$$
is fully faithful. 
As the above morphism
$$
\Z[\mathrm{Hom}_k(E,C)]\cong \Z[\mathrm{Hom}_k(E_{\Phi_\tau},C)]
$$  
is an isomorphism we see that 
$$
\Z[\mathrm{Hom}_k(E_{\Phi_\tau},C)]\subseteq \mathrm{Coind}^G\Z
$$
lies in the image of $X^\ast(T)$. As $E/k$ was arbitrary, we can conclude that
$$
X^\ast(T)\cong \mathrm{Coind}^G\Z
$$
which implies
$$
\mathcal{T}\cong \mathrm{Rep}_k(D_k).
$$
For the last statement it suffices to note, as was mentioned above, that the character group
$$
\mathrm{Coind}^G(\Z)\cong\varinjlim\limits_{\overline{k}/L/k} \Z[\mathrm{Hom}_k(L,C)]
$$
is generated by embeddings $\tau\colon L\to C$ for $L/k$ finite.
\end{proof}

\section{Breuil-Kisin-Fargues modules up to isogeny}
\label{section: breuil_kisin_fargues_modules_up_to_isogeny}

We try to follow closely the notations in \cite{scholze_weinstein_lecture_notes_on_p_adic_geometry} and \cite{bhatt_morrow_scholze_integral_p_adic_hodge_theory}.
Fix a prime $p$.
Let $C/{\Q_p}$ be a complete, non-archimedean, algebraically closed extension of $\Q_p$, e.g., $C=\mathbb{C}_p=\widehat{\overline{\Q_p}}$ the completion of an algebraic closure of $\Q_p$.
Let $\mathcal{O}_C\subseteq C$ be the ring of integers in $C$ and let
$$
C^\flat:=\varprojlim\limits_{x\mapsto x^p}C
$$  
be the tilt of $C$ with its ring of integers
$$
\mathcal{O}_{C^\flat}=\mathcal{O}_C^\flat=\varprojlim\limits_{x\mapsto x^p}\mathcal{O}_C\cong \varprojlim\limits_{x\mapsto x^p}\mathcal{O}_C/p.
$$
We denote by
$$
(-)^\#\colon C^\flat\to C,\ (x_0,x_1,\ldots)\mapsto x_0
$$
resp.\
$$
(-)^\#\colon \mathcal{O}_{C^\flat}\to \mathcal{O}_C,\ (x_0,x_1,\ldots)\mapsto x_0
$$
the canonical projections.
Let 
$$
\epsilon:=(1,\zeta_p,\zeta_{p^2},\ldots)\in \mathcal{O}_{C^\flat}
$$ 
be a fixed system of $p$-power roots of unity in $C^\flat$ (the results of this paper will be independent of the choice of $\epsilon$).
Define
$$
A_\inf:=W(\mathcal{O}_{C^\flat})
$$
as the ring of Witt vectors for the (perfect) ring $\mathcal{O}_{C^\flat}$.
We denote by
$$
[-]\colon \mathcal{O}_{C^\flat}\to W(\mathcal{O}_{C^\flat})
$$
the canonical Teichm\"uller lift.
When dealing with $A_\inf$ as a topological ring we always equip it with the $(p,[\varpi])$-adic topology for a pseudo-uniformizer $\varpi\in C^\flat$ (this topology is independent of the choice of $\varpi$). 
Let
$$
\varphi\colon A_\inf\to A_\inf,\ \sum\limits_{i=0}^\infty [x_i]p^i\mapsto \sum\limits_{i=0}^\infty [x_i^p]p^i 
$$
be the Frobenius on $A_\inf$.
We define the elements
$$
\begin{matrix}
\mu:=[\epsilon]-1\\
\xi:=\frac{[\epsilon]-1}{[\epsilon^{1/p}]-1}=\sum\limits_{i=0}^{p-1}[\epsilon^{i/{p-1}}]\\
\tilde{\xi}:=\varphi(\xi)=\frac{[\epsilon^p]-1}{[\epsilon]-1}
\end{matrix}
$$
of $A_\inf$.
Let
$$
\theta\colon A_\inf\to \mathcal{O}_C,\ \sum\limits_{i=0}^\infty [x_i]p^i\mapsto \sum\limits_{i=0}^\infty (x_i)^\#p^i
$$
be Fontaine's map $\theta$ and set
$$
\tilde{\theta}:=\theta\circ \varphi^{-1}.
$$
Then $\xi\in A_\inf$ is a generator of $\ker(\theta)$
and $\tilde{\xi}\in A_\inf$ a generator of $\ker(\tilde{\theta})$.
We define as usual
$$
A_\crys
$$
as the universal PD-thickening of $\theta$ and $B^+_\crys:=A_\crys[\frac{1}{p}]$.
Finally, Fontaine's period ring $B^+_\dR$ is defined to be the $\xi$-adic completion of $A_\inf[\frac{1}{p}]$. In particular, the map $\theta$ induces a (surjective) morphism
$$
\theta\colon B^+_{\dR}\to C.
$$
In fact, the ring $B^+_{\dR}$ is a complete discrete valuation ring with uniformiser $\xi$ and residue field $C$. Its fraction field $B_{\dR}:=B^+_{\dR}[\frac{1}{\xi}]$ is also called the field of $p$-adic periods.

Having fixed this notation we can now define the notion of a Breuil-Kisin-Fargues module which is a ``perfectoid'' analog of a Breuil-Kisin module.

\begin{definition}
\label{definition: bkf_module}
 A Breuil-Kisin-Fargues module is a finitely presented $A_\inf$-module $M$ with an isomorphism
$$
\varphi_M\colon (\varphi^\ast M)[\frac{1}{\tilde{\xi}}]=M\otimes_{A_\inf,\varphi}A_\inf[\frac{1}{\tilde{\xi}}]\xrightarrow{\cong} M[\frac{1}{\tilde{\xi}}]
$$ 
such that $M[\frac{1}{p}]$ is a finite projective $A_\inf[\frac{1}{\tilde{\xi}}]$-module.
\end{definition}

By \cite[Corollary 4.12.]{bhatt_morrow_scholze_integral_p_adic_hodge_theory} a finite projective $A_\inf[\frac{1}{p}]$-module is automatically free, hence in \Cref{definition: bkf_module} the condition ``finite projective'' can be replaced by ``finite free".
We denote by
$$
\mathrm{BKF}
$$
the category of Breuil-Kisin-Fargues modules (cf.\ \cite[Remark 4.25]{bhatt_morrow_scholze_integral_p_adic_hodge_theory}). It is (naturally) an exact tensor category. However, it is not abelian, only pseudo-abelian.

\begin{lemma}
\label{lemma:bkf-modules-are-pseudo-abelian}
Let $(M,\varphi_M)$ be a Breuil-Kisin-Fargues module and let $e\colon M\to M$ be an idempotent endomorphism of $(M,\varphi_M)$, i.e., $e$ commutes with $\varphi_M$. Then $\mathrm{ker}(e)$ and $\mathrm{coker}(e)$ with their induced Frobenii are Breuil-Kisin-Fargues modules. In particular, the category $\mathrm{BKF}$ of Breuil-Kisin-Fargues modules is pseudo-abelian.   
\end{lemma}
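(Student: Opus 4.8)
The statement to prove is \Cref{lemma:bkf-modules-are-pseudo-abelian}: given a Breuil-Kisin-Fargues module $(M,\varphi_M)$ and an idempotent $e\colon M\to M$ commuting with $\varphi_M$, the kernel and cokernel of $e$, with their induced Frobenii, are again Breuil-Kisin-Fargues modules, whence $\mathrm{BKF}$ is pseudo-abelian.

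The plan is to use that $e$ being idempotent gives an internal direct sum decomposition $M = \ker(e)\oplus \mathrm{coker}(e)$ as $A_\inf$-modules (identifying $\mathrm{coker}(e)$ with the image $e(M)$, and $\ker(e)$ with the image of $1-e$), and then check that each summand inherits the three defining properties of a Breuil-Kisin-Fargues module from \Cref{definition: bkf_module}. First I would record that $M_1:=\ker(e)$ and $M_2:=\mathrm{coker}(e)\cong e(M)$ are finitely presented $A_\inf$-modules: they are direct summands of the finitely presented module $M$, and a direct summand of a finitely presented module over any ring is finitely presented (it is finitely generated, and is a quotient of a finitely generated free module by a finitely generated kernel since the complementary summand is finitely generated). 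Second, since $e$ commutes with $\varphi_M$, applying $\varphi^\ast$ and inverting $\tilde\xi$ the isomorphism $\varphi_M\colon (\varphi^\ast M)[\tfrac{1}{\tilde\xi}]\xrightarrow{\sim} M[\tfrac{1}{\tilde\xi}]$ is equivariant for the idempotents $\varphi^\ast e$ and $e$; hence it carries the summand $(\varphi^\ast M_i)[\tfrac{1}{\tilde\xi}]$ isomorphically onto $M_i[\tfrac{1}{\tilde\xi}]$ for $i=1,2$ (using that $\varphi^\ast$ commutes with the direct sum decomposition, as $\varphi^\ast$ is additive and $\varphi^\ast(e)$ is again an idempotent splitting $\varphi^\ast M$). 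This produces the required Frobenius isomorphisms $\varphi_{M_i}$. Third, $M_i[\tfrac1p]$ is a direct summand of the finite projective $A_\inf[\tfrac1p]$-module $M[\tfrac1p]$, hence finite projective over $A_\inf[\tfrac1p]$; equivalently (invoking the remark after \Cref{definition: bkf_module} quoting \cite[Corollary 4.12.]{bhatt_morrow_scholze_integral_p_adic_hodge_theory}) finite free.

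Wrapping up: having verified all three conditions for $M_1=\ker(e)$ and $M_2=\mathrm{coker}(e)$, both are Breuil-Kisin-Fargues modules, and the inclusion $\ker(e)\hookrightarrow M$ and projection $M\twoheadrightarrow \mathrm{coker}(e)$ are morphisms in $\mathrm{BKF}$; this is precisely the statement that idempotents split, i.e.\ that $\mathrm{BKF}$ is pseudo-abelian (Karoubian). The only point requiring a little care — and the place I would expect a referee to look — is the finite presentation of the summands: over the highly non-noetherian ring $A_\inf$ one cannot simply say "submodule of finitely generated is finitely generated", but for a \emph{direct} summand this is not an issue, since if $M\cong M_1\oplus M_2$ with $M$ finitely presented then each $M_i$ is finitely generated (being a quotient of $M$), and then from a presentation $A_\inf^a\to A_\inf^b\to M\to 0$ together with the splitting one extracts a presentation of $M_i$ (concretely, $M_i$ is the image of the idempotent endomorphism of $M$, and the cokernel of $(1-e)\oplus(\text{relations})\colon A_\inf^b\oplus A_\inf^a\to A_\inf^b$ composed appropriately). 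So the argument is essentially formal, relying only on the compatibility of $\varphi^\ast(-)$ with finite direct sums and on \cite[Corollary 4.12.]{bhatt_morrow_scholze_integral_p_adic_hodge_theory} for the "finite projective" clause.
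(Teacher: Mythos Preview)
Your proposal is correct and follows essentially the same approach as the paper's proof: both arguments simply observe that $\ker(e)$ and $\mathrm{coker}(e)$ are direct summands of $M$, hence inherit finite presentation and finite projectivity after inverting $p$, while the compatibility of $e$ with $\varphi_M$ gives the induced Frobenii. The paper states this in two lines without justifying the finite-presentation claim for direct summands; your elaboration on that point (which is indeed the only place where non-noetherianness of $A_\inf$ might cause concern) is a welcome addition but not a different strategy.
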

\begin{proof}
Clearly, the kernel and cokernel of $e$ admit Frobenii. As direct summands of finitely presented (resp.\ finite projective) modules are again finitely presented(resp.\ finite projective) $\mathrm{ker}(e)$ and $\mathrm{coker}(e)$ satisfy all conditions in \Cref{definition: bkf_module}, i.e., they are Breuil-Kisin-Fargues modules.
\end{proof}

We give an easy example of a Breuil-Kisin-Fargues modules.
\begin{example}
\label{example-bkf-modules-of-rank-1}
For $d\in \Z$ we set $A_\inf\{d\}:=\mu^{-d}A_\inf\otimes_{\Z_p}\Z_p(1)$ with Frobenius 
$$
\varphi_{A_\inf\{d\}}\colon \varphi^\ast(A_\inf\{d\})=\varphi(\mu)^{-d}A_\inf\otimes_{\Z_p}\Z_p(1)\to A_\inf\{d\},\ a\mapsto \tilde{\xi}^d a
$$

\end{example}

\begin{definition}
\label{definition: dual_of_free_bkf_module}
Let $(M,\varphi_M)$ be a Breuil-Kisin-Fargues module such that $M$ is finite projective (equivalently, free) over $A_\inf$. Then the dual $(M^\vee,\varphi_{M^\vee})$ of $(M,\varphi_M)$ is defined by
$$
M^\vee:=\mathrm{Hom}_{A_\inf}(M,A_\inf)
$$
with Frobenius
$$
\varphi_{M^\vee}\colon (\varphi^\ast M^\vee)[\frac{1}{\tilde{\xi}}]=(\varphi^\ast M)[\frac{1}{\tilde{\xi}}]^\vee\xrightarrow{(\varphi_M^{\vee})^{-1}}(M[\frac{1}{\tilde{\xi}}])^\vee=M^\vee[\frac{1}{\tilde{\xi}}].
$$
where we used the notation $(-)^\vee=\mathrm{Hom}_{A_{\inf}[\frac{1}{\tilde{\xi}}]}(-,A_{\inf}[\frac{1}{\tilde{\xi}}])$ again for duals of finite projective $A_{\inf}[\frac{1}{\tilde{\xi}}]$-modules.  
\end{definition}

We recall the following lemma about general $A_\inf$-modules.

\begin{lemma}
\label{lemma: structure_of_a_inf_modules}
Let $M$ be a finitely presented $A_\inf$-module such that $M[\frac{1}{p}]$ is finite projective (equivalently, free) over $A_\inf$. Then there is a functorial (in M) exact sequence
$$
0\to M_{\mathrm{tor}}\to M\to M_{\mathrm{free}}\to \overline{M}\to 0
$$
satisfying:
\begin{itemize}
\item[i)] $M_{\mathrm{tor}}$ is finitely presented and perfect as an $A_\inf$-module, and killed by $p^n$ for some $n\gg 0$.
\item[ii)] $M_{\mathrm{free}}$ is a finite free $A_\inf$-module.
\item[iii)] $\overline{M}$ is finitely presented and perfect as an $A_\inf$-module, and is supported at the closed point of $\Spec(A_\inf)$. 
\end{itemize}
\end{lemma}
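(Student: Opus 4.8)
The plan is to obtain the four-term sequence by splicing two short exact sequences: the inclusion $0\to M_{\mathrm{tor}}\to M\to N\to 0$ of the torsion submodule, followed by the embedding $0\to N\to M_{\mathrm{free}}\to \overline M\to 0$ of the torsion-free quotient $N:=M/M_{\mathrm{tor}}$ into its reflexive hull $M_{\mathrm{free}}:=N^{\vee\vee}=\Hom_{A_\inf}(\Hom_{A_\inf}(N,A_\inf),A_\inf)$. The composite $M\to M_{\mathrm{free}}$ is then just the canonical map $M\to M^{\vee\vee}$, so the construction is manifestly functorial in $M$. The whole argument rests on the commutative algebra of $A_\inf$ developed in \cite{bhatt_morrow_scholze_integral_p_adic_hodge_theory}: that the kernels, cokernels, $\Hom$-modules and double duals occurring here stay finitely presented --- a point that genuinely needs the finiteness results of \cite[\S4]{bhatt_morrow_scholze_integral_p_adic_hodge_theory}, since $A_\inf$ is far from noetherian; that $(p,[\varpi])$ is a regular sequence, so that $A_\inf$ has depth $2$ at its maximal ideal $\mathfrak m$ and the punctured spectrum $U:=\Spec(A_\inf)\setminus\{\mathfrak m\}$ is regular of Krull dimension $1$; that every finitely presented reflexive $A_\inf$-module is finite free, equivalently every vector bundle on $U$ extends to a finite free $A_\inf$-module; and that every finite projective $A_\inf[\frac{1}{p}]$-module is free (\cite[Corollary 4.12]{bhatt_morrow_scholze_integral_p_adic_hodge_theory}). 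None of these is formal over the highly non-noetherian ring $A_\inf$, and marshalling them is the real content; I expect the freeness of vector bundles on $U$ to be the main obstacle.

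For the torsion part, let $M_{\mathrm{tor}}\subseteq M$ be the torsion submodule. Since $M[\frac{1}{p}]$ is projective, hence torsion-free, over the domain $A_\inf[\frac{1}{p}]$, the module $M_{\mathrm{tor}}$ dies after inverting $p$; choosing $m_1,\dots,m_r\in M$ lifting a basis of $M[\frac{1}{p}]$ produces a finite free submodule $F\subseteq M$ with $F[\frac{1}{p}]=M[\frac{1}{p}]$, so that $M/F$ is finitely generated and annihilated by $p^n$ for some $n$, whence $p^nM\subseteq F$ and therefore $p^nM_{\mathrm{tor}}=0$ (as $F$ is torsion-free). Thus $M_{\mathrm{tor}}=M[p^n]$, finitely presented by the cited finiteness results, and perfect: a finitely presented $A_\inf$-module annihilated by a power of $p$ has projective dimension $\le 2$ over $A_\inf$, since filtering by powers of $p$ reduces to finitely presented $\mathcal O_{C^\flat}=A_\inf/p$-modules, which are finite direct sums of cyclics as $\mathcal O_{C^\flat}$ is a valuation ring, hence resolved over $A_\inf$ by Koszul complexes on regular sequences of length $\le 2$. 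This proves (i), and $N:=M/M_{\mathrm{tor}}$ is finitely presented and torsion-free.

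Now put $M_{\mathrm{free}}:=N^{\vee\vee}$ and let $\overline M$ be the cokernel of the canonical map $N\to M_{\mathrm{free}}$, which is injective because $N$ is a finitely generated torsion-free module over the domain $A_\inf$; both $M_{\mathrm{free}}$ and $\overline M$ are finitely presented by the cited finiteness results, and splicing the two short exact sequences yields the asserted four-term exact sequence. For (iii): over $U$, whose local rings are fields or discrete valuation rings, the finitely presented torsion-free sheaf $N|_U$ is a vector bundle, in particular reflexive, and since $\Hom$ commutes with these localizations $M_{\mathrm{free}}|_U=(N|_U)^{\vee\vee}=N|_U$; hence $\overline M$ is supported at $\mathfrak m$, therefore annihilated by a power of $\mathfrak m=(p,[\varpi])$ and in particular by a power of $p$, so perfect for the same reason as $M_{\mathrm{tor}}$. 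For (ii): $M_{\mathrm{free}}$ is reflexive, hence satisfies Serre's condition $S_2$, so --- using $\mathrm{depth}_{\mathfrak m}A_\inf=2$ --- it equals the pushforward along $U\hookrightarrow\Spec(A_\inf)$ of the vector bundle $M_{\mathrm{free}}|_U$ on $U$; as every vector bundle on $U$ extends to a finite free $A_\inf$-module, $M_{\mathrm{free}}$ is finite free. In practice I would quote this last statement, together with the coherence and regularity inputs, from \cite[\S4]{bhatt_morrow_scholze_integral_p_adic_hodge_theory}, these non-noetherian facts being precisely the hard part.
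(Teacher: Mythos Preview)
Your reconstruction is essentially the proof of \cite[Proposition~4.13]{bhatt_morrow_scholze_integral_p_adic_hodge_theory}, which is all the paper does here (it simply cites that result). The paper even records, in the remark following the lemma, that in the cited proof one takes $M_{\mathrm{free}}$ to be the pushforward of $N:=M/M_{\mathrm{tor}}$ from the punctured spectrum; since $A_\inf$ has depth~$2$ at $\mathfrak m$, this agrees with your double dual $N^{\vee\vee}$, so the two descriptions coincide.

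One step is not quite right as stated. You assert that the local rings of $U=\Spec(A_\inf)\setminus\{\mathfrak m\}$ are fields or discrete valuation rings, and deduce from this that the torsion-free finitely presented sheaf $N|_U$ is a vector bundle. That structural claim about arbitrary Zariski localizations of $A_\inf$ is not among the facts proved in \cite[\S4]{bhatt_morrow_scholze_integral_p_adic_hodge_theory}, and I would not rely on it. What one actually uses is the two-chart cover of $U$ by $\Spec(A_\inf[\frac{1}{p}])$ and $\Spec(A_\inf[\frac{1}{[\varpi]}])$: on the first chart $N[\frac{1}{p}]=M[\frac{1}{p}]$ is free by hypothesis, and freeness on the second chart (together with the extension of vector bundles across $\mathfrak m$) is exactly the content supplied by the cited reference. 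With that correction your outline matches the source argument.
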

\begin{proof}
See \cite[Proposition 4.13]{bhatt_morrow_scholze_integral_p_adic_hodge_theory}. 
\end{proof}

In particular, a finitely presented $A_\inf$-module $M$ such that $M[\frac{1}{p}]$ is finite free over $A_\inf[\frac{1}{p}]$ is perfect as an $A_\inf$-module, i.e., admits a finite projective resolution (cf.\ \cite[Lemma 4.9.]{bhatt_morrow_scholze_integral_p_adic_hodge_theory}). 

Moreover, if the $A_\inf$-module $M$ in \Cref{lemma: structure_of_a_inf_modules} is a Breuil-Kisin-Fargues module, i.e., equipped with a Frobenius 
$$
\varphi_M\colon \varphi^\ast(M)[\frac{1}{\tilde{\xi}}]\cong M[\frac{1}{\tilde{\xi}}],
$$ 
then the modules $M_{\mathrm{rm}}$, $M_{\mathrm{free}}$ and $\overline{M}$ carry a natural Frobenius as well and the exact sequence in \Cref{lemma: structure_of_a_inf_modules} is an exact sequence of Breuil-Kisin-Fargues modules (note that $M_{\mathrm{tor}}[\frac{1}{p}]=\overline{M}[\frac{1}{p}]=0$ is free). In fact, the existence of this Frobenius is clear for $M_{\mathrm{tor}}$. But $M_{\mathrm{free}}=H^0(\Spec(A_\inf),M/{M_{\mathrm{tor}}})$ (cf.\ the proof of \cite[Proposition 4.13]{bhatt_morrow_scholze_integral_p_adic_hodge_theory}) which yields the Frobenius on $M_{\mathrm{free}}$ (and thus $\overline{M}$ as well). 

We want to recall some equivalent descriptions of Breuil-Kisin-Fargues modules whose underlying $A_\inf$-module is finite free (cf.\ \cite[Proposition 20.1.1.]{scholze_weinstein_lecture_notes_on_p_adic_geometry}). For this we need to recall that associated to the fixed complete and algebraically closed non-archimedean field $C/\Q_p$ there is associated a scheme $X_{\mathrm{FF}}$ over $\Q_p$, ``the Fargues-Fontaine curve'', together with a distinguished point $\infty\in X_{\mathrm{FF}}$ whose completed stalk $\widehat{\mathcal{O}_{X_{\mathrm{FF}},\infty}}\cong B^+_\dR$ is isomorphic to Fontaine's period ring $B^+_{\dR}$ associated with $C$ (cf.\ \cite[Definition 13.5.3.]{scholze_weinstein_lecture_notes_on_p_adic_geometry}). We also recall that the adic Fargues-Fontaine curve $X_{\mathrm{FF}}^\ad=Y/\varphi^\Z$ is uniformized by an adic space $Y$ admitting an action of $\varphi$ (cf.\ \cite[Definition 13.5.1]{scholze_weinstein_lecture_notes_on_p_adic_geometry}).
The space $Y$ is defined to be 
$$
Y:=\mathrm{Spa}(A_\inf)\setminus V(p[\varpi])
$$
where $\varpi\in \mathcal{O}_{C^\flat}$ is a pseudo-uniformizer (cf.\ \cite[Proposition 13.1.1.]{scholze_weinstein_lecture_notes_on_p_adic_geometry} for a proof that this is actually an honest adic space, i.e., the structure presheaf is a sheaf).
Additionally to $Y$ the adic space $\mathrm{Spa}(A_\inf)\setminus V(p,[\varpi])$ contains two points, namely
\begin{itemize}
\item $x_\crys$ given as the image of the $p$-adic valuation on $W(k)$ along the canonical projection $A_\inf\to W(k)$ (this point is denoted $x_L$ in \cite{scholze_weinstein_lecture_notes_on_p_adic_geometry}).
\item $x_{\mathrm{\acute{e}tale}}$ given by the image of the valuation on $\mathcal{O}_{C^\flat}$ along the morphism $A_\inf\to O_{C^\flat}=A_\inf/p$ (this point is denoted $x_{C^\flat}$ in \cite{scholze_weinstein_lecture_notes_on_p_adic_geometry}). 
\end{itemize}

We set
$$
B:=H^0(Y,\mathcal{O}_{Y})
$$
and 
$$
B^+:=H^0(\Spa(A_\inf)\setminus{V(p)}, \mathcal{O}).
$$
There exists a natural $\varphi$-equivariant injection $B^+\to B^+_\crys$ making $B^+_\crys$ a $B^+$-algebra (cf.\ \cite[Proposition 1.10.12.]{fargues_fontaine_courbes_et_fibres_vectoriels_en_theorie_de_hodge_p_adique}).

\begin{theorem}
\label{theorem: vector_bundles_and_varphi_modules_over_b_+_crys}
The category $\mathrm{Bun}_{X_{\mathrm{FF}}}$ of vector bundles on the Fargues-Fontaine curve is naturally equivalent to the category of $\varphi$-modules over $B^+_\crys$, i.e., to finite projective $B^+_\crys$-modules $M$ together with an isomorphism $\varphi_M\colon \varphi^\ast M\cong M$.  
\end{theorem}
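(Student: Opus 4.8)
The plan is to obtain the equivalence by a chain of reductions, each step being either a descent argument or an already-available theorem about the period rings. First, by the GAGA theorem for the Fargues-Fontaine curve (\cite{fargues_fontaine_courbes_et_fibres_vectoriels_en_theorie_de_hodge_p_adique}; see also Kedlaya-Liu), $\mathrm{Bun}_{X_{\mathrm{FF}}}$ is equivalent to the category of vector bundles on the adic curve $X_{\mathrm{FF}}^{\ad}=Y/\varphi^{\Z}$. Since $Y$ is an honest (sheafy) adic space by \cite[Proposition 13.1.1.]{scholze_weinstein_lecture_notes_on_p_adic_geometry}, $\varphi$-equivariant descent along the covering $Y\to Y/\varphi^{\Z}$ identifies this with the category of pairs $(\mathcal{E},\psi)$ consisting of a vector bundle $\mathcal{E}$ on $Y$ and an isomorphism $\psi\colon\varphi^{\ast}\mathcal{E}\xrightarrow{\sim}\mathcal{E}$. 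Next, I would invoke the theorem of Kedlaya (see also Fargues-Fontaine) that $B=H^{0}(Y,\mathcal{O}_{Y})$ is a B\'ezout domain and that the global-sections functor is an equivalence from vector bundles on $Y$ to finite projective (hence free) $B$-modules; transporting $\psi$ through this equivalence shows that the above pairs are the same thing as $\varphi$-modules over $B$, i.e.\ finite projective $B$-modules $M$ equipped with an isomorphism $\varphi^{\ast}M\cong M$.

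It therefore remains to produce an equivalence between $\varphi$-modules over $B$ and $\varphi$-modules over $B^{+}_{\crys}$, which is the technical heart of the statement. The two rings share the common subring $B^{+}=H^{0}(\Spa(A_\inf)\setminus V(p),\mathcal{O})$, via the restriction $B^{+}\hookrightarrow B$ and the $\varphi$-equivariant injection $B^{+}\hookrightarrow B^{+}_{\crys}$, and $\varphi$ is an automorphism of $B$ but only an injective endomorphism of $B^{+}$ and of $B^{+}_{\crys}$. The mechanism is Frobenius descent: starting from a $\varphi$-module over $B^{+}_{\crys}$, one normalises the Frobenius slopes by twisting by a suitable power of $t=\log[\epsilon]$ (equivalently by $\mathcal{O}_{X_{\mathrm{FF}}}(1)$) and, iterating the pullback along $\varphi$, descends it to a finite projective $B^{+}$-module whose base change along $B^{+}\hookrightarrow B$ inherits a canonical $\varphi$-structure; conversely a $\varphi$-module over $B$ descends to $B^{+}$ after the same slope normalisation and then base-changes to $B^{+}_{\crys}$. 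An alternative for this step is to use the coincidence of Frobenius eigenspaces $B^{\varphi=p^{d}}=(B^{+}_{\crys})^{\varphi=p^{d}}$ for all $d\geq0$ (Fargues-Fontaine) together with the presentation $X_{\mathrm{FF}}=\mathrm{Proj}\bigl(\bigoplus_{d\geq0}B^{\varphi=p^{d}}\bigr)$, which lets one read off the vector bundle attached to a $\varphi$-module directly.

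I expect this last step, and more precisely the commutative algebra underlying it and the task of making Frobenius descent precise over the highly non-noetherian rings $B$, $B^{+}$ and $B^{+}_{\crys}$, to be the main obstacle. All the needed inputs are nevertheless available in \cite{fargues_fontaine_courbes_et_fibres_vectoriels_en_theorie_de_hodge_p_adique}, \cite{scholze_weinstein_lecture_notes_on_p_adic_geometry} and the work of Kedlaya-Liu, so the proof is ultimately an assembly of these references; this is presumably why the paper records the statement only in order to fix the equivalent description it will use.
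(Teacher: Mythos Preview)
Your anticipation is correct: the paper does not prove this statement but simply cites \cite[Corollaire 11.1.14]{fargues_fontaine_courbes_et_fibres_vectoriels_en_theorie_de_hodge_p_adique} and records the functor. Your outline of the first three reductions (GAGA, $\varphi$-equivariant descent along $Y\to Y/\varphi^{\Z}$, and passage to global sections) matches the shape of the argument in Fargues--Fontaine, and your alternative route via the equality of eigenspaces $B^{\varphi=p^{d}}=(B^{+}_{\crys})^{\varphi=p^{d}}$ together with the $\mathrm{Proj}$ description of $X_{\mathrm{FF}}$ is in fact very close to how they organize it.

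There is one point where your description diverges from the paper's, and it is worth noting because it is exactly the description the paper relies on later. For the passage between $B$ and $B^{+}_{\crys}$ you propose to take global sections over $Y$ (landing in $\varphi$-modules over $B$) and then to ``Frobenius-descend'' to $B^{+}$ after a slope normalization by a power of $t$. The paper instead states the functor geometrically: one \emph{extends} the $\varphi$-equivariant bundle on $Y$ across the point $x_{\crys}$ (this extension exists and is unique for any $\varphi$-equivariant bundle, since iterating $\varphi^{-1}$ contracts $Y$ towards $x_{\crys}$), takes global sections over $\Spa(A_{\inf})\setminus V(p)$ to get a $\varphi$-module over $B^{+}$, and only then base-changes along $B^{+}\hookrightarrow B^{+}_{\crys}$. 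Your ``iterating the pullback along $\varphi$'' is morally the same mechanism, but the twist by a single power of $t$ to normalize slopes is not right in general (slopes need not be equal, or even integral), whereas the geometric extension-to-$x_{\crys}$ step needs no such normalization and is what the paper uses verbatim in the proof of \Cref{theorem: equivalent_descriptions_of_bkf_modules}.
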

\begin{proof}
This is proven in \cite[Corollaire 11.1.14.]{fargues_fontaine_courbes_et_fibres_vectoriels_en_theorie_de_hodge_p_adique}. The natural functor inducing the equivalence is given by pulling back a vector bundle $\mathcal{F}\in \mathrm{Bun}_{X_{\mathrm{FF}}}\cong \Bun_{X^\ad_{\mathrm{FF}}}$ to $Y$, extending it along the point $x_{\crys}$ and tensoring the global sections of this extension (which form a $\varphi$-module over $B^+$) over $B^+$ with $B^+_{\crys}$.
\end{proof}

\begin{theorem}
\label{theorem: equivalent_descriptions_of_bkf_modules} 
The following categories are equivalent:
\begin{itemize}
\item[i)] Finite free Breuil-Kisin-Fargues modules $(M,\varphi_M)$.
\item[ii)] Pairs $(T,\Xi)$, where $T$ is a finite free $\Z_p$-module, and $\Xi\subseteq T\otimes_{\Z_p}B_\dR$ is a $B^+_\dR$-lattice.
\item[iii)] Quadruples $(\mathcal{F},\mathcal{F}^\prime,\beta,T)$, where $\mathcal{F}$ and $\mathcal{F}^\prime$ are vector bundles on the Fargues-Fontaine curve $X_{\mathrm{FF}}$, and $\beta\colon \mathcal{F}_{|X_{\mathrm{FF}\setminus{\infty}}}\xrightarrow{\simeq}\mathcal{F}^\prime_{|X_{\mathrm{FF}\setminus{\infty}}}$ is an isomorphism, $\mathcal{F}$ is trivial, and $T\subseteq H^0(X_{\mathrm{FF}},\mathcal{F})$ is a $\Z_p$-lattice.  
\end{itemize}
\end{theorem}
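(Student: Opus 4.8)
The plan is to produce explicit, mutually quasi-inverse functors among the three categories, keeping in mind that the equivalence between ii) and iii) is essentially formal while the equivalence between i) and iii) carries the real content (this last equivalence is due to Fargues and Kedlaya--Liu, cf.\ \cite[Proposition 20.1.1.]{scholze_weinstein_lecture_notes_on_p_adic_geometry}). For the equivalence between ii) and iii), recall that $X_{\mathrm{FF}}$ is a complete curve over $\Q_p$ with $H^0(X_{\mathrm{FF}},\mathcal{O})=\Q_p$ and with completed local ring $\widehat{\mathcal{O}}_{X_{\mathrm{FF}},\infty}\cong B^+_\dR$ at $\infty$. A finite free $\Z_p$-module $T$ gives the trivial bundle $T\otimes_{\Z_p}\mathcal{O}_{X_{\mathrm{FF}}}$, and conversely a trivial bundle $\mathcal{F}$ together with a $\Z_p$-lattice $T\subseteq H^0(X_{\mathrm{FF}},\mathcal{F})$ recovers $T$; this matches the data ``trivial $\mathcal{F}$ together with a $\Z_p$-lattice in $H^0$'' with ``finite free $\Z_p$-module''. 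By Beauville--Laszlo glueing along the Cartier divisor $\{\infty\}$, specifying in addition a bundle $\mathcal{F}^\prime$ on $X_{\mathrm{FF}}$ and an isomorphism $\beta\colon\mathcal{F}_{|X_{\mathrm{FF}}\setminus\{\infty\}}\xrightarrow{\simeq}\mathcal{F}^\prime_{|X_{\mathrm{FF}}\setminus\{\infty\}}$ is the same as specifying a $B^+_\dR$-lattice $\Xi\subseteq\widehat{\mathcal{F}}_\infty\otimes_{B^+_\dR}B_\dR=T\otimes_{\Z_p}B_\dR$, namely $\Xi:=\beta^{-1}(\widehat{\mathcal{F}^\prime}_\infty)$; functoriality and compatibility with morphisms are immediate, so this gives the equivalence of ii) and iii).

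For the functor from i) to iii) I would argue as follows. Let $(M,\varphi_M)$ be finite free. The image of $\tilde{\xi}$ in $W(C^\flat)/p=C^\flat$ equals $(\epsilon-1)^{p-1}$, which is nonzero since $\epsilon\neq 1$; as $W(C^\flat)$ is local with residue field $C^\flat$, the element $\tilde{\xi}$ is a unit in $W(C^\flat)$, so $\varphi_M$ induces an honest isomorphism $\varphi^\ast(M\otimes_{A_\inf}W(C^\flat))\cong M\otimes_{A_\inf}W(C^\flat)$. Hence $M\otimes_{A_\inf}W(C^\flat)$ is an \'etale $\varphi$-module, and the associated finite free $\Z_p$-module $T:=(M\otimes_{A_\inf}W(C^\flat))^{\varphi_M=1}$ has rank $\rk_{A_\inf}(M)$; set $\mathcal{F}:=T\otimes_{\Z_p}\mathcal{O}_{X_{\mathrm{FF}}}$. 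On the other hand $M$, being finite free, restricts to a vector bundle on $Y=\Spa(A_\inf)\setminus V(p[\varpi])$, and because $\varphi_M$ is an isomorphism away from $V(\tilde{\xi})$ this bundle is Frobenius-equivariant on the complement of the preimage of $\infty$ under $Y\to X_{\mathrm{FF}}^{\ad}=Y/\varphi^\Z$; descending and using $\Bun_{X_{\mathrm{FF}}}\cong\Bun_{X_{\mathrm{FF}}^{\ad}}$ yields a vector bundle on $X_{\mathrm{FF}}\setminus\{\infty\}$. Extending it across $\infty$ in the two natural ways prescribed by the $\xi$-adic and the $\tilde{\xi}$-adic completions of $M$ (which differ precisely by the modification encoded in $\varphi_M$) produces bundles $\mathcal{F}^\prime$ and $\mathcal{F}$; the identification of the $\tilde{\xi}$-side extension with the trivial bundle $\mathcal{F}$ attached to $T$ is the point where \Cref{theorem: vector_bundles_and_varphi_modules_over_b_+_crys}, applied to the \'etale $\varphi$-module $T\otimes_{\Z_p}B^+_\crys$, enters. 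The tautological identification of the two extensions away from $\infty$ is the desired $\beta$, and $T\subseteq H^0(X_{\mathrm{FF}},\mathcal{F})$ is the desired lattice; this construction is functorial in $(M,\varphi_M)$.

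For the functor from iii) to i) one runs the construction backwards: from $(\mathcal{F},\mathcal{F}^\prime,\beta,T)$ one manufactures a $\varphi$-module over $B^+$ on $\Spa(A_\inf)\setminus V(p)$ out of $\mathcal{F}^\prime$ via \Cref{theorem: vector_bundles_and_varphi_modules_over_b_+_crys} and the $\varphi$-equivariant map $B^+\to B^+_\crys$, together with the \'etale $\varphi$-module $T\otimes_{\Z_p}W(C^\flat)$ at $x_{\mathrm{\acute{e}tale}}$ and the $B^+_\dR$-lattice $\beta^{-1}(\widehat{\mathcal{F}^\prime}_\infty)$ at the divisor cut out by $\xi$; these agree on overlaps, so glueing of vector bundles on $\Spa(A_\inf)\setminus V(p,[\varpi])$ (legitimate by the sheafiness results recalled before \Cref{theorem: vector_bundles_and_varphi_modules_over_b_+_crys} and by the Kedlaya--Liu descent formalism) gives a vector bundle there, which extends uniquely across the closed point to a finite free $A_\inf$-module $M$ because $\Spa(A_\inf)$ behaves near $V(p,[\varpi])$ like a punctured two-dimensional regular local ring. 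The glued Frobenius is an isomorphism after inverting $\tilde{\xi}$ and $M[\frac{1}{p}]$ is finite free, so $(M,\varphi_M)$ satisfies \Cref{definition: bkf_module} and is finite free, and chasing definitions shows that the two composites are canonically the identity. I expect the hard part to be exactly this glueing-and-extension step over the highly non-noetherian ring $A_\inf$, together with the careful bookkeeping near $\infty$ of the two Frobenius untwistings via $\xi$ and $\tilde{\xi}=\varphi(\xi)$ that makes $\mathcal{F}$ come out trivial; everything on the Fargues--Fontaine side, including the Beauville--Laszlo step, is formal.
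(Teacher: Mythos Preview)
Your proposal is in the right spirit and correctly identifies the Beauville--Laszlo step for ii)$\Leftrightarrow$iii) and the extension across the closed point of $\Spa(A_\inf)$ as the key non-noetherian input; these match the paper. There are, however, two places where your outline diverges from the paper's and where your description is imprecise enough to count as a gap.

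\textbf{From i) to ii)/iii).} The paper does not produce $\mathcal{F}$ and $\mathcal{F}^\prime$ simultaneously from $M$ by ``extending across $\infty$ in the two natural ways prescribed by the $\xi$-adic and the $\tilde{\xi}$-adic completions of $M$''. Instead it goes directly i)$\to$ii): one takes $T=(M\otimes_{A_\inf}W(C^\flat))^{\varphi_M=1}$ and $\Xi:=M\otimes_{A_\inf}B^+_{\dR}$, and the crucial point is that multiplication gives a canonical isomorphism
\[
M\otimes_{A_\inf}A_\inf[\tfrac{1}{\mu}]\;\cong\;T\otimes_{\Z_p}A_\inf[\tfrac{1}{\mu}]
\]
(cf.\ \cite[Lemma 4.26]{bhatt_morrow_scholze_integral_p_adic_hodge_theory}), which is what allows one to regard $\Xi$ as a lattice inside $T\otimes_{\Z_p}B_{\dR}$. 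Your ``$\tilde{\xi}$-adic completion'' lives at the point $V(\tilde{\xi})=\varphi^{-1}(V(\xi))$ of $Y$, not at $\infty$; transporting it to $\infty$ via the descent data amounts to the lattice $(\varphi^{-1})^\ast M\otimes B^+_{\dR}$, and it is not clear from your sketch why this coincides with $T\otimes_{\Z_p}B^+_{\dR}$ (equivalently, why the resulting bundle is trivial with the correct $\Z_p$-lattice $T$). The identification $M[1/\mu]\cong T\otimes A_\inf[1/\mu]$ is exactly what makes this work, and it is the ingredient missing from your account.

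\textbf{From iii) to i).} The paper glues \emph{two} pieces, not three: it pulls back $\mathcal{F}$ and $\mathcal{F}^\prime$ to $\varphi$-equivariant bundles $\mathcal{E},\mathcal{E}^\prime$ on $Y$, extends $\mathcal{E}$ across $x_{\mathrm{\acute{e}tale}}$ using the lattice $T$ (this extension is not canonical without $T$), extends $\mathcal{E}^\prime$ across $x_{\crys}$ (canonically), and then glues these two extensions on a small annulus avoiding all $\varphi^n(\infty)$ using $\beta$. Your third piece, the $B^+_{\dR}$-lattice $\beta^{-1}(\widehat{\mathcal{F}^\prime}_\infty)$ at $V(\xi)$, is redundant: the modification $\beta$ already encodes it, and inserting it as a separate gluing datum obscures which bundle ($\mathcal{E}$ or $\mathcal{E}^\prime$) one is using on which side of the annulus. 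In particular, the paper records that the resulting $M\otimes_{A_\inf}B^+_{\crys}$ is canonically the $\varphi$-module associated with $\mathcal{F}^\prime$ (not $\mathcal{F}$), a fact used later in \Cref{theorem: rigidified_bkf_modules_versus_modifications_of_vector_bundles}; your three-piece description does not make this transparent.
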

\begin{proof}
The proof is given in \cite[Proposition 20.1.1.]{scholze_weinstein_lecture_notes_on_p_adic_geometry}. We shortly give descriptions of the various functors involved.
Let $(M,\varphi_M)$ be a finite free Breuil-Kisin-Fargues module. Then the $\varphi_M$-invariants
$$
T:=(M\otimes_{A_\inf}W(C^\flat))^{\varphi_M=1}
$$
form a finite free $\Z_p$-module (by Artin-Schreier theory, cf.\ \cite[Theorem 12.3.4.]{scholze_weinstein_lecture_notes_on_p_adic_geometry}). Note that $\tilde{\xi}$ is invertible in $W(C^\flat)$, hence the base extension $M\otimes_{A_\inf}W(C^\flat)$ actually carries a Frobenius. Moreover, multiplication defines an isomorphism
$$
T\otimes_{\Z_p}W(C^\flat)\cong M\otimes_{A_\inf}W(C^\flat)
$$
under which $T\otimes_{\Z_p}A_{\inf}[\frac{1}{\mu}]\subseteq T\otimes_{\Z_p}W(C^\flat)$ is mapped to $M\otimes_{A_\inf}A_{\inf}[\frac{1}{\mu}]$ (cf.\ \cite[Lemma 4.26.]{bhatt_morrow_scholze_integral_p_adic_hodge_theory}).
Setting 
$$
\Xi:=M\otimes_{A_\inf}B^+_{\dR}\subseteq (M\otimes_{A_\inf}A_\inf[\frac{1}{\mu}])\otimes_{A_\inf[\frac{1}{\mu}]}B_{\dR}\cong T\otimes_{\Z_p}B_{\dR}
$$ defines the pair $(T,\Xi)$ in ii) associated with $(M,\varphi_M)$. If $\Xi\subseteq T\otimes_{\Z_p}B_{\dR}$ is a $B^+_\dR$-lattice, where $T$ is a finite free $\Z_p$-module, then (in order to pass to iii)) one can use this lattice to modify the trivial bundle $\mathcal{F}:=T\otimes_{\Z_p}\mathcal{O}_{X_{\mathrm{FF}}}$ at the distinguished point $\infty\in x_{\mathrm{FF}}$ on the Fargues-Fontaine curve to obtain a bundle $\mathcal{F}^\prime$ with an isomorphism $\mathcal{F}\dashrightarrow\mathcal{F}^\prime$ away from $\infty$ (recall that $\widehat{\mathcal{O}_{X_{\mathrm{FF},\infty}}}\cong B^+_{\dR}).$
Clearly, $T\subseteq H^0(X_{\mathrm{FF}},\mathcal{O}_{X_{\mathrm{FF}}})$ defines a $\Z_p$-lattice as $H^0(X_{\mathrm{FF}},\mathcal{O}_{X_\mathrm{FF}})\cong T\otimes_{\Z_p}\Q_p$. Moreover, this construction can be reversed giving the equivalence between the categories in ii) and iii). Thus in particular, the hard part of the theorem is the construction of a functor from the category in ii) (or iii)) to Breuil-Kisin-Fargues modules. We shortly describe how this is done. Let $(\mathcal{F},\mathcal{F}^\prime, \alpha,T)$ be given as in iii). Pulling back $\mathcal{F}$ and $\mathcal{F}^\prime$ to $Y$ defines two $\varphi$-equivariant vector bundles $\mathcal{E},\mathcal{E}^\prime$ on $Y$. By triviality of $\mathcal{F}$ the vector bundle $\mathcal{E}$ extends (non-canonically) to the point $x_{\mathrm{\acute{e}tale}}$. This extension is not unique, but depends on the choice of a $\Z_p$-lattice in $H^0(X,\mathcal{F})=H^0(Y,\mathcal{E})^{\varphi=1}$. In particular, the data of $T$ yiels a canonical extension of $\mathcal{E}$ to $x_{\mathrm{\acute{e}tale}}$. Moreover, every $\varphi$-equivariant vector bundle on $Y$, thus in particular $\mathcal{E}^\prime$, extends uniquely to the point $x_{\crys}$.
Using the modification $\mathcal{F}\dashrightarrow \mathcal{F}^\prime$ away from $\infty$ the bundles $\mathcal{E}$ and $\mathcal{E}^\prime$ are ($\varphi$-equivariantly) isomorphic when restricted to a small annulus omitting the points $\varphi^n(\infty),n\in \Z$. In particular, one obtains a vector bundle $\mathcal{E}^{\prime\prime}$ on $\Spa(A_\inf)\setminus V(p,[\varpi])$ which restricts to the extension of $\mathcal{E}^\prime$ near $x_\crys$ and to the extension of $\mathcal{E}$ near $x_{\mathrm{\acute{e}tale}}$. Taking global sections of $\mathcal{E}^{\prime\prime}$ defines the finite free $A_\inf$-module $M$ which is then moreover equipped with a Frobenius $\varphi_M$ away from $V(\tilde{\xi})$, i.e., a finite free Breuil-Kisin-Fargues module.
In particular, we see that the $\varphi$-module $M\otimes_{A_\inf}B^+_\crys$ over $B^+_\crys$ is canonically isomorphic to the $\varphi$-module over $B^+_\crys$ associated with $\mathcal{F}^\prime$ as in \Cref{theorem: vector_bundles_and_varphi_modules_over_b_+_crys}.   
\end{proof}

For example, the Breuil-Kisin-Fargues modules $A_\inf\{d\}$ is sent to the pair
$$
(\Z_p(d),\Xi:=\xi^{-d}(\Z_p(d)\otimes_{\Z_p}B^+_{\dR})\subseteq \Z_p(d)\otimes_{\Z_p}B_\dR)
$$
resp., for $d\geq 0$, to the modification
$$
0\to \Z_p(d)\otimes_{\Z_p} \mathcal{O}_{X_\mathrm{FF}}\xrightarrow{t^d} \mathcal{O}_{X_{\mathrm{FF}}}(d)\to t^{-d}\mathcal{O}_{X_{\mathrm{FF}},\infty}/\mathcal{O}_{X_{\mathrm{FF}},\infty}\to 0
$$
induced by the $d$-th power $t^d$ of Fontaine's $t=\mathrm{log}([\epsilon])\in H^0(X_{\mathrm{FF}},\mathcal{O}_{X_{\mathrm{FF}}}(1))=(B^+_\crys)^{\varphi=p}$.

We want to stress that the categories in \Cref{theorem: equivalent_descriptions_of_bkf_modules} are not equivalent \textit{as exact categories}, i.e., the equivalences (or their inverses) are not exact. In fact, in the construction of the finite free Breuil-Kisin-Fargues module one has to extend (by taking global sections) a vector bundle on the punctured $\Spec(A_\inf)$ with the closed point removed to the whole of $\Spec(A_\inf)$ and this is not an exact operation.

\begin{definition}
\label{definition: bkf_modules_up_to_isogeny}
We define the isogeny category of Breuil-Kisin-Fargues modules $\mathrm{BKF}^\circ$ as 
$$
\mathrm{BKF}^\circ:=\Q_p\otimes_{\Z_p}\mathrm{BKF},
$$
i.e., the category $\mathrm{BKF}^\circ$ has the same objects as the category $\mathrm{BKF}$, but for $M,N\in \mathrm{BKF}^\circ$ the space of homomorphisms is given by
$$
\mathrm{Hom}_{\mathrm{BKF}^\circ}(M,N):=\Q_p\otimes_{\Z_p}\mathrm{Hom}_{\mathrm{BKF}}(M,N).
$$    
\end{definition}

The category $\mathrm{BKF}^\circ$ is still not abelian (for the same reason as $\mathrm{BKF}$, cf.\ \cite[Remark 4.25]{bhatt_morrow_scholze_integral_p_adic_hodge_theory}), but rigid as we now show.

\begin{lemma}
\label{lemma: bkf_modules_rigid_exact_tensor_category}
The isogeny category $\mathrm{BKF}^\circ$ of Breuil-Kisin-Fargues modules is a rigid, exact tensor category.  
\end{lemma}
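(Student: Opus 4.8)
The exact tensor structure on $\mathrm{BKF}^\circ$ is simply inherited from $\mathrm{BKF}$: the tensor product is $(M,\varphi_M)\otimes(N,\varphi_N):=(M\otimes_{A_\inf}N,\varphi_M\otimes\varphi_N)$ with unit $(A_\inf,\Id)$, and this stays inside $\mathrm{BKF}$ since being finitely presented and being finite projective after inverting $p$ are preserved by $\otimes_{A_\inf}$, while $\varphi_M\otimes\varphi_N$ remains an isomorphism after inverting $\tilde{\xi}$; additivity, idempotent-completeness (from \Cref{lemma:bkf-modules-are-pseudo-abelian}) and the exact structure pass to $\mathrm{BKF}^\circ$ upon tensoring Hom-groups with $\Q_p$. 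So the real content is \emph{rigidity}, and the plan is to reduce it to the case of finite free Breuil-Kisin-Fargues modules.

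The first step is to show that in $\mathrm{BKF}^\circ$ every object is isomorphic to one whose underlying $A_\inf$-module is finite free. Given $(M,\varphi_M)$, I would invoke the functorial sequence $0\to M_{\mathrm{tor}}\to M\xrightarrow{f} M_{\mathrm{free}}\to\overline M\to 0$ of \Cref{lemma: structure_of_a_inf_modules}, which, as noted right after that lemma, is an exact sequence of Breuil-Kisin-Fargues modules. Since $M_{\mathrm{tor}}$ and $\overline M$ are finitely generated over $A_\inf$ with $M_{\mathrm{tor}}[\frac{1}{p}]=\overline M[\frac{1}{p}]=0$, they are annihilated by a single power $p^N$. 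Then multiplication by $p^N$ on $M$ kills $\Ker(f)=M_{\mathrm{tor}}$ and (since $p^N\overline M=0$) lands inside $\Im(f)$, so it factors through $f$; chasing this produces an $A_\inf$-linear $g\colon M_{\mathrm{free}}\to M$ with $g\circ f=p^{2N}\Id_M$ and $f\circ g=p^{2N}\Id_{M_{\mathrm{free}}}$. The auxiliary factorizations are $\varphi$-equivariant after inverting $\tilde{\xi}$ because $\varphi^\ast$ is right-exact (so $\varphi^\ast$ of the surjection $M\twoheadrightarrow M/M_{\mathrm{tor}}$ stays surjective) and because $\overline M$, being supported at the closed point of $\Spec(A_\inf)$ and $\tilde{\xi}$ lying in the maximal ideal, satisfies $\overline M[\frac{1}{\tilde{\xi}}]=0$ (so the inclusion $M/M_{\mathrm{tor}}\hookrightarrow M_{\mathrm{free}}$ is already an isomorphism after inverting $\tilde{\xi}$). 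Hence $g\in\mathrm{Hom}_{\mathrm{BKF}}(M_{\mathrm{free}},M)$ and $f$ becomes an isomorphism in $\mathrm{BKF}^\circ$. Consequently the inclusion of the full monoidal subcategory of finite free Breuil-Kisin-Fargues modules into $\mathrm{BKF}^\circ$ is essentially surjective, hence a monoidal equivalence.

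It then remains to see that this full subcategory is rigid, which follows from \Cref{definition: dual_of_free_bkf_module}: for finite free $(M,\varphi_M)$ the dual $M^\vee=\mathrm{Hom}_{A_\inf}(M,A_\inf)$ is again finite free, $(\varphi_M^\vee)^{-1}$ makes it a Breuil-Kisin-Fargues module, and the usual evaluation $M\otimes M^\vee\to A_\inf$ and coevaluation $A_\inf\to M^\vee\otimes M$ for finite projective $A_\inf$-modules are $\varphi$-equivariant after inverting $\tilde{\xi}$ — precisely because $(\varphi_M^\vee)^{-1}$ is the inverse transpose of $\varphi_M$ — while the triangle identities hold since they already hold for the underlying $A_\inf$-modules. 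As duals, evaluation and coevaluation are unaffected by passing to $\Q_p\otimes_{\Z_p}(-)$ on Hom-groups, and rigidity is preserved under monoidal equivalence, we conclude that $\mathrm{BKF}^\circ$ is rigid.

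I expect the main obstacle to be the reduction step. Because $\mathrm{BKF}$ is not abelian one cannot form kernels and cokernels freely, so one is forced to work with the single functorial sequence of \Cref{lemma: structure_of_a_inf_modules} and to verify by hand that the comparison $M\rightsquigarrow M_{\mathrm{free}}$ and its $p$-power inverse are genuine morphisms of Breuil-Kisin-Fargues modules. The two facts that make this go through are the uniform annihilation of $M_{\mathrm{tor}}$ and $\overline M$ by a power of $p$ (finite generation together with vanishing after inverting $p$) and the vanishing $\overline M[\frac{1}{\tilde{\xi}}]=0$ (its support lies in $V(\tilde{\xi})$).
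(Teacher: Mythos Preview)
Your proof is correct and follows essentially the same route as the paper: inherit the exact tensor structure from $\mathrm{BKF}$, use the structural sequence of \Cref{lemma: structure_of_a_inf_modules} to replace an arbitrary object by a finite free one in the isogeny category, and then invoke \Cref{definition: dual_of_free_bkf_module} for rigidity. The only cosmetic difference is that the paper performs the reduction in two steps (first pass to $M/M_{\mathrm{tor}}$, then to $M_{\mathrm{free}}$), which makes the $\varphi$-equivariance of the inverse map transparent—once $M$ is torsion-free, $p^N M_{\mathrm{free}}\subset M$ and multiplication by $p^N$ is visibly Frobenius-compatible since $M[\frac{1}{\tilde\xi}]=M_{\mathrm{free}}[\frac{1}{\tilde\xi}]$—whereas you handle both obstructions at once and must argue equivariance of the composite $g$ through the intermediate quotient; your justification via right-exactness of $\varphi^\ast$ and $\overline M[\frac{1}{\tilde\xi}]=0$ is exactly what is needed for this.
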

\begin{proof}
It suffices to prove that every object admits a dual because the tensor product and the exact structure are inherited from the category $\mathrm{BKF}$. Let $(M,\varphi_M)\in \mathrm{BKF}$ be a Breuil-Kisin-Kisin-Fargues module. We claim that, in the isogeny category, $(M,\varphi_M)$ is isomorphic to a Breuil-Kisin-Fargues module $(N,\varphi_N)$ such that $N$ is finite free over $A_\inf$. Let $M_{\mathrm{tor}}\subseteq M$ be the torsion submodule. Then $\varphi^\ast(M_{\mathrm{tor}})$ resp.\ $\varphi^\ast(M_{\mathrm{tor}})[\frac{1}{\tilde{\xi}}]$ is the torsion submodule of $\varphi^\ast(M)$ resp.\ $\varphi^\ast(M)[\frac{1}{\tilde{\xi}}]$. Therefore it is stable by $\varphi_M$. By lemma \Cref{lemma: structure_of_a_inf_modules} the torsion submodule $M_{\mathrm{tor}}$ is actually killed by $p^n$ for $n\gg 0$, and thus zero in the isogeny category. Hence, we may assume that $M$ is actually torsion-free. Set $N:=M_{\mathrm{free}}$ as in \Cref{lemma: structure_of_a_inf_modules}, which is finite free over $A_\inf$. Then $\varphi^\ast(N)[\frac{1}{\tilde{\xi}}]=\varphi^\ast(M)[\frac{1}{\tilde{\xi}}]$ and $N[\frac{1}{\tilde{\xi}}]=M[\frac{1}{\tilde{\xi}}]$ because the cokernel of $M\to N$ is killed by $\xi$resp.\  $\tilde{\xi}$.
In particular, the Frobenius $\varphi_M$ of $M$ defines a Breuil-Kisin-Fargues module $(N,\varphi_M)$ and we are finally allowed to replace $M$ by $N$, as $(M,\varphi_M)$ and $(N,\varphi_N)$ are isomorphic in the isogeny category. This finishes the proof as finite free Breuil-Kisin-Fargues modules clearly admit duals.  
\end{proof}

We remark that the proof of \Cref{lemma: bkf_modules_rigid_exact_tensor_category} actually shows that for a Breuil-Kisin-Fargues module $(M,\varphi_M)$ the exact sequence
$$
0\to M_{\mathrm{tor}}\to M\to M_{\mathrm{free}}\to \overline{M}\to 0
$$
of \Cref{lemma: structure_of_a_inf_modules} can be enhanced to an exact sequence of Breuil-Kisin-Fargues modules, i.e., there are compatible Frobenii on the modules involved.

In terms of pairs of a $\Z_p$-lattice and a $B^+_{\dR}$-lattice the category $\mathrm{BKF}^\circ$ of Breuil-Kisin-Fargues modules up to isogeny is equivalent to the category of pairs $(V,\Xi)$ where $V$ is a finite dimensional $\Q_p$-vector space and $\Xi\subseteq V\otimes_{\Q_p}B_{\dR}$ a $B^+_{\dR}$-lattice. Again, we can see that this category is not abelian: The canonical morphism
$$
(\Q_p,\xi B^+_{\dR})\to (\Q_p,B^+_\dR)
$$
does not admit a cokernel. However, we see that it admits kernels. But we caution the reader that it is not clear how to describe the kernel of a morphism $f\colon (M,\varphi_M)\to (N,\varphi_N)$ of Breuil-Kisin-Fargues modules up to isogeny, because it is not clear whether the kernel $K:=\mathrm{Ker}(M\xrightarrow{f}N)$ is finitely presented over $A_\inf$ or satisfies the property that $K[\frac{1}{p}]$ is finite projective over $A_\inf[\frac{1}{p}]$ (cf.\ the proof of \Cref{theorem: rigidified_bkf_modules_form_tannakian_category}).

We are seeking for a Tannakian, and thus in particular abelian, category of Breuil-Kisin-Fargues-modules up to isogeny. This is possible after adding a rigidification after base change to $B^+_{\crys}=A_\crys[\frac{1}{p}]$ (cf.\ \cite[Lemma 4.27.]{bhatt_morrow_scholze_integral_p_adic_hodge_theory}).
Let us recall \cite[Lemma 4.27.]{bhatt_morrow_scholze_integral_p_adic_hodge_theory}.

\begin{lemma}
\label{lemma: existence_of_rigidifications}
Let $(M,\varphi_M)$ be a Breuil-Kisin-Fargues module. Then $\overline{M}=M\otimes_{A_\inf}W(k)$ is a finitely generated $W(k)$-module equipped with a Frobenius automorphism after inverting $p$. Fix a section $k\to \mathcal{O}_C/p$, which induces a section $W(k)\to A_\inf$. Then there is a (noncanonical) $\varphi$-equivariant isomorphism
$$
M\otimes_{A_\inf}B^+_{\crys}\cong \overline{M}\otimes_{W(k)}B^+_\crys
$$ 
reducing to the identity over $W(k)[\frac{1}{p}]$.
\end{lemma}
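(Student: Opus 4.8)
The plan is to reduce to finite free Breuil-Kisin-Fargues modules, to view both sides as vector bundles on the Fargues-Fontaine curve $X_{\mathrm{FF}}$ via the descriptions already established, to identify those bundles by comparing the isocrystals they induce at $x_\crys$, and finally to correct the resulting isomorphism so that it reduces to the identity over $W(k)[\frac{1}{p}]$. For the reduction: since $B^+_\crys=A_\crys[\frac{1}{p}]$ is a $\Q_p$-algebra and $A_\inf\to B^+_\crys$ factors through $A_\inf[\frac{1}{p}]$, both the torsion submodule $M_{\mathrm{tor}}$ (killed by a power of $p$) and the module $\overline M$ appearing in \Cref{lemma: structure_of_a_inf_modules} (zero after inverting $p$) disappear upon applying $-\otimes_{A_\inf}B^+_\crys$, and likewise upon applying $-\otimes_{A_\inf}W(k)[\frac{1}{p}]$; as the four-term sequence of \Cref{lemma: structure_of_a_inf_modules} underlies a sequence of Breuil-Kisin-Fargues modules, we may replace $(M,\varphi_M)$ by $(M_{\mathrm{free}},\varphi_{M_{\mathrm{free}}})$ and assume $M$ finite free. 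One checks that $\tilde\xi$ maps to a unit times $p$ in both $W(k)$ and $A_\crys$, so that $\overline M[\frac{1}{p}]$ is an isocrystal over $k$ and both $M\otimes_{A_\inf}B^+_\crys$ and $\overline M\otimes_{W(k)}B^+_\crys$ are finite free $\varphi$-modules over $B^+_\crys$; by \Cref{theorem: vector_bundles_and_varphi_modules_over_b_+_crys} they correspond to vector bundles on $X_{\mathrm{FF}}$.

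The heart of the proof is that these two bundles are isomorphic. The reduction $A_\inf\to W(k)$ extends uniquely to a $\varphi$-equivariant PD-morphism $B^+_\crys\to W(k)[\frac{1}{p}]$, compatible with the evaluation at the $\varphi$-fixed point $x_\crys$ appearing in \Cref{theorem: vector_bundles_and_varphi_modules_over_b_+_crys}; consequently the isocrystal over $k$ that this description attaches to the bundle underlying a $\varphi$-module $P$ over $B^+_\crys$ is $P\otimes_{B^+_\crys}W(k)[\frac{1}{p}]$. Taking $P=M\otimes_{A_\inf}B^+_\crys$ yields the isocrystal $M\otimes_{A_\inf}W(k)[\frac{1}{p}]=\overline M[\frac{1}{p}]$, while taking $P=\overline M\otimes_{W(k)}B^+_\crys$ yields $\overline M[\frac{1}{p}]$ again — here using that the fixed section $k\to\mathcal{O}_C/p$ induces, $k$ being perfect, a $\varphi$-equivariant section $W(k)\to A_\inf$ whose composite with the reduction is the identity, so that $W(k)\to B^+_\crys\to W(k)[\frac{1}{p}]$ is the canonical map. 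Since over the algebraically closed $C$ every vector bundle on $X_{\mathrm{FF}}$ is a direct sum of line bundles $\mathcal{O}_{X_{\mathrm{FF}}}(\lambda)$, passing from a bundle to the isocrystal it induces at $x_\crys$ is a bijection on isomorphism classes; as our two bundles induce the same isocrystal $\overline M[\frac{1}{p}]$, they are isomorphic, and pushing this isomorphism back through \Cref{theorem: vector_bundles_and_varphi_modules_over_b_+_crys} gives a $\varphi$-equivariant isomorphism $\beta\colon M\otimes_{A_\inf}B^+_\crys\xrightarrow{\sim}\overline M\otimes_{W(k)}B^+_\crys$. I expect the main difficulty to be exactly this identification of the crystalline realization $\overline M$ with the isocrystal read off from the bundle attached to $M\otimes_{A_\inf}B^+_\crys$: it requires carefully tracking the non-exact functors of \Cref{theorem: equivalent_descriptions_of_bkf_modules} and \Cref{theorem: vector_bundles_and_varphi_modules_over_b_+_crys} together with the behaviour near $x_\crys$ (alternatively one can unwind the construction in the proof of \Cref{theorem: equivalent_descriptions_of_bkf_modules}, where $M\otimes_{A_\inf}B^+_\crys$ is the $\varphi$-module attached to the modified bundle $\mathcal{F}'$ and $M$ is the module of global sections of a bundle on $\Spa(A_\inf)\setminus V(p,[\varpi])$ whose fiber at $x_\crys$ is $\overline M[\frac{1}{p}]$).

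It remains to arrange that $\beta$ reduces to the identity over $W(k)[\frac{1}{p}]$. Base changing $\beta$ along $B^+_\crys\to W(k)[\frac{1}{p}]$, both sides become $\overline M[\frac{1}{p}]$ by the computations above, so $\beta$ induces a $\varphi$-linear automorphism $\bar\beta$ of the isocrystal $\overline M[\frac{1}{p}]$; replacing $\beta$ by $(\bar\beta^{-1}\otimes_{W(k)[\frac{1}{p}]}B^+_\crys)\circ\beta$ yields the desired isomorphism.
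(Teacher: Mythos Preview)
Your proof is correct and is essentially an unwinding of the references the paper cites: the paper's own proof simply points to \cite[Lemma 4.27.]{bhatt_morrow_scholze_integral_p_adic_hodge_theory} and \cite[Corollaire 11.1.14]{fargues_fontaine_courbes_et_fibres_vectoriels_en_theorie_de_hodge_p_adique}, and the latter is exactly \Cref{theorem: vector_bundles_and_varphi_modules_over_b_+_crys}, whose content (together with the classification of bundles on $X_{\mathrm{FF}}$ over the algebraically closed residue field $k$) is precisely your argument that two $\varphi$-modules over $B^+_\crys$ are isomorphic once their reductions to $W(k)[\frac{1}{p}]$ agree as isocrystals. One terminological nit: in your final step, $\bar\beta$ is a $\varphi$-\emph{equivariant} (i.e., Frobenius-commuting, $W(k)[\frac{1}{p}]$-linear) automorphism of the isocrystal, not a $\varphi$-linear (semilinear) one.
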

\begin{proof}
See \cite[Lemma 4.27.]{bhatt_morrow_scholze_integral_p_adic_hodge_theory} which refers to \cite[Corollaire 11.1.14]{fargues_fontaine_courbes_et_fibres_vectoriels_en_theorie_de_hodge_p_adique}.  
\end{proof}

We remark that $\tilde{\xi}$ is invertible in $B^+_\crys$ (but $\xi$ is not), hence the Frobenius $\varphi_M\colon \varphi^{\ast}M[\frac{1}{\tilde{\xi}}]\xrightarrow{\cong} M[\frac{1}{\tilde{\xi}}]$ defines an isomorphism
$$
\varphi_M\otimes \varphi_{B^+_\crys}\colon \varphi^\ast(M\otimes_{A_\inf} B^+_\crys)\xrightarrow{\cong} M\otimes_{A_\inf}B^+_\crys.
$$

We fix a section $k\to \mathcal{O}_C/p$ of the projection $\mathcal{O}_C/p\to k$. This yields a section $W(k)\to A_\inf$ of the projection $A_\inf\to W(k)$.

\begin{definition}
\label{definition: rigidified_bkf_modules}  
A rigidified Breuil-Kisin-Fargues module is a Breuil-Kisin-Fargues module $(M,\varphi_M)$ together with a $\varphi$-equivariant isomorphism
$$
\alpha\colon M\otimes_{A_\inf}B^+_\crys\xrightarrow{\simeq} (M\otimes_{A_\inf}W(k))\otimes_{W(k)}B^+_\crys
$$
such that $\alpha$ reduces to the identity over $W(k)[\frac{1}{p}]$, i.e., 
$$\alpha\otimes_{B^+_\crys}W(k)[\frac{1}{p}]=\Id_{M\otimes_{A_\inf}W(k)[\frac{1}{p}]}.
$$
\end{definition}
We call an isomorphism $\alpha$ as in \Cref{definition: rigidified_bkf_modules} a rigidification of $(M,\varphi_M)$.
We denote by
$$
\mathrm{BKF}_{\mathrm{rig}}
$$
the category of rigidified Breuil-Kisin-Fargues modules, i.e., its objects are rigidified Breuil-Kisin-Fargues modules and the morphisms are morphisms of Breuil-Kisin-Fargues modules respecting the given rigidifications.
We illustrate the effect of imposing a rigidification in the case of Breuil-Kisin-Fargues modules of rank $1$.

\begin{lemma}
\label{lemma:bkf-modules-of-rank-1}
Each finite free Breuil-Kisin-Fargues modules of rank $1$ is isomorphic to
 the Breuil-Kisin-Fargues module $A_\inf\{d\}$ for some $d\in \Z$. For $d,d^\prime\in \Z$ we have
$$
\mathrm{Hom}_{\mathrm{BKF}}(A_\inf\{d\},A_\inf\{d^\prime\})\cong
\begin{cases}
\Z_p & \text{ if } d\leq d^\prime\\
0 & \text{ otherwise }  
\end{cases}
$$
while
$$
\mathrm{Hom}_{\mathrm{BKF}_{\mathrm{rig}}}(A_\inf\{d\},A_\inf\{d^\prime\})\cong
\begin{cases}
\Z_p & \text{ if } d= d^\prime\\
0 & \text{ otherwise }  
\end{cases}
$$
\end{lemma}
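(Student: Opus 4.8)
The plan is to transport everything to the description of finite free Breuil-Kisin-Fargues modules by pairs $(T,\Xi)$ from \Cref{theorem: equivalent_descriptions_of_bkf_modules}(ii), using that $B^+_\dR$ is a complete discrete valuation ring with uniformiser $\xi$ and that $\Z_p\setminus\{0\}\subseteq (B^+_\dR)^\times$ (as $B^+_\dR$ is a $\Q_p$-algebra). A rank $1$ finite free object corresponds to a pair $(T,\Xi)$ with $T$ free of rank $1$ over $\Z_p$ and $\Xi\subseteq T\otimes_{\Z_p}B_\dR\cong B_\dR$ a $B^+_\dR$-lattice; after choosing a $\Z_p$-basis of $T$ (rescaling it by a unit of $\Z_p$ does not change $\Xi$) we get $\Xi=\xi^{-d}B^+_\dR$ for a unique $d\in\Z$. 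Since $A_\inf\{d\}$ corresponds to $(\Z_p,\xi^{-d}B^+_\dR)$, as recorded after \Cref{theorem: equivalent_descriptions_of_bkf_modules}, this gives the first claim, distinct $d$ yielding non-isomorphic pairs.

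For $\Hom_{\mathrm{BKF}}$ I would stay on the $(T,\Xi)$-side: a morphism $A_\inf\{d\}\to A_\inf\{d'\}$ is a $\Z_p$-linear map $\Z_p\to\Z_p$, i.e.\ an element $c\in\Z_p$, whose $B_\dR$-linear extension carries $\xi^{-d}B^+_\dR$ into $\xi^{-d'}B^+_\dR$, i.e.\ $c\,\xi^{d'-d}\in B^+_\dR$; a nonzero $c$ has $\xi$-valuation $0$, so this holds precisely when $d\le d'$, giving $\Z_p$ if $d\le d'$ and $0$ otherwise. (Equivalently, representing $A_\inf\{d\}$ up to isomorphism as $(A_\inf,\varphi_M)$ with $\varphi_M$ multiplication by $\tilde{\xi}^{-d}$, a morphism is an $a\in A_\inf$ with $\varphi(a)=\tilde{\xi}^{d'-d}a$; this is $A_\inf^{\varphi=1}=\Z_p$ when $d=d'$, is $\Z_p\cdot\mu^{d'-d}$ when $d<d'$ using $\varphi(\mu)=\tilde{\xi}\mu$ and $A_\inf[\tfrac1\mu]^{\varphi=1}=\Z_p$, and is $0$ when $d>d'$, since reducing $a=\tilde{\xi}^{d-d'}\varphi(a)$ modulo $p$ shows a nonzero $\bar a\in\mathcal{O}_{C^\flat}$ would satisfy $(1-p)v(\bar a)=(d-d')\,v(\tilde{\xi}\bmod p)>0$, which is impossible, whence $a\in pA_\inf$ and then $a=0$ by $p$-adic separatedness.)

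For $\Hom_{\mathrm{BKF}_{\mathrm{rig}}}$ I fix rigidifications $\alpha_d$ of $A_\inf\{d\}$ (they exist by \Cref{lemma: existence_of_rigidifications}); a morphism $f$ of Breuil-Kisin-Fargues modules is compatible with the rigidifications iff $\alpha_{d'}\circ(f\otimes_{A_\inf}B^+_\crys)=(\bar f\otimes_{W(k)}B^+_\crys)\circ\alpha_d$ with $\bar f:=f\otimes_{A_\inf}W(k)$. Now $A_\inf\{d\}\otimes_{A_\inf}W(k)[\tfrac1p]$ is a rank $1$ isocrystal over $W(k)[\tfrac1p]$ whose slope is determined by $d$ (it is $-d$ for the normalisation above, because $\tilde{\xi}$ maps to $p$ up to a unit under $A_\inf\to W(k)$). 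Hence for $d\ne d'$ the map $\bar f$ vanishes after inverting $p$, hence $\bar f=0$ (source and target are finite free over $W(k)$), so the displayed equation and bijectivity of $\alpha_{d'}$ force $f\otimes_{A_\inf}B^+_\crys=0$, and injectivity of $A_\inf\to B^+_\crys$ gives $f=0$. For $d=d'$ every endomorphism of $A_\inf\{d\}$ in $\mathrm{BKF}$ is a scalar in $\Z_p=A_\inf^{\varphi=1}$, which trivially intertwines $\alpha_d$ with itself, so $\Hom_{\mathrm{BKF}_{\mathrm{rig}}}(A_\inf\{d\},A_\inf\{d\})=\Z_p$. Altogether $\Hom_{\mathrm{BKF}_{\mathrm{rig}}}(A_\inf\{d\},A_\inf\{d'\})$ is $\Z_p$ for $d=d'$ and $0$ otherwise (and this is independent of the chosen rigidifications).

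All the commutative-algebra inputs are recorded in the text and the arguments are short. I expect the only real point of care — and the conceptual heart of the rigidified statement — to be the contrast that $\mu$ becomes $0$ in the crystalline realisation $W(k)$ (equivalently: passing to $W(k)[\tfrac1p]$ changes the slope, so Breuil-Kisin-Fargues morphisms between differently-twisted $A_\inf\{d\}$ die) while $\mu$ stays a non-zero-divisor in $B^+_\crys$; this is exactly why imposing a rigidification collapses $\Hom$ to the diagonal.
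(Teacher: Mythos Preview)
Your argument is correct and follows essentially the same route as the paper. For the classification and the computation of $\Hom_{\mathrm{BKF}}$ the paper simply appeals to \Cref{theorem: equivalent_descriptions_of_bkf_modules} and the example after it, which is exactly your $(T,\Xi)$ computation; for $\Hom_{\mathrm{BKF}_{\mathrm{rig}}}$ the paper argues, as you do, that the isocrystal $A_\inf\{d\}\otimes_{A_\inf}W(k)[1/p]$ has slope determined by $d$ (because $\tilde\xi\mapsto p$ in $W(k)$), so for $d\neq d'$ there are no nonzero isocrystal maps. Your additional step---deducing $f=0$ from $\bar f=0$ via the rigidification identity and injectivity of $A_\inf\hookrightarrow B^+_\crys$---makes explicit what the paper leaves implicit. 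One cosmetic point: the paper records the slope as $d$ rather than $-d$, but this is a matter of trivialisation conventions and is immaterial to the argument, which only needs that distinct $d$ give distinct slopes.
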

\begin{proof}
The first statement and the computation of $\mathrm{Hom}_{\mathrm{BKF}}(A_{\inf}\{d\},A_\inf\{d^\prime\})$ follows from \Cref{theorem: equivalent_descriptions_of_bkf_modules} (and the example following it). To prove
$$
\mathrm{Hom}_{\mathrm{BKF}_{\mathrm{rig}}}(A_\inf\{d\},A_\inf\{d^\prime\})=0
$$ 
if $d\neq d^\prime$ it suffices to show that the isocrystal $(A_\inf\{d\}\otimes_{A_\inf}W(k)[\frac{1}{p}],\varphi_{A_{\inf}\{d\}}\otimes \mathrm{Id})$ has slope $d$ because there are no morphisms between isocrystals of different slopes.
But the image of $$\tilde{\xi}=\frac{[\epsilon^p]-1}{[\epsilon]-1}=1+[\epsilon]+\ldots + [\epsilon^{p-1}] $$ in $W(k)$ is $p$ because $k$ does not contain non-trivial $p$-power roots of unity.
Hence, the isocrystal $(A_\inf\{d\}\otimes_{A_\inf}W(k)[\frac{1}{p}],\varphi_{A_{\inf}\{d\}}\otimes \mathrm{Id})$ is isomorphic to $(W(k)[\frac{1}{p}],p^{d}\varphi)$ which is of slope $d$. 
\end{proof}

The category $\mathrm{BKF}_{\mathrm{rig}}$ of rigidified Breuil-Kisin-Fargues modules is again an exact tensor category with the exact structure and tensor product inherited from $\mathrm{BKF}$ (taking the tensor product of the rigidifications), but it is moreover abelian (cf.\ \Cref{theorem: rigidified_bkf_modules_abelian}) contrary to the case of the category $\mathrm{BKF}$ of non-rigidified Breuil-Kisin-Fargues modules.

To prove this we recall the following criterion for an $A_\inf$-module $M$ to satisfy that $M[\frac{1}{p}]$ is finite projective over $A_\inf[\frac{1}{p}]$.
Recall the element $\mu=[\epsilon]-1\in A_\inf$ (cf.\ the notation in the beginning of \Cref{section: breuil_kisin_fargues_modules_up_to_isogeny}).

\begin{lemma}
\label{lemma: criterion_for_a_inf_modules_to_be_free_after_inverting_p}
Let $M$ be a finitely presented $A_\inf$-module. Assume
\begin{itemize}
\item[i)] $M[\frac{1}{p\mu}]$ is finite projective over $A_\inf[\frac{1}{p\mu}]$.
\item[ii)] $M\otimes_{A_\inf}B^+_{\crys}$ is finite projective over $B^+_\crys$. 
\end{itemize}
Then $M[\frac{1}{p}]$ is finite free over $A_{\inf}[\frac{1}{p}]$.
\end{lemma}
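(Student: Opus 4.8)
The plan is to reduce to a statement about vector bundles on the punctured spectrum $\Spec(A_\inf) \setminus \{\mathfrak{m}\}$, where $\mathfrak{m}$ denotes the closed point, and then patch. First I would localize: the scheme $\Spec(A_\inf[\tfrac{1}{p}])$ is covered by the two opens $U_1 = \Spec(A_\inf[\tfrac{1}{p\mu}])$ and the complement $U_2$ of the closed subset $V(\mu)$ inside $\Spec(A_\inf[\tfrac1p])$. By hypothesis i), the module $M$ is already finite projective over $U_1$. The key point is that $U_2$ should be related to $\Spa(A_\inf)\setminus V(p)$ and hence to $B^+$ (and after a further localization to $B^+_\crys$), so that hypothesis ii) controls $M$ there. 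Concretely, $\mu = [\epsilon]-1$ vanishes exactly at the points above the distinguished point $\infty$ (and its Frobenius translates) on the Fargues--Fontaine curve; inverting $\mu$ together with $p$ therefore removes precisely the locus where the de Rham period ring intervenes, and away from it $M$ becomes visible after base change along $A_\inf[\tfrac1p] \to B^+_\crys$.

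The key steps, in order, are as follows. \emph{Step 1.} Use \Cref{lemma: structure_of_a_inf_modules}: since $M$ is finitely presented and $M[\tfrac1p]$ is (we want to show) finite projective, $M$ is perfect as an $A_\inf$-module; in any case the structural exact sequence $0 \to M_{\mathrm{tor}} \to M \to M_{\mathrm{free}} \to \overline{M} \to 0$ is available. Since $M_{\mathrm{tor}}$ is killed by a power of $p$ and $\overline{M}$ is supported at the closed point, both vanish after inverting $p$, so $M[\tfrac1p] = M_{\mathrm{free}}[\tfrac1p]$, which is automatically finite free over $A_\inf[\tfrac1p]$ by \cite[Corollary 4.12.]{bhatt_morrow_scholze_integral_p_adic_hodge_theory}. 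Thus the whole statement reduces to showing that the natural map $M \to M_{\mathrm{free}}$ becomes an isomorphism after inverting $p$, equivalently that $\overline{M}[\tfrac1p] = 0$ and $M_{\mathrm{tor}}[\tfrac1p] = 0$; the latter is automatic, so the content is $\overline{M}[\tfrac1p] = 0$, i.e. that $\overline{M}$, which is supported at $\mathfrak{m}$ and finitely presented, is actually killed by a power of $p$. \emph{Step 2.} Here is where hypothesis ii) enters: base changing the sequence along $A_\inf \to A_\crys$ (which kills nothing relevant once we invert $p$, since $A_\crys[\tfrac1p] = B^+_\crys$ and $M\otimes B^+_\crys$ is finite projective by assumption) forces the defect term to vanish rationally, because a finite projective module has no quotient supported at a point of codimension $\geq 2$. \emph{Step 3.} Combine with hypothesis i): on $\Spec A_\inf[\tfrac{1}{p\mu}]$ the module is already projective, so $\overline M$ is supported on $V(p,\mu) \cup V(p,[\varpi])$; steps 1--2 then pin down that $M_{\mathrm{free}}/M$ has trivial generic (in $p$) fiber.

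The main obstacle I expect is Step 2/3: controlling the defect term $\overline{M}$ precisely. The subtlety is that $\Spec(A_\inf)$ has Krull dimension $2$ and the closed point is the intersection of $V(p)$ and $V([\varpi])$ (the "crystalline" and "étale" special points), so an $A_\inf$-module supported at the closed point can fail to be killed by a power of $p$ in general — the assertion is genuinely using conditions i) and ii) together to exclude this. I would handle it by passing to the punctured spectrum $X := \Spec(A_\inf) \setminus \{\mathfrak{m}\}$, showing that $M|_X$ is a vector bundle (glueing the projective locus from i) with the projectivity coming from ii) along the appropriate opens, using that $B^+_\crys$ together with $A_\inf[\tfrac{1}{p\mu}]$ suffices to cover the part of $X$ lying over $p=0$), and then invoking that $H^0(X, M|_X) = M_{\mathrm{free}}$ with the cokernel $\overline M$ being exactly the obstruction to $M$ being a vector bundle on all of $\Spec A_\inf$ — an obstruction which, after inverting $p$, is measured by local cohomology at $\mathfrak m$ of a module that is already locally free in codimension $\leq 1$, hence by a depth/Serre-criterion argument must be bounded $p$-torsion. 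The bookkeeping of which period rings see which points of $\Spa(A_\inf)$ is the delicate part; everything else is formal manipulation of the sequence in \Cref{lemma: structure_of_a_inf_modules}.
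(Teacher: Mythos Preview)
The paper does not actually prove this lemma; it simply cites \cite[Lemma 4.19.]{bhatt_morrow_scholze_integral_p_adic_hodge_theory}. So the comparison is between your sketch and the BMS argument, and here there is a genuine gap in your Step 1.

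Your reduction via \Cref{lemma: structure_of_a_inf_modules} is circular. That lemma has as a \emph{hypothesis} that $M[\tfrac1p]$ is finite projective over $A_\inf[\tfrac1p]$, which is precisely the conclusion you are trying to establish. You flag this yourself with the parenthetical ``(we want to show)'', but then assert ``in any case the structural exact sequence \ldots\ is available'' --- it is not. For a general finitely presented $A_\inf$-module the quotient $M/M_{\mathrm{tor}}$ need not embed in a finite free module, and the module $M_{\mathrm{free}}$ in that sequence is \emph{constructed} using the freeness of $M[\tfrac1p]$ (cf.\ the proof of \cite[Proposition 4.13]{bhatt_morrow_scholze_integral_p_adic_hodge_theory}, where $M_{\mathrm{free}} = H^0(\Spec A_\inf, M/M_{\mathrm{tor}})$ is shown to be finite free only under that assumption). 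So the entire framework of Steps 1--3, which aims to show $\overline{M}[\tfrac1p]=0$, collapses: there is no $\overline{M}$ to speak of until the conclusion is already known.

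Your final paragraph is closer to the actual argument in BMS, but still underspecified. The point is not a Zariski glueing on $\Spec A_\inf[\tfrac1p]$ --- $B^+_\crys$ is not the ring of functions on a Zariski open --- but rather a descent/patching statement of Beauville--Laszlo type: one shows that finite projective modules over $A_\inf[\tfrac1p]$ are determined by their base change to $A_\inf[\tfrac1{p\mu}]$ together with their base change to (a suitable completion along $V(\mu)$ related to) $B^+_\crys$, with compatibility over the overlap. Hypotheses i) and ii) then give projectivity on each piece, and one checks the Fitting ideals of $M[\tfrac1p]$ are trivial. If you want to give a self-contained proof rather than cite BMS, that is the route to pursue; the structural sequence of \Cref{lemma: structure_of_a_inf_modules} is a consequence, not an input.
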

\begin{proof}
See \cite[Lemma 4.19.]{bhatt_morrow_scholze_integral_p_adic_hodge_theory}.  
\end{proof}

We record the following corollary in the case of Breuil-Kisin-Fargues modules.

\begin{corollary}
\label{corollary: cokernel_of_bkf_modules_finite_projective_after_inverting_mu} 
Let $f\colon (M,\varphi_M)\to (N,\varphi_N)$ be a morphism of Breuil-Kisin-Fargues modules and let $Q:=\mathrm{Coker}(M\xrightarrow{f} N)$ be the cokernel (of $A_\inf$-modules). Then
$Q[\frac{1}{p\mu}]$ is a finite free $A_\inf[\frac{1}{p\mu}]$-module.
\end{corollary}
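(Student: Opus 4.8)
The plan is to reduce to the case where the underlying $A_\inf$-modules of $M$ and $N$ are finite free, and then to trivialize both over $A_\inf[\frac{1}{\mu}]$ via their \'etale realizations, so that $Q[\frac{1}{p\mu}]$ becomes visibly obtained by extension of scalars from a finite-dimensional $\Q_p$-vector space.

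First I would handle the reduction. Applying the functorial four-term exact sequence of \Cref{lemma: structure_of_a_inf_modules} to $M$ and to $N$ --- which, as recalled after that lemma, is an exact sequence of Breuil-Kisin-Fargues modules --- produces a morphism $f_{\mathrm{free}}\colon M_{\mathrm{free}}\to N_{\mathrm{free}}$ of finite free Breuil-Kisin-Fargues modules fitting into a commutative diagram with $f$. Since $M_{\mathrm{tor}}[\frac{1}{p}]=\overline{M}[\frac{1}{p}]=0$ (and likewise for $N$), inverting $p$ turns $M\to M_{\mathrm{free}}$ and $N\to N_{\mathrm{free}}$ into isomorphisms carrying $f[\frac{1}{p}]$ to $f_{\mathrm{free}}[\frac{1}{p}]$; this is exactly the move made in the proof of \Cref{lemma: bkf_modules_rigid_exact_tensor_category}. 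As cokernels commute with the base change $A_\inf\to A_\inf[\frac{1}{p\mu}]$, it suffices to treat $f_{\mathrm{free}}$, so I may assume that $M$ and $N$ are finite free.

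Next I would invoke the trivialization over $A_\inf[\frac{1}{\mu}]$. Writing $T_M:=(M\otimes_{A_\inf}W(C^\flat))^{\varphi_M=1}$ and $T_N$ for the \'etale realizations, and using that $\tilde{\xi}$ is a unit in $W(C^\flat)$, the morphism $f$ induces a $\Z_p$-linear map $f_T\colon T_M\to T_N$ compatible with the multiplication isomorphisms $T_M\otimes_{\Z_p}W(C^\flat)\cong M\otimes_{A_\inf}W(C^\flat)$ and $T_N\otimes_{\Z_p}W(C^\flat)\cong N\otimes_{A_\inf}W(C^\flat)$. By \cite[Lemma 4.26.]{bhatt_morrow_scholze_integral_p_adic_hodge_theory} (as used in the proof of \Cref{theorem: equivalent_descriptions_of_bkf_modules}) these restrict to isomorphisms after inverting $\mu$, still compatibly with $f_T$ and $f$. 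Taking cokernels, which commute with base change, yields
$$
Q\otimes_{A_\inf}A_\inf[\tfrac{1}{\mu}]\cong\mathrm{Coker}(f_T)\otimes_{\Z_p}A_\inf[\tfrac{1}{\mu}].
$$
Since $\mathrm{Coker}(f_T)$ is a finitely generated $\Z_p$-module, $\mathrm{Coker}(f_T)\otimes_{\Z_p}\Q_p$ is a finite-dimensional $\Q_p$-vector space, so inverting $p$ identifies $Q[\frac{1}{p\mu}]$ with $\big(\mathrm{Coker}(f_T)\otimes_{\Z_p}\Q_p\big)\otimes_{\Q_p}A_\inf[\frac{1}{p\mu}]$, a finite free $A_\inf[\frac{1}{p\mu}]$-module.

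The substantive input is \cite[Lemma 4.26.]{bhatt_morrow_scholze_integral_p_adic_hodge_theory}, namely that a finite free Breuil-Kisin-Fargues module is trivialized by its \'etale realization after inverting $\mu$; the rest is formal. The one point that needs care --- and where a careless argument could slip --- is the functoriality of this trivialization in $f$, i.e.\ that the identification of $M\otimes_{A_\inf}A_\inf[\frac{1}{\mu}]$ with $T_M\otimes_{\Z_p}A_\inf[\frac{1}{\mu}]$ intertwines $f$ with $f_T$; but this is immediate, since both sides arise by restricting the multiplication map $T_{(-)}\otimes_{\Z_p}W(C^\flat)\to(-)\otimes_{A_\inf}W(C^\flat)$, which is manifestly natural in the Breuil-Kisin-Fargues module. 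Alternatively one can bypass the reduction step by working in the isogeny category $\mathrm{BKF}^\circ$, where $M$ and $N$ are already isomorphic to finite free objects.
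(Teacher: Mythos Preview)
Your proof is correct and follows essentially the same route as the paper: both rely on \cite[Lemma 4.26.]{bhatt_morrow_scholze_integral_p_adic_hodge_theory} to trivialize $M$ and $N$ over $A_\inf[\frac{1}{\mu}]$ via their \'etale realizations, then take cokernels and invert $p$. The only difference is that you make the reduction to the finite free case explicit via \Cref{lemma: structure_of_a_inf_modules}, whereas the paper applies the trivialization directly; your extra care here is harmless and arguably clarifies why the cited lemma (stated for finite free Breuil--Kisin--Fargues modules) applies.
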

\begin{proof}
By \cite[Lemma 4.26.]{bhatt_morrow_scholze_integral_p_adic_hodge_theory} there is a canonical isomorphism
$$
M\otimes_{A_\inf}A_\inf[\frac{1}{\mu}]\cong T_M\otimes_{\Z_p}A_\inf[\frac{1}{\mu}]
$$
with $T_M:=(M\otimes_{A_\inf}W(C^\flat))^{\varphi_M=1}$
and similarly for $N$,
$$
N\otimes_{A_\inf}A_\inf[\frac{1}{\mu}]\cong T_N\otimes_{\Z_p}A_\inf[\frac{1}{\mu}],
$$
with $T_N:=(N\otimes_{A_\inf}W(C^\flat))^{\varphi_N=1}$.
In particular, we get
$$
Q\otimes_{A_\inf}A_{\inf}[\frac{1}{p\mu}]\cong Q^\prime\otimes_{\Q_p}A_{\inf}[\frac{1}{p\mu}]
$$
where $Q^\prime$ is the cokernel of the morphism $T_M\otimes_{\Z_p}\Q_p\to T_N\otimes_{\Z_p}\Q_p$ induced by $f$.
Thus we see that
$$
Q\otimes_{A_\inf}A_{\inf}[\frac{1}{p\mu}]
$$
is finite free.
\end{proof}

We remark that in general $Q$ (with the induced Frobenius from $(N,\varphi_N)$) will not be a Breuil-Kisin-Fargues modules, because $Q[\frac{1}{p}]$ need not be a finite projective (equivalently, free) $A_\inf[\frac{1}{p}]$-module. The purpose of introducing the rigidifications is exactly to ensure this freeness (cf.\ the remark after \cite[Lemma 4.27.]{bhatt_morrow_scholze_integral_p_adic_hodge_theory}).  

\begin{theorem}
\label{theorem: rigidified_bkf_modules_abelian}  
The category $\mathrm{BKF}_{\mathrm{rig}}$ of rigidified Breuil-Kisin-Fargues modules is an abelian tensor category.
\end{theorem}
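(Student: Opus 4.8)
The plan is to verify directly that $\mathrm{BKF}_{\mathrm{rig}}$ satisfies the axioms of an abelian category, the tensor structure being the one inherited from $\mathrm{BKF}$ by tensoring rigidifications. The category is visibly $\Z_p$-linear and has finite direct sums, so it remains to produce kernels and cokernels and to check that the canonical morphism from the coimage to the image of any morphism is an isomorphism. The guiding principle is that every construction will be performed on the underlying $A_\inf$-modules, with Frobenii, rigidifications and universal properties induced in the obvious way; the only real work is to check that the resulting $A_\inf$-module again satisfies the two finiteness conditions of \Cref{definition: bkf_module} (and that the induced trivialization is a rigidification), and it is precisely here that the rigidification will be used in an essential way. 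Throughout I would use that $\varphi\colon A_\inf\to A_\inf$ is an isomorphism (as $\mathcal{O}_{C^\flat}$ is perfect), so $\varphi^\ast$ is exact, and that $-\otimes_{A_\inf}A_\inf[\frac{1}{p}]$, $-\otimes_{A_\inf}A_\inf[\frac{1}{\tilde{\xi}}]$ and $-\otimes_{A_\inf}B^+_\crys$ are exact (the last since $B^+_\crys$ is flat over $A_\inf$), which is what lets induced Frobenii and rigidifications make sense on subquotients.

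First I would construct \emph{cokernels}. Given $f\colon (M,\varphi_M,\alpha_M)\to (N,\varphi_N,\alpha_N)$, let $Q:=\mathrm{coker}(f)$ be the cokernel of $A_\inf$-modules; it is finitely presented and, by exactness of $\varphi^\ast$ and of $-\otimes_{A_\inf}A_\inf[\frac{1}{\tilde{\xi}}]$, inherits an isomorphism $\varphi_Q\colon \varphi^\ast(Q)[\frac{1}{\tilde{\xi}}]\cong Q[\frac{1}{\tilde{\xi}}]$. To see that $Q[\frac{1}{p}]$ is finite free over $A_\inf[\frac{1}{p}]$ I would apply \Cref{lemma: criterion_for_a_inf_modules_to_be_free_after_inverting_p}: condition (i) is \Cref{corollary: cokernel_of_bkf_modules_finite_projective_after_inverting_mu}, while for condition (ii) one uses the compatibility of $\alpha_M,\alpha_N$ with $f$ to identify
$$
Q\otimes_{A_\inf}B^+_\crys\;\cong\;\mathrm{coker}\big((M\otimes_{A_\inf}W(k))\otimes_{W(k)}B^+_\crys\to (N\otimes_{A_\inf}W(k))\otimes_{W(k)}B^+_\crys\big)\;=\;\overline{Q}\otimes_{W(k)}B^+_\crys,
$$
where $\overline{Q}:=\mathrm{coker}(M\otimes_{A_\inf}W(k)\to N\otimes_{A_\inf}W(k))$ is a finitely generated $W(k)$-module. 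Since $p$ is invertible in $B^+_\crys$ and $W(k)[\frac{1}{p}]$ is a field ($k$ being algebraically closed, $W(k)$ is a complete discrete valuation ring), $\overline{Q}\otimes_{W(k)}B^+_\crys\cong \overline{Q}[\frac{1}{p}]\otimes_{W(k)[\frac{1}{p}]}B^+_\crys$ is finite free over $B^+_\crys$. Hence $Q$ is a Breuil-Kisin-Fargues module, the induced trivialization $\alpha_Q$ (still reducing to the identity over $W(k)[\frac{1}{p}]$) is a rigidification, and $(Q,\varphi_Q,\alpha_Q)$ is readily checked to be the categorical cokernel of $f$.

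Then I would turn to \emph{kernels and images}, whose core is the claim that the kernel $K$ of any surjection $\pi\colon A\twoheadrightarrow B$ in $\mathrm{BKF}_{\mathrm{rig}}$ is again an object of $\mathrm{BKF}_{\mathrm{rig}}$. Since $A$ and $B$ are perfect $A_\inf$-modules (cf.\ the remark following \Cref{lemma: structure_of_a_inf_modules} together with \Cref{definition: bkf_module}), so is $K=\mathrm{fib}(A\to B)$, hence $K$ is in particular finitely presented — this is the step where the non-noetherianness of $A_\inf$ has to be circumvented. Inverting $p$ in $0\to K\to A\to B\to 0$ gives a short exact sequence of finite projective $A_\inf[\frac{1}{p}]$-modules which splits (as $B[\frac{1}{p}]$ is projective), so $K[\frac{1}{p}]$ is a direct summand of $A[\frac{1}{p}]$ and hence finite free; $\varphi_K$ is induced as above. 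For the rigidification I would tensor the sequence with $B^+_\crys$ (exact) and use $\alpha_A,\alpha_B$ to identify $K\otimes_{A_\inf}B^+_\crys$ with the kernel of $(A\otimes_{A_\inf}W(k))\otimes_{W(k)}B^+_\crys\to (B\otimes_{A_\inf}W(k))\otimes_{W(k)}B^+_\crys$; since $B^+_\crys$ is flat (indeed torsion-free) over the discrete valuation ring $W(k)$ and $p$ is invertible in $B^+_\crys$, this kernel agrees with $(K\otimes_{A_\inf}W(k))\otimes_{W(k)}B^+_\crys$ — the discrepancy between $K\otimes_{A_\inf}W(k)$ and $\ker(A\otimes W(k)\to B\otimes W(k))$ is $p$-power torsion coming from $\mathrm{Tor}_1^{A_\inf}(B,W(k))$ and dies after inverting $p$. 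Thus $K\in\mathrm{BKF}_{\mathrm{rig}}$. Applying this twice — first to $N\twoheadrightarrow Q=\mathrm{coker}(f)$, which exhibits the $A_\inf$-module image $I:=f(M)=\ker(N\to Q)$ as a rigidified Breuil-Kisin-Fargues module, and then to $M\twoheadrightarrow I$ — shows that $\ker(f)=\ker(M\to I)$ lies in $\mathrm{BKF}_{\mathrm{rig}}$ and is the categorical kernel of $f$. To conclude, for any $f$ the coimage $\mathrm{coker}(\ker f\hookrightarrow M)=M/\ker(f)$ (by the cokernel computation) and the image $I=f(M)$ are related by the first isomorphism theorem for $A_\inf$-modules, giving a $\varphi$-equivariant, rigidification-compatible isomorphism $M/\ker(f)\xrightarrow{\ \sim\ }I$ which is exactly the canonical map $\mathrm{coim}(f)\to\mathrm{im}(f)$; hence $\mathrm{BKF}_{\mathrm{rig}}$ is abelian, and it is a tensor category since $\otimes$ and its constraints are inherited from $\mathrm{BKF}$. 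The main obstacle I anticipate is the control of kernels over the badly non-noetherian ring $A_\inf$: getting finite presentation of $K$ (resolved via perfectness and \Cref{lemma: structure_of_a_inf_modules}) and checking that $\alpha_K$ is genuinely a rigidification; conceptually, the single decisive point is the verification of hypothesis (ii) of \Cref{lemma: criterion_for_a_inf_modules_to_be_free_after_inverting_p} for the cokernel, which is precisely the property that fails without a rigidification (compare the discussion following \Cref{corollary: cokernel_of_bkf_modules_finite_projective_after_inverting_mu}).
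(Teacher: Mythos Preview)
Your argument is correct and follows the same strategy as the paper: show the cokernel $Q$ is a Breuil-Kisin-Fargues module via \Cref{lemma: criterion_for_a_inf_modules_to_be_free_after_inverting_p} and \Cref{corollary: cokernel_of_bkf_modules_finite_projective_after_inverting_mu}, then deduce the same for the kernel using perfectness (the paper does this in one step by applying \cite[Corollary 4.17]{bhatt_morrow_scholze_integral_p_adic_hodge_theory} to the two-term complex $M\to N$, whereas you factor through the image and treat kernels of surjections twice). One caveat: your blanket claim that $B^+_\crys$ is flat over $A_\inf$ is neither asserted nor needed in the paper --- exactness of $-\otimes_{A_\inf}B^+_\crys$ on the sequences in question follows instead from the fact that all the modules involved are free after inverting $p$ (which is exactly how the paper phrases it), and your own later arguments already invoke this freeness, so the flatness assertion can simply be dropped.
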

\begin{proof}
It suffices to prove that $\mathrm{BKF}_{\mathrm{rig}}$ is abelian.
Let $(M,\varphi_M,\alpha_M)$ and $(N,\varphi_N,\alpha_N)$ be two rigidified Breuil-Kisin-Fargues modules with Frobenii $\varphi_M$ resp. $\varphi_N$ (after inverting $\tilde{\xi}$) and rigidifications $\alpha_M$ and $\alpha_N$. Let $f\colon (M,\varphi_M,\alpha_M)\to (N,\varphi_N,\alpha_N)$ be a morphism of rigidified Breuil-Kisin-Fargues modules. Consider the exact sequence
$$
0\to K\to M\xrightarrow{f}N\to Q\to 0
$$
of $A_\inf$-modules. We will show that the kernel $K$ and the cokernel $Q$ are finitely presented over $A_\inf$ and admit Frobenii $\varphi_K$, $\varphi_Q$ (after inverting $\tilde{\xi}$) and rigidifications $\alpha_K$, $\alpha_{Q}$ induced from $\varphi_M, \varphi_N$ resp.\ $\alpha_M,\alpha_N$, such that $(K,\varphi_K,\alpha_K)$ and $(Q,\varphi_Q,\alpha_Q)$ are rigidified Breuil-Kisin-Fargues modules. This will then imply that the category $\mathrm{BKF}_{\mathrm{rig}}$ is abelian. In particular, we have to prove that $K[\frac{1}{p}]$ and $Q[\frac{1}{p}]$ are finite free.
As $f$ commutes with $\varphi_M$ resp.\ $\varphi_N$ it is clear that $K$ and $Q$ are canonically equipped with Frobenii $\varphi_K$ and $\varphi_Q$.
Clearly, the module $Q$ is finitely presented. By \Cref{corollary: cokernel_of_bkf_modules_finite_projective_after_inverting_mu} the module $Q[\frac{1}{p\mu}]$ is finite projective over $A_\inf[\frac{1}{p\mu}]$. Moreover, as $f$ respects the rigidifications
$$
\alpha_M\colon M\otimes_{A_\inf}B^+_{\crys}\cong \overline{M}\otimes_{W(k)}B^+_\crys
$$
resp.\
$$
\alpha_N\colon N\otimes_{A_\inf}B^+_{\crys}\cong \overline{N}\otimes_{W(k)}B^+_\crys,
$$
where $\overline{M}:=M\otimes_{A_\inf}W(k)$ resp.\ $\overline{N}:=N\otimes_{A_\inf}W(k)$,
 we see that $Q\otimes_{A_\inf}B^+_{\crys}$ is isomorphic to the cokernel of
$$
\overline{M}\otimes_{W(k)}B^+_\crys\to \overline{N}\otimes_{W(k)}B^+_\crys,
$$
which is free because the cokernel of $\overline{M}[\frac{1}{p}]\to \overline{N}[\frac{1}{p}]$ is free as $W(k)[\frac{1}{p}]$ is a field. By \Cref{lemma: criterion_for_a_inf_modules_to_be_free_after_inverting_p} we can conclude that $Q[\frac{1}{p}]$ is finite free over $A_\inf[\frac{1}{p}]$. As moreover, $M[\frac{1}{p}]$ and $N[\frac{1}{p}]$ are finite free over $A_\inf[\frac{1}{p}]$, we can conclude that $K[\frac{1}{p}]$ is finite projective (and hence finite free).
Let $C$ be the two-term complex (concentrated in degree 0 and 1) 
$$
\ldots\to M\xrightarrow{f} N\to \ldots
$$
which is a perfect complex of $A_\inf$-modules satisfying that its cohomology if finite free over $A_{\inf}[\frac{1}{p}]$. By \cite[Corollary 4.17.]{bhatt_morrow_scholze_integral_p_adic_hodge_theory} we can conclude that $H^0(C)=K$ is finitely presented.
In other words, we have proven that $(K,\varphi_K)$ and $(Q,\varphi_Q)$ are Breuil-Kisin-Fargues modules and are left with showing that they admit canonical rigidifications. By right exactness of tensor products this is clear for $Q$. But as $K,M,N,Q$ are free after inverting $p$ we get a commutative diagram
$$
\xymatrix{
0\ar[r]& K\otimes_{A_\inf}B^+_{\crys} \ar[r]\ar[d]^{\alpha_K}& M\otimes_{A_\inf}B^+_{\crys}\ar[r]\ar[d]^{\alpha_M} & N\otimes_{A_\inf}B^+_{\crys}\ar[d]^{\alpha_N}\\
0\ar[r] & \overline{K}\otimes_{W(k)}B^+_\crys\ar[r]&\overline{M}\otimes_{W(k)}B^+_\crys\ar[r]&\overline{N}\otimes_{W(k)}B^+_\crys
}
$$
whose rows are exact. In particular, we also get a canonical rigidifications $\alpha_K$ on $K$. This finishes the proof of the theorem.
\end{proof}

We are now ready to define a main player of this paper.

\begin{definition}
\label{definition: rigidified_bkf_modules_up_to_isogeny}
We define the isogeny category of rigidified Breuil-Kisin-Fargues modules to be the category
$$
\mathrm{BKF}_{\mathrm{rig}}^\circ:=\Q_p\otimes_{\Z_p}\mathrm{BKF}_{\mathrm{rig}}.
$$  
\end{definition}

In other words, the category $\mathrm{BKF}_{\mathrm{rig}}^\circ$ is the Serre quotient of the abelian category $\mathrm{BKF}_{\mathrm{rig}}$ by the full subcategory of rigidified Breuil-Kisin-Fargues modules $(M,\varphi_M,\alpha_M)$ such the underlying $A_\inf$-module $M$ is annihilated by $p^n$, $n\gg 0$.

\begin{proposition}
\label{proposition: rigidified_bkf_modules_up_to_isogeny_versus_varphi_modules_after_inverting_p}
The category $\mathrm{BKF}^\circ_{\mathrm{rig}}$ of rigidified Breuil-Kisin-Fargues modules up to isogeny is equivalent to the category of triples $(N,\varphi_N,\alpha_N)$ where
\begin{itemize}
\item $N$ is a finite free $A_\inf[\frac{1}{p}]$-module
\item $\varphi_N\colon \varphi_{A_\inf[\frac{1}{p}]}^\ast(N)[\frac{1}{\tilde{\xi}}]\xrightarrow{\simeq} N[\frac{1}{\tilde{\xi}}]$ is an isomorphism and
\item $\alpha_N\colon N\otimes_{A_\inf[\frac{1}{p}]}B^+_{\crys}\cong \overline{N}\otimes_{W(k)[\frac{1}{p}]}B^+_{\crys}$ is a rigidification reducing to the identity over $W(k)[\frac{1}{p}]$ with $\overline{N}:=N\otimes_{A_\inf[\frac{1}{p}]}W(k)[\frac{1}{p}]$
\end{itemize}
such that there exists a finitely presented $A_\inf$-submodule $N^\prime\subseteq N$ satisfying $N^\prime[\frac{1}{p}]=N$ which is stable under $\varphi_N$.
\end{proposition}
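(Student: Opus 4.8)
The plan is to exhibit the equivalence and a quasi-inverse directly. In one direction, send a rigidified Breuil-Kisin-Fargues module $(M,\varphi_M,\alpha_M)$ to the triple obtained by inverting $p$: the module $M[\frac{1}{p}]$, which is finite free over $A_\inf[\frac{1}{p}]$ by \Cref{definition: bkf_module} (invoking \cite[Corollary 4.12]{bhatt_morrow_scholze_integral_p_adic_hodge_theory}), the isomorphism $\varphi_M[\frac{1}{p}]$ induced after inverting $\tilde{\xi}$, and the rigidification $\alpha_M$ itself. Here one uses that $B^+_\crys$ is a $\Q$-algebra, so that the structure morphism $A_\inf\to B^+_\crys$ factors through $A_\inf[\frac{1}{p}]$ and hence $M\otimes_{A_\inf}B^+_\crys\cong M[\frac{1}{p}]\otimes_{A_\inf[\frac{1}{p}]}B^+_\crys$, and likewise $\overline{M}\otimes_{W(k)}B^+_\crys\cong\overline{M}[\frac{1}{p}]\otimes_{W(k)[\frac{1}{p}]}B^+_\crys$. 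The finitely presented submodule $N'$ required in the statement is the image of $M$ in $M[\frac{1}{p}]$, equivalently $M_{\mathrm{free}}$ of \Cref{lemma: structure_of_a_inf_modules}: by the discussion following that lemma it inherits a Frobenius making it a Breuil-Kisin-Fargues module, it is finitely presented, $N'[\frac{1}{p}]=M[\frac{1}{p}]$, and it is stable under the Frobenius. On morphisms, an element of $\Hom_{\mathrm{BKF}^\circ_{\mathrm{rig}}}(M,N)=\Q_p\otimes_{\Z_p}\Hom_{\mathrm{BKF}_{\mathrm{rig}}}(M,N)$ is represented by $p^{-n}g$ for an honest morphism $g$ of rigidified Breuil-Kisin-Fargues modules, and is sent to $p^{-n}g[\frac{1}{p}]$; this is independent of the representative since $p$ is invertible in $A_\inf[\frac{1}{p}]$.

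For essential surjectivity, I would start from a triple $(N,\varphi_N,\alpha_N)$ together with a finitely presented $\varphi_N$-stable submodule $N'\subseteq N$ with $N'[\frac{1}{p}]=N$, and take $(N',\varphi_N|_{N'})$: this is a Breuil-Kisin-Fargues module, since it is finitely presented, $\varphi_N$ restricts to an isomorphism $\varphi^\ast(N')[\frac{1}{\tilde{\xi}}]\xrightarrow{\sim}N'[\frac{1}{\tilde{\xi}}]$ by $\varphi_N$-stability, and $N'[\frac{1}{p}]=N$ is finite free. Transporting $\alpha_N$ along the canonical isomorphisms $N'\otimes_{A_\inf}B^+_\crys\cong N\otimes_{A_\inf[\frac{1}{p}]}B^+_\crys$ and $\overline{N'}\otimes_{W(k)}B^+_\crys\cong\overline{N}\otimes_{W(k)[\frac{1}{p}]}B^+_\crys$ (again using that $B^+_\crys$ is a $\Q$-algebra) equips $(N',\varphi_{N'})$ with a rigidification $\alpha_{N'}$ reducing to the identity over $W(k)[\frac{1}{p}]$, and by construction the triple attached to $(N',\varphi_{N'},\alpha_{N'})$ is the one we started with.

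For full faithfulness I would first reduce to the case that the underlying $A_\inf$-modules are finite free: the argument in the proof of \Cref{lemma: bkf_modules_rigid_exact_tensor_category} goes through verbatim in the rigidified setting — the map $M\to M_{\mathrm{free}}$ is a morphism in the abelian category $\mathrm{BKF}_{\mathrm{rig}}$ (cf.\ \Cref{theorem: rigidified_bkf_modules_abelian}) with kernel and cokernel killed by a power of $p$, hence an isomorphism in $\mathrm{BKF}^\circ_{\mathrm{rig}}$ — so every object of $\mathrm{BKF}^\circ_{\mathrm{rig}}$ is isomorphic to the image of a finite free rigidified Breuil-Kisin-Fargues module. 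For $M,N$ finite free it then suffices to show that the natural map
$$
\Q_p\otimes_{\Z_p}\Hom_{\mathrm{BKF}_{\mathrm{rig}}}(M,N)\longrightarrow \Hom\bigl(M[\tfrac{1}{p}],N[\tfrac{1}{p}]\bigr),
$$
the target being the $A_\inf[\frac{1}{p}]$-linear maps compatible with the Frobenii and the rigidifications, is bijective. Injectivity is immediate because $M\hookrightarrow M[\frac{1}{p}]$, $N\hookrightarrow N[\frac{1}{p}]$ and $\Hom_{\mathrm{BKF}_{\mathrm{rig}}}(M,N)\subseteq\Hom_{A_\inf}(M,N)$ is torsion-free. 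For surjectivity, given a compatible $f\colon M[\frac{1}{p}]\to N[\frac{1}{p}]$, the submodule $f(M)\subseteq N[\frac{1}{p}]$ is finitely generated, so $p^nf(M)\subseteq N$ for $n\gg 0$; then $p^nf$ is an $A_\inf$-linear map $M\to N$ still compatible with $\varphi$ and $\alpha$ (both being scaling-invariant conditions, checked after inverting $\tilde{\xi}$ resp.\ base changing to $B^+_\crys$), hence $p^nf\in\Hom_{\mathrm{BKF}_{\mathrm{rig}}}(M,N)$ and $f=p^{-n}(p^nf)$ lies in the image.

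Essentially all of this is bookkeeping, the role of the auxiliary submodule $N'$ in the statement being precisely to cut out the subcategory of $\varphi$-modules over $A_\inf[\frac{1}{p}]$ that actually arise. The only step that is not purely formal is the reduction to finite free $A_\inf$-modules used for full faithfulness, and this is exactly the content of \Cref{lemma: bkf_modules_rigid_exact_tensor_category} together with \Cref{theorem: rigidified_bkf_modules_abelian}; I do not anticipate a genuine obstacle.
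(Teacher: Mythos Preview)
Your proposal is correct and follows essentially the same approach as the paper: define the functor by base change to $A_\inf[\tfrac{1}{p}]$, get essential surjectivity from the $\varphi$-stable lattice $N'$, and full faithfulness from the fact that localizing at $p$ commutes with $\Hom$. The paper's proof is a two-line version of yours; in particular it observes that $\Hom_{A_\inf}(M,N)[\tfrac{1}{p}]\cong\Hom_{A_\inf[1/p]}(M[\tfrac{1}{p}],N[\tfrac{1}{p}])$ holds for \emph{finitely presented} $M$ (and the $\varphi$- and $\alpha$-compatibility conditions pass through localization), so your reduction to the finite free case is unnecessary. One small slip: the image of $M$ in $M[\tfrac{1}{p}]$ is $M/M_{\mathrm{tor}}$, not $M_{\mathrm{free}}$ (they differ by $\overline{M}$), though either choice serves as $N'$.
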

\begin{proof}
The natural functor sending an object $(M,\varphi_M,\alpha_M)\in \mathrm{BKF}_{\mathrm{rig}}^\circ$ to its base extension $(M\otimes_{A_\inf}A_{\inf}[\frac{1}{p}],\varphi_M\otimes \varphi_{A_\inf[\frac{1}{p}]},\alpha_M)$ to $A_\inf[\frac{1}{p}]$ is essential surjective by the last condition on the existence of $N^\prime$. It is moreover fully faithful as inverting $p$ commutes with $\mathrm{Hom}$ for finitely presented modules.    
\end{proof}

The condition on the existence of a $\varphi$-stable $A_\inf$-lattice $N^\prime\subseteq N$ ensures that for a triple $(N,\varphi_N,\alpha_N)$ as in \Cref{proposition: rigidified_bkf_modules_up_to_isogeny_versus_varphi_modules_after_inverting_p} the ``etale realization'' (cf.\ \Cref{definition: realizations_of_bkf_modules})
$$
(N\otimes_{A_{\inf}}W(C^\flat))^{\varphi_N=1}
$$  
is a $\Q_p$-vector space of dimension the rank of $N$. Otherwise, the isocrystal $N\otimes_{A_{\inf}}W(C^\flat)$ (over $C^\flat$) need not be isoclinic of slope $0$.

We can now record the main theorem of this section.

\begin{theorem}
\label{theorem: rigidified_bkf_modules_form_tannakian_category}
The category $\mathrm{BKF}_{\mathrm{rig}}^\circ$ of rigidified Breuil-Kisin-Fargues modules up to isogeny is Tannakian.  
\end{theorem}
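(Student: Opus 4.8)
The plan is to verify directly that $\mathrm{BKF}_{\mathrm{rig}}^\circ$ satisfies the axioms of a neutral Tannakian category over $\Q_p$, the fiber functor being the \'etale realization. First, $\mathrm{BKF}_{\mathrm{rig}}^\circ$ is abelian: by construction it is the Serre quotient of the abelian category $\mathrm{BKF}_{\mathrm{rig}}$ (\Cref{theorem: rigidified_bkf_modules_abelian}) by the Serre subcategory of those rigidified Breuil-Kisin-Fargues modules whose underlying $A_\inf$-module is killed by a power of $p$, and Serre quotients of abelian categories are abelian. The symmetric monoidal structure is inherited from $\mathrm{BKF}$ (tensoring Frobenii and rigidifications), and by \Cref{proposition: rigidified_bkf_modules_up_to_isogeny_versus_varphi_modules_after_inverting_p} every object has finite free underlying $A_\inf[\frac{1}{p}]$-module; the construction of \Cref{definition: dual_of_free_bkf_module}, together with the induced rigidification on the dual, then exhibits $\mathrm{BKF}_{\mathrm{rig}}^\circ$ as a rigid tensor category exactly as in the proof of \Cref{lemma: bkf_modules_rigid_exact_tensor_category}. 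Finally $\mathrm{End}_{\mathrm{BKF}_{\mathrm{rig}}^\circ}(\mathbf 1)=\Q_p\otimes_{\Z_p}\mathrm{End}_{\mathrm{BKF}_{\mathrm{rig}}}(A_\inf\{0\})=\Q_p$ by \Cref{lemma:bkf-modules-of-rank-1}, which is in particular a field of characteristic $0$.

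It remains to produce an exact, faithful, $\Q_p$-linear tensor functor to $\mathrm{Vec}_{\Q_p}$, for which I would take the \'etale realization
$$
\omega\colon \mathrm{BKF}_{\mathrm{rig}}^\circ\to \mathrm{Vec}_{\Q_p},\qquad (M,\varphi_M,\alpha_M)\mapsto \bigl(M\otimes_{A_\inf}W(C^\flat)\bigr)^{\varphi_M=1}\otimes_{\Z_p}\Q_p.
$$
As explained after \Cref{proposition: rigidified_bkf_modules_up_to_isogeny_versus_varphi_modules_after_inverting_p}, the condition on the existence of a $\varphi$-stable $A_\inf$-lattice guarantees via Artin--Schreier theory (cf.\ \cite[Theorem 12.3.4.]{scholze_weinstein_lecture_notes_on_p_adic_geometry}) that $M\otimes_{A_\inf}W(C^\flat)$ is an \'etale $\varphi$-module over $W(C^\flat)$, so that $\omega(M)$ has $\Q_p$-dimension $\mathrm{rk}_{A_\inf}(M)$ and multiplication yields a canonical isomorphism $\omega(M)\otimes_{\Q_p}W(C^\flat)\cong M\otimes_{A_\inf}W(C^\flat)$. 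From this identification $\omega$ is visibly $\Q_p$-linear, symmetric monoidal and compatible with duals, and it is faithful since $M$ injects into $M\otimes_{A_\inf[\frac1p]}W(C^\flat)$. For exactness one uses that a short exact sequence in $\mathrm{BKF}_{\mathrm{rig}}^\circ$ is, up to isogeny, a short exact sequence of the underlying $A_\inf[\frac1p]$-modules --- this is how kernels and cokernels were produced in the proof of \Cref{theorem: rigidified_bkf_modules_abelian} --- and, all terms being finite free over $A_\inf[\frac1p]$, such a sequence splits as modules, hence remains exact after $\otimes_{A_\inf[\frac1p]}W(C^\flat)$; taking $\varphi$-invariants is then exact on \'etale $\varphi$-modules over $W(C^\flat)$ because $\varphi-1$ is surjective on $W(C^\flat)$, as $C^\flat$ is algebraically closed. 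Thus $\omega$ is a fiber functor and $\mathrm{BKF}_{\mathrm{rig}}^\circ\cong \Rep_{\Q_p}(\Aut^\otimes(\omega))$ is neutral Tannakian.

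The genuine difficulty lies upstream, in \Cref{theorem: rigidified_bkf_modules_abelian}, where the non-noetherianity of $A_\inf$ is handled via the commutative algebra of \cite{bhatt_morrow_scholze_integral_p_adic_hodge_theory}. Within the argument above, the one point needing care is the exactness of $\omega$: one must know that monomorphisms and epimorphisms in $\mathrm{BKF}_{\mathrm{rig}}$ are injective, respectively surjective, on underlying $A_\inf$-modules (which follows from the module-theoretic description of kernels and cokernels in \Cref{theorem: rigidified_bkf_modules_abelian}), so that $\omega$ genuinely detects and preserves short exact sequences rather than being merely left exact; the remaining input, surjectivity of $\varphi-1$ on $W(C^\flat)$, is elementary. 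Alternatively, once rigidity and $\mathrm{End}(\mathbf 1)=\Q_p$ are established, one may instead invoke Deligne's characterization of Tannakian categories in characteristic $0$ and only check that the categorical rank of every object equals the rank of its underlying $A_\inf[\frac1p]$-module, hence is a non-negative integer --- this avoids the exactness discussion but produces a possibly non-neutral fiber functor.
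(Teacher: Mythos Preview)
Your argument is correct and follows the same route as the paper: abelianness via the Serre quotient of \Cref{theorem: rigidified_bkf_modules_abelian}, rigidity by reduction to finite free objects as in \Cref{lemma: bkf_modules_rigid_exact_tensor_category}, and then either Deligne's criterion or the \'etale realization as fiber functor. The paper is terser, deferring the fiber-functor verification to \Cref{lemma: fiber_functors_for_rigidified_bkf_modules}, whereas you spell out exactness via the splitting of the underlying module sequence and surjectivity of $\varphi-1$; both arguments are valid. One notational slip: $W(C^\flat)$ is not an $A_\inf[\frac{1}{p}]$-algebra since $p$ is not invertible there, so your tensor products should be over $A_\inf$ with $p$ inverted afterwards, or equivalently taken in $W(C^\flat)[\frac{1}{p}]$.
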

\begin{proof}
Theorem \Cref{theorem: rigidified_bkf_modules_abelian} implies that also the isogeny category $\mathrm{BKF}_{\mathrm{rig}}^\circ$ of rigidified Breuil-Kisin-Fargues modules is abelian, namely it identifies with the Serre quotient of the category $\mathrm{BKF}_{\mathrm{rig}}$ by the full subcategory of rigidified Breuil-Kisin-Fargues modules which are p-power torsion.
By (the proof of) \Cref{lemma: bkf_modules_rigid_exact_tensor_category} each object $(M,\varphi_M)\in\mathrm{BKF}^\circ$ is isomorphic (in the isogeny category) to some $(N,\varphi_N)$ with $N$ a finite free $A_\inf$-module. If
$$
\alpha_M\colon M\otimes_{A_\inf}B^+_\crys\cong (M\otimes_{A_\inf}W(k))\otimes_{W(k)}B^+_\crys
$$
is a rigidification for $(M,\varphi_M)$, then $\alpha_M$ also defines a rigidification of $(N,\varphi_N)$ because $M\otimes_{A_\inf}B^+_\crys\cong N\otimes_{A_\inf}B^+_\crys$ resp.\ $(M\otimes_{A_\inf}W(k))\otimes_{W(k)}B^+_\crys\cong (N\otimes_{A_\inf}W(k))\otimes_{W(k)}B^+_\crys$.
In particular, we see that every object in $\mathrm{BKF}_{\mathrm{rig}}^\circ$ admits a dual and therefore the category of $\mathrm{BKF}_{\mathrm{rig}}^\circ$ is rigid. To prove that $\mathrm{BKF}_{\mathrm{rig}}^\circ$ is Tannakian we can either apply Deligne's criterion that $\Lambda^n M$ vanishes for every $M\in \mathrm{BKF}^\circ_{\mathrm{rig}}$ and $n\gg 0$ (cf.\ \cite[Th\'eor\`eme 7.1.]{deligne_categories_tannakiennes} or use that there are various concrete fiber functors \Cref{lemma: fiber_functors_for_rigidified_bkf_modules}. 
\end{proof}

We continue by giving an alternative description of rigidified Breuil-Kisin-Fargues modules in terms of modifications of vector bundles on the Fargues-Fontaine curve.

We recall that the Fargues-Fontaine curve admits an Harder-Narasimhan formalism, namely, for every vector bundle $\mathcal{F}$ on $X_{\mathrm{FF}}$ there exists a canonical (decreasing) filtration indexed by $\lambda\in \Q$
$$
\mathrm{HN}^\lambda(\mathcal{F})\subseteq \mathcal{F}
$$
such that the associated graded pieces
$$
\mathrm{gr}^\lambda(\mathrm{HN}(\mathcal{F}))
$$
are semistable of slope $\lambda$.

\begin{theorem}
\label{theorem: rigidified_bkf_modules_versus_modifications_of_vector_bundles}  
The category $\mathrm{BKF}^\circ_{\mathrm{rig}}$ of rigidified Breuil-Kisin-Fargues modules is equivalent to the category of quadruples $(\mathcal{F},\mathcal{F}^\prime,\beta,\alpha)$ where $\mathcal{F}$, $\mathcal{F}^\prime$ are vector bundles on the Fargues-Fontaine curve $X_{\mathrm{FF}}$, $\mathcal{F}$ is trivial, $\beta\colon \mathcal{F}_{|X_{\mathrm{FF}}\setminus\{\infty\}}\cong \mathcal{F}^\prime_{|X_{\mathrm{FF}}\setminus\{\infty\}}$ is an isomorphism, and $\alpha\colon \mathcal{F}^\prime\cong \bigoplus\limits_{\lambda\in \Q}\mathrm{gr}^\lambda(\mathrm{HN}(\mathcal{F}^\prime))$ is an isomorphism inducing the identity on all graded pieces of the Harder-Narasimhan filtration. 
\end{theorem}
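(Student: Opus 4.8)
The plan is to bootstrap from \Cref{theorem: equivalent_descriptions_of_bkf_modules}, which already translates the underlying Breuil--Kisin--Fargues module, and to determine what the extra datum of a rigidification becomes under that dictionary. First I would reduce to the finite free case: by (the proof of) \Cref{lemma: bkf_modules_rigid_exact_tensor_category} every object of $\mathrm{BKF}^\circ_{\mathrm{rig}}$ is isomorphic to one whose underlying $A_\inf$-module is finite free, so it is enough to treat finite free rigidified Breuil--Kisin--Fargues modules and then invert isogenies. Applying the equivalence of i) and iii) in \Cref{theorem: equivalent_descriptions_of_bkf_modules} to $(M,\varphi_M)$ produces a quadruple $(\mathcal{F},\mathcal{F}^\prime,\beta,T)$ with $\mathcal{F}$ trivial; passing to the isogeny category forgets the $\Z_p$-lattice $T\subseteq H^0(X_{\mathrm{FF}},\mathcal{F})$, since any two such lattices differ by a quasi-isogeny of $H^0(X_{\mathrm{FF}},\mathcal{F})$ which becomes an isomorphism in $\mathrm{BKF}^\circ$. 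Thus the non-rigidified category $\mathrm{BKF}^\circ$ is equivalent to the category of triples $(\mathcal{F},\mathcal{F}^\prime,\beta)$ with $\mathcal{F}$ trivial, and everything comes down to showing that the datum of a rigidification of $(M,\varphi_M)$ is the same as that of a splitting of the Harder--Narasimhan filtration of $\mathcal{F}^\prime$, i.e.\ an isomorphism $\alpha\colon \mathcal{F}^\prime\xrightarrow{\cong}\bigoplus_\lambda\mathrm{gr}^\lambda(\mathrm{HN}(\mathcal{F}^\prime))$ inducing the identity on every graded piece.

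For this I would run through the following identifications. By the last assertion in the proof of \Cref{theorem: equivalent_descriptions_of_bkf_modules}, under the equivalence of \Cref{theorem: vector_bundles_and_varphi_modules_over_b_+_crys} between $\varphi$-modules over $B^+_\crys$ and vector bundles on $X_{\mathrm{FF}}$, the $\varphi$-module $M\otimes_{A_\inf}B^+_\crys$ corresponds to $\mathcal{F}^\prime$. The target $\overline{M}\otimes_{W(k)}B^+_\crys$ of a rigidification (where $\overline{M}=M\otimes_{A_\inf}W(k)$) is the base change of the isocrystal $\overline{M}[\frac{1}{p}]$ over $k$ along the $\varphi$-equivariant inclusion $W(k)[\frac{1}{p}]\hookrightarrow B^+_\crys$ induced by the fixed section $k\to\mathcal{O}_C/p$; since $k$ is algebraically closed, Dieudonn\'e--Manin makes $\overline{M}[\frac{1}{p}]$ a finite direct sum of isoclinic isocrystals, so under \Cref{theorem: vector_bundles_and_varphi_modules_over_b_+_crys} the $\varphi$-module $\overline{M}\otimes_{W(k)}B^+_\crys$ corresponds to a direct sum of semistable bundles of pairwise distinct slopes. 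Finally, the crystalline point $x_\crys$ of $\Spa(A_\inf)\setminus V(p,[\varpi])$ computes the Harder--Narasimhan type of $\mathcal{F}^\prime$: there is a canonical isomorphism of isocrystals $\overline{M}[\frac{1}{p}]=M\otimes_{A_\inf}W(k)[\frac{1}{p}]\cong\bigoplus_\lambda\mathrm{Isoc}(\mathrm{gr}^\lambda(\mathrm{HN}(\mathcal{F}^\prime)))$, where $\mathrm{Isoc}(-)$ is the isoclinic isocrystal attached to a semistable bundle under the equivalence just used. Combining these, $\overline{M}\otimes_{W(k)}B^+_\crys$ corresponds canonically to the bundle $\bigoplus_\lambda\mathrm{gr}^\lambda(\mathrm{HN}(\mathcal{F}^\prime))$, and hence a $\varphi$-equivariant isomorphism $M\otimes_{A_\inf}B^+_\crys\xrightarrow{\cong}\overline{M}\otimes_{W(k)}B^+_\crys$ is precisely an isomorphism of vector bundles $\mathcal{F}^\prime\xrightarrow{\cong}\bigoplus_\lambda\mathrm{gr}^\lambda(\mathrm{HN}(\mathcal{F}^\prime))$ --- no $\varphi$-compatibility is lost because \Cref{theorem: vector_bundles_and_varphi_modules_over_b_+_crys} is a full equivalence.

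It remains to match the normalization. A rigidification is moreover required to become the identity after applying $-\otimes_{B^+_\crys}W(k)[\frac{1}{p}]$; under the identifications above this base change corresponds to sending $\mathcal{F}^\prime$ to its graded isocrystal $\bigoplus_\lambda\mathrm{Isoc}(\mathrm{gr}^\lambda(\mathrm{HN}(\mathcal{F}^\prime)))$, so --- the functor from semistable bundles of a fixed slope to isoclinic isocrystals of that slope being an equivalence --- this condition is equivalent to requiring that $\alpha$ induce the identity on each $\mathrm{gr}^\lambda(\mathrm{HN}(\mathcal{F}^\prime))$. Equivalently, both the set of rigidifications of $(M,\varphi_M)$ and the set of isomorphisms $\alpha$ as above are torsors under the unipotent group $\bigoplus_{\mu<\lambda}\mathrm{Hom}(\mathrm{gr}^\mu(\mathrm{HN}(\mathcal{F}^\prime)),\mathrm{gr}^\lambda(\mathrm{HN}(\mathcal{F}^\prime)))$, the ``slope-increasing'' part of $\mathrm{Aut}(\bigoplus_\lambda\mathrm{gr}^\lambda(\mathrm{HN}(\mathcal{F}^\prime)))$, and the comparison identifies the two torsors; non-emptiness of the first is exactly \Cref{lemma: existence_of_rigidifications}. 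Morphisms on the two sides are the morphisms of the respective tuples compatible with all of the structure, so the resulting functor is fully faithful, and it is essentially surjective by the surjectivity assertions of \Cref{theorem: equivalent_descriptions_of_bkf_modules} and \Cref{theorem: vector_bundles_and_varphi_modules_over_b_+_crys} together with the evident existence of Harder--Narasimhan splittings; a quasi-inverse sends $(\mathcal{F},\mathcal{F}^\prime,\beta,\alpha)$ to the isogeny class of the Breuil--Kisin--Fargues module attached to $(\mathcal{F},\mathcal{F}^\prime,\beta,T)$ for an arbitrary lattice $T$, equipped with the rigidification determined by $\alpha$.

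I expect the step requiring real work to be the assertion that the crystalline point $x_\crys$ computes the Harder--Narasimhan graded of $\mathcal{F}^\prime$, i.e.\ that the two natural identifications of the $W(k)[\frac{1}{p}]$-base changes of $M\otimes_{A_\inf}B^+_\crys$ and of $\overline{M}\otimes_{W(k)}B^+_\crys$ with $\overline{M}[\frac{1}{p}]$ are compatible. This uses the geometry of the Fargues--Fontaine curve --- the classification of vector bundles and the behaviour near $x_\crys$ of the $\varphi$-equivariant bundle on $Y$ that enters the proof of \Cref{theorem: equivalent_descriptions_of_bkf_modules} --- rather than purely formal manipulation; the other steps are bookkeeping with the equivalences already available.
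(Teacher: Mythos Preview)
Your proposal is correct and follows essentially the same route as the paper: reduce to triples $(\mathcal{F},\mathcal{F}^\prime,\beta)$ via \Cref{theorem: equivalent_descriptions_of_bkf_modules} and \Cref{lemma: bkf_modules_rigid_exact_tensor_category}, identify $M\otimes_{A_\inf}B^+_\crys$ with the $\varphi$-module of $\mathcal{F}^\prime$ through \Cref{theorem: vector_bundles_and_varphi_modules_over_b_+_crys}, and identify $\overline{M}\otimes_{W(k)}B^+_\crys$ with $\bigoplus_\lambda\mathrm{gr}^\lambda(\mathrm{HN}(\mathcal{F}^\prime))$ via the functor from isocrystals to graded bundles (your Dieudonn\'e--Manin argument is exactly what the paper packages as the functor $\mathcal{E}(-)$ and \Cref{lemma: graded_vector_bundles_and_isocrystals}). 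The paper is terser on the step you single out as the real work---that the crystalline point recovers the Harder--Narasimhan graded---simply citing the construction in \cite{fargues_fontaine_courbes_et_fibres_vectoriels_en_theorie_de_hodge_p_adique}, but the content is the same.
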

\begin{proof}
By \Cref{theorem: equivalent_descriptions_of_bkf_modules} (and \Cref{lemma: bkf_modules_rigid_exact_tensor_category}) the category $\mathrm{BKF}^\circ$ of Breuil-Kisin-Fargues modules up to isogeny is equivalent to the category of triples 
$$
(\mathcal{F},\mathcal{F}^\prime,\beta)
$$   
satisfying the same condition as in the statement of this theorem. Thus we are left with showing that the rigidifications can be identified under the equivalence in \Cref{theorem: equivalent_descriptions_of_bkf_modules}. Let $(M,\varphi)$ be a finite free Breuil-Kisin-Fargues module with corresponding quadruple $(\mathcal{F},\mathcal{F}^\prime,\beta,T)$ as in \Cref{theorem: equivalent_descriptions_of_bkf_modules}. Let $N$ be the $\varphi$-module over $B^+_\crys$ associated with $\mathcal{F}^\prime$ in \Cref{theorem: vector_bundles_and_varphi_modules_over_b_+_crys}. As was shown in the proof of \Cref{theorem: equivalent_descriptions_of_bkf_modules} there exists a canonical isomorphism
$$
M\otimes_{A_\inf}B^+_\crys\cong N.
$$
Recall that we have fixed a splitting $k\to \mathcal{O}_C/p$ of the projection $\mathcal{O}_C/p\to k$.
Let
$$
\mathcal{E}(-)\colon \varphi\mathrm{-Mod}_{W(k)[\frac{1}{p}]}\to \mathrm{Bun}_{X_{\mathrm{FF}}}
$$
be the exact tensor functor associated with this splitting (cf. \cite[Proposition 8.2.6.]{fargues_fontaine_courbes_et_fibres_vectoriels_en_theorie_de_hodge_p_adique}). Then for the vector bundle $\mathcal{F}^\prime$ there is a canonical isomorphism
$$
\bigoplus\limits_{\lambda\in \Q}\mathrm{gr}^\lambda(\mathrm{HN}(\mathcal{F}^\prime)\cong \mathcal{E}(N\otimes_{B^+_\crys}W(k)[\frac{1}{p}]
$$
and we see (writing $\overline{M}:=M\otimes_{A_\inf}W(k)[\frac{1}{p}]\cong N\otimes_{B^+_\crys}W(k)[\frac{1}{p}]$) that the set of isomorphisms
$$
\alpha_M\colon M\otimes_{A_\inf}B^+_\crys\cong \overline{M}\otimes_{W(k)[\frac{1}{p}]}B^+_\crys 
$$
is in bijection with the set of isomorphisms
$$
\alpha\colon \mathcal{F}^\prime\cong \bigoplus\limits_{\lambda\in \Q}\mathrm{gr}^\lambda(\mathrm{HN}(\mathcal{F}^\prime).
$$
Clearly, the condition on $\alpha_M$ resp.\ $\alpha$ to reduce to the identity over $W(k)[\frac{1}{p}]$ resp.\ the graded pieces of the Harder-Narasimhan filtration is preserved.
This finishes the proof.
\end{proof}

We note that we actually proved that for a finite free Breuil-Kisin-Fargues module $(M,\varphi_M)$ with corresponding modification 
$$(\mathcal{F},\mathcal{F}^\prime,\alpha\colon \mathcal{F}_{|X_{\mathrm{FF}}\setminus\{\infty\}}\cong \mathcal{F}^\prime_{|X_{\mathrm{FF}}\setminus\{\infty\}},T\subseteq H^0(X_{\mathrm{FF}},\mathcal{F}))
$$ 
as in \Cref{theorem: equivalent_descriptions_of_bkf_modules} there is a bijection
$$
\begin{matrix}
\{\text{rigidifications }M\otimes_{A_\inf}B^+_{\crys}\cong (M\otimes_{A_\inf}W(k))\otimes_{W(k)}B^+_{\crys}\} \\\cong \{\textrm{rigidifications }\mathcal{F}^\prime\cong \bigoplus\limits_{\lambda\in \Q}\mathrm{gr}^\lambda(\mathrm{HN}(\mathcal{F}^\prime))\}
\end{matrix}
$$
where we call an isomorphism $\mathcal{F}^\prime\cong \bigoplus\limits_{\lambda\in \Q}\mathrm{gr}^\lambda(\mathrm{HN}(\mathcal{F}^\prime))$ reducing to the identity on the graded pieces of the Harder-Narasimhan filtration a rigidification of the data $(\mathcal{F},\mathcal{F}^\prime, \alpha,T)$ or just of $\mathcal{F}^\prime$.
In particular, if $\mathcal{F}^\prime$ happens to be semistable there exists a unique rigidification on $(M,\varphi_M)$, namely the one given by the identity on $\mathcal{F}^\prime$.

Using \Cref{theorem: rigidified_bkf_modules_versus_modifications_of_vector_bundles} it is possible to give an alternative proof of \Cref{theorem: rigidified_bkf_modules_form_tannakian_category}. Namely, the category of semistable vector bundles of a fixed slope is abelian and therefore the cokernel of a morphism between vector bundles on the Fargues-Fontaine curve which respects fixed splittings of the respective Harder-Narasimhan filtrations is again a vector bundle. This proves (using \Cref{theorem: rigidified_bkf_modules_versus_modifications_of_vector_bundles}) that the category $\mathrm{BKF}^\circ_{\mathrm{rig}}$ is abelian. Moreover, as it is a rigid tensor category, the existence of a fiber functor (cf.\ \Cref{lemma: fiber_functors_for_rigidified_bkf_modules}) or Deligne's criterion proves that it is moreover Tannakian.

\Cref{theorem: rigidified_bkf_modules_versus_modifications_of_vector_bundles} also proves that the category $\mathrm{BKF}^\circ_{\mathrm{rig}}$ of rigidified Breuil-Kisin-Fargues modules up to isogeny does not depend (up to canonical isomorphism) on the choice of the section $k\to \mathcal{O}_C/p$.



We do not know how to describe the category of rigidified Breuil-Kisin-Fargues modules up to isogeny only in terms of pairs $(V,\Xi)$ where $V$ is a finite-dimensional $\Q_p$-vector space and $\Xi\subseteq V\otimes_{\Q_p}B_{\dR}$ a $B_{\dR}^+$-lattice. However, we can record the following.

\begin{lemma}
\label{lemma:cokernel-of-rigidified-morphism-between-pairs}
Let $f\colon (M,\varphi_M,\alpha_M)\to (M^\prime,\varphi_{M^\prime},\alpha_{M^\prime})\in \mathrm{BKF}_{\mathrm{rig}}$ be a morphism of finite free rigidified Breuil-Kisin-Fargues modules. Let $g:(T,\Xi)\to (T^\prime,\Xi^\prime)$ be the associated morphism between pairs of a $\Z_p$- and $B^+_\dR$-lattice as in \Cref{theorem: equivalent_descriptions_of_bkf_modules}. Then the cokernel of the morphism
$$
\Xi\xrightarrow{g} \Xi^\prime
$$  
is free over $B^+_\dR$.
\end{lemma}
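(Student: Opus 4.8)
The plan is to reduce the statement to the freeness result already contained in the proof of \Cref{theorem: rigidified_bkf_modules_abelian}. First I would recall, from the construction of the functor in \Cref{theorem: equivalent_descriptions_of_bkf_modules}, that for a \emph{finite free} Breuil-Kisin-Fargues module $(M,\varphi_M)$ the attached $B^+_\dR$-lattice is literally the base change $\Xi=M\otimes_{A_\inf}B^+_\dR$ (along the canonical map $A_\inf\to A_\inf[\frac{1}{p}]\to B^+_\dR$), sitting inside $T\otimes_{\Z_p}B_\dR$ through the multiplication isomorphism built from $\varphi_M$; and that a morphism $f\colon(M,\varphi_M)\to(M^\prime,\varphi_{M^\prime})$ induces on lattices precisely the map $g=f\otimes_{A_\inf}B^+_\dR$. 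Since $-\otimes_{A_\inf}B^+_\dR$ is right exact, this identifies the cokernel of $g$ with $Q\otimes_{A_\inf}B^+_\dR$, where $Q:=\mathrm{coker}(M\xrightarrow{f}M^\prime)$ is the cokernel taken in $A_\inf$-modules.

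Next I would feed in that $f$ is a morphism of \emph{rigidified} Breuil-Kisin-Fargues modules: this is exactly the hypothesis used in the proof of \Cref{theorem: rigidified_bkf_modules_abelian} to conclude that $Q[\frac{1}{p}]$ is a finite free $A_\inf[\frac{1}{p}]$-module, via \Cref{corollary: cokernel_of_bkf_modules_finite_projective_after_inverting_mu} for condition i) and the compatibility of $f$ with the rigidifications (so that $Q\otimes_{A_\inf}B^+_\crys$ is a cokernel of a map between free $B^+_\crys$-modules whose reduction over the field $W(k)[\frac{1}{p}]$ is free) for condition ii) of \Cref{lemma: criterion_for_a_inf_modules_to_be_free_after_inverting_p}. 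Since $B^+_\dR$ is an $A_\inf[\frac{1}{p}]$-algebra, we get $\mathrm{coker}(g)\cong Q\otimes_{A_\inf}B^+_\dR\cong Q[\frac{1}{p}]\otimes_{A_\inf[\frac{1}{p}]}B^+_\dR$, which is therefore finite free over $B^+_\dR$, as desired.

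Alternatively, and more in the spirit of \Cref{theorem: rigidified_bkf_modules_versus_modifications_of_vector_bundles}, one can argue geometrically: a morphism of rigidified data respects the chosen splittings of the Harder-Narasimhan filtrations, so $f$ induces a morphism $\mathcal{F}^\prime_M\to\mathcal{F}^\prime_{M^\prime}$ which is a direct sum over $\lambda\in\Q$ of morphisms between semistable bundles of slope $\lambda$; as the category of semistable bundles of a fixed slope on $X_{\mathrm{FF}}$ is abelian (as already remarked in the text), the cokernel is again a vector bundle, and taking completed stalks at $\infty$ — which is exact on coherent sheaves over the Noetherian scheme $X_{\mathrm{FF}}$ — identifies $\mathrm{coker}(g)$ with the completed stalk at $\infty$ of this vector bundle, hence a free $\widehat{\mathcal{O}}_{X_{\mathrm{FF}},\infty}\cong B^+_\dR$-module. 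In either approach the only genuinely delicate point is the bookkeeping in the first step: one must confirm that the lattice attached by the functor of \Cref{theorem: equivalent_descriptions_of_bkf_modules} to a finite free $M$ is the honest base change $M\otimes_{A_\inf}B^+_\dR$, so that functoriality yields $g=f\otimes_{A_\inf}B^+_\dR$. This is immediate from the construction, since $M$ finite free forces $M\otimes_{A_\inf}B^+_\dR$ to be finite free over $B^+_\dR$ and, being torsion free while becoming an isomorphism onto $T\otimes_{\Z_p}B_\dR$ after inverting $\xi$, to inject into $T\otimes_{\Z_p}B_\dR$.
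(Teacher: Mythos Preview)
Your proposal is correct and rests on the same two observations as the paper: first, that for finite free $M$ the attached lattice is literally $\Xi=M\otimes_{A_\inf}B^+_\dR$, so $g=f\otimes_{A_\inf}B^+_\dR$ and $\mathrm{coker}(g)=Q\otimes_{A_\inf}B^+_\dR$; second, that the rigidification is what forces freeness. The difference is only in how the second step is executed. You route through the full argument of \Cref{theorem: rigidified_bkf_modules_abelian} (invoking \Cref{corollary: cokernel_of_bkf_modules_finite_projective_after_inverting_mu} and \Cref{lemma: criterion_for_a_inf_modules_to_be_free_after_inverting_p}) to get that $Q[\frac{1}{p}]$ is finite free over $A_\inf[\frac{1}{p}]$, and then base-change. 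The paper is more direct: it simply base-changes the rigidifications $\alpha_M,\alpha_{M^\prime}$ from $B^+_\crys$ to $B^+_\dR$, obtaining a commutative square with vertical isomorphisms that identifies $g$ with $\overline{f}\otimes\mathrm{Id}\colon \overline{M}\otimes_{W(k)}B^+_\dR\to\overline{M^\prime}\otimes_{W(k)}B^+_\dR$; since $W(k)[\frac{1}{p}]$ is a field the cokernel is visibly free. Your argument buys you a stronger intermediate statement (freeness of $Q[\frac{1}{p}]$ over all of $A_\inf[\frac{1}{p}]$), while the paper's buys brevity by localizing immediately at $B^+_\dR$. Your alternative via \Cref{theorem: rigidified_bkf_modules_versus_modifications_of_vector_bundles} is also fine and is essentially the geometric rephrasing of the paper's direct argument.
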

\begin{proof}
By construction of $(T,\Xi)$ resp.\ $(T^\prime,\Xi^\prime)$ the morphism $g\colon \Xi\to \Xi^\prime$ is given by
$$
f\otimes \mathrm{Id}\colon M\otimes_{A_\inf}B^+_{\dR}\to N\otimes_{A_\inf}B^+_\dR.
$$
Set $\overline{M}:=M\otimes_{A_\inf}W(k)$ resp.\ $\overline{N}:=N\otimes_{A_\inf}W(k)$. By assumption the diagram
$$
\xymatrix{
M\otimes_{A_\inf}B^+_{\dR}\ar[r]^{f\otimes\mathrm{Id}}\ar[d]^{\alpha_M\otimes B^+_\dR} & N\otimes_{A_\inf}B^+_\dR\ar[d]^{\alpha_N\otimes B^+_\dR}\\
\overline{M}\otimes_{A_\inf}B^+_{\dR}\ar[r]^{\overline{f}\otimes\mathrm{Id}} & \overline{N}\otimes_{A_\inf}B^+_\dR.
}
$$
with both vertical morphisms isomorphisms commutes. In particular, the cokernel of $g$ is free.
\end{proof}

Let us list some fiber functors, i.e., ``realizations'', of the Tannakian category $\mathrm{BKF}_{\mathrm{rig}}^\circ$.

We recall the following lemma which is a direct corollary of the classification of vector bundles on the Fargues-Fontaine curve (cf.\ \cite[Th\'eor\`eme 8.2.10.]{fargues_fontaine_courbes_et_fibres_vectoriels_en_theorie_de_hodge_p_adique}).

\begin{lemma}
\label{lemma: graded_vector_bundles_and_isocrystals}
The category of $\varphi$-modules over $W(k)[\frac{1}{p}]$, i.e., the category of isocrystals over $k$, is equivalent to the category of $\Q$-graded vector bundles $\mathcal{E}=\bigoplus\limits_{\lambda\in \Q}\mathcal{E}^\lambda$ such that $\mathcal{E}^\lambda$ is semistable of slope $\lambda$.  
\end{lemma}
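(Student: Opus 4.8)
The plan is to obtain this as a direct consequence of the classification of vector bundles on the Fargues-Fontaine curve \cite[Th\'eor\`eme 8.2.10.]{fargues_fontaine_courbes_et_fibres_vectoriels_en_theorie_de_hodge_p_adique}, combined with the Dieudonn\'e--Manin classification of isocrystals. Note that $k$, being the residue field of the algebraically closed field $C$, is itself algebraically closed, so Dieudonn\'e--Manin applies in its strong form. Throughout I would use the exact tensor functor
$$
\mathcal{E}(-)\colon \varphi\mathrm{-Mod}_{W(k)[\frac{1}{p}]}\to \Bun_{X_{\mathrm{FF}}}
$$
of \cite[Proposition 8.2.6.]{fargues_fontaine_courbes_et_fibres_vectoriels_en_theorie_de_hodge_p_adique}, already used in the proof of \Cref{theorem: rigidified_bkf_modules_versus_modifications_of_vector_bundles}; it sends the simple isocrystal of slope $\lambda$ to the stable bundle $\mathcal{O}_{X_{\mathrm{FF}}}(\lambda)$, hence carries an isoclinic isocrystal of slope $\lambda$ to a semistable bundle of slope $\lambda$.

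First I would record that both categories in the statement decompose as coproducts indexed by $\Q$. On the isocrystal side, Dieudonn\'e--Manin furnishes for every $\varphi$-module $D$ over $W(k)[\frac{1}{p}]$ a canonical and functorial decomposition $D=\bigoplus_{\lambda\in\Q}D^\lambda$ into its isoclinic component $D^\lambda$ of slope $\lambda$, and $\Hom(D_1,D_2)=0$ whenever $D_1,D_2$ are isoclinic of distinct slopes; thus the category of isocrystals over $k$ is the coproduct, over $\lambda\in\Q$, of the semisimple abelian categories of isoclinic isocrystals of slope $\lambda$. On the bundle side, a morphism $\bigoplus_\lambda\mathcal{E}^\lambda\to\bigoplus_\mu\mathcal{F}^\mu$ of $\Q$-graded vector bundles as in the statement is by definition a collection of morphisms $\mathcal{E}^\lambda\to\mathcal{F}^\lambda$; hence the category of such $\Q$-graded bundles is likewise the coproduct, over $\lambda\in\Q$, of the categories of semistable vector bundles of slope $\lambda$ on $X_{\mathrm{FF}}$.

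It then suffices to show that, for each fixed $\lambda\in\Q$, the functor $\mathcal{E}(-)$ restricts to an equivalence between isoclinic isocrystals of slope $\lambda$ over $k$ and semistable vector bundles of slope $\lambda$ on $X_{\mathrm{FF}}$. This is exactly what \cite[Th\'eor\`eme 8.2.10.]{fargues_fontaine_courbes_et_fibres_vectoriels_en_theorie_de_hodge_p_adique} provides: both categories are semisimple with a single simple object — the simple isocrystal of slope $\lambda$, resp.\ the bundle $\mathcal{O}_{X_{\mathrm{FF}}}(\lambda)$, every semistable bundle of slope $\lambda$ being a finite direct sum of copies of the latter — and $\mathcal{E}(-)$ matches these simple objects together with their (division) endomorphism algebras, so it is fully faithful and essentially surjective on these subcategories. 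Assembling these equivalences by means of the two coproduct decompositions of the previous paragraph yields the asserted equivalence, under which the $\Q$-grading on $\mathcal{E}(D)$ is the Harder-Narasimhan decomposition, $\mathcal{E}(D)^\lambda=\mathcal{E}(D^\lambda)=\mathrm{gr}^\lambda(\mathrm{HN}(\mathcal{E}(D)))$.

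Since all the genuine content sits in the cited classification theorem, the remaining steps are formal. The only point requiring care is the bookkeeping of slope-sign conventions: one must ensure that, with the normalisation of $\mathcal{E}(-)$ adopted here (and in the proof of \Cref{theorem: rigidified_bkf_modules_versus_modifications_of_vector_bundles}), the slope $\lambda$ of an isocrystal corresponds to the slope $\lambda$ — rather than $-\lambda$ — of the associated vector bundle.
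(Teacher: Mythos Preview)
Your proposal is correct and follows essentially the same route the paper intends: the paper's own proof is simply a reference to \cite[Lemma 3.6.]{anschuetz_reductive_group_schemes_over_the_fargues_fontaine_curve}, and your argument --- Dieudonn\'e--Manin on one side, the classification \cite[Th\'eor\`eme 8.2.10.]{fargues_fontaine_courbes_et_fibres_vectoriels_en_theorie_de_hodge_p_adique} on the other, matched slopewise by the functor $\mathcal{E}(-)$ --- is exactly the content one finds behind that citation. Your caution about the slope-sign convention is well placed; with the normalisations used in this paper (cf.\ \Cref{lemma:bkf-modules-of-rank-1} and the example following \Cref{theorem: equivalent_descriptions_of_bkf_modules}) the isocrystal of slope $d$ indeed goes to a bundle of slope $d$, so no sign flip occurs.
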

\begin{proof}
Cf.\ \cite[Lemma 3.6.]{anschuetz_reductive_group_schemes_over_the_fargues_fontaine_curve}.  
\end{proof}

\begin{definition}
\label{definition: realizations_of_bkf_modules}
Let $(M,\varphi_M)$ be a Breuil-Kisin-Fargues module. We define
\begin{itemize}
\item its ``\'etale realization'' $T:=(M\otimes_{A_\inf}W(C^\flat))^{\varphi_M=1}$, a $\Z_p$-module.
\item its ``crystalline realization'' $D:=M\otimes_{A_\inf}W(k)$, a $W(k)$-module equipped with the $\varphi$-semilinear isomorphism $\varphi_D:=\varphi_M\otimes W(k)[\frac{1}{p}]:\varphi^\ast(D[\frac{1}{p}])\cong D[\frac{1}{p}]$ after inverting $p$.
\item and its ``de Rham realization'' $V:=M\otimes_{A_\inf,\theta}\mathcal{O}_C$, a $\mathcal{O}_C$-module. 
\end{itemize}
\end{definition}

\begin{lemma}
\label{lemma: fiber_functors_for_rigidified_bkf_modules}
The realizations from \Cref{definition: realizations_of_bkf_modules} define fiber functors
\begin{itemize}
\item $\omega_{\acute{e}t}\colon\mathrm{BKF}^\circ_{\mathrm{rig}}\to \mathrm{Vec}_{\Q_p}$
\item $\omega_{\crys}\colon \mathrm{BKF}^\circ_{\mathrm{rig}}\to \varphi\mathrm{-Mod}_{W(k)[\frac{1}{p}]}$
\item $\omega_{\dR}\colon \mathrm{BKF}^\circ_{\mathrm{rig}}\to \mathrm{Vec}_C$
\end{itemize}
 on the Tannakian category $\mathrm{BKF}^\circ_{\mathrm{rig}}$ of rigidified Breuil-Kisin-Fargues modules up to isogeny.
\end{lemma}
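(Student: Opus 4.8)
The plan is to verify, for each of the three candidate functors, the defining properties of a fiber functor on the abelian rigid tensor category $\mathrm{BKF}^\circ_{\mathrm{rig}}$: well-definedness on objects with the stated target and the expected rank, $\Q_p$-linearity, compatibility with $\otimes$ and unit objects, exactness, and faithfulness. Most of these are formal. Faithfulness follows from exactness together with the observation that each functor preserves ranks, hence reflects the zero object: for an exact additive $\omega$ with $\omega(X)=0\Rightarrow X=0$ and $f$ with $\omega(f)=0$, exactness gives $\Im\omega(f)=\omega(\Im f)=0$, so $\Im f=0$ and $f=0$. $\Q_p$-linearity is built in: each construction is base change along one of the ring homomorphisms $A_\inf\to W(C^\flat)$, $A_\inf\to W(k)$, $A_\inf\xrightarrow{\theta}\mathcal{O}_C$, possibly followed by passage to $\varphi=1$-invariants, and then by inverting $p$, so it factors through the Serre quotient $\mathrm{BKF}^\circ_{\mathrm{rig}}$ and annihilates $p$-power torsion objects. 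Compatibility with tensor products is the monoidality of base change, supplemented for $\omega_{\acute{e}t}$ by the monoidality of Artin--Schreier theory; the unit object $A_\inf\{0\}$ is sent to $\Q_p$, to the unit isocrystal $(W(k)[\tfrac{1}{p}],\varphi)$, and to $C$. So the substance is well-definedness on objects and exactness.

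For well-definedness I would, using \Cref{lemma: bkf_modules_rigid_exact_tensor_category} and \Cref{proposition: rigidified_bkf_modules_up_to_isogeny_versus_varphi_modules_after_inverting_p}, represent an object of $\mathrm{BKF}^\circ_{\mathrm{rig}}$ by a finite free Breuil--Kisin--Fargues module $(M,\varphi_M)$ with a rigidification (equivalently, a finite free $A_\inf[\tfrac{1}{p}]$-module with Frobenius and rigidification admitting a $\varphi$-stable $A_\inf$-lattice). Then $(M\otimes_{A_\inf}W(C^\flat))^{\varphi_M=1}$ is finite free over $\Z_p$ of rank $\rk_{A_\inf}(M)$ by Artin--Schreier theory, as recalled in the proof of \Cref{theorem: equivalent_descriptions_of_bkf_modules} (using that $\tilde{\xi}$ is invertible in $W(C^\flat)$ and that $M\otimes_{A_\inf}W(C^\flat)$ is isoclinic of slope $0$); hence $\omega_{\acute{e}t}(M)\in\mathrm{Vec}_{\Q_p}$ has dimension $\rk_{A_\inf}(M)$. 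The crystalline realization $M\otimes_{A_\inf}W(k)$ is finite free over $W(k)$, and since $\tilde{\xi}$ maps to $p$ in $W(k)$ (cf.\ \Cref{lemma:bkf-modules-of-rank-1}) the Frobenius induces a $\varphi$-semilinear isomorphism after inverting $p$, so $\omega_{\crys}(M)\in\varphi\mathrm{-Mod}_{W(k)[\frac{1}{p}]}$ is an isocrystal of rank $\rk_{A_\inf}(M)$. Finally $M\otimes_{A_\inf,\theta}\mathcal{O}_C$ is finite free over $\mathcal{O}_C$, so after inverting $p$ we get $\omega_{\dR}(M)\in\mathrm{Vec}_C$ of dimension $\rk_{A_\inf}(M)$. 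In particular all three functors are conservative.

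For exactness, by (the proof of) \Cref{theorem: rigidified_bkf_modules_abelian} a short exact sequence in $\mathrm{BKF}^\circ_{\mathrm{rig}}$ is underlain by a short exact sequence $0\to N^\prime\to N\to N^{\prime\prime}\to 0$ of finite free $A_\inf[\tfrac{1}{p}]$-modules (compatibly with Frobenii and rigidifications, in the description of \Cref{proposition: rigidified_bkf_modules_up_to_isogeny_versus_varphi_modules_after_inverting_p}); as $N^{\prime\prime}$ is projective, this sequence splits as a sequence of $A_\inf[\tfrac{1}{p}]$-modules, so base change along any $A_\inf[\tfrac{1}{p}]$-algebra preserves its exactness. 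Hence $-\otimes_{A_\inf[\frac{1}{p}]}W(k)[\tfrac{1}{p}]$ gives exactness of $\omega_{\crys}$; $-\otimes_{A_\inf[\frac{1}{p}],\theta}C$ gives exactness of $\omega_{\dR}$; and $-\otimes_{A_\inf[\frac{1}{p}]}W(C^\flat)[\tfrac{1}{p}]$ followed by $(-)^{\varphi=1}$, the latter being an exact equivalence onto $\Q_p$-vector spaces by Artin--Schreier theory, gives exactness of $\omega_{\acute{e}t}$. (Equivalently: for $\omega_{\dR}$ one may observe that the $B^+_{\dR}$-lattices $N\otimes_{A_\inf[\frac{1}{p}]}B^+_{\dR}$ of \Cref{theorem: equivalent_descriptions_of_bkf_modules} have free cokernels for such morphisms, cf.\ \Cref{lemma:cokernel-of-rigidified-morphism-between-pairs}, so reduction modulo $\xi$ is exact; and for $\omega_{\crys}$ one may use \Cref{theorem: rigidified_bkf_modules_versus_modifications_of_vector_bundles}, \Cref{theorem: vector_bundles_and_varphi_modules_over_b_+_crys} and \Cref{lemma: graded_vector_bundles_and_isocrystals} to identify $\omega_{\crys}$ with passage to the Harder--Narasimhan-graded isocrystal of $\mathcal{F}^\prime$, which is exact because kernels and cokernels in $\mathrm{BKF}^\circ_{\mathrm{rig}}$ are computed slope by slope and each category of semistable bundles of a fixed slope is abelian.)

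The step I expect to be the real obstacle is exactness, and its weight lies outside this lemma: it rests on \Cref{theorem: rigidified_bkf_modules_abelian}, i.e.\ on the fact --- ensured by the rigidification through \Cref{corollary: cokernel_of_bkf_modules_finite_projective_after_inverting_mu} and \Cref{lemma: criterion_for_a_inf_modules_to_be_free_after_inverting_p} --- that kernels and cokernels of morphisms of rigidified Breuil--Kisin--Fargues modules stay finite free after inverting $p$, so that short exact sequences in $\mathrm{BKF}^\circ_{\mathrm{rig}}$ become split exact sequences of finite free $A_\inf[\tfrac{1}{p}]$-modules. Granting that, the remaining checks are routine; the only genuinely non-formal point within the present proof is the reduction to finite free representatives used to see that each realization lands in the asserted target with the correct rank.
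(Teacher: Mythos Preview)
Your proposal is correct and follows essentially the same approach as the paper. Both arguments reduce to finite free representatives (using that every object is isogenous to a finite free Breuil--Kisin--Fargues module), deduce exactness from the fact that short exact sequences then have projective quotient on the level of $A_\inf[\tfrac{1}{p}]$-modules, and handle tensor compatibility via base change.

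The paper's proof is far terser than yours---it disposes of exactness in one sentence and does not explicitly discuss well-definedness or faithfulness---so your version is a fleshed-out variant rather than a different argument. The only stylistic difference worth noting is in the treatment of the tensor structure on $\omega_{\acute{e}t}$: where you invoke ``monoidality of Artin--Schreier theory'', the paper phrases this as a factorization $\mathrm{BKF}^\circ_{\mathrm{rig}}\to \varphi\text{-}\mathrm{Mod}_{W(C^\flat)[\frac{1}{p}]}\to \mathrm{Vec}_{\Q_p}$, the second functor being passage to the isoclinic part of slope $0$. These are the same mechanism in different words.
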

\begin{proof}
The functors are defined by sending a Breuil-Kisin-Fargues module to the respective realizations (with $p$ inverted).
As each Breuil-Kisin-Fargues modules is isogenous to a finite free Breuil-Kisin-Fargues module all three functors are exact. The functors given by crystalline and de Rham realization are clearly tensor functor. To prove that also the \'etale realization $\omega_{\acute{e}t}$ defines a tensor functor we remark that it factors canonically into the tensor functor
$$
\mathrm{BKF}^\circ_{\mathrm{rig}}\to \varphi\mathrm{-Mod}_{W(C^\flat)[\frac{1}{p}]},\ (M,\varphi_M)\mapsto (M\otimes_{A_\inf}W(C^\flat)[\frac{1}{p}],\varphi_M\otimes \mathrm{Id}),
$$ 
to isocrystals over $C^\flat$ and the tensor functor
$$
\varphi\mathrm{-Mod}_{W(C^\flat)[\frac{1}{p}]}\to \mathrm{Vec}_{\Q_p}
$$
sending an isocrystal to its isoclinic part of slope $0$.
\end{proof}

Actually, the \'etale realization $\omega_{\acute{e}t}\otimes_{\Q_p}C$, base changed to $C$, underlies a \textit{filtered} fiber functor (cf.\ \cite{saavedra_rivano_categories_tannakiennes} and \cite{ziegler_graded_and_filtered_fiber_functors}) on the category $\mathrm{BKF}^\circ_{\mathrm{rig}}$.
This nicely parallels the case of rational Hodge structures.

\begin{definition}
\label{definition:filtration-on-etale-realization}
Let $(M,\varphi_M)$ be a finite free Breuil-Kisin-Fargues modules with associated \'etale realization
$$
T:=(M\otimes_{A_\inf}W(C^\flat))^{\varphi_M=1}
$$
 and $B^+_\dR$-lattice 
$$\Xi:=M\otimes_{A_\inf}B^+_\dR\subseteq T\otimes_{\Z_p}B_\dR\cong M\otimes_{A_\inf}B_\dR.
$$
Then we define the decreasing filtration $\mathrm{Fil}^j(T\otimes_{\Z_p}C), j\in \Z,$ on 
$$
T\otimes_{\Z_p}C=T\otimes_{\Z_p}B^+_\dR/(T\otimes_{\Z_p}\xi B^+_\dR)
$$ 
by
$$
\mathrm{Fil}^j(T\otimes_{\Z_p}C):=\mathrm{Im}(\xi^j\Xi\cap T\otimes_{\Z_p}B^+_\dR\to T\otimes_{\Z_p}C)
$$  
with $j\in\Z$.
\end{definition}

Written in a suitable basis $e_1,\ldots,e_n$ of $T\otimes_{\Z_p}B^+_{\dR}$ (over $B^+_\dR$) there exist $\lambda_1,\ldots,\lambda_n\in \Z$ satisfying $\lambda_1\geq\ldots\geq \lambda_n$ such that the lattice $\Xi\subseteq T\otimes_{\Z_p}B_{\dR}$ is generated by the $\xi^{\lambda_1}e_1,\ldots,\xi^{\lambda_n}e_n$.
Then the filtration $\mathrm{Fil}^j(T\otimes_{\Z_p}C)$ is given by the image of
$$
\langle\xi^{\lambda_1+j}e_1,\ldots, \xi^{\lambda_n+j}e_n\rangle \cap T\otimes_{\Z_p}B^+_\dR
$$
in $T\otimes_{\Z_p}C$, i.e., the $C$-subspace generated by the residue classes $\overline{e}_i$ of the $e_i$ such that $\lambda_i+j\leq 0$. 

\begin{lemma}
\label{lemma: filtered_fiber_functor_on_rigidified_bkf_modules}
The fiber functor
$$
\omega_{\acute{e}t}\otimes_{\Q_p}C\colon \mathrm{BKF}^\circ_{\mathrm{rig}}\to \mathrm{Vec}_C
$$
admits a canonical filtration induced by the filtration in \Cref{definition:filtration-on-etale-realization}, i.e., the functor
$$
\omega\colon \mathrm{BKF}^\circ_{\mathrm{rig}}\to \mathrm{FilVec}_C, M\mapsto (\omega_{\acute{e}t}\otimes_{\Q_p}C, \mathrm{Fil}^\bullet(\omega_{\acute{e}t}(M)\otimes_{\Q_p}C)
$$ 
is exact.
\end{lemma}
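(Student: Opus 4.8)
The plan is to reduce exactness of $\omega$ to exactness of the underlying functor $\omega_{\acute{e}t}\otimes_{\Q_p}C$ (which holds by \Cref{lemma: fiber_functors_for_rigidified_bkf_modules}) together with a \emph{strictness} statement for the filtration of \Cref{definition:filtration-on-etale-realization}. First I would record that the filtration is functorial: a morphism $f$ in $\mathrm{BKF}^\circ_{\mathrm{rig}}$ induces a map $\Xi=M\otimes_{A_\inf}B^+_\dR\to M'\otimes_{A_\inf}B^+_\dR=\Xi'$ of $B^+_\dR$-lattices which is compatible with the multiplication isomorphisms identifying $M\otimes_{A_\inf}B_\dR$ with $\omega_{\acute{e}t}(M)\otimes_{\Q_p}B_\dR$, hence carries $\xi^j\Xi\cap(\omega_{\acute{e}t}(M)\otimes B^+_\dR)$ into $\xi^j\Xi'\cap(\omega_{\acute{e}t}(M')\otimes B^+_\dR)$ and so respects $\mathrm{Fil}^\bullet$; thus $\omega$ is a well-defined additive tensor functor to $\mathrm{FilVec}_C$. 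It then remains to show that for a short exact sequence $0\to M'\to M\to M''\to 0$ in $\mathrm{BKF}^\circ_{\mathrm{rig}}$ the filtration induced on $\omega_{\acute{e}t}(M')\otimes_{\Q_p}C$ coincides with the subspace filtration and the one induced on $\omega_{\acute{e}t}(M'')\otimes_{\Q_p}C$ with the quotient filtration.

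Next I would extract a short exact sequence of $B^+_\dR$-lattices. By (the proof of) \Cref{lemma: bkf_modules_rigid_exact_tensor_category} every object of $\mathrm{BKF}^\circ_{\mathrm{rig}}$ is isomorphic to one whose underlying $A_\inf$-module is finite free, and rigidifications transport along such isomorphisms, so we may assume $M'$, $M$, $M''$ are finite free over $A_\inf$; since kernels and cokernels in $\mathrm{BKF}_{\mathrm{rig}}$ are computed on underlying $A_\inf$-modules and (by \Cref{theorem: rigidified_bkf_modules_abelian}, using \Cref{lemma: criterion_for_a_inf_modules_to_be_free_after_inverting_p}) remain finite free after inverting $p$ — this is exactly the step where the rigidification is used — the sequence $0\to M'\to M\to M''\to 0$ becomes, after inverting $p$, an honest short exact sequence of finite free $A_\inf[\tfrac1p]$-modules, hence splits. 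Base change along $-\otimes_{A_\inf}B^+_\dR$ and $-\otimes_{A_\inf}W(C^\flat)$ then yields compatible short exact sequences $0\to\Xi'\to\Xi\to\Xi''\to 0$ of free $B^+_\dR$-modules and $0\to\omega_{\acute{e}t}(M')\to\omega_{\acute{e}t}(M)\to\omega_{\acute{e}t}(M'')\to 0$ of $\Q_p$-vector spaces; in particular $\Xi'=\Xi\cap(\omega_{\acute{e}t}(M')\otimes_{\Q_p}B_\dR)$ and $\Xi''$ is the image of $\Xi$ (compare \Cref{lemma:cokernel-of-rigidified-morphism-between-pairs}), and $\omega_{\acute{e}t}(M')\otimes_{\Q_p}B^+_\dR$ is a direct summand of $\omega_{\acute{e}t}(M)\otimes_{\Q_p}B^+_\dR$.

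Finally, strictness is linear algebra over the discrete valuation ring $B^+_\dR$: from the two compatible short exact sequences just produced the structure theory of lattices over a discrete valuation ring provides a basis $e_1,\dots,e_n$ of $\omega_{\acute{e}t}(M)\otimes_{\Q_p}B^+_\dR$ and integers $\lambda_1,\dots,\lambda_n$ such that $e_1,\dots,e_m$ is a basis of $\omega_{\acute{e}t}(M')\otimes_{\Q_p}B^+_\dR$, $\Xi=\bigoplus_i\xi^{\lambda_i}B^+_\dR e_i$, $\Xi'=\bigoplus_{i\le m}\xi^{\lambda_i}B^+_\dR e_i$ and $\Xi''=\bigoplus_{i>m}\xi^{\lambda_i}B^+_\dR\overline{e}_i$. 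Then, by the explicit description of $\mathrm{Fil}^\bullet$ recalled after \Cref{definition:filtration-on-etale-realization} ($\mathrm{Fil}^j$ is spanned by the residue classes $\overline{e}_i$ with $\lambda_i\le -j$), the subspace and quotient filtrations on $\omega_{\acute{e}t}(M')\otimes_{\Q_p}C$ and $\omega_{\acute{e}t}(M'')\otimes_{\Q_p}C$ are exactly the filtrations of \Cref{definition:filtration-on-etale-realization}, which gives exactness of $\omega$.

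The hard part is the second paragraph: passing from the abelian structure of $\mathrm{BKF}^\circ_{\mathrm{rig}}$ to a genuine short exact sequence of $B^+_\dR$-lattices compatibly positioned relative to $\omega_{\acute{e}t}(M)\otimes_{\Q_p}B^+_\dR$. This is precisely where the rigidification is indispensable (without it $\mathrm{BKF}^\circ$ is not abelian and the analogous statement fails), and it rests on \Cref{theorem: rigidified_bkf_modules_abelian} and \Cref{lemma:cokernel-of-rigidified-morphism-between-pairs}; by contrast the simultaneous-diagonalization step over $B^+_\dR$ in the third paragraph is routine.
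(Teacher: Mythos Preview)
Your overall plan --- reduce exactness to strictness of the filtration on short exact sequences and then produce a simultaneous adapted basis over $B^+_\dR$ --- differs from the paper's route, which first observes that $\omega$ is a tensor functor (hence commutes with duals and exterior powers), reduces via $\Lambda^r$ and duality to the case where the subobject has rank~$1$, twists so that its lattice is the standard one, and only then invokes \Cref{lemma:cokernel-of-rigidified-morphism-between-pairs} to analyse a single primitive vector.

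There is, however, a genuine gap in your third paragraph: the assertion that ``the structure theory of lattices over a discrete valuation ring'' furnishes a basis $e_1,\dots,e_n$ of $\omega_{\acute{e}t}(M)\otimes B^+_\dR$ with $e_1,\dots,e_m$ spanning the sub and $\Xi=\bigoplus_i\xi^{\lambda_i}B^+_\dR e_i$ is \emph{not} a consequence of the elementary divisor theorem. What you have actually established (via \Cref{lemma:cokernel-of-rigidified-morphism-between-pairs}) is only that $\omega_{\acute{e}t}(M')\otimes B^+_\dR\subseteq\omega_{\acute{e}t}(M)\otimes B^+_\dR$ and $\Xi'\subseteq\Xi$ are both saturated; but two lattices $L_0,\Xi$ together with a subspace $W'$ in which each meets in a saturated sublattice need not admit a common adapted basis. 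Concretely, take $L_0=B^+_\dR e_1\oplus B^+_\dR e_2$, $W'=B_\dR e_1$ and $\Xi=B^+_\dR(e_1+\xi e_2)\oplus B^+_\dR\,\xi e_1$: then $\Xi\cap W'=\xi B^+_\dR e_1$ and $\Xi/(\Xi\cap W')$ are free, yet the elementary divisors of the sub, the total, and the quotient relative to their $L_0$-counterparts are $1$, $(0,2)$, $1$, which rules out any simultaneous diagonalisation, and one checks directly that $\mathrm{Fil}^0$ on the sub (which is $0$) is strictly smaller than the subspace filtration (which is $C\bar e_1$). This configuration cannot arise from a short exact sequence in $\mathrm{BKF}^\circ_{\mathrm{rig}}$ --- the corresponding map of isocrystals would go from slope $-1$ into slopes $0$ and $-2$, hence vanish --- so the lemma is not in danger; but it shows that the rigidification has to be exploited beyond the bare freeness conclusion of \Cref{lemma:cokernel-of-rigidified-morphism-between-pairs}, which is all your argument invokes. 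The simultaneous-diagonalisation step is therefore not ``routine'' and needs a genuinely new idea.
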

\begin{proof}
Writing down adapted bases as was done following \Cref{definition:filtration-on-etale-realization} one sees that $\omega$ is a tensor functor.
As symmetric powers and exterior powers are kernels of projectors (in characteristic $0$) and the category $\mathrm{FilVec}_C$ of filtered vector spaces is pseudo-abelian we see that $\omega$ commutes with symmetric and exterior powers. Moreover, $\omega$ commutes with duals.
Let $0\to M\to N\to Q\to 0$ be an exact sequence in $\mathrm{BKF}^\circ_{\mathrm{rig}}$. We have to proof that the sequence
$$
0\to \mathrm{gr}(\omega(M))\to \mathrm{gr}(\omega(N))\to \mathrm{gr}(\omega(Q))\to 0
$$
is exact. Considering dimensions and taking duals it suffices to prove that the left arrow $\mathrm{gr}(\omega(M))\to \mathrm{gr}(\omega(N))$ is injective. This is equivalent to the statement that
$$
\Lambda^r(\mathrm{gr}(\omega(M)))\to \Lambda^r(\mathrm{gr}(\omega(N)))
$$
is non-zero where $r:=\mathrm{rk}(M)$. As $\omega$ commutes with exterior powers we may replace the morphism $M\to N$ by $\Lambda^r M\to \Lambda^r N$ and assume that $r=1$.  By \Cref{lemma:cokernel-of-rigidified-morphism-between-pairs}
Let $(V,\Xi)$ resp.\ $(V^\prime,\Xi^\prime)$ be the associated pairs of a $\Q_p$- and a $B^+_\dR$-lattice. We identify $V,\Xi$ with their images in $V^\prime$ resp.\ $\Xi^\prime$. Tensoring with the dual of $(V,\Xi)$ we can assume that $\Xi=V\otimes_{\Q_p}B^+_\dR$. Pick a generator $v\in V$. Then the element $v\otimes 1$ is an element in $\Xi^\prime$ and moreover, it is part of a basis of $\Xi^\prime$ by \Cref{lemma:cokernel-of-rigidified-morphism-between-pairs}. Therefore we can, cf.\ the description following \Cref{definition:filtration-on-etale-realization}, find an adapted basis $v:=e_1,\ldots, e_n$ of $V^\prime\otimes B^+_\dR$ containing $v$ such that $\Xi^\prime=\langle e_1,\xi^{\lambda_2}e_2,\ldots,\xi^{\lambda_n}e_n\rangle$. From the concrete description given after \Cref{definition:filtration-on-etale-realization} it follows that the image of $v$ in $\mathrm{gr}^0(V^\prime\otimes_{\Q_p}C)$ is non-zero. This finishes the proof.
\end{proof}

We remark that sending a finite free Breuil-Kisin-Fargues modules $M\in \mathrm{BKF}^\circ$ (without fixing a rigidification) to the filtered vector space $\omega_{\acute{e}t}(M)\otimes_{\Q_p}C$ is not an exact operation. An counterexample is provided by the morphism
$$
(\Q_p,\xi B^+_\dR)\to (\Q_p,B^+_\dR).
$$

\begin{definition}
\label{definition: motivic_galois_group}
Let $\omega_?\in \{\omega_{\acute{e}t}, \omega_{\crys}, \omega_{\dR}\}$ be one of the fiber functors in \Cref{lemma: fiber_functors_for_rigidified_bkf_modules}. Then we set
$$
G_{?}:=\mathrm{Aut}^\otimes(\omega_{?})
$$
as the group of tensor automorphisms of $\omega_{?}$.    
\end{definition}

We do not know how to describe the affine group schemes $G_{\acute{e}t}, G_{\crys}$ or $G_{\dR}$ apart from their maximal torus quotients corresponding to the full subcategory of rigidified Breuil-Kisin-Fargues modules admitting complex multiplication.
We record the following general result about these group schemes.

\begin{lemma}
\label{lemma:band-of-bkf-modules-connected}
The band of the Tannakian category $\mathrm{BKF}^\circ_{\mathrm{rig}}$ is connected. In particular, the group schemes $G_{\acute{e}t}, G_{\crys}$ and $G_{\dR}$ are connected.  
\end{lemma}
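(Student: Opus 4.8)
The plan is to verify connectedness directly for the neutral Tannakian category $\mathrm{BKF}^\circ_{\mathrm{rig}}\cong\Rep_{\Q_p}(G_{\acute{e}t})$ attached to the étale realization $\omega_{\acute{e}t}$ of \Cref{lemma: fiber_functors_for_rigidified_bkf_modules}. Once $G_{\acute{e}t}$ is shown to be connected, the band of $\mathrm{BKF}^\circ_{\mathrm{rig}}$ is connected, and hence so is the automorphism group scheme of \emph{every} fibre functor; in particular $G_{\crys}$ and $G_{\dR}$ are connected. Recall the standard criterion (see \cite{deligne_categories_tannakiennes}): $G_{\acute{e}t}$ is connected precisely when every object $X\in\mathrm{BKF}^\circ_{\mathrm{rig}}$ whose Tannakian subcategory $\langle X\rangle^\otimes$ is finite (i.e.\ has only finitely many isomorphism classes of simple objects) is a direct sum of copies of the unit object.

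So let $X$ be such an object; by (the proof of) \Cref{theorem: rigidified_bkf_modules_form_tannakian_category} we may assume it is represented by a finite free Breuil-Kisin-Fargues module. Then $\langle X\rangle^\otimes\cong\Rep_{\Q_p}(G_X)$ with $G_X$ a \emph{finite} group scheme over $\Q_p$, hence étale since we are in characteristic $0$. Restricting the filtered fibre functor $\omega$ of \Cref{lemma: filtered_fiber_functor_on_rigidified_bkf_modules} to $\langle X\rangle^\otimes$ and applying the construction of the grading attached to a filtered fibre functor (\cite[Construction 3.4]{ziegler_graded_and_filtered_fiber_functors}) yields a cocharacter $\mu\colon\mathbb{G}_{m,C}\to (G_X)_C$. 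Since $(G_X)_C$ is finite and $\mathbb{G}_{m,C}$ is connected, $\mu$ is trivial, so the filtration on $\omega_{\acute{e}t}(X)\otimes_{\Q_p}C$ is concentrated in degree $0$.

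Now read off what this means. Writing $X\leftrightarrow(T,\Xi)$ under \Cref{theorem: equivalent_descriptions_of_bkf_modules}(ii) and choosing an adapted basis $e_1,\dots,e_n$ with $\Xi=\langle\xi^{\lambda_1}e_1,\dots,\xi^{\lambda_n}e_n\rangle$ as in the discussion following \Cref{definition:filtration-on-etale-realization}, the explicit formula for $\mathrm{Fil}^\bullet(T\otimes_{\Z_p}C)$ recorded there shows that this filtration is concentrated in degree $0$ exactly when all $\lambda_i=0$, i.e.\ when $\Xi=T\otimes_{\Z_p}B^+_{\dR}$. Hence $(T,\Xi)$ is a direct sum of copies of the pair $(\Z_p,B^+_{\dR})$ representing $A_\inf\{0\}$; equivalently, in the language of \Cref{theorem: equivalent_descriptions_of_bkf_modules}(iii) there is no modification at $\infty$, so $\mathcal{F}'\cong\mathcal{F}$ is the trivial bundle, in particular semistable of slope $0$, whence its Harder-Narasimhan filtration is concentrated in slope $0$ and the only rigidification is the identity. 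Either way $X$ is isomorphic in $\mathrm{BKF}^\circ_{\mathrm{rig}}$ to a direct sum of copies of the unit object, as required.

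The argument is short; its only inputs are the passage from a filtered fibre functor to a cocharacter and the elementary computation — already carried out after \Cref{definition:filtration-on-etale-realization} — that a Hodge filtration concentrated in degree $0$ forces the trivial $B^+_{\dR}$-lattice. The one point to state with care is that finiteness of $\langle X\rangle^\otimes$ enters only through the triviality of this cocharacter, which holds automatically because a finite group scheme in characteristic $0$ admits no nontrivial cocharacter. (Alternatively one could pass to the crystalline realization and use that the Tannakian group of isocrystals over $\overline{\F}_p$ is a connected pro-torus, but the route above is more direct since triviality of the lattice already forces triviality of the whole object.)
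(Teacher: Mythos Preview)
Your proof is correct and follows essentially the same route as the paper: reduce connectedness to showing that any object generating a ``finite'' Tannakian subcategory has trivial Hodge filtration, then use the adapted-basis computation after \Cref{definition:filtration-on-etale-realization} to conclude $\Xi=T\otimes_{\Z_p}B^+_{\dR}$, hence the object is a sum of units (with the unique rigidification on a semistable bundle). The only difference is that you make explicit, via the cocharacter $\mu\colon\mathbb{G}_{m,C}\to (G_X)_C$ into a finite group, \emph{why} the filtration is concentrated in degree $0$; the paper simply asserts this step.
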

\begin{proof}
It suffices to show that every object $M\in \mathrm{BKR}^\circ_{\mathrm{rig}}$ such that the smallest abelian subcategory $\langle M\rangle$, i.e., the full subcategory of subquotients of a finite direct sum $\bigoplus M$, is closed under the tensor product is a direct sum of the unit object.
Assume that there exists an object like this. Set $V:=\omega_{\acute{e}t}(M)=(M\otimes_{A_\inf}W(C^\flat)^{\varphi_M=1}[\frac{1}{p}]$. Then the filtration of \Cref{lemma: filtered_fiber_functor_on_rigidified_bkf_modules} on the \'etale realization $\omega_{\acute{e}t}(M)\otimes_{\Q_p}C=V\otimes_{\Q_p}C$ must be trivial. Set $\Xi:=M\otimes_{A_\inf}B^+_{\dR}\subseteq V\otimes_{\Q_p}B_{\dR}$. In a suitable basis $e_1,\ldots,e_n$ of $V\otimes_{\Q_p}B^+_\dR$ we can write $\Xi=\langle\xi^{\lambda_1}e_1,\ldots, \xi^{\lambda_n}e_n\rangle$ with $\lambda_1,\ldots,\lambda_n\in \Z$. Then for $j\in \Z$ 
$$
\xi^j\Xi\subseteq V\otimes_{\Q_p}B^+_\dR
$$
is generated by the $\xi^{\mathrm{max}\{\lambda_i+j,0\}}e_i$. As the filtration on $V\otimes_{\Q_p}C$ is trivial we can conclude that
$$
\lambda_i\leq 0
$$
and
$$
\lambda_i+1>0
$$
for all $i$. In other words, $\lambda_i=0$ for all $i$, i.e., $\Xi=V\otimes_{\Q_p}B^+_{\dR}$, which proves that $M$ is a direct sum of the unit object of $\mathrm{BKF}^\circ_{\mathrm{rig}}.$
\end{proof}

We now calculate the Ext-groups
$$
\mathrm{Ext}^1_{\mathrm{BKF}^\circ_{\mathrm{rig}}}(A_\inf\{d\},A_\inf\{d^\prime\})
$$
for $d,d^\prime\in \Z$. In particular, as they will turn out to be non-zero we will be able to conclude that the category $\mathrm{BKF}^\circ_{\mathrm{rig}}$ of Breuil-Kisin-Fargues modules is not semisimple (and thus the group schemes $G_{\acute{e}t},G_{\crys},G_{\dR}$ are not reductive).

We recall one more period ring, namely $B_e$. By definition,
$$
B_e:=H^0(X_{\mathrm{FF}},\mathcal{O}_{X_\mathrm{FF}})=B_\crys^{\varphi=1}.
$$
Moreover, there exists the ``fundamental sequence of $p$-adic Hodge theory'' involving $B_e$:
$$
0\to \Q_p\to B_e\to B_\dR/(B_\dR^+)\to 0
$$
(cf.\ this follows from \cite[Exemple 6.4.2.]{fargues_fontaine_courbes_et_fibres_vectoriels_en_theorie_de_hodge_p_adique}).

\begin{lemma}
\label{lemma: extensions_of_bkf_modules_of_rank_one}
Let $d\in \Z$. Then 
$$
\mathrm{Ext}^1_{\mathrm{BKF}^\circ_{\mathrm{rig}}}(A_\inf,A_\inf\{d\})\cong
B_{\dR}/{t^d B^+_{\dR}}.
$$
\end{lemma}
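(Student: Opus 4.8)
The plan is to describe extensions concretely via the equivalence of \Cref{theorem: rigidified_bkf_modules_versus_modifications_of_vector_bundles} between $\mathrm{BKF}^\circ_{\mathrm{rig}}$ and the category of modifications $(\mathcal{F},\mathcal{F}',\beta,\alpha)$ of vector bundles on the Fargues--Fontaine curve. Under this equivalence $A_\inf$ corresponds to the trivial datum and $A_\inf\{d\}$ to $(\mathcal{O}_{X_{\mathrm{FF}}},\mathcal{O}_{X_{\mathrm{FF}}}(d),\beta_d,\mathrm{id})$, where $\beta_d$ is the modification at $\infty$ given by $t^d$ as in the example following \Cref{theorem: equivalent_descriptions_of_bkf_modules}; since $\mathcal{O}_{X_{\mathrm{FF}}}$ and $\mathcal{O}_{X_{\mathrm{FF}}}(d)$ are semistable, the rigidifications on these two objects are forced to be the identity. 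By the description of the abelian structure on modifications used in the alternative proof of \Cref{theorem: rigidified_bkf_modules_form_tannakian_category}, a short exact sequence $0\to A_\inf\{d\}\to E\to A_\inf\to 0$ in $\mathrm{BKF}^\circ_{\mathrm{rig}}$ amounts to a datum $(\mathcal{F}_E,\mathcal{F}'_E,\beta_E,\alpha_E)$ sitting in a short exact sequence $0\to\mathcal{O}_{X_{\mathrm{FF}}}\to\mathcal{F}_E\to\mathcal{O}_{X_{\mathrm{FF}}}\to 0$ of (necessarily trivial) bundles and a short exact sequence $0\to\mathcal{O}_{X_{\mathrm{FF}}}(d)\to\mathcal{F}'_E\to\mathcal{O}_{X_{\mathrm{FF}}}\to 0$ which becomes a short exact sequence of Harder--Narasimhan-graded bundles after applying $\alpha_E$.

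Next I would make this explicit. Using $H^1(X_{\mathrm{FF}},\mathcal{O}_{X_{\mathrm{FF}}})=0$, and for $d\geq 0$ also $H^1(X_{\mathrm{FF}},\mathcal{O}_{X_{\mathrm{FF}}}(d))=0$, the bundles $\mathcal{F}_E$ and $\mathcal{F}'_E$ split; in adapted coordinates $\beta_E$ restricts to $t^d$ on the sub and to the identity on the quotient, hence is determined by one off-diagonal term $u\in H^0(X_{\mathrm{FF}}\setminus\{\infty\},\mathcal{O}_{X_{\mathrm{FF}}}(d))$, while the remaining coordinate ambiguities together with the rigidification $\alpha_E$ are governed by the $\mathrm{Hom}$-groups $\mathrm{Hom}(\mathcal{O}_{X_{\mathrm{FF}}},\mathcal{O}_{X_{\mathrm{FF}}})$ and $\mathrm{Hom}(\mathcal{O}_{X_{\mathrm{FF}}},\mathcal{O}_{X_{\mathrm{FF}}}(d))=H^0(X_{\mathrm{FF}},\mathcal{O}_{X_{\mathrm{FF}}}(d))$. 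Tracking the isomorphisms of extensions, one identifies $\mathrm{Ext}^1_{\mathrm{BKF}^\circ_{\mathrm{rig}}}(A_\inf,A_\inf\{d\})$ with the quotient of $H^0(X_{\mathrm{FF}}\setminus\{\infty\},\mathcal{O}_{X_{\mathrm{FF}}}(d))$ by the subspace generated by $H^0(X_{\mathrm{FF}},\mathcal{O}_{X_{\mathrm{FF}}}(d))$ and the modification $t^d$. For $d<0$, where $\mathcal{F}'_E$ need not split and its Harder--Narasimhan type can change, I would reduce to the case $d\geq 0$ either by Serre duality on $X_{\mathrm{FF}}$ or by using the tensor structure to pass to $(A_\inf,A_\inf\{-d\})$.

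Finally I would translate the resulting quotient into period rings. Since $X_{\mathrm{FF}}\setminus\{\infty\}=\Spec B_e$, $\widehat{\mathcal{O}}_{X_{\mathrm{FF}},\infty}\cong B^+_{\dR}$ with uniformiser $t$, and $\mathcal{O}_{X_{\mathrm{FF}}}(d)$ is trivialised near $\infty$, the Mayer--Vietoris sequence for the cover of $X_{\mathrm{FF}}$ by $X_{\mathrm{FF}}\setminus\{\infty\}$ and the formal neighbourhood of $\infty$ expresses $H^0$ and $H^1$ of $\mathcal{O}_{X_{\mathrm{FF}}}(d)$ in terms of $B_e$, $B^+_{\dR}$, $B_{\dR}$ and $t^d$; combined with the fundamental exact sequence $0\to\Q_p\to B_e\to B_{\dR}/B^+_{\dR}\to 0$ recalled above, this yields the identification with $B_{\dR}/t^dB^+_{\dR}$. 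I expect the main obstacle to lie precisely in this bookkeeping in the middle step: correctly isolating the group of coboundaries, i.e.\ disentangling the contributions of the modification $\beta_E$, of the choice of splitting of $\mathcal{F}'_E$, and of the rigidification $\alpha_E$ under isomorphisms of extensions, and pinning down the exact power of $t$ in the lattice relative to the chosen trivialisation at $\infty$ (as well as carefully handling the non-semistable bundles $\mathcal{F}'_E$ occurring for $d<0$).
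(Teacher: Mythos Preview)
Your overall strategy---translating to modifications on $X_{\mathrm{FF}}$ via \Cref{theorem: rigidified_bkf_modules_versus_modifications_of_vector_bundles}, parametrising the off-diagonal of $\beta$, and finishing with the fundamental exact sequence---is exactly the paper's approach. Two points, however, deserve correction.

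First, the case split on the sign of $d$ is unnecessary, and your proposed reductions for $d<0$ (Serre duality, a tensor trick swapping $d$ and $-d$) are not needed and not obviously available. The key observation, used uniformly for all $d$, is that the compatibility of the rigidification $\alpha_E$ with the maps in the extension forces $\mathrm{gr}^\bullet(\mathrm{HN}(\mathcal{F}'_E))\cong\mathcal{O}_{X_{\mathrm{FF}}}(d)\oplus\mathcal{O}_{X_{\mathrm{FF}}}$, and hence $\mathcal{F}'_E$ itself splits via $\alpha_E^{-1}$. No appeal to $H^1(X_{\mathrm{FF}},\mathcal{O}_{X_{\mathrm{FF}}}(d))=0$ is required, and the HN type of $\mathcal{F}'_E$ cannot jump.

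Second, and more seriously, your computation of the coboundaries is off for $d>0$. You record a single datum $u\in H^0(X_{\mathrm{FF}}^\circ,\mathcal{O}_{X_{\mathrm{FF}}}(d))$ and then quotient by both $H^0(X_{\mathrm{FF}},\mathcal{O}_{X_{\mathrm{FF}}}(d))=(B^+_\crys)^{\varphi=p^d}$ and $\Q_p t^d$. But the rigidification $\alpha_E$ carries its own off-diagonal datum $b\in H^0(X_{\mathrm{FF}},\mathcal{O}_{X_{\mathrm{FF}}}(d))$, and the unipotent automorphism $c'\in H^0(X_{\mathrm{FF}},\mathcal{O}_{X_{\mathrm{FF}}}(d))$ of $\mathcal{F}'_E$ acts simultaneously on $u$ and on $b$; it therefore does not produce a relation on $u$ alone. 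Once one uses $c'$ to normalise $b=0$, the only remaining relation comes from the automorphism $c\in\Q_p$ of $\mathcal{F}_E$, giving $u\sim u+ct^d$. Hence the Ext group is $B_e t^d/\Q_p t^d\cong B_{\dR}/t^dB^+_{\dR}$, not $B_e t^d/(B^+_\crys)^{\varphi=p^d}$. The latter quotient is precisely what one obtains \emph{without} the rigidification (cf.\ the remark following the paper's proof), so your bookkeeping has effectively discarded the rigidification datum.
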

\begin{proof}
We use \Cref{theorem: rigidified_bkf_modules_versus_modifications_of_vector_bundles} and will classify triples $(\mathcal{F},\mathcal{F}^\prime,\beta)$ fitting into a commutative diagramm with exact rows (where a dotted arrow means an isomorphism outside $\infty\in X_{\mathrm{FF}}$)
$$
\xymatrix{
0 \ar[r] & \mathcal{O}_{X_{\mathrm{FF}}} \ar[r]\ar@{-->}[d]^{t^{d}} & \mathcal{F} \ar[r]\ar@{-->}[d]^{\beta} & \mathcal{O}_{X_{\mathrm{FF}}}\ar[r]\ar[d]^{=} & 0 \\
0 \ar[r] & \mathcal{O}_{X_{\mathrm{FF}}}(d) \ar[r] & \mathcal{F}^\prime \ar[r] & \mathcal{O}_{X_{\mathrm{FF}}}\ar[r] & 0 
}
$$
and isomorphisms
$$
\alpha\colon \mathcal{F}^\prime\cong \mathrm{gr}^\bullet(\mathrm{HN}(\mathcal{F}^\prime))
$$
inducing the canonical rigidifications on $\mathcal{O}_{X_{\mathrm{FF}}}(d)$ resp.\ $\mathcal{O}_{X_{\mathrm{FF}}}$.
First we note that this last requirement actually implies that 
$$
\mathcal{F}^\prime\cong \mathcal{O}_{X_{\mathrm{FF}}}(d)\oplus \mathcal{O}_{X_{\mathrm{FF}}}.
$$
Moreover, 
$$
\mathcal{F}\cong \mathcal{O}_{X_{\mathrm{FF}}}\oplus\mathcal{O}_{X_{\mathrm{FF}}}.
$$
In other words, an extension of $\mathrm{A}_\inf$ by $\mathrm{A}_{\inf}\{d\}$ is thus determined by an automorphism, preserving the factor $\mathcal{O}_{X_{\mathrm{FF}}}(d)$,
$$
\mathcal{O}_{X_{\mathrm{FF}}}(d)\oplus \mathcal{O}_{X_{\mathrm{FF}}}
$$
restricting to the identity on both factors, i.e., by an element in
$$
\mathrm{Hom}_{X_{\mathrm{FF}}}(\mathcal{O}_{X_{\mathrm{FF}}},\mathcal{O}_{X_{\mathrm{FF}}}(d))\cong (B^+_\crys)^{\varphi=p^{d}},
$$
and an isomorphism
$$
\beta\colon (\mathcal{O}_{X_{\mathrm{FF}}}\oplus\mathcal{O}_{X_{\mathrm{FF}}})_{|X_{\mathrm{FF}}\setminus\{\infty\}}\cong (\mathcal{O}_{X_{\mathrm{FF}}}(d)\oplus \mathcal{O}_{X_{\mathrm{FF}}})_{|X_{\mathrm{FF}}\setminus\{\infty\}}
$$
on $X_{\mathrm{FF}}\setminus\{\infty\}$ (which again must preserve the filtration and associated gradeds). 
We thus obtain a canonical surjection (of $\Q_p$-vector spaces)
$$
\Gamma\colon B_et^{d}\oplus (B_{\crys}^+)^{\varphi=p^{d}}\to \mathrm{Ext}^1_{\mathrm{BKF}^\circ_{\mathrm{rig}}}(A_\inf,A_\inf\{d\})
$$
by sending $(a,b)\in B_et^{d}\oplus (B_{\crys}^+)^{\varphi=p^{d}}$ to the quadruple
$$
(\mathcal{O}_{X_{\mathrm{FF}}}\oplus \mathcal{O}_{X_{\mathrm{FF}}}, \mathcal{O}_{X_{\mathrm{FF}}}(d)\oplus \mathcal{O}_{X_{\mathrm{FF}}},\beta=
\begin{pmatrix}
  t^{d} & a \\
0 & 1
\end{pmatrix},\alpha=
\begin{pmatrix}
1 & b\\
0 & 1  
\end{pmatrix})
$$
Assume that a pair $(a,b)$ defines a trivial extension. Then there exists 
$$
c\in \Q_p\cong H^0(X_{\mathrm{FF}},\mathcal{O}_{X_{\mathrm{FF}}})
$$ 
and 
$$
c^\prime\in (B^+_\crys)^{\varphi=p^d}\cong H^0(X_{\mathrm{FF}},\mathcal{O}_{X_{\mathrm{FF}}}(d))
$$ 
such that
$$
\begin{matrix}
  \begin{pmatrix}
    t^d & a\\
    0 & 1
  \end{pmatrix}
  \begin{pmatrix}
    1 & c \\
    0 & 1 
  \end{pmatrix}
=
\begin{pmatrix}
  1 & c^\prime \\
  0 & 1
\end{pmatrix}
\begin{pmatrix}
  t^d & 0 \\
  0 & 1
\end{pmatrix} \\
\begin{pmatrix}
  1 & b \\
  0 & 1
\end{pmatrix}
\begin{pmatrix}
  1 & c^\prime  \\
  0 & 1
\end{pmatrix}
=
\begin{pmatrix}
  1 & 0 \\
  0 & 1
\end{pmatrix}.
\end{matrix}
$$
In other words, 
$$
b=-c^\prime
$$ 
and
$$
ct^d+a=c^\prime=-b.
$$
For $b\in (B^+_\crys)^{\varphi=p^d}$ the element $\frac{b}{t^d}$ lies in $B_e$. Therefore the pair 
$$
(\frac{b}{t^d}t^d,b)\in \mathrm{Ker}(\Gamma)
$$
lies in the kernel of $\Gamma$.
In particular, we see that the morphism
$$
\Gamma_{|B_et^d}\colon B_et^d\to \mathrm{Ext}^1_{\mathrm{BKF}^\circ_{\mathrm{rig}}}(A_\inf,A_\inf\{d\})
$$
is still surjective. Moreover, its kernel is given by $\Q_pt^d$ because $\Gamma(a,0)$ is trivial if and only if $a\in \Q_pt^d$.
The fundamental exact sequence implies therefore
$$
\mathrm{Ext}^1_{\mathrm{BKF}^\circ_{\mathrm{rig}}}(A_\inf,A_\inf\{d\})\cong B_et^d/\Q_pt^d\cong B_{\dR}/{t^{d}B^+_\dR}.
$$ 
\end{proof}

Let 
$$
K:=\mathrm{Ker}(\mathrm{Ext}^1_{\mathrm{BKF}^\circ}(A_\inf,A_{\inf}\{d\})\to \mathrm{Ext}^1_{X_{\mathrm{FF}}}(\mathcal{O}_{X_{\mathrm{FF}}},\mathcal{O}_{X_{\mathrm{FF}}}(d)))
$$
be the kernel of the natural map (in terms of modifications it sends $(\mathcal{F},\mathcal{F}^\prime,\beta)$ to $\mathcal{F}^\prime$). 
Analyzing the proof of \Cref{lemma: extensions_of_bkf_modules_of_rank_one} we see that
$$
K\cong B_et^d/(B^+_\crys)^{\varphi=p^d}.
$$ 
Thus imposing the condition of a rigidification enlarges this group to
$$
B_et^d/\Q_pt^d\cong B_{\dR}/t^dB^+_\dR.
$$

For more general $\mathrm{Ext}$-groups we can prove the following result (which implies \Cref{lemma: extensions_of_bkf_modules_of_rank_one}).
For simplicity, we denote by 
$$
X^\circ_{\mathrm{FF}}:=X_{\mathrm{FF}}\setminus\{\infty\}=\mathrm{Spec}(B_e)
$$
the punctured Fargues-Fontaine curve.

\begin{lemma}
\label{lemma:ext-groups-between-bkf-modules}
Let $(M_1,\varphi_{M_1}),(M_2,\varphi_{M_2})\in \mathrm{BKF}^0_{\mathrm{rig}}(M_1,M_2)$ be rigidified Breuil-Kisin-Fargues modules up to isogeny with associated modifications 
$$
(\mathcal{F}_i,\mathcal{F}_i^\prime,\beta_i,\alpha_i), i=1,2,
$$ 
of vector bundles on the Fargues-Fontaine curve as in \Cref{theorem: rigidified_bkf_modules_versus_modifications_of_vector_bundles}. Then there exist a natural surjection
$$
H^0(X_{\mathrm{FF}}^\circ,\mathcal{F}_{1|X^\circ_{\mathrm{FF}}}^{\vee}\otimes_{\mathcal{O}_{X^\circ_{\mathrm{FF}}}}\mathcal{F}_{2|X^\circ_{\mathrm{FF}}}^{\prime})\twoheadrightarrow \mathrm{Ext}^1_{\mathrm{BKF}^\circ_{\mathrm{rig}}}(M_1,M_2). 
$$
If $\mathcal{F}_i^\prime, i=1,2$, are semistable, then the kernel of this surjection is naturally isomorphic to
$$
H^0(X_{\mathrm{FF}},\mathcal{F}_1^\vee\otimes_{\mathcal{O}_{X_{\mathrm{FF}}}}\mathcal{F}_2).
$$
\end{lemma}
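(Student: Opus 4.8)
The plan is to describe extensions through the modification picture of \Cref{theorem: rigidified_bkf_modules_versus_modifications_of_vector_bundles}, in the spirit of the proof of \Cref{lemma: extensions_of_bkf_modules_of_rank_one}. Write $M_i\leftrightarrow(\mathcal{F}_i,\mathcal{F}_i^\prime,\beta_i,\alpha_i)$ and let $0\to M_2\to E\to M_1\to 0$ be an extension, corresponding to a quadruple $(\mathcal{G},\mathcal{G}^\prime,\gamma,\delta)$. First I would record two vanishing statements. Since $\mathcal{F}_1$ is a trivial vector bundle, $\mathcal{F}_1^\vee\otimes_{\mathcal{O}_{X_{\mathrm{FF}}}}\mathcal{F}_2$ is trivial, so $\mathrm{Ext}^1_{X_{\mathrm{FF}}}(\mathcal{F}_1,\mathcal{F}_2)=H^1(X_{\mathrm{FF}},\mathcal{F}_1^\vee\otimes\mathcal{F}_2)=0$ (which follows from the fundamental exact sequence $0\to\Q_p\to B_e\to B_{\dR}/B^+_{\dR}\to 0$ and the covering of $X_{\mathrm{FF}}$ by $X^\circ_{\mathrm{FF}}$ and a neighbourhood of $\infty$), and hence the sequence $0\to\mathcal{F}_2\to\mathcal{G}\to\mathcal{F}_1\to 0$ of bundles splits. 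Moreover, as $C$ is algebraically closed its residue field $k$ is algebraically closed, so by Dieudonn\'e--Manin the category of isocrystals over $k$ is semisimple; using \Cref{lemma: graded_vector_bundles_and_isocrystals} (which identifies rigidified bundles with isocrystals over $k$) it follows that the extension of rigidified bundles $0\to(\mathcal{F}_2^\prime,\alpha_2)\to(\mathcal{G}^\prime,\delta)\to(\mathcal{F}_1^\prime,\alpha_1)\to 0$ splits as well.

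Next I would choose splittings of these two sequences. With respect to them the gluing isomorphism $\gamma\colon\mathcal{G}_{|X^\circ_{\mathrm{FF}}}\xrightarrow{\simeq}\mathcal{G}^\prime_{|X^\circ_{\mathrm{FF}}}$ must be block upper triangular with diagonal blocks $\beta_2$ and $\beta_1$ (compatibility with the sub- and the quotient object), so the isomorphism class of $E$ is determined by the single remaining block
$$
\gamma^\prime\in\mathrm{Hom}_{X^\circ_{\mathrm{FF}}}(\mathcal{F}_{1|X^\circ_{\mathrm{FF}}},\mathcal{F}^\prime_{2|X^\circ_{\mathrm{FF}}})=H^0(X^\circ_{\mathrm{FF}},\mathcal{F}_{1|X^\circ_{\mathrm{FF}}}^\vee\otimes_{\mathcal{O}_{X^\circ_{\mathrm{FF}}}}\mathcal{F}^\prime_{2|X^\circ_{\mathrm{FF}}})
$$
sending the $\mathcal{F}_1$-summand of $\mathcal{G}$ to the $\mathcal{F}^\prime_2$-summand of $\mathcal{G}^\prime$, and conversely every such $\gamma^\prime$ comes from an extension. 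This gives the asserted natural surjection. A class $\gamma^\prime$ yields the split extension exactly when it can be removed by altering the two chosen splittings; altering the splitting of $\mathcal{G}$ by $h\in\mathrm{Hom}_{X_{\mathrm{FF}}}(\mathcal{F}_1,\mathcal{F}_2)=H^0(X_{\mathrm{FF}},\mathcal{F}_1^\vee\otimes\mathcal{F}_2)$ and that of $\mathcal{G}^\prime$ by $g\in\mathrm{Hom}((\mathcal{F}_1^\prime,\alpha_1),(\mathcal{F}_2^\prime,\alpha_2))$ replaces $\gamma^\prime$ by $\gamma^\prime-(\beta_2\circ h)_{|X^\circ_{\mathrm{FF}}}+(g\circ\beta_1)_{|X^\circ_{\mathrm{FF}}}$. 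Hence the kernel of the surjection is the sum, inside $H^0(X^\circ_{\mathrm{FF}},\mathcal{F}_{1|X^\circ_{\mathrm{FF}}}^\vee\otimes\mathcal{F}^\prime_{2|X^\circ_{\mathrm{FF}}})$, of the image under restriction of $H^0(X_{\mathrm{FF}},\mathcal{F}_1^\vee\otimes\mathcal{F}_2)$ — injective, since $X^\circ_{\mathrm{FF}}$ is dense — and of the image under restriction of $\mathrm{Hom}((\mathcal{F}_1^\prime,\alpha_1),(\mathcal{F}_2^\prime,\alpha_2))\cong\mathrm{Hom}_{\mathrm{Isoc}_k}(\omega_{\crys}(M_1),\omega_{\crys}(M_2))$ (using $\mathcal{F}_{i|X^\circ_{\mathrm{FF}}}=\mathcal{F}^\prime_{i|X^\circ_{\mathrm{FF}}}$). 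This proves the first assertion.

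For the semistable case it then remains to show that the second summand above is contained in the first, i.e.\ that every morphism between the isocrystals $\omega_{\crys}(M_i)$ is the crystalline realization of a morphism $M_1\to M_2$ in $\mathrm{BKF}^\circ_{\mathrm{rig}}$. If $\mathcal{F}_1^\prime$ and $\mathcal{F}_2^\prime$ have distinct slopes there is nothing to prove, since then $\mathrm{Hom}_{\mathrm{Isoc}_k}(\omega_{\crys}(M_1),\omega_{\crys}(M_2))=0$ (no morphisms between isoclinic isocrystals of distinct slopes). If they are semistable of the same slope, then $\mathcal{F}_1^{\prime\vee}\otimes\mathcal{F}_2^\prime$ is semistable of slope $0$, hence trivial, so $\mathcal{F}_1^\vee\otimes\mathcal{F}_2$ and $\mathcal{F}_1^{\prime\vee}\otimes\mathcal{F}_2^\prime$ are both trivial of the same rank and restrict to the same bundle on $X^\circ_{\mathrm{FF}}$; as restriction is injective on global sections, it then suffices to show that a bundle morphism $\mathcal{F}_1^\prime\to\mathcal{F}_2^\prime$, restricted to $X^\circ_{\mathrm{FF}}$ and transported through $\beta_1,\beta_2$, extends across $\infty$ to a morphism $\mathcal{F}_1\to\mathcal{F}_2$ of trivial bundles. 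I expect this last point to be the main obstacle: it is a compatibility, at the completed local ring $\widehat{\mathcal{O}_{X_{\mathrm{FF}},\infty}}\cong B^+_{\dR}$, between the ``étale'' lattice $T_i\otimes_{\Z_p}B^+_{\dR}$ (the completed stalk of $\mathcal{F}_i$ at $\infty$) and the lattice $\Xi_i\subseteq T_i\otimes_{\Z_p}B_{\dR}$ of \Cref{theorem: equivalent_descriptions_of_bkf_modules} (that of $\mathcal{F}_i^\prime$), and I would approach it by first twisting both $M_i$ by a suitable $A_\inf\{d\}$ — which changes neither $\mathrm{Ext}^1$ nor the $H^0$-groups in the statement up to canonical isomorphism — to bring the common slope into $[0,1)$, and then comparing the two lattices directly using the classification of semistable vector bundles on $X_{\mathrm{FF}}$; the sub-case of non-integral slope is the most delicate, and is where semistability of the $\mathcal{F}_i^\prime$ is essentially used.
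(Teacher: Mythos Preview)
Your overall approach matches the paper's: construct the map $\Gamma$ by putting an off-diagonal element into the modification, show surjectivity by splitting both $\mathcal{G}$ and the rigidified $\mathcal{G}'$, and compute the kernel by varying the two splittings. Your use of Dieudonn\'e--Manin semisimplicity (via \Cref{lemma: graded_vector_bundles_and_isocrystals}) to split the rigidified extension is a clean repackaging of the paper's more hands-on argument, which instead shows directly that the off-diagonal piece $a$ of the rigidification can be absorbed by an automorphism of $\mathcal{F}_2'\oplus\mathcal{F}_1'$.

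The real divergence is in the kernel analysis for the semistable case. The paper argues in one stroke: an isomorphism of the two extensions must intertwine their (identical, diagonal) rigidifications, and this forces the off-diagonal block $c_2\in\mathrm{Hom}(\mathcal{F}_1',\mathcal{F}_2')$ of the $\mathcal{F}'$-automorphism to induce zero on the graded pieces of the Harder--Narasimhan filtration; when both $\mathcal{F}_i'$ are semistable this means $c_2=0$, and the kernel is immediately the image of $\mathrm{Hom}(\mathcal{F}_1,\mathcal{F}_2)$. You instead allow $c_2$ to range over all morphisms of rigidified bundles and then try to show that its contribution is already subsumed by that of $c_1$. This leads you into a local comparison at $\infty$ between the lattices $T_i\otimes B^+_{\dR}$ and $\Xi_i$, which you yourself flag as the main obstacle and leave only sketched (the twist-to-small-slope idea is not fleshed out, and the non-integral slope case is not handled). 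That detour is unnecessary: go back and unwind precisely what compatibility with the fixed diagonal rigidification $\begin{pmatrix}\alpha_2&0\\0&\alpha_1\end{pmatrix}$ imposes on $\begin{pmatrix}1&c_2\\0&1\end{pmatrix}$ at the level of graded pieces --- this is where the paper's constraint on $c_2$ comes from, and in the semistable case it collapses the second summand entirely.
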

\begin{proof}
We first construct a natural morphism
$$
\Gamma\colon H^0(X_{\mathrm{FF}}^\circ,\mathcal{F}_{1|X^\circ_{\mathrm{FF}}}^{\prime\vee}\otimes_{\mathcal{O}_{X^\circ_{\mathrm{FF}}}}\mathcal{F}_{2|X^\circ_{\mathrm{FF}}}^{\prime})\to \mathrm{Ext}^1_{\mathrm{BKF}^\circ_{\mathrm{rig}}}(M_1,M_2).
$$  
Let 
$$
b\in H^0(X_{\mathrm{FF}}^\circ,\mathcal{F}_{1|X^\circ_{\mathrm{FF}}}^{\vee}\otimes_{\mathcal{O}_{X^\circ_{\mathrm{FF}}}}\mathcal{F}_{2|X^\circ_{\mathrm{FF}}}^{\prime})\cong \mathrm{Hom}_{X_\mathrm{FF}^\circ}(\mathcal{F}_{1|X^\circ_{\mathrm{FF}}},\mathcal{F}_{2|X^\circ_{\mathrm{FF}}}^\prime)
$$ 
be an element. Then we set $\Gamma(b)$ to be the extension
$$
\xymatrix{
0 \ar[r] & \mathcal{F}_{2} \ar[r]\ar@{-->}[d]^{\beta_2} & \mathcal{F}_2\oplus \mathcal{F}_1 \ar[r]\ar@{-->}[d]^{\begin{pmatrix} \beta_2 & b \\ 0 &\beta_1 \end{pmatrix}} & \mathcal{F}_1\ar[r]\ar@{-->}[d]^{\beta_1} & 0 \\
0 \ar[r] & \mathcal{F}_2^\prime \ar[r] & \mathcal{F}^\prime_2\oplus \mathcal{F}^\prime_1 \ar[r] & \mathcal{F}_1^\prime\ar[r] & 0 
}
$$
with rigidification
$$
\begin{pmatrix}
  \alpha_2 & 0 \\
 0 & \alpha_1
\end{pmatrix}\colon \mathcal{F}^\prime_2\oplus \mathcal{F}^\prime_1\cong \mathrm{gr}^\bullet(\mathrm{HN}(\mathcal{F}^\prime_2\oplus \mathcal{F}^\prime_1))=\mathrm{gr}^\bullet(\mathrm{HN}(\mathcal{F}^\prime_2))\oplus \mathrm{gr}^\bullet(\mathrm{HN}(\mathcal{F}^\prime_1)).
$$
We claim that $\Gamma$ is surjective with kernel, if $\mathcal{F}_1^\prime,\mathcal{F}_2^\prime$ are semistable, given by 
$$
H^0(X_{\mathrm{FF}},\mathcal{F}_1^\vee\otimes_{\mathcal{O}_{X_{\mathrm{FF}}}}\mathcal{F}_2)\cong \mathrm{Hom}_{X_\mathrm{FF}}(\mathcal{F}_{1},\mathcal{F}_{2}),
$$
embedded into $H^0(X_{\mathrm{FF}}^\circ,\mathcal{F}_{1|X^\circ_{\mathrm{FF}}}^{\vee}\otimes_{\mathcal{O}_{X^\circ_{\mathrm{FF}}}}\mathcal{F}_{2|X^\circ_{\mathrm{FF}}}^{\prime})$ via $c\mapsto \beta_2\circ c_{|X^\circ_{\mathrm{FF}}}$.
Let 
$$
(\mathcal{F},\mathcal{F^\prime},\beta,\alpha)
$$
be an extension of $(\mathcal{F}_1,\mathcal{F}_1^\prime,\beta_1,\alpha_1)$ by $(\mathcal{F}_2,\mathcal{F}_2^\prime,\beta_2,\alpha_2)$. Then 
$$
\mathcal{F}\cong \mathcal{F}_1\oplus\mathcal{F}_2
$$
because $H^1(X_{\mathrm{FF}},\mathcal{O}_{X_\mathrm{FF}})=0.$
Moreover, as the extension must be compatible with the rigidification the extension
$$
0\to \mathcal{F}_2^\prime\to \mathcal{F}^\prime\to \mathcal{F}^\prime_1\to 0
$$
splits, i.e., $\mathcal{F}^\prime\cong \mathcal{F}^\prime_1\oplus \mathcal{F}^\prime_2$. We moreover see that the rigidification $\alpha$ must be given by a matrix
$$
\alpha=\begin{pmatrix}
  \alpha_2 & a\\
  0 & \alpha_1
\end{pmatrix}
$$
with $a\in \mathrm{Hom}_{X_{\mathrm{FF}}}(\mathcal{F}^\prime_1,\mathrm{gr}(\mathrm{HN}(\mathcal{F}^\prime_2)))$. Moreover, $a$ must map to zero on the associated graded of the Harder-Narasimhan filtration
$$
\mathrm{gr}(a)\colon \mathrm{gr}(\mathrm{HN}(\mathcal{F}^\prime_1))\to \mathrm{gr}(\mathrm{HN}(\mathcal{F}^\prime_2))
$$ 
as $\alpha$ must reduce to the identity on these graded pieces.
Therefore there exists an $a^\prime\in \mathrm{Hom}_{X_{\mathrm{FF}}}(\mathcal{F}^\prime_1,\mathcal{F}^\prime_2)$ such that
$$
\alpha_2\circ a^\prime = \mathrm{gr}(a^\prime)\circ \alpha_1 + a.
$$ 
Then the extension
$$
(\mathcal{F},\mathcal{F^\prime},\beta,\alpha)
$$
is isomorphic to
$$
(\mathcal{F},\mathcal{F^\prime},
\begin{pmatrix}
  1 & a^\prime \\
  0 & 1
\end{pmatrix}\circ
\beta,
\begin{pmatrix}
  \alpha_2 & 0 \\
  0 & \alpha_1
\end{pmatrix}
)
$$
and thus this extension lies in the image of $\Gamma$. Now assume that for 
$$
b\in H^0(X_{\mathrm{FF}}^\circ,\mathcal{F}_{1|X^\circ_{\mathrm{FF}}}^{\vee}\otimes_{\mathcal{O}_{X^\circ_{\mathrm{FF}}}}\mathcal{F}_{2|X^\circ_{\mathrm{FF}}}^{\prime})
$$ 
the extension 
$$
\Gamma(b)
$$
is trivial. This means that there exists two isomorphisms 
$$
\begin{pmatrix}
  1 & c_1 \\
 0 & 1
\end{pmatrix}
\colon \mathcal{F}_1\oplus \mathcal{F}_2\cong \mathcal{F}_1\oplus\mathcal{F}_2,
$$
with $c_1\in \mathrm{Hom}_{X_{\mathrm{FF}}}(\mathcal{F}_1,\mathcal{F}_2)$, and
$$
\begin{pmatrix}
1 & c_2\\
0 & 1  
\end{pmatrix}\colon \mathcal{F}^\prime_1\oplus \mathcal{F}^\prime_2\cong \mathcal{F}^\prime_1\oplus\mathcal{F}^\prime_2,
$$
with $c_2\in \mathrm{Hom}_{X_{\mathrm{FF}}}(\mathcal{F}_1^\prime,\mathcal{F}_2^\prime)$ inducing zero on the graded pieces of the Harder-Narasimhan filtration, such that
$$
\begin{pmatrix}
  \beta_2 & b \\
  0 & \beta_1
\end{pmatrix}
\begin{pmatrix}
  1 & c_1\\
  0 & 1
\end{pmatrix}=
\begin{pmatrix}
  1 & c_2 \\
  0 & 1
\end{pmatrix}
\begin{pmatrix}
  \beta_2 & 0 \\
  0 & \beta_1
\end{pmatrix}.
$$
If $\mathcal{F}^\prime_i,i=1,2,$ are semistable, then $c_2=0$ must be zero and thus 
$$
b+\beta_2\circ c_1=0,
$$
i.e., $b=-\beta_2\circ c_1$. This implies therefore the last statement in the lemma.
\end{proof}

We want to deduce that the category $\mathrm{BKF}^\circ_{\mathrm{rig}}$ of rigidified Breuil-Kisin-Fargues modules up to isogeny is of homological dimension $1$.

\begin{lemma}
\label{lemma:homological-dimension}
The category $\mathrm{BKF}^\circ_{\mathrm{rig}}$ of rigidified Breuil-Kisin-Fargues modules up to isogeny is of homological dimension $1$, i.e., for every $(M,\varphi_M), (N,\varphi_N)\in \mathrm{BKF}^\circ_{\mathrm{rig}}$ the Ext-group (in the sense of Yoneda or equivalently as spaces of homomorphisms in the derived category) vanishes for $i\geq 2$, i.e., 
$$
\mathrm{Ext}^i_{\mathrm{BKF}^\circ_{\mathrm{rig}}}(M,N)=0.
$$
\end{lemma}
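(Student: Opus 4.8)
The plan is to exploit the description of $\mathrm{BKF}^\circ_{\mathrm{rig}}$ as a category of modifications in order to exhibit it as a fibre product of three abelian categories, each of homological dimension at most $1$, and then to run a Mayer--Vietoris argument for $\mathrm{Ext}$-groups.

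First I would reinterpret the category via \Cref{theorem: rigidified_bkf_modules_versus_modifications_of_vector_bundles}. Since a rigidification is by definition an isomorphism $\alpha\colon\mathcal F'\xrightarrow{\simeq}\bigoplus_\lambda\mathrm{gr}^\lambda(\mathrm{HN}(\mathcal F'))$, one may absorb it and check that an object of $\mathrm{BKF}^\circ_{\mathrm{rig}}$ is the same datum as a triple $(V,D,\bar\beta)$, where $V$ is a finite-dimensional $\Q_p$-vector space, $D$ is an isocrystal over $k$ (corresponding under \Cref{lemma: graded_vector_bundles_and_isocrystals} to the Harder--Narasimhan graded bundle, which is $\mathcal E(D)$), and $\bar\beta\colon V\otimes_{\Q_p}\mathcal O_{X^\circ_{\mathrm{FF}}}\xrightarrow{\simeq}\mathcal E(D)_{|X^\circ_{\mathrm{FF}}}$ is an isomorphism of vector bundles on the punctured Fargues--Fontaine curve $X^\circ_{\mathrm{FF}}=\Spec(B_e)$; a morphism $(V_1,D_1,\bar\beta_1)\to(V_2,D_2,\bar\beta_2)$ is a pair $(f,g)$ with $f\in\mathrm{Hom}_{\Q_p}(V_1,V_2)$, $g\in\mathrm{Hom}_{\mathrm{Isoc}_k}(D_1,D_2)$ and $\bar\beta_2\circ(f\otimes\mathrm{id})=(g_{|X^\circ_{\mathrm{FF}}})\circ\bar\beta_1$. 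In other words $\mathrm{BKF}^\circ_{\mathrm{rig}}$ is the fibre product, along the exact functors $V\mapsto V\otimes_{\Q_p}B_e$ and $D\mapsto\mathcal E(D)_{|X^\circ_{\mathrm{FF}}}$, of $\mathrm{Vec}_{\Q_p}$ and the category $\mathrm{Isoc}_k$ of isocrystals over $k$ over the category of vector bundles on $X^\circ_{\mathrm{FF}}$; the induced descriptions of $\mathrm{Hom}$ and $\mathrm{Ext}^1$ are exactly those of \Cref{lemma:ext-groups-between-bkf-modules}.

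From this fibre-product structure, $R\mathrm{Hom}_{\mathrm{BKF}^\circ_{\mathrm{rig}}}(M_1,M_2)$ (for $M_i\leftrightarrow(V_i,D_i,\bar\beta_i)$) is computed by the totalisation of the two-column \v{C}ech-type bicomplex whose first column is $R\mathrm{Hom}_{\Q_p}(V_1,V_2)\oplus R\mathrm{Hom}_{\mathrm{Isoc}_k}(D_1,D_2)$, whose second column is $R\mathrm{Hom}_{B_e}\!\big(V_1\otimes_{\Q_p}B_e,\ \mathcal E(D_2)_{|X^\circ_{\mathrm{FF}}}\big)$, and whose horizontal differential is the difference of the two structure maps given by $\bar\beta_1$ and $\bar\beta_2$ --- equivalently there is a Mayer--Vietoris long exact sequence
$$
\cdots\to\mathrm{Ext}^{i-1}_{B_e}\!\big(V_1\otimes B_e,\mathcal E(D_2)_{|X^\circ_{\mathrm{FF}}}\big)\to\mathrm{Ext}^i_{\mathrm{BKF}^\circ_{\mathrm{rig}}}(M_1,M_2)\to\mathrm{Ext}^i_{\Q_p}(V_1,V_2)\oplus\mathrm{Ext}^i_{\mathrm{Isoc}_k}(D_1,D_2)\to\cdots.
$$
Now both columns are concentrated in degree $0$: $R\mathrm{Hom}_{\Q_p}$ because $\Q_p$ is a field; $R\mathrm{Hom}_{\mathrm{Isoc}_k}$ because $\mathrm{Isoc}_k$ is semisimple, $k$ being algebraically closed (for the conclusion it would already suffice that $\mathrm{Isoc}_k$ have homological dimension $\le 1$, which holds over any perfect field, $R\mathrm{Hom}_{\mathrm{Isoc}_k}(D_1,D_2)$ being the two-term complex $[\underline{\mathrm{Hom}}(D_1,D_2)\xrightarrow{\varphi-1}\underline{\mathrm{Hom}}(D_1,D_2)]$); and $R\mathrm{Hom}_{B_e}(V_1\otimes B_e,\mathcal E(D_2)_{|X^\circ_{\mathrm{FF}}})$ because $B_e$ is a principal ideal domain, so these two bundles on $X^\circ_{\mathrm{FF}}$ are finite free $B_e$-modules, in particular projective. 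Hence the totalisation lives in degrees $[0,1]$, i.e. $\mathrm{Ext}^i_{\mathrm{BKF}^\circ_{\mathrm{rig}}}(M_1,M_2)=0$ for all $i\ge 2$, which is the claim.

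The one point that genuinely needs care is the assertion that this bicomplex computes $R\mathrm{Hom}$, i.e. the Mayer--Vietoris sequence. I would obtain it either from the general principle that the bounded derived category of a fibre product of abelian categories along exact functors, one of which (here $V\mapsto V\otimes_{\Q_p}B_e$) is exact and essentially surjective onto a full subcategory (here the finite free $B_e$-modules), is the homotopy fibre product of the derived categories; or --- closer to the methods used here --- directly, noting that it is enough to treat $\mathrm{Ext}^2$ (in any abelian category the Yoneda pairing $\mathrm{Ext}^1\otimes\mathrm{Ext}^{n-1}\to\mathrm{Ext}^n$ is surjective, so $\mathrm{Ext}^2=0$ forces $\mathrm{Ext}^{\ge 2}=0$), and that a Yoneda $2$-extension in $\mathrm{BKF}^\circ_{\mathrm{rig}}$ becomes Yoneda-trivial after applying $\omega_{\acute{e}t}$, $\omega_{\crys}$ and restriction to $X^\circ_{\mathrm{FF}}$ --- using respectively $\mathrm{Ext}^{\ge 1}_{\Q_p}=0$, $\mathrm{Ext}^{\ge 2}_{\mathrm{Isoc}_k}=0$ and $\mathrm{Ext}^{\ge 1}_{B_e}(\text{free},-)=0$ --- so that, splicing these compatible trivialisations exactly as the proof of \Cref{lemma:ext-groups-between-bkf-modules} does for $\mathrm{Ext}^1$ (the only obstruction lying in $\mathrm{Ext}^1_{B_e}(\text{free},-)=0$), it is already Yoneda-trivial in $\mathrm{BKF}^\circ_{\mathrm{rig}}$.
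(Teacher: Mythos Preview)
Your approach is correct and genuinely different from the paper's.  The paper argues as follows: from \Cref{lemma:ext-groups-between-bkf-modules} one sees that a surjection $N_1\twoheadrightarrow N_2$ in $\mathrm{BKF}^\circ_{\mathrm{rig}}$ induces a \emph{split} surjection $\mathcal{F}_1'\twoheadrightarrow\mathcal{F}_2'$ of the rigidified bundles, whence $\mathrm{Ext}^1(M,-)$ preserves surjections; then, because $\mathrm{BKF}^\circ_{\mathrm{rig}}$ has neither enough injectives nor projectives, the author passes to the Ind-category $\mathrm{Ind}(\mathrm{BKF}^\circ_{\mathrm{rig}})$, which is Grothendieck abelian, transports the right-exactness of $\mathrm{Ext}^1$ there, and deduces $\mathrm{Ext}^{\geq 2}=0$ via the long exact sequence with genuine injectives.

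Your fibre-product description $(V,D,\bar\beta)$ is exactly the content of \Cref{theorem: rigidified_bkf_modules_versus_modifications_of_vector_bundles} together with \Cref{lemma: graded_vector_bundles_and_isocrystals}, and the resulting two-term complex $C^\bullet(M_1,M_2)=[\mathrm{Hom}_{\Q_p}(V_1,V_2)\oplus\mathrm{Hom}_{\mathrm{Isoc}_k}(D_1,D_2)\to\mathrm{Hom}_{B_e}(V_1\otimes B_e,\mathcal{E}(D_2)_{|X^\circ})]$ does compute $\mathrm{Hom}$ and $\mathrm{Ext}^1$ (your argument here is the general form of \Cref{lemma:ext-groups-between-bkf-modules}, without the semistability hypothesis, which works because morphisms on the $\mathcal{F}'$-side are forced to be isocrystal maps).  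Since both terms of $C^\bullet(M_1,-)$ are exact functors, this already gives right-exactness of $\mathrm{Ext}^1(M_1,-)$---the same conclusion the paper reaches, but via a cleaner conceptual route that explains the ``curve-like'' origin of homological dimension~$1$.

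The one place to tighten is your passage from this to $\mathrm{Ext}^2=0$.  Your option~(a) (derived categories of fibre products) is not a standard black box and would need its own proof.  Your option~(b) is correct but can be said more crisply, and in fact shows the paper's Ind-category detour is avoidable: given any Yoneda $2$-extension $0\to N\to P\to Q\to M\to 0$ with $K=\mathrm{im}(P\to Q)$, its class is $\delta([\gamma])$ for $[\gamma]=[0\to K\to Q\to M\to 0]\in\mathrm{Ext}^1(M,K)$ and $\delta$ the connecting map for $0\to N\to P\to K\to 0$; the long exact sequence for Yoneda $\mathrm{Ext}$ (valid in any abelian category) reads $\mathrm{Ext}^1(M,P)\to\mathrm{Ext}^1(M,K)\xrightarrow{\delta}\mathrm{Ext}^2(M,N)$, and since $\mathrm{Ext}^1(M,-)$ preserves the surjection $P\twoheadrightarrow K$, the class $[\gamma]$ lifts and $\delta([\gamma])=0$.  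This replaces both your sketch and the paper's Ind argument.
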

\begin{proof}
\Cref{lemma:ext-groups-between-bkf-modules} implies that the functor 
$
\mathrm{Ext}^1_{\mathrm{BKF}^\circ_{\mathrm{rig}}}(M,-)
$  
preserves surjections of rigidified Breuil-Kisin-Fargues modules. Namely, for a surjection
$$
(N_1,\varphi_{N_1})\twoheadrightarrow (N_2,\varphi_{N_2})
$$
of rigidified Breuil-Kisin-Fargues modules the associated surjection
$$
\mathcal{F}_1^\prime\twoheadrightarrow \mathcal{F}_2^\prime
$$
of vector bundles on the Fargues-Fontaine curve must be split due to the preservation of the given rigidifications.
For an abelian category with enough injectives (or projectives) this would finish the proof by embedding an object into an injective object and using the associated long exact sequence. However, the category $\mathrm{BKF}^\circ_{\mathrm{rig}}$ does not contain enough injectives (or projectives), hence we have to work a bit more.
For this let $\mathcal{A}$ be an essentially small\footnote{This means that the isomorphism classes in $\mathcal{A}$ form a set.} abelian category such that 
$$
\mathrm{Ext^1}_{\mathcal{A}}(A,-)
$$
preserves surjections for every $A\in \mathcal{A}$.
Consider the (fully faithful and exact) embedding
$$
\mathcal{A}\to \mathrm{Ind}(\mathcal{A})
$$
of $\mathcal{A}$ into the category $\mathrm{Ind}(\mathcal{A})$ of its Ind-objects. The category $\mathrm{Ind}(\mathcal{A})$ need not have to have enough injectives in general (\cite[Corollary 15.1.3.]{kashiwara_schapira_categories_and_sheaves}), but it has if $\mathcal{A}$ is essentially small which we assumed (in fact it is then Grothendieck abelian, cf.\ \cite[Theorem 8.6.5.(vi)]{kashiwara_schapira_categories_and_sheaves}). Hence, we can conclude that (cf.\ \cite[Corollary 15.3.9.]{kashiwara_schapira_categories_and_sheaves})
$$
\varinjlim\limits_{A^\prime\to B } \mathrm{Ext}^k_{\mathcal{A}}(A,A^\prime)\cong \mathrm{Ext}^k_{\mathrm{Ind}(\mathcal{A})}(A,B)
$$
for $A\in \mathcal{A}, B\in \mathrm{Ind}(\mathcal{A})$ where the colimit is running over all $A^\prime\in \mathcal{A}$ with a morphism to $B$.
Now let $f\colon B^\prime\to B^{\prime\prime}$ be a surjection of Ind-objects. Then $f$ can be written as a filtered colimit of surjections $A^\prime\to A^{\prime\prime}$ with $A^\prime,A^{\prime\prime}\in \mathcal{A}$ (cf.\ \cite[Proposition 8.6.6.]{kashiwara_schapira_categories_and_sheaves}). Using the above formular for the $\mathrm{Ext}$-groups and our assumption on $\mathcal{A}$ we can conclude that for $A\in \mathcal{A}$ the functor
$$
\mathrm{Ext}^1_{\mathrm{Ind}(\mathcal{A})}(A,-)
$$
preserves surjections. Using embeddings into injectives (of which there are enough in $\mathrm{Ind}(\mathcal{A})$ by our assumption on essential smallness) we can conclude that 
$$
\mathrm{Ext}^i_{\mathrm{Ind}(\mathcal{A}}(A,-)=0
$$
for $A\in \mathcal{A}$ and $i\geq 2$.
Using again the above formula for the $\mathrm{Ext}$-groups we can conclude
$$
\mathrm{Ext}^i_{\mathcal{A}}(A,A^\prime)=0
$$
for $A,A^\prime\in \mathcal{A}$ and $i\geq 2$.

Applying these considerations to the category $\mathcal{A}=\mathrm{BKF}^\circ_{\mathrm{rig}}$ proves the lemma if we can show that the category $\mathrm{BKF}^\circ_{\mathrm{rig}}$ is essentially small. But by definition Breuil-Kisin-Fargues modules are finitely presented and the isomorphism classes of finitely presented $A_\inf$-modules form a set. As the possibilities for adding a Frobenius or a rigidification form a set, we can conclude that the isomorphism classes of Breuil-Kisin-Fargues modules form a set as required.
\end{proof}

\section{CM Breuil-Kisin-Fargues modules}
\label{section: cm_breuil_kisin_fargues_modules}

In this section we want to apply the formal CM theory of \Cref{section: formal_cm_theory} to the case 
$$
\mathcal{T}:=\mathrm{BKF}^\circ_{\mathrm{rig}}
$$ 
of rigidified Breuil-Kisin-Fargues modules up to isogeny (cf.\ \Cref{definition: rigidified_bkf_modules_up_to_isogeny}).

Using Fargues' theorem \Cref{theorem: equivalent_descriptions_of_bkf_modules} the classification of CM Breuil-Kisin-Fargues modules, i.e., CM objects in the Tannakian category $\mathcal{T}$ (cf.\ \Cref{definition:cm-object}), is actually very simple - they are uniquely determined by their CM type $(E,\Phi)$ (cf.\ \Cref{lemma:integral-cm-bkf-modules} and \Cref{lemma-rigidifications-of-cm-bkf-modules}).

Actually, we can prove a stronger integral statement. For this let $E/\Q_p$ be a commutative semisimple algebra and let $\mathcal{O}$ be an order in $E$.
\begin{definition}
\label{definition-integral-cm-bkf-module}
 A finite free Breuil-Kisin-Fargues module with CM by $\mathcal{O}$ will mean a finite free Breuil-Kisin-Fargues module $(M,\varphi_M)$ together with an injection $\mathcal{O}\to \mathrm{End}_{\mathrm{BKF}}((M,\varphi_M))$, such that $\mathrm{rk}(M)=\mathrm{rk}_{\Z_p}(\mathcal{O})$.  
\end{definition}

\begin{lemma}
\label{lemma:integral-cm-bkf-modules}
Let $E$ be a commutative, semisimple, finite-dimensional algebra over $\Q_p$ and let $\mathcal{O}\subseteq E$ be an order in $E$. Then there is a natural equivalence of categories between finite free Breuil-Kisin-Fargues module with CM by $\mathcal{O}$
and pairs $(T,\Phi)$ where $T$ is a faithful $\mathcal{O}$-module, finite free over $\Z_p$ of rank $\rk_{\Z_p}(\mathcal{O})$, and functions $\Phi\colon \mathrm{Hom}_{\Q_p}(E,C)\to \Z$.
In particular, if $\mathcal{O}=\mathcal{O}_E$ is the maximal order, then there is a bijection of finite free Breuil-Kisin-Fargues modules with CM by $\mathcal{O}_E$ (up to isomorphism) and ``types'' $\Phi\colon \mathrm{Hom}_{\Q_p}(E,C)\to \Z$.
\end{lemma}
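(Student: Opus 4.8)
The plan is to push everything through Fargues' classification (\Cref{theorem: equivalent_descriptions_of_bkf_modules}), which identifies the category of finite free Breuil-Kisin-Fargues modules with the category of pairs $(T,\Xi)$, where $T$ is a finite free $\Z_p$-module and $\Xi\subseteq T\otimes_{\Z_p}B_\dR$ is a $B^+_\dR$-lattice. Under this equivalence an action of $\mathcal{O}$ by endomorphisms of $(M,\varphi_M)$ is precisely an $\mathcal{O}$-module structure on $T$ such that the induced action on $T\otimes_{\Z_p}B_\dR$ preserves $\Xi$, and $\rk_{A_\inf}(M)=\rk_{\Z_p}(T)$. Since the étale realization is faithful on finite free modules (an endomorphism of $(M,\varphi_M)$ killing $T$ also kills $M\otimes_{A_\inf}W(C^\flat)\supseteq M$), a CM-by-$\mathcal{O}$ structure turns $T$ into a faithful $\mathcal{O}$-module with $\rk_{\Z_p}(T)=\rk_{\Z_p}(\mathcal{O})$, i.e.\ exactly the first datum in the asserted target. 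It thus remains to show that, for such a fixed $T$, an $\mathcal{O}$-stable $B^+_\dR$-lattice $\Xi$ is the same thing as a function $\Phi\colon\Hom_{\Q_p}(E,C)\to\Z$.

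For this I would first observe that a faithful module over the semisimple algebra $E=\prod E_i$ (a product of fields) which is finite free over $\Z_p$ of rank $\rk_{\Z_p}(\mathcal{O})=\dim_{\Q_p}E$ necessarily satisfies $T\otimes_{\Z_p}\Q_p\cong E$ as an $E$-module: writing $T\otimes_{\Z_p}\Q_p=\bigoplus_i V_i$ with $V_i$ an $E_i$-vector space, faithfulness forces $V_i\neq 0$ and the dimension count forces $\dim_{E_i}V_i=1$. Hence $T\otimes_{\Z_p}B_\dR=E\otimes_{\Q_p}B_\dR$ is free of rank one over $E\otimes_{\Q_p}B_\dR$. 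Now $B^+_\dR$ is a complete — hence strictly henselian — local $\Q_p$-algebra with algebraically closed residue field $C$, so the finite étale $\Q_p$-algebra $E$ splits: $E\otimes_{\Q_p}B^+_\dR\cong\prod_{\tau\in\Hom_{\Q_p}(E,C)}B^+_\dR$, and likewise $E\otimes_{\Q_p}B_\dR\cong\prod_\tau B_\dR$. Since $p$ is invertible in $B^+_\dR$ we have $\mathcal{O}\otimes_{\Z_p}B^+_\dR=(\mathcal{O}\otimes_{\Z_p}\Q_p)\otimes_{\Q_p}B^+_\dR=E\otimes_{\Q_p}B^+_\dR$, so the $\mathcal{O}$-action on $\Xi$ automatically extends to an action of $\prod_\tau B^+_\dR$; consequently $\Xi=\prod_\tau\Xi_\tau$ with each $\Xi_\tau\subseteq B_\dR$ a $B^+_\dR$-lattice, i.e.\ $\Xi_\tau=\xi^{-\Phi(\tau)}B^+_\dR$ for a unique $\Phi(\tau)\in\Z$. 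Conversely $\Phi$ reconstructs $\Xi:=\prod_\tau\xi^{-\Phi(\tau)}B^+_\dR\subseteq T\otimes_{\Z_p}B_\dR$, which is manifestly $\mathcal{O}$-stable. These two constructions are mutually inverse and compatible with morphisms, so together with \Cref{theorem: equivalent_descriptions_of_bkf_modules} they assemble into the claimed equivalence of categories; it is natural because the decomposition $E\otimes_{\Q_p}B_\dR\cong\prod_\tau B_\dR$ is canonical and indexed by $\Hom_{\Q_p}(E,C)$.

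For the final assertion, when $\mathcal{O}=\mathcal{O}_E=\prod\mathcal{O}_{E_i}$ is the maximal order, a faithful $\mathcal{O}_E$-module $T$ that is finite free over $\Z_p$ of rank $\rk_{\Z_p}(\mathcal{O}_E)$ decomposes as $T=\prod T_i$ with $T_i$ a faithful torsion-free $\mathcal{O}_{E_i}$-module of rank $\rk_{\Z_p}(\mathcal{O}_{E_i})$, hence $T_i\cong\mathcal{O}_{E_i}$ because $\mathcal{O}_{E_i}$ is a discrete valuation ring; thus $T\cong\mathcal{O}_E$, so isomorphism classes of pairs $(T,\Phi)$ reduce to the functions $\Phi$ alone, giving the stated bijection. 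The only genuinely delicate points are bookkeeping ones: fixing the sign of $\Phi(\tau)$ so that it agrees with the notion of type in \Cref{definition:type-of-a-cm-object} (equivalently the filtration of \Cref{definition:filtration-on-etale-realization}), and keeping track of the role of the hypotheses — it is exactly "faithful" together with "rank $=\rk_{\Z_p}(\mathcal{O})$" that forces $T\otimes_{\Z_p}\Q_p\cong E$ and hence lets one split $\Xi$ over $\mathcal{O}\otimes_{\Z_p}B^+_\dR=\prod_\tau B^+_\dR$. Everything else is formal once \Cref{theorem: equivalent_descriptions_of_bkf_modules} is in hand.
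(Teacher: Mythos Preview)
Your proof is correct and follows essentially the same route as the paper: reduce via \Cref{theorem: equivalent_descriptions_of_bkf_modules} to pairs $(T,\Xi)$ with an $\mathcal{O}$-action, then split $E\otimes_{\Q_p}B_\dR\cong\prod_{\tau}B_\dR$ so that an $\mathcal{O}$-stable lattice $\Xi$ is encoded by the tuple of valuations $\Phi(\tau)$. The only cosmetic difference is that the paper justifies the splitting by noting $\overline{\Q}_p\subseteq B^+_\dR$, whereas you invoke strict henselianity of $B^+_\dR$; your version is also more explicit about why $T\otimes_{\Z_p}\Q_p\cong E$ and why $T\cong\mathcal{O}_E$ in the maximal-order case, points the paper leaves to the reader.
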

\begin{proof}
By \Cref{theorem: equivalent_descriptions_of_bkf_modules} finite free Breuil-Kisin-Fargues modules with CM by $\mathcal{O}$ are equivalent to pairs $(T,\Xi)$ with $T$ as in the statement of this lemma and $\Xi\subseteq T\otimes_{\Z_p}B_{\dR}$ a $B^+_{\dR}$-sublattice, stable under $\mathcal{O}\otimes_{\Z_p}B^+_{\dR}$.
But $B^+_{\dR}$ contains $\overline{\Q}_p$, hence
$$
\mathcal{O}\otimes_{\Z_p}B_\dR\cong (E\otimes_{\Q_p}{\overline{\Q}_p})\otimes_{\overline{\Q}_p}B_\dR\cong \prod\limits_{\mathrm{Hom}_{\Q_p}(E,\overline{\Q}_p)}B_\dR
$$  
and a $\mathcal{O}\otimes_{\Z_p}B^+_{\dR}$-stable sublattice will be uniquely determined by the valuation in each factor, i.e., by a function
$$
\Phi\colon \mathrm{Hom}_{\Q_p}(E,\overline{\Q}_p)=\mathrm{Hom}_{\Q_p}(E,C)\to\Z.
$$
If $\mathcal{O}=\mathcal{O}_E$ is the maximal order, then moreover every faithful $\mathcal{O}_E$-module $T$ which is finite free of rank $d$ over $\Z_p$ must be isomorphic to $\mathcal{O}_E$. This finishes the proof.
\end{proof}

We remark that in general there are non-trivial examples of $\mathcal{O}$-modules $T$ satisfying the hypothesis in \Cref{lemma:integral-cm-bkf-modules}.
We thank Bhargav Bhatt and Sebastian Posur for discussions about this point.
In general, one can take $\mathcal{O}\neq \mathcal{O}_E$ and $T:=\mathcal{O}_E$. But there exist also less pathological examples. For example, let $E=\Q_p(p^{1/4})$ and set $\mathcal{O}:=\Z_p[p^{2/4},p^{3/4}]$. Then $\mathcal{O}/p\cong \F_p[t^2,t^3]/t^4\cong\F_p[x,y]/(x^2,xy,y^2)$ is not Gorentstein. In particular, $\mathcal{O}$ is not Gorenstein as well (the dualizing complex commutes with (derived) base change). However, it is still Cohen-Macaulay. Hence, the dualizing module $T:=\omega_{\mathcal{O}}\cong \mathrm{Hom}_{\Z_p}(\mathcal{O},\Z_p)$ (cf.\ \cite[Tag 0A7B, Tag 0AWS]{stacks_project}) for $\mathcal{O}$ yields an example. It is finite free over $\Z_p$ of rank $\mathrm{rk}_{\Z_p}\mathcal{O}$ and $\mathrm{End}_{\mathcal{O}}(\omega_{\mathcal{O}})\cong \mathcal{O}$. As $\mathcal{O}$ is not Gorenstein, $\omega_{\mathcal{O}}\neq \mathcal{O}$.   

We now analyse rigidifications of Breuil-Kisin-Fargues modules with CM.

\begin{lemma}
\label{lemma-rigidifications-of-cm-bkf-modules}
Let $E$ be a commutative semisimple algebra over $\Q_p$ of dimension $d$.
Let $(M,\varphi_M)$ be a finite free Breuil-Kisin-Fargues module of rank $\mathrm{rk}(M)=d$ with an injection $E\hookrightarrow \mathrm{End}_{\mathrm{BKF}^\circ}((M,\varphi_M))$. Then there exists a unique rigidification
$$
\alpha\colon M\otimes_{A_\inf}B^+_\crys\cong (M\otimes_{A\inf} W(k))\otimes_{W(k)}B^+_\crys
$$  
which is preserved by $E$, i.e., $E$-linear.
\end{lemma}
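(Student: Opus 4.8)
The plan is to prove the stronger statement that, when $\mathrm{rk}(M)=\dim_{\Q_p}E$, the module $(M,\varphi_M)$ admits \emph{only one} rigidification at all, and then to observe that this one is automatically $E$-linear. The mechanism is the remark following \Cref{theorem: rigidified_bkf_modules_versus_modifications_of_vector_bundles}: if the vector bundle $\mathcal{F}'$ on the Fargues--Fontaine curve attached to the $\varphi$-module $M\otimes_{A_\inf}B^+_\crys$ is semistable, then its Harder--Narasimhan filtration has a single step, so the rigidification datum $\mathcal{F}'\xrightarrow{\sim}\bigoplus_\mu\mathrm{gr}^\mu(\mathrm{HN}(\mathcal{F}'))$ inducing the identity on graded pieces is forced to be $\mathrm{id}_{\mathcal{F}'}$, hence unique. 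Everything therefore reduces to proving that a ``full'' $E$-action makes $\mathcal{F}'$ semistable.

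The heart of the matter is the following purely isocrystalline assertion, which I would prove first: if a commutative semisimple $\Q_p$-algebra $E$, written $E=\prod_i E_i$ with $E_i$ fields and idempotents $e_i$, acts on an isocrystal $N$ over the algebraically closed residue field $k$ with $\dim_{\Q_p}E=\dim_{\Q_p}N$, then $N=\bigoplus_i e_iN$ with each $e_iN$ \emph{isoclinic} and $E_i$ a maximal commutative subfield of $\mathrm{End}(e_iN)$. Indeed, every element of $E$ acts through an endomorphism of isocrystals and hence preserves the canonical slope decomposition (there are no nonzero maps of isocrystals between distinct slopes), so $\mathrm{End}(N)=\prod_\mu\mathrm{End}(N_{(\mu)})$. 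Fixing $i$, the homomorphism $E_i\to\mathrm{End}(e_iN)$ is nonzero and $E_i$ is a field, so it is injective, and therefore injective into some factor $\mathrm{End}((e_iN)_{(\mu_0)})$ with $(e_iN)_{(\mu_0)}\neq 0$. Over the algebraically closed field $k$ an isoclinic isocrystal has for endomorphism algebra a central simple $\Q_p$-algebra whose degree equals its $\Q_p$-dimension, and a commutative subfield of a central simple algebra of degree $m$ has $\Q_p$-dimension at most $m$ (double-centralizer theorem); hence $\dim_{\Q_p}E_i\leq\dim_{\Q_p}(e_iN)_{(\mu_0)}\leq\dim_{\Q_p}e_iN$. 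Summing over $i$ and comparing with $\sum_i\dim_{\Q_p}E_i=\dim_{\Q_p}E=\dim_{\Q_p}N=\sum_i\dim_{\Q_p}e_iN$ forces equality throughout, which gives the claim.

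Next I apply this to $N:=\overline{M}=M\otimes_{A_\inf}W(k)[\frac{1}{p}]$, which is an isocrystal over $k$ (recall $\tilde{\xi}\mapsto p$ in $W(k)$, cf.\ the proof of \Cref{lemma:bkf-modules-of-rank-1}) of $\Q_p$-dimension $\mathrm{rk}(M)=\dim_{\Q_p}E$, carrying the $E$-action induced from $E\hookrightarrow\mathrm{End}_{\mathrm{BKF}^\circ}((M,\varphi_M))$ by the crystalline realization of \Cref{definition: realizations_of_bkf_modules}. Since $\mathrm{BKF}^\circ$ is pseudo-abelian (it is equivalent to the category of pairs $(V,\Xi)$ described after \Cref{lemma: bkf_modules_rigid_exact_tensor_category}, for which this is clear), the idempotents $e_i$ split $M\cong\bigoplus_i M_i$ in $\mathrm{BKF}^\circ$, with $\overline{M_i}=e_i\overline{M}$ isoclinic and $\mathrm{rk}(M_i)=\dim_{\Q_p}E_i$ by the sublemma; in particular each $(M_i,\varphi_{M_i})$ admits CM by $E_i$. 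Because an isomorphism $M\otimes_{A_\inf}B^+_\crys\cong(M\otimes_{A_\inf}W(k))\otimes_{W(k)}B^+_\crys$ is $E$-linear precisely when it is block-diagonal for the $e_i$ with $E_i$-linear blocks, it suffices to produce a unique $E_i$-linear rigidification on each block; so we may assume $E$ is a field and $\overline{M}$ is isoclinic, of slope $\lambda_0$ say.

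In that case a rigidification exists by \Cref{lemma: existence_of_rigidifications}, so $\mathcal{F}'$ is isomorphic to the bundle $\mathcal{E}(\overline{M})$ attached to the isocrystal $\overline{M}$; as $\overline{M}$ is isoclinic, $\mathcal{E}(\overline{M})$, and hence $\mathcal{F}'$, is semistable of slope $\lambda_0$ by \Cref{lemma: graded_vector_bundles_and_isocrystals}. By the remark after \Cref{theorem: rigidified_bkf_modules_versus_modifications_of_vector_bundles}, $(M,\varphi_M)$ then possesses a unique rigidification, namely the one corresponding to $\mathrm{id}_{\mathcal{F}'}$ under that equivalence. That this rigidification is $E$-linear is automatic: the bijection between rigidifications of $(M,\varphi_M)$ and splittings of $\mathrm{HN}(\mathcal{F}')$ furnished by \Cref{theorem: rigidified_bkf_modules_versus_modifications_of_vector_bundles} is functorial in $(M,\varphi_M)$, hence $E$-equivariant, and the identity splitting is visibly $E$-linear (here $\mathcal{F}'=\mathrm{gr}^{\lambda_0}(\mathrm{HN}(\mathcal{F}'))$ with its own $E$-action). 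This yields both existence and uniqueness. The only genuinely nontrivial ingredient is the sublemma forcing $\overline{M}$ to be isoclinic; the passage from there to the rigidification statement is bookkeeping with \Cref{theorem: rigidified_bkf_modules_versus_modifications_of_vector_bundles}, and I expect the only mild obstacle in writing up full details to be the careful matching of $E$-actions through those equivalences.
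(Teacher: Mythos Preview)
Your argument is correct and reaches the same conclusion as the paper, but via a genuinely different route for the key step. After the common reduction to $E$ a field (which both you and the paper perform by splitting along idempotents), the paper proves that $\mathcal{F}'$ is semistable by a \emph{geometric} argument on the curve: the Harder--Narasimhan filtration of $\mathcal{F}'$ is $E$-stable, and passing to the generic point $\eta\in X_{\mathrm{FF}}$ with function field $K$ yields an $E\otimes_{\Q_p}K$-stable flag in the one-dimensional $E\otimes_{\Q_p}K$-vector space $\mathcal{F}'_\eta$; since $X_{\mathrm{FF}}\otimes_{\Q_p}E$ is integral (so $E\otimes_{\Q_p}K$ is a field), this flag must be trivial. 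You instead prove the equivalent statement that the isocrystal $\overline{M}$ is isoclinic by an \emph{algebraic} argument: the endomorphism algebra of an isoclinic piece is central simple over $\Q_p$, and the bound on the dimension of a commutative subfield together with $\dim_{\Q_p}E=\mathrm{rk}(M)$ forces the slope decomposition to collapse. Your route avoids invoking the integrality of $X_{\mathrm{FF}}\otimes_{\Q_p}E$ and is perhaps more self-contained; the paper's route is shorter and more conceptual (a rank-one argument at the generic point).

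Two small corrections to your write-up. First, the opening ``stronger statement'' that $(M,\varphi_M)$ admits only one rigidification at all is false before the reduction to $E$ a field: for $E=\Q_p\times\Q_p$ acting on $M=A_\inf\oplus A_\inf\{1\}$ one has $\mathcal{F}'\cong\mathcal{O}_{X_{\mathrm{FF}}}\oplus\mathcal{O}_{X_{\mathrm{FF}}}(1)$, which is not semistable and admits many rigidifications, only one of which is $E$-linear. Your actual execution correctly reduces to blocks first and proves uniqueness on each block, so this is only a mismatch between the stated plan and the proof. Second, the dimensions in your sublemma must be taken over $W(k)[\frac{1}{p}]$, not $\Q_p$: the isocrystal $\overline{M}$ has $\dim_{W(k)[1/p]}\overline{M}=\mathrm{rk}(M)$ (its $\Q_p$-dimension is infinite), and the degree of the central simple $\Q_p$-algebra $\mathrm{End}(N_{(\mu)})$ equals $\dim_{W(k)[1/p]}N_{(\mu)}$. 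With this trivial fix your dimension count goes through unchanged.
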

\begin{proof}
We use \Cref{theorem: rigidified_bkf_modules_versus_modifications_of_vector_bundles} (respectively the remark following it) to argue with the modification $(\mathcal{F},\mathcal{F}^\prime,\beta)$ of vector bundles on the Fargues-Fontaine curve associated with $(M,\varphi_M)$. The algebra $E$ acts by assumption on the vector bundle $\mathcal{F}^\prime$ of rank $d$ and we must produce a unique $E$-linear isomorphism
$$
\mathcal{F}^\prime\cong \mathrm{gr}^\bullet(\mathrm{HN}(\mathcal{F}^\prime)).
$$   
Decomposing $E$ (and then $\mathcal{F}^\prime$ accordingly) into factors reduces to the case that $E$ is a field. Then we claim that $\mathcal{F}^\prime$ must be semistable. Indeed, each subbundle $\mathcal{E}^\lambda\subseteq\mathcal{F}^\prime$ in the Harder-Narasimhan filtration of $\mathcal{F}^\prime$ must be stable under $E$. Let $K$ be the function field of $X_{\mathrm{FF}}$. Then $E\otimes_{\Q_p}K$ is again a field (because $X_{\mathrm{FF}}\otimes_{\Q_p}E$ is again integral, cf.\ \cite[Th\'eor\`eme 6.5.2.2.)]{fargues_fontaine_courbes_et_fibres_vectoriels_en_theorie_de_hodge_p_adique}) and passing to the generic point $\eta\in X_{\mathrm{FF}}$ yields a $E\otimes_{\Q_p}K$-stable flag in the $1$-dimensional $E\otimes_{\Q_p}K$-vector space $\mathcal{F}^\prime_\eta$. Hence, this flag is trivial and thus $\mathcal{F}^\prime$ semistable.
This implies finally that there exists a unique $E$-linear rigidification for $\mathcal{F}^\prime$, namely the identity of $\mathcal{F}^\prime$.
\end{proof}

In other words, we can write down all rigidified Breuil-Kisin-Fargues modules up to isogeny with CM by a finite-dimensional commutative, semisimple $\Q_p$-algebra $E$ in terms of pairs $(V,\Xi)$. Namely, $V$ must be isomorphic (as an $E$-module) to $E$ and $\Xi\subseteq E\otimes_{\Q_p}B_\dR$ can be constructed explicitly from the type
$$
\Phi\colon \mathrm{Hom}_{\Q_p}(E,C)\to \Z.
$$ 
We now want to write down the Breuil-Kisin-Fargues modules $(M,\varphi_M)$ corresponding to a pair $(E,\Phi)$. This will require more work.

We note that for a finite free Breuil-Kisin-Fargues module $(M,\varphi_{M_\Phi})$ as in \Cref{lemma:integral-cm-bkf-modules} the function $\Phi\colon \mathrm{Hom}_{\Q_p}(E,C)\to \Z$ in \Cref{lemma:integral-cm-bkf-modules} is precisely the type of the rigidified Breuil-Kisin-Fargues modules up to isogeny $(\Q_p\otimes_{\Z_p}M,\varphi_{M_\Phi})\in \mathrm{BKF}^\circ_{\mathrm{rig}}$ with respect to the filtered fiber functor
$$
\omega_{\acute{e}t}\otimes_{\Q_p}C\colon \mathrm{BKF}^\circ_{\mathrm{rig}}\to \mathrm{Vec}_C
$$
(cf.\ \Cref{lemma: filtered_fiber_functor_on_rigidified_bkf_modules} and \Cref{definition:type-of-a-cm-object}). Namely, this follows from the concrete description of the filtration on $\omega_{\acute{e}t}\otimes_{\Q_p}C$ (cf.\ the discussion after \Cref{definition:filtration-on-etale-realization} and the proof of \Cref{theorem-rigidified-bkf-modules-with-cm}).

In general a Breuil-Kisin-Fargues module with CM by a commutative semisimple $\Q_p$-algebra $E$ will decompose according to the factors of $E$. In particular, we may focus on the case where $E$ is a field.
Hence, we fix a finite extension $E$ of $\Q_p$ and denote by $E_0\subseteq E$ its maximal unramified subextension. We fix a uniformizer $\pi\in E$. 

Let $\overline{\F}_p\subseteq k$ be the algebraic closure of $\F_p$. By formal \'etaleness of $\overline{\F}_p$ over $\F_p$ there exists a unique lifting  $\overline{\F}_p\hookrightarrow \mathcal{O}_C/p$ of the embedding $\overline{\F}_p\to k$. Concretely, 
$$
\overline{\F}_p=\bigcup\limits_{n\geq 0}(\mathcal{O}_{C}/p)^{\varphi^n=1}.
$$
Taking the inverse limit over Frobenius yields a canonical embedding
$$
\overline{\F}_p\hookrightarrow \mathcal{O}_C^\flat.
$$
The algebraic closure of $\Q_p$ in $\Q_p\otimes_{\Z_p}A_\inf$ is contained in
$$
\breve{\Q}_p\cong \Q_p\otimes_{\Z_p}W(\overline{\F}_p)\subseteq\Q_p\otimes_{\Z_p}A_\inf
$$
where $\breve{\Q}_p$ denotes the completion of the maximal unramified extension $\Q_p^{\mathrm{un}}$ of $\Q_p$. In particular, the algebraic closure of $\Q_p$ in $\Q_p\otimes_{\Z_p}A_\inf$ is given by $\Q_p^{\mathrm{un}}$.
For an embedding of $\iota\colon E_0\hookrightarrow \Q_p^{\mathrm{un}}$ we define
$$
A_{\inf,\mathcal{O}_E,\iota}:=\mathcal{O}_E\otimes_{\mathcal{O}_{E_0},\iota}A_\inf,
$$
the ring of ``ramified Witt vectors'' (cf.\ \cite[Section 1.2.]{fargues_fontaine_courbes_et_fibres_vectoriels_en_theorie_de_hodge_p_adique}). Its elements are formal power series
$$
\sum\limits_{i=0}^\infty [x_i]\pi^i
$$
with $x_i\in \mathcal{O}_C^\flat$.
We set
$$
A_{\inf,E,\iota}:=\Q_p\otimes_{\Z_p}A_{\inf,\mathcal{O}_E,\iota}=E\otimes_{\mathcal{O}_E}A_{\inf,\mathcal{O}_E,\iota}.
$$

\begin{lemma}
\label{lemma-kernel-after-fixing-embedding}
Let $\tau\colon E\to C$ be an embedding and let $\tau_0\colon E_0\to C$ be its restriction to $E_0$. Then the kernel of the homomorphism
$$
\theta_{\tau}\colon \mathcal{O}_E\otimes_{\Z_p}A_\inf\to C,\ e\otimes x\mapsto \tau(e)\theta(x) 
$$ 
is principal, generated by a non-zero divisor $\xi_\tau$.
In the decomposition
$$
\mathcal{O}_E\otimes_{\Z_p}A_\inf\cong \mathcal{O}_E\otimes_{\mathcal{O}_{E_0}}(\mathcal{O}_{E_0}\otimes_{\Z_p}A_\inf)\cong \prod\limits_{\iota\colon E_0\hookrightarrow C} A_{\inf,\mathcal{O}_E,\iota}
$$
the element $\xi_\tau$ can be chosen to be
$$
(1,\ldots,\pi-[\tau(\pi)^\flat],\ldots,1)
$$
with $\pi-[\tau(\pi)^\flat]$ placed in the component $\tau_0$.
Here $\tau(\pi)^\flat=(\tau(\pi),\tau(\pi)^{1/p},\ldots)$ denotes a $p$-power compatible systems of $p$-power roots of $\tau(\pi)\in C$.
\end{lemma}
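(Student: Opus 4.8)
The plan is to peel off a single factor of $\mathcal{O}_E\otimes_{\Z_p}A_\inf$ and then reproduce Fontaine's proof that $\ker(\theta)=(\xi)$ in this ``ramified'' setting.

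First I would carry out the reduction. As $E_0/\Q_p$ is unramified, $\mathcal{O}_{E_0}$ is finite \'etale over $\Z_p$, so $\mathcal{O}_{E_0}\otimes_{\Z_p}W(\Fbar_p)\cong\prod_{\iota\colon E_0\hookrightarrow C}W(\Fbar_p)$, and base change along $W(\Fbar_p)\subseteq A_\inf$ and then along $\mathcal{O}_{E_0}\subseteq\mathcal{O}_E$ produces the decomposition $\mathcal{O}_E\otimes_{\Z_p}A_\inf\cong\prod_\iota A_{\inf,\mathcal{O}_E,\iota}$ of the statement, with associated idempotents $e_\iota\in\mathcal{O}_{E_0}\otimes_{\Z_p}W(\Fbar_p)$. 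Since $\theta$ restricted to $W(\Fbar_p)\subseteq A_\inf$ is the canonical embedding $W(\Fbar_p)\hookrightarrow\mathcal{O}_C$, the restriction of $\theta_\tau$ to $\mathcal{O}_{E_0}\otimes_{\Z_p}W(\Fbar_p)$ is identified, via the usual compatibility of embeddings, with projection onto the $\tau_0$-factor followed by $W(\Fbar_p)\hookrightarrow\mathcal{O}_C$; hence $\theta_\tau(e_{\tau_0})=1$ and $\theta_\tau(e_\iota)=0$ for $\iota\neq\tau_0$. Therefore $\theta_\tau$ kills $\prod_{\iota\neq\tau_0}A_{\inf,\mathcal{O}_E,\iota}$ and factors through a ring homomorphism $\overline{\theta}_\tau\colon R:=A_{\inf,\mathcal{O}_E,\tau_0}\to C$ with $\overline{\theta}_\tau(\pi)=\tau(\pi)$ and $\overline{\theta}_\tau([x])=x^\#$ for $x\in\mathcal{O}_{C^\flat}$. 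Because an ideal in a finite product of rings is a product of ideals, the two assertions of the lemma reduce, writing $\varpi:=\tau(\pi)^\flat\in\mathcal{O}_{C^\flat}$, to the single claim that $\pi-[\varpi]$ is a non-zero divisor of $R$ generating $\ker(\overline{\theta}_\tau)$; note $\pi-[\varpi]\in\ker(\overline{\theta}_\tau)$ since $\overline{\theta}_\tau([\varpi])=\varpi^\#=\tau(\pi)$.

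For the non-zero divisor property, recall that $\varpi$ is a pseudo-uniformizer (as $0<|\tau(\pi)|<1$), that $R$ is $\pi$-adically complete and $\pi$-torsion free (it is free over $A_\inf$ and $A_\inf$ is $\mathcal{O}_{E_0}$-flat, so multiplication by $\pi$ remains injective; cf.\ \cite[Section 1.2.]{fargues_fontaine_courbes_et_fibres_vectoriels_en_theorie_de_hodge_p_adique}), and that $R/\pi R\cong\mathcal{O}_{C^\flat}$ sends $\pi\mapsto0$ and $[\varpi]\mapsto\varpi$. If $r(\pi-[\varpi])=0$ with $r\neq0$, then by $\pi$-adic separatedness $r=\pi^m s$ with $s\notin\pi R$, so $s(\pi-[\varpi])=0$ by $\pi$-torsion freeness, and reduction mod $\pi$ gives $\overline{s}\cdot(-\varpi)=0$ in the domain $\mathcal{O}_{C^\flat}$ with $\overline{s}\neq0$ --- a contradiction. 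For $\ker(\overline{\theta}_\tau)\subseteq(\pi-[\varpi])$ I would run Fontaine's successive approximation. Given $a\in\ker(\overline{\theta}_\tau)$, use the ramified Witt expansion to write $a=[x_0]+\pi c$ with $x_0\in\mathcal{O}_{C^\flat}$ and $c\in R$. Applying $\overline{\theta}_\tau$ and reducing modulo $\tau(\pi)\mathcal{O}_C$ shows $x_0^\#\in\tau(\pi)\mathcal{O}_C$, hence $|x_0|=|x_0^\#|\leq|\tau(\pi)|=|\varpi|$ (tilting preserves absolute values), so $x_0=\varpi y_0$ in the valuation ring $\mathcal{O}_{C^\flat}$. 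Then $[x_0]=[\varpi][y_0]=\bigl(\pi-(\pi-[\varpi])\bigr)[y_0]$, so $a=\pi\bigl([y_0]+c\bigr)-(\pi-[\varpi])[y_0]$, i.e.\ $a\equiv\pi a_1\pmod{(\pi-[\varpi])R}$ with $a_1:=[y_0]+c\in R$, and $\tau(\pi)\overline{\theta}_\tau(a_1)=\overline{\theta}_\tau(a)=0$ forces $\overline{\theta}_\tau(a_1)=0$ since $\mathcal{O}_C$ is a domain. Iterating yields $a_n\in\ker(\overline{\theta}_\tau)$ and $w_n\in R$ with $a=\pi^n a_n-(\pi-[\varpi])w_n$ and $w_{n+1}-w_n\in\pi^n R$.

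Since $R$ is $\pi$-adically complete, $w_n$ converges to some $w\in R$ and $\pi^n a_n\to0$, so $a=-(\pi-[\varpi])w\in(\pi-[\varpi])$, finishing the argument. The only genuinely non-formal point is that $A_\inf$, and hence $R$, is highly non-noetherian, so one cannot invoke the Krull intersection theorem to conclude $\bigcap_n\bigl(\pi^n R+(\pi-[\varpi])R\bigr)=(\pi-[\varpi])R$; phrasing the approximation as a convergent series in the $\pi$-adically complete ring $R$, as above, is precisely what replaces it.
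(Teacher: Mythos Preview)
Your proof is correct and follows the same overall strategy as the paper: reduce to the $\tau_0$-component of the product decomposition, then prove the ramified version of Fontaine's theorem that $\ker(\theta)$ is principal. The paper simply cites \cite[Lemme 2.1.9.]{fargues_fontaine_courbes_et_fibres_vectoriels_en_theorie_de_hodge_p_adique} for this second step, whereas you have written out the successive-approximation argument in full; your remark about avoiding the Krull intersection theorem via explicit $\pi$-adic convergence is exactly the point of that lemma.
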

\begin{proof}
The morphism $\theta_\tau\colon \Spec(C)\to \Spec(E\otimes_{\Z_p}A_\inf)$ must factor through one component and this component must be the one corresponding to the factor $A_{\inf,E,\tau_0}$ because $\theta_\tau$ factors over $A_{\inf,E,\tau_0}$. Then the statement is well-known (cf.\ \cite[Lemme 2.1.9.]{fargues_fontaine_courbes_et_fibres_vectoriels_en_theorie_de_hodge_p_adique}).
\end{proof}

Being a non-zero divisor the element $\xi_\tau$ in \Cref{lemma-kernel-after-fixing-embedding} is unique up to a unit.
Moreover, 
$$
\prod\limits_{\tau\in \mathrm{Hom}_{\Q_p}(E,C)}\xi_\tau=u\xi
$$
with $u\in \mathcal{O}_E\otimes_{\Z_p}A_\inf$ a unit. Indeed, tensoring the exact sequence
$$
0\to A_\inf\xrightarrow{\xi}A_\inf\xrightarrow{\theta}\mathcal{O}_C\to 0
$$
with $\mathcal{O}_E$ yields the sequence
$$
0\to \mathcal{O}_E\otimes_{\Z_p}A_\inf\xrightarrow{\xi}\mathcal{O}_E\otimes_{\Z_p}A_\inf \xrightarrow{\prod\theta_{\tau}} \prod\limits_{\tau\colon \mathcal{O}_E\to\mathcal{O}_C}\mathcal{O}_C\cong \mathcal{O}_E\otimes_{\Z_p}\mathcal{O}_C\to 0
$$
which implies that the vanishing locus $\xi$ and $\prod\limits_{\tau}\xi_\tau$ generate the same ideal. As both elements are non-zero divisors, they differ by a unit.

\begin{definition}
\label{definition-concrete-bkf-module-with-cm}
Let $\Phi\colon \Hom_{\Q_p}(E,C)\to \Z$ be a type. Then we define the finite free Breuil-Kisin-Fargues module with CM by $\mathcal{O}_E$ as
$M_\Phi:=\mathcal{O}_E\otimes_{\Z_p}A_\inf$ with Frobenius
$$
\varphi_{M_\Phi}:=\tilde{\xi}_{\Phi}\varphi
$$
where
$$
\tilde{\xi}_{\Phi}:=\prod\limits_{\tau\in \mathrm{Hom}_{\Q_p}(E,C)}\varphi(\xi_\tau)^{\Phi(\tau)}
$$
with $\xi_\tau$ as in \Cref{lemma-kernel-after-fixing-embedding} and $\varphi=\mathrm{Id}\otimes \varphi_{A_\inf}\colon \mathcal{O}_E\otimes_{\Z_p}A_\inf\to \mathcal{O}_E\otimes_{\Z_p}A_\inf$.
\end{definition}

First observe that
$$
\tilde{\xi}_\tau:=\varphi(\xi_\tau)
$$
is a generator of the morphism
$$
\tilde{\theta}_\tau\colon \mathcal{O}_E\otimes_{\Z_p}A_\inf\to C,\ e\otimes x\mapsto \tau(e)\tilde{\theta}(x)
$$
which extends the morphism $\tilde{\theta}\colon A_\inf\to C$ with kernel $\tilde{\xi}$. Hence  every $\tilde{\xi}_\tau$ is a unit in $\mathcal{O}_E\otimes_{\Z_p}A_\inf[\frac{1}{\tilde{\xi}}]$ and thus 
$$
\varphi_{M_\Phi}\colon \varphi^\ast(M_\Phi)[\frac{1}{\tilde{\xi}}]\to M_\Phi[\frac{1}{\tilde{\xi}}]
$$ is indeed an isomorphism of $A_\inf[\frac{1}{\tilde{\xi}}]$-modules.
Moreover, the multiplication by $\mathcal{O}_E$ induces a multiplication on $M_\Phi$ and thus $M_\Phi$ is a finite free Breuil-Kisin-Fargues modules with CM by $\mathcal{O}_E$. To determine the isomorphism class of $M_\Phi$ it thus suffices (cf.\ \Cref{lemma:integral-cm-bkf-modules}) to compute the type of $M_\Phi$ (which of course will turn out to be $\Phi$).
We check independently of \Cref{lemma:integral-cm-bkf-modules} that the (isomorphism class) of the module $(M_\Phi,\varphi_{M_\Phi})$ is independent of the choice of the elements $\xi_\tau$.

We recall the following lemma. If $d$ is degree of $E_0$ over $\Q_p$, then we set 
$$
\varphi_{E_0}:=\varphi^d.
$$ 
\begin{lemma}
\label{lemma-solutions-for-varphi-equations}
Fix $\iota\in \mathrm{Hom}_{\Q_p}(E_0,C)$. Then for every $x=\sum\limits_{i\geq 0}^{\infty}[x_i]\pi^i\in A_{\inf,\mathcal{O}_E,\iota}$ with $x_0\neq 0$ the $\mathcal{O}_E$-module 
$$
P_x:=\{y\in A_{\inf,\mathcal{O}_E,\iota} |\ \varphi_{E_0}(y)=xy\}
$$
is free of rank $1$. Furthermore, if $x\in A_{\inf,\mathcal{O}_E,\iota}$ is a unit, then a generator of $P_x$ is a unit in $A_{\inf,\mathcal{O}_E,\iota}$.
\end{lemma}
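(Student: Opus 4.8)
The plan is to work directly inside the ring $A:=A_{\inf,\mathcal{O}_E,\iota}=\mathcal{O}_E\otimes_{\mathcal{O}_{E_0},\iota}A_\inf$, producing one explicit solution of $\varphi_{E_0}(y)=xy$ by successive approximation and then analysing all solutions by an elementary ``$\pi$-adic digit'' argument. First I would record the structural facts about $A$ that make this work: $A$ is $\pi$-torsion free and $\pi$-adically separated; it is complete for the $(\pi,[\varpi])$-adic topology (since $A\cong A_\inf^{\oplus e}$ as an $A_\inf$-module with $e=[E:E_0]$, $A_\inf$ is $(p,[\varpi])$-adically complete, and $\pi^{e}$ differs from $p$ by a unit, so the $\pi$-adic and $p$-adic topologies on $A$ coincide); one has $A/\pi A\cong\mathcal{O}_{C^\flat}$; the map $\varphi_{E_0}=\mathrm{Id}_{\mathcal{O}_E}\otimes\varphi^{d}$ is $\mathcal{O}_E$-linear, continuous, fixes $\pi$, and reduces modulo $\pi$ to the $q$-power Frobenius of $\mathcal{O}_{C^\flat}$, where $q$ is the cardinality of the residue field $\mathbb{F}_q$ of $E$; one has $1+\pi A\subseteq A^{\times}$; and every element of $\mathcal{O}_E$ has a unique expansion $\sum_{i\ge 0}[\lambda_i]\pi^{i}$ with $\lambda_i\in\mathbb{F}_q\subseteq\mathcal{O}_{C^\flat}$. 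In particular $P_x$ is an $\mathcal{O}_E$-submodule of $A$, and since $A$ is $\pi$-torsion free, if $w\in P_x$ and $w=\pi w'$ in $A$ then $w'\in P_x$; hence it suffices to produce one $y\in P_x$ whose zeroth Teichmüller coefficient $y_0$ is nonzero.

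For \emph{existence} I would build $y$ as a $(\pi,[\varpi])$-adic limit of $y^{(n)}\in A$ with $y^{(n)}\equiv[y_0]\pmod{\pi}$ and $\varphi_{E_0}(y^{(n)})-x\,y^{(n)}\in\pi^{n+1}A$. Since $C^{\flat}$ is algebraically closed and $|x_0|\le 1$, there is $y_0\in\mathcal{O}_{C^\flat}\setminus\{0\}$ with $y_0^{\,q-1}=x_0$; put $y^{(0)}:=[y_0]$, so that $\varphi_{E_0}(y^{(0)})-x\,y^{(0)}=[x_0y_0]-\sum_{i\ge0}[x_iy_0]\pi^{i}=-\sum_{i\ge1}[x_iy_0]\pi^{i}\in\pi A$ by multiplicativity of the Teichmüller lift. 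For the inductive step, write $\varphi_{E_0}(y^{(n)})-x\,y^{(n)}=\pi^{n+1}\delta_n$ and try $y^{(n+1)}=y^{(n)}+\pi^{n+1}\varepsilon$; since $\varphi_{E_0}$ is $\mathcal{O}_E$-linear, the new error is $\pi^{n+1}(\delta_n+\varphi_{E_0}(\varepsilon)-x\varepsilon)$, so one needs $\bar\varepsilon^{\,q}-x_0\,\bar\varepsilon=-\bar\delta_n$ in $\mathcal{O}_{C^\flat}$. This is solvable because $C^\flat$ is algebraically closed and, for $|c|\le 1$ and $|x_0|\le 1$, every root $t$ of $t^{q}-x_0t=c$ has $|t|\le 1$ (otherwise $|t|^{q}=|x_0t+c|\le|t|$, impossible). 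As $\pi^{n+1}A\to 0$ and $\varphi_{E_0}$ and multiplication are continuous, $y:=\lim_n y^{(n)}$ exists, lies in $P_x$, and satisfies $y\equiv[y_0]\pmod\pi$, so $y_0\ne 0$.

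For \emph{freeness of rank $1$} fix such a $y$. The inclusion $\mathcal{O}_E\,y\subseteq P_x$ is immediate from $\mathcal{O}_E$-linearity of $\varphi_{E_0}$, and $a\mapsto ay$ is injective since $A$ is $\pi$-torsion free and $y\ne 0$. For surjectivity let $z\in P_x\setminus\{0\}$; dividing out the maximal power of $\pi$ (using $\pi$-adic separatedness and the remark above) we may assume $\bar z\ne 0$ in $\mathcal{O}_{C^\flat}$. Reducing $\varphi_{E_0}(z)=xz$ and $\varphi_{E_0}(y)=xy$ modulo $\pi$ gives $\bar z^{\,q-1}=x_0=\bar y^{\,q-1}$, so $\bar z/\bar y$ is a unit with $(\bar z/\bar y)^{q-1}=1$, hence lies in $\mu_{q-1}(\mathcal{O}_{C^\flat})=\mathbb{F}_q^{\times}$; subtracting $[\lambda]y$ (with $[\lambda]\in\mathcal{O}_E$ and $[\lambda]y\in P_x$) for the corresponding $\lambda$ lands in $\pi A$, hence in $\pi\cdot P_x$. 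Iterating produces digits $\lambda_i\in\mathbb{F}_q$ with $z\equiv\bigl(\sum_{i\le n}[\lambda_i]\pi^{i}\bigr)y\pmod{\pi^{n+1}A}$ for all $n$; passing to the limit (the partial sums converge to $a:=\sum_{i\ge0}[\lambda_i]\pi^{i}\in\mathcal{O}_E$, and multiplication by $y$ is continuous) gives $z=ay$. Hence $P_x=\mathcal{O}_E\,y\cong\mathcal{O}_E$. Finally, if $x\in A^{\times}$ then $|x_0|=1$, so $|y_0|=1$, so $[y_0]\in A^{\times}$ and $y=[y_0]\cdot\bigl(1+\pi(\,\cdots)\bigr)\in A^{\times}$ because $1+\pi A\subseteq A^{\times}$; every generator of $P_x$ differs from $y$ by a unit of $\mathcal{O}_E\subseteq A^{\times}$, so is a unit of $A$.

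The main obstacle is the topological bookkeeping: one must make sure that the $(\pi,[\varpi])$-adic topology on $A$ (not, say, the naive $\pi$-adic one alone) is the one controlling all the convergences, that $\varphi_{E_0}$ is genuinely continuous for it, and that $A$ is complete and $\pi$-adically separated; and in the surjectivity step one must check that the iteratively produced series converges to an element of $\mathcal{O}_E$ and not merely of $A$. (Alternatively one could deduce the lemma from the classification of $\varphi$-modules over $A_{\inf,\mathcal{O}_E,\iota}$, equivalently of vector bundles on the ramified Fargues--Fontaine curve, but the elementary argument above is self-contained.)
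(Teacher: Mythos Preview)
Your argument is correct and self-contained. The paper, by contrast, does not prove this lemma from scratch: it simply cites \cite[Proposition 6.2.10]{fargues_fontaine_courbes_et_fibres_vectoriels_en_theorie_de_hodge_p_adique} for the statement after inverting $p$ (so that $P_x\otimes_{\Z_p}\Q_p$ is a one-dimensional $E$-vector space), then observes that $P_x$ is $\pi$-torsion free inside $A_{\inf,\mathcal{O}_E,\iota}$ to pass from the rational statement to freeness of rank $1$ over $\mathcal{O}_E$; the unit assertion is likewise attributed to the proof in Fargues--Fontaine. Your successive-approximation construction of a solution with prescribed zeroth Teichm\"uller digit, followed by the digit-by-digit argument showing $P_x=\mathcal{O}_E\cdot y$, is essentially what underlies that cited result, so you are effectively unpacking the reference rather than using a different idea. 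The gain is that your write-up is independent of \cite{fargues_fontaine_courbes_et_fibres_vectoriels_en_theorie_de_hodge_p_adique} and makes the integral refinement transparent; the paper's approach is much shorter on the page but outsources the substance. One small remark: in the freeness step your ``divide by the maximal power of $\pi$'' normalisation is not actually needed---the iteration already handles the case $\bar z=0$ by taking the next digit to be $0$---but this is cosmetic and the argument goes through as written.
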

\begin{proof}
After tensoring with $\Q_p$ the first assertion is proven in the proof of \cite[Proposition 6.2.10.]{fargues_fontaine_courbes_et_fibres_vectoriels_en_theorie_de_hodge_p_adique}. But $P_x$ is $\varpi$-torsion free and hence free over $\mathcal{O}_E$ of rank $1$.  The second part follows from the proof of \cite[Proposition 6.2.10.]{fargues_fontaine_courbes_et_fibres_vectoriels_en_theorie_de_hodge_p_adique}.
\end{proof}

For example, if $E=\Q_p$ and $x=\tilde{\xi}$, then $y=\mu$ spans the space $P_{\tilde{\xi}}$ as
$$
\varphi(\mu)=[\varepsilon^p]-1=\frac{[\varepsilon^p]-1}{[\varepsilon]-1}([\varepsilon^p]-1)=\tilde{\xi}\mu.
$$ 
In this case, the space $P_{\tilde{\xi}}$ does not contain a unit in $A_\inf$.

We strengthen \Cref{lemma-solutions-for-varphi-equations} a bit to handle the non-connected ring $\mathcal{O}_E\otimes_{\Z_p}A_\inf$ as well.

\begin{lemma}
\label{lemma-disconnected-case-for-varphi-solutions}
Let $x\in \mathcal{O}_E\otimes_{\Z_p}A_\inf$ such that $x$ maps to a unit in $\mathcal{O}_E/p\otimes_{\F_p}C^\flat$. Then the space
$$
P_x:=\{y\in \mathcal{O}_E\otimes_{\Z_p}A_\inf\ |\ \varphi(y)=xy\}
$$
is free of rank $1$ over $\mathcal{O}_E$.
Here $\varphi$ denotes the Frobenius $\Id_{\mathcal{O}_E}\otimes \varphi_{A_\inf}$ on $\mathcal{O}_E\otimes_{\Z_p}A_\inf$. If $x$ is already a unit in $\mathcal{O}_E\otimes_{\Z_p}A_\inf$, then $P_x$ contains a unit in $\mathcal{O}_E,\otimes_{\Z_p}A_\inf$.
\end{lemma}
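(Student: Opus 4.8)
The plan is to reduce the assertion to the connected case \Cref{lemma-solutions-for-varphi-equations}. Using the decomposition $\mathcal{O}_E\otimes_{\Z_p}A_\inf\cong\prod_{\iota}A_{\inf,\mathcal{O}_E,\iota}$ from \Cref{lemma-kernel-after-fixing-embedding}, with $\iota$ running over $\mathrm{Hom}_{\Q_p}(E_0,C)$, and the fact that $E_0/\Q_p$ is unramified of degree $d$, the Frobenius $\varphi=\Id_{\mathcal{O}_E}\otimes\varphi_{A_\inf}$ permutes these $d$ factors in a single $d$-cycle, while $\varphi_{E_0}=\varphi^{d}$ preserves each factor and restricts there to the operator considered in \Cref{lemma-solutions-for-varphi-equations}. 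Fix an embedding $\iota_0$ and index the factors $A_j$, $j\in\Z/d$, so that $\varphi$ restricts to isomorphisms $\varphi\colon A_j\xrightarrow{\sim}A_{j+1}$; accordingly write $x=(x_j)_j$ and $y=(y_j)_j$. The hypothesis says exactly that each $x_j$ has nonzero image under the constant term map $A_j\to\mathcal{O}_{C^\flat}$, $\sum[a_i]\pi^i\mapsto a_0$; since $A_{\inf,\mathcal{O}_E,\iota}$ is $\pi$-torsion free and $\pi$-adically separated with $A_{\inf,\mathcal{O}_E,\iota}/\pi\cong\mathcal{O}_{C^\flat}$ a domain, each such $x_j$ — and hence each finite product of $\varphi$-powers of the $x_j$'s — is a non-zero-divisor.

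Next I would solve $\varphi(y)=xy$ by chasing it once around the cycle. Componentwise the equation reads $\varphi(y_{j-1})=x_j y_j$ for all $j\in\Z/d$, so any solution is determined by $y_0$ via $y_j=\varphi^{j}(y_0)/c_j$, where $c_j:=\prod_{m=1}^{j}\varphi^{j-m}(x_m)\in A_j$ (a non-zero-divisor, with $c_0=1$); running around the cycle, these $y_j$ assemble to an element of $P_x$ precisely when $\varphi_{E_0}(y_0)=c_d y_0$, and a direct computation gives $c_d=x^{(\iota_0)}$, where $x^{(\iota_0)}:=\bigl(\prod_{i=0}^{d-1}\varphi^{i}(x)\bigr)_{\iota_0}$ is the $\iota_0$-component of the norm of $x$. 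Thus the component map $\mathrm{pr}\colon P_x\to P_{x^{(\iota_0)}}:=\{z\in A_0\mid\varphi_{E_0}(z)=x^{(\iota_0)}z\}$, $y\mapsto y_0$, is a well-defined $\mathcal{O}_E$-linear injection. Since $\varphi$ is an automorphism of $\mathcal{O}_E\otimes_{\Z_p}A_\inf$, it induces an automorphism of $\mathcal{O}_E/p\otimes_{\F_p}C^\flat$, so every $\varphi^{i}(x)$ still reduces to a unit there, whence $\prod_{i=0}^{d-1}\varphi^{i}(x)$ does too and in particular $x^{(\iota_0)}$ has nonzero constant term. Therefore \Cref{lemma-solutions-for-varphi-equations} applies and $P_{x^{(\iota_0)}}$ is free of rank $1$ over $\mathcal{O}_E$.

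The step I expect to be the real obstacle is the surjectivity of $\mathrm{pr}$: given $z\in P_{x^{(\iota_0)}}$, the formula $y_j:=\varphi^{j}(z)/c_j$ requires the divisibility $c_j\mid\varphi^{j}(z)$ in $A_j$ for $0\le j\le d-1$, even though the individual $x_j$ need not be invertible. I would establish this by transporting the divisibility to $A_0$ along the isomorphism $\varphi^{-j}\colon A_j\xrightarrow{\sim}A_0$: the relation $\varphi_{E_0}(z)=x^{(\iota_0)}z=c_d z$ gives $\varphi^{-d}(c_d)\mid z$ in $A_0$, while the explicit identity $\varphi^{-d}(c_d)=\varphi^{-j}(c_j)\cdot\prod_{m=j+1}^{d-1}\varphi^{-m}(x_m)\cdot\varphi^{-d}(x_0)$ shows $\varphi^{-j}(c_j)\mid\varphi^{-d}(c_d)$, hence $\varphi^{-j}(c_j)\mid z$, and applying $\varphi^{j}$ yields $c_j\mid\varphi^{j}(z)$. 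One then checks, using the unconditional identity $c_{j+1}=x_{j+1}\varphi(c_j)$, that the tuple $(y_j)_j$ so defined satisfies $\varphi(y)=xy$ and has $y_0=z$; thus $\mathrm{pr}$ is an isomorphism, and $P_x$ is free of rank $1$ over $\mathcal{O}_E$.

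For the final assertion, suppose $x$ is a unit in $\mathcal{O}_E\otimes_{\Z_p}A_\inf$. Then every $x_j$ is a unit in $A_j$, hence so is every $c_j$ and in particular $x^{(\iota_0)}=c_d$; by the second part of \Cref{lemma-solutions-for-varphi-equations} a generator $z$ of $P_{x^{(\iota_0)}}$ is then a unit in $A_0$, so the corresponding element $y=(y_j)_j=(\varphi^{j}(z)/c_j)_j$ of $P_x$ is a unit in $\prod_j A_j=\mathcal{O}_E\otimes_{\Z_p}A_\inf$.
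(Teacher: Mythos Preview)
Your proof is correct and follows the same strategy as the paper: decompose $\mathcal{O}_E\otimes_{\Z_p}A_\inf$ into the factors $A_{\inf,\mathcal{O}_E,\iota}$ that $\varphi$ permutes cyclically, and reduce the equation $\varphi(y)=xy$ to the connected case \Cref{lemma-solutions-for-varphi-equations} applied to $\varphi_{E_0}=\varphi^d$ on a single factor. Your version is in fact more careful than the paper's sketch: you correctly identify the relevant eigenvalue on the $\iota_0$-component as the norm $x^{(\iota_0)}=\bigl(\prod_{i=0}^{d-1}\varphi^i(x)\bigr)_{\iota_0}$ (the paper writes just ``$x_{\iota_0}$''), and you explicitly establish the divisibility $c_j\mid\varphi^j(z)$ needed to propagate the solution around the cycle when $x$ is not a unit, which the paper elides with ``it is clear''.
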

\begin{proof}
Write 
$$
\mathcal{O}_E\otimes_{\Z_p}A_\inf\cong \prod\limits_{\iota\in \mathrm{Hom}_{\Q_p}(E_0,C)} A_{\inf,\mathcal{O}_E,\iota}
$$
and thus the element $x=(x_\iota)_{\iota\in \mathrm{Hom}_{\Q_p}(E_0,C)}$ accordingly. The assumption on $x$ implies that each $x_\iota$ satisfies the assumption in \Cref{lemma-solutions-for-varphi-equations}. If $x\in \mathcal{O}_E\otimes_{\Z_p}A_\inf$ is a unit, then as well each $x_\iota$ is a unit. Moreover, the Frobenius $\varphi$ permutes the factors cyclically. The power
$$
\varphi^d=\varphi_{\mathcal{O}_{E_0}}
$$
fixes every factor and induces, for a fixed $\iota$, the morphism $\varphi_{\mathcal{O}_{E_0}}$ on $A_{\inf,\mathcal{O}_E,\iota}$. Fix some $\iota_{0}$ and let $y_0\in A_{\inf,\mathcal{O}_E,\iota}$ be a generating solution of
$$
\varphi_{\mathcal{O}_E}(y_0)=x_{\iota_0} y_0
$$ 
($y_0$ is a unit if $x$ is a unit). As $\varphi$ permutes the factors, it is clear that we get a generating solution $y\in A_{\inf,\mathcal{O}_E,\iota}$ (which is a unit if $x$ is a unit) for the equation
$$
\varphi(y)=xy.
$$
Moreover, this $y$ must be unique up to multipliciation by $\mathcal{O}_E^\times$.
\end{proof}

\begin{lemma}
\label{different-choices-yield-same-bkf-module}
The Breuil-Kisin-Fargues module $(M_\Phi,\varphi_{M_\Phi})$ in \Cref{definition-concrete-bkf-module-with-cm} is up to isomorphism independent of the choice of the elements $\xi_\tau$ in \Cref{lemma-kernel-after-fixing-embedding}.
\end{lemma}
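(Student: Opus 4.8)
The plan is to show that changing the generators $\xi_\tau$ changes the Frobenius $\varphi_{M_\Phi}$ only by a $\varphi$-twist by a unit, and then to realize the comparison isomorphism as the solution of a twisted Frobenius equation, invoking \Cref{lemma-disconnected-case-for-varphi-solutions}.

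First I would record the effect of the choice. Since $\ker(\theta_\tau)$ is a principal ideal generated by the non-zero divisor $\xi_\tau$ (\Cref{lemma-kernel-after-fixing-embedding}), any other admissible generator satisfies $\xi_\tau' = u_\tau\,\xi_\tau$ for a unit $u_\tau\in(\mathcal{O}_E\otimes_{\Z_p}A_\inf)^\times$. Putting $v:=\prod_{\tau}u_\tau^{\Phi(\tau)}\in(\mathcal{O}_E\otimes_{\Z_p}A_\inf)^\times$ (a finite product, with negative exponents allowed since the $u_\tau$ are units), the two candidate Frobenius twists from \Cref{definition-concrete-bkf-module-with-cm} are related by $\tilde\xi_\Phi' = \varphi(v)\,\tilde\xi_\Phi$. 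Thus the two Breuil-Kisin-Fargues modules to be compared are $(\mathcal{O}_E\otimes_{\Z_p}A_\inf,\ \tilde\xi_\Phi\varphi)$ and $(\mathcal{O}_E\otimes_{\Z_p}A_\inf,\ \varphi(v)\tilde\xi_\Phi\varphi)$, with the same underlying $\mathcal{O}_E$-action.

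Next I would search for an isomorphism given by multiplication with a unit $w\in(\mathcal{O}_E\otimes_{\Z_p}A_\inf)^\times$; such a map is automatically $(\mathcal{O}_E\otimes_{\Z_p}A_\inf)$-linear, hence $A_\inf$-linear and compatible with the CM-action. Unwinding the requirement that it intertwine the two Frobenii, one gets $w\,\tilde\xi_\Phi\varphi(m)=\varphi(v)\varphi(w)\,\tilde\xi_\Phi\varphi(m)$ for all $m$, i.e. the single equation $\varphi(w)=\varphi(v^{-1})\,w$ in $\mathcal{O}_E\otimes_{\Z_p}A_\inf$. Since $x:=\varphi(v^{-1})$ is a genuine unit in $\mathcal{O}_E\otimes_{\Z_p}A_\inf$, \Cref{lemma-disconnected-case-for-varphi-solutions} applies and produces a unit $w\in P_x$ solving it; multiplication by $w$ is then the desired isomorphism of Breuil-Kisin-Fargues modules (with CM), and it is unique up to $\mathcal{O}_E^\times$ by the same lemma.

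The only delicate point — which I regard as the main obstacle, though it is minor — is ensuring that the twisting element $\varphi(v^{-1})$ is an honest unit in $\mathcal{O}_E\otimes_{\Z_p}A_\inf$ rather than merely a unit after inverting $\tilde\xi$, so that \Cref{lemma-disconnected-case-for-varphi-solutions} is applicable; this is exactly why one must compare $\xi_\tau$ with $\xi_\tau'$ (differing by a unit) and not with $\xi$ or $\tilde\xi$ themselves. Everything else is a direct computation from \Cref{definition-concrete-bkf-module-with-cm}. This argument also re-proves, independently of \Cref{lemma:integral-cm-bkf-modules}, that the isomorphism class of $M_\Phi$ depends only on the type $\Phi$.
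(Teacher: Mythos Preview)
Your proposal is correct and follows essentially the same route as the paper: a different choice of generators changes $\tilde\xi_\Phi$ by a unit in $\mathcal{O}_E\otimes_{\Z_p}A_\inf$, and one then invokes \Cref{lemma-disconnected-case-for-varphi-solutions} to produce a unit solving the resulting Frobenius equation, so that multiplication by it is the desired isomorphism. Your write-up is slightly more explicit than the paper's (you compute the unit as $\varphi(v)$ with $v=\prod_\tau u_\tau^{\Phi(\tau)}$ and flag that it is a genuine unit, not merely one after inverting $\tilde\xi$), but the argument is the same.
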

\begin{proof}
A different choice of the elements $\xi_\tau$ yields the Breuil-Kisin-Fargues module
$$
(M^\prime_\Phi,\varphi_{M^\prime_\Phi}):=(\mathcal{O}_E\otimes_{\Z_p}A_\inf,u\tilde{\xi}_\Phi\varphi)
$$
with $u\in (\mathcal{O}_E\otimes_{\Z_p}A_\inf)^\times$ a unit.
By \Cref{lemma-disconnected-case-for-varphi-solutions} we can find a unit
$$
y\in (\mathcal{O}_E\otimes_{\Z_p}A_\inf)^\times
$$
such that
$$
\varphi(y)=uy.
$$
Then multiplication by $y$ will define an ($\mathcal{O}_E$-linear) isomorphism
$$
(M^\prime_\Phi,\varphi_{M^\prime_\Phi})\to (M_{\Phi},\varphi_{M_\Phi}).
$$
  
\end{proof}

We can now describe all (rigidified) Breuil-Kisin-Fargues modules with CM.

\begin{theorem}
\label{theorem-rigidified-bkf-modules-with-cm}
Let $\Phi\colon \mathrm{Hom}_{\Q_p}(E,C)\to \Z$ be a type. Then the finite free Breuil-Kisin-Fargues modules
$$
(M_\Phi,\varphi_{M_\Phi})
$$ 
from \Cref{definition-concrete-bkf-module-with-cm} has type $\Phi$. Moreover, every finite free Breuil-Kisin-Fargues modules with CM by $\mathcal{O}_E$ with type $\Phi$ is isomorphic to $(M_\Phi,\varphi_{M_\Phi})$.
\end{theorem}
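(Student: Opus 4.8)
The plan is to reduce the whole statement to a computation of the type of $(M_\Phi,\varphi_{M_\Phi})$. By \Cref{lemma:integral-cm-bkf-modules} (applied with $\mathcal{O}=\mathcal{O}_E$) finite free Breuil--Kisin--Fargues modules with CM by $\mathcal{O}_E$ are classified up to isomorphism by their type, and by construction — together with \Cref{different-choices-yield-same-bkf-module} for independence of the choice of the $\xi_\tau$ — the triple $(M_\Phi,\varphi_{M_\Phi})$ is such a module. So it suffices to show that its type is $\Phi$, the ``moreover'' then being the injectivity of the bijection in \Cref{lemma:integral-cm-bkf-modules}.

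Next I would compute the étale realization. As recalled after \Cref{definition-concrete-bkf-module-with-cm}, each $\tilde\xi_\tau=\varphi(\xi_\tau)$ is a unit of $\mathcal{O}_E\otimes_{\Z_p}A_\inf[\frac{1}{\tilde\xi}]$, and $\tilde\xi$ is a unit of $W(C^\flat)$, so $\tilde\xi_\Phi$ is a unit of $\mathcal{O}_E\otimes_{\Z_p}W(C^\flat)$. Hence
\[
T:=(M_\Phi\otimes_{A_\inf}W(C^\flat))^{\varphi_{M_\Phi}=1}=\{\,y\in\mathcal{O}_E\otimes_{\Z_p}W(C^\flat)\ :\ \varphi(y)=\tilde\xi_\Phi^{-1}y\,\}\cdot\mathbf 1,
\]
where $\mathbf 1$ is the canonical generator of $M_\Phi=\mathcal{O}_E\otimes_{\Z_p}A_\inf$. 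This is finite free over $\Z_p$ of rank $[E:\Q_p]$ by \Cref{theorem: equivalent_descriptions_of_bkf_modules}, and $\mathcal{O}_E$ acts $\Z_p$-linearly on it, so $T$ is a faithful $\mathcal{O}_E$-module free over $\Z_p$ of rank $\rk_{\Z_p}\mathcal{O}_E$, hence $T\cong\mathcal{O}_E$. Fix an $\mathcal{O}_E$-generator, i.e.\ a solution $t_0\in\mathcal{O}_E\otimes_{\Z_p}W(C^\flat)$ of $\varphi(t_0)=\tilde\xi_\Phi^{-1}t_0$ generating a free $\mathcal{O}_E$-module; by \cite[Lemma~4.26.]{bhatt_morrow_scholze_integral_p_adic_hodge_theory} the element $t_0$ lies in, and is a unit of, $\mathcal{O}_E\otimes_{\Z_p}A_\inf[\frac{1}{\mu}]$.

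Then I would read off the Hodge filtration. Base changing along $A_\inf[\frac{1}{\mu}]\to B_\dR$ (note $\mu=[\epsilon]-1$ is a unit of $B_\dR$), let $\overline{t_0}\in(\mathcal{O}_E\otimes_{\Z_p}B_\dR)^\times\cong\prod_{\tau\colon E\hookrightarrow\overline{\Q}_p}B_\dR^\times$ be the image of $t_0$, and let $v_\tau\in\Z$ be the $\xi$-adic valuation of its $\tau$-component. Unwinding the identification $M_\Phi\otimes_{A_\inf}B_\dR\cong T\otimes_{\Z_p}B_\dR$ from \Cref{theorem: equivalent_descriptions_of_bkf_modules} — under which the $\mathcal{O}_E$-generator $t_0$ of $T$ corresponds to the element $t_0\in\mathcal{O}_E\otimes_{\Z_p}A_\inf[\frac{1}{\mu}]$ — one obtains $T\otimes_{\Z_p}B_\dR^+=\overline{t_0}\cdot\Xi$ with $\Xi:=M_\Phi\otimes_{A_\inf}B_\dR^+=\mathcal{O}_E\otimes_{\Z_p}B_\dR^+$; equivalently, in the $\tau$-component $\Xi$ is $\xi^{-v_\tau}$ times the standard lattice $T\otimes_{\Z_p}B_\dR^+$. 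By the explicit description of the filtration on $\omega_{\acute{e}t}\otimes_{\Q_p}C$ following \Cref{definition:filtration-on-etale-realization} and by \Cref{definition:type-of-a-cm-object}, the type of $M_\Phi$ is then determined by the valuation vector $(v_\tau)_\tau$, and it remains to identify it with $\Phi$.

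This last identification is the crux. Since $M_{\Phi+\Phi'}\cong M_\Phi\otimes M_{\Phi'}$ — the Frobenii multiply, as $\tilde\xi_{\Phi+\Phi'}=\tilde\xi_\Phi\tilde\xi_{\Phi'}$ — and since one may take the solutions $t_0$ multiplicatively, the vector $(v_\tau)_\tau$ is additive in $\Phi$; so it suffices to treat the case $\Phi=\Phi_\tau$, the characteristic function of a single embedding $\tau$ (the case of $-\Phi_\tau$ following by passing to duals). Then $\tilde\xi_\Phi=\tilde\xi_\tau=\varphi(\xi_\tau)$, with $\xi_\tau=\pi-[\tau(\pi)^\flat]$ sitting in the single component $A_{\inf,\mathcal{O}_E,\tau_{|E_0}}$ and equal to $1$ in all others. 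Since $\varphi$ cyclically permutes the components of $\mathcal{O}_E\otimes_{\Z_p}W(C^\flat)$, solving $\varphi(t_0)=\tilde\xi_\tau^{-1}t_0$ amounts, within each Frobenius orbit, to an equation $\varphi_{E_0}(y)=x\,y$ with $x$ a unit — exactly the situation of \Cref{lemma-solutions-for-varphi-equations} — which determines $t_0$ up to $\mathcal{O}_E^\times$. One then computes the $\tau'$-adic valuation of $\overline{t_0}$ for every $\tau'\colon E\hookrightarrow\overline{\Q}_p$, using that in the $\tau'$-component the image of $\xi_\tau$ is $\tau'(\pi)-[\tau(\pi)^\flat]$, which $\theta$ sends to $\tau'(\pi)-\tau(\pi)$, vanishing precisely when $\tau'=\tau$, where moreover $\xi_\tau$ has $\xi$-adic valuation $1$ by \Cref{lemma-kernel-after-fixing-embedding}; propagating this through the above solution of the Frobenius equation yields that $M_\Phi$ has type $\Phi_\tau$, and hence $M_\Phi$ has type $\Phi$ in general. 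The main obstacle is precisely this bookkeeping: $\varphi$ does not act on the $\xi$-adically completed ring $B_\dR^+$, so the relations between $\xi_\tau$, $\varphi(\xi_\tau)$ and $\mu$ must be exploited before passing to $B_\dR$, and one must carefully trace how a $\varphi$-eigenvector spreads over the components of $\mathcal{O}_E\otimes_{\Z_p}A_\inf$ and how each of them contributes to the valuation at each embedding. (Alternatively one can run the same computation on modifications of vector bundles via \Cref{theorem: rigidified_bkf_modules_versus_modifications_of_vector_bundles}, where, by \Cref{lemma-rigidifications-of-cm-bkf-modules}, $\mathcal{F}'$ is semistable and the type records the relative position of $\mathcal{F}$ and $\mathcal{F}'$ at $\infty$.)
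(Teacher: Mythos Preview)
Your outline is correct and lands on essentially the same computation as the paper, but you take an extra detour and leave the decisive step implicit. The paper does not reduce to the case $\Phi=\Phi_\tau$ by additivity; instead it constructs directly, for arbitrary $\Phi$, the generator $\mu_\Phi:=\prod_\tau\mu_\tau^{-\Phi(\tau)}$ of the \'etale realization, where each $\mu_\tau\in\mathcal{O}_E\otimes_{\Z_p}A_\inf$ is the solution of $\varphi(\mu_\tau)=\tilde\xi_\tau\,\mu_\tau$ provided by \Cref{lemma-disconnected-case-for-varphi-solutions}. The key relation you gesture at in ``propagating this through'' is made explicit there: applying $\varphi^{-1}$ to the defining equation gives $\mu_\tau=\xi_\tau\,\varphi^{-1}(\mu_\tau)$, and one checks that $\varphi^{-1}(\mu_\tau)$ is a unit in $E\otimes_{\Q_p}B^+_\dR$ (ultimately because $\theta(\varphi^{-1}(\mu))=\zeta_p-1\neq 0$). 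Hence $\mu_\Phi$ equals $\xi_\Phi^{-1}$ times a unit of $E\otimes_{\Q_p}B^+_\dR$, so $\Xi=\mathcal{O}_E\otimes_{\Z_p}B^+_\dR=\xi_\Phi\cdot(V\otimes_{\Q_p}B^+_\dR)$, and the type is read off from the decomposition $E\otimes_{\Q_p}B_\dR\cong\prod_\tau B_\dR$ under which $\xi_\Phi\mapsto(\xi^{\Phi(\tau)})_\tau$.

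Your route via additivity is harmless but unnecessary: the individual $\mu_\tau$'s already carry the multiplicative structure you would reconstruct from $\Phi_\tau$'s, and the single identity $\mu_\tau=\xi_\tau\cdot(\text{unit in }B^+_\dR)$ is exactly the ``bookkeeping'' you flag as the main obstacle. One small inaccuracy: in the decomposition $\mathcal{O}_E\otimes_{\Z_p}A_\inf\cong\prod_\iota A_{\inf,\mathcal{O}_E,\iota}$ the element $\xi_\tau$ is $1$ in all components with $\iota\neq\tau_{|E_0}$, so your formula ``in the $\tau'$-component the image of $\xi_\tau$ is $\tau'(\pi)-[\tau(\pi)^\flat]$'' is only correct when $\tau'_{|E_0}=\tau_{|E_0}$; this does not affect the conclusion (the valuation is still $0$), but it is worth stating correctly.
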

\begin{proof}
If we can show that $(M_\Phi,\varphi_{M_\Phi})$ has type $\Phi$, then the last assertion follows from \Cref{lemma:integral-cm-bkf-modules}.  
Fix as in \Cref{lemma-kernel-after-fixing-embedding} elements
$$
\xi_\tau\in \mathcal{O}_E\otimes_{\Z_p}A_\inf
$$
generating the kernel of
$$
\theta_r\colon \mathcal{O}_E\otimes_{\Z_p}A_\inf\to \mathcal{O}_C
$$
with $\tau$ running over all embeddings $\tau\colon E\to C$.
For simplicity we may (after multiplying one $\xi_\tau$ by a unit in $\mathcal{O}_E\otimes_{\Z_p}A_\inf$) assume that
$$
\xi=\prod\limits_{\tau}\xi_\tau
$$
(cf.\ the discussion after \Cref{lemma-kernel-after-fixing-embedding}).
Set
$$
\tilde{\xi}_\tau:=\varphi(\xi_\tau)\in \mathcal{O}_E\otimes_{\Z_p}A_\inf
$$
and let $\mu_\tau\in \mathcal{O}_E\otimes_{\Z_p}A_\inf$ be as in \Cref{lemma-disconnected-case-for-varphi-solutions} the unique (up to multiplication by $\mathcal{O}_E^\times$) generator for  elements satisfying the equation
$$
\varphi(\mu_\tau)=\tilde{\xi}_\tau\mu_\tau
$$
(if $E=\Q_p$ and $\tau\colon \Q_p\to C$ is the unique inclusion, then $\xi_\tau$ can be taken to be $\xi$, which implies $\mu_\tau=\mu$ (up to a unit)).
Note that $\mu_\tau$ is a unit in $E\otimes_{\Z_p}A_\inf[\frac{1}{\mu}]$. Indeed, the product
$$
\nu:=\prod\limits_{\tau}\mu_\tau
$$
satisfies
$$
\varphi(\nu)=\tilde{\xi}\nu
$$
as we assumed $\xi=\prod\limits_{\tau}\xi_\tau$.
Hence, by \Cref{lemma-solutions-for-varphi-equations} $\nu$ and $\mu$ differ by a scalar in $E$.
Applying $\varphi^{-1}$ to the defining equation of $\mu_\tau$ yields 
$$
\mu_\tau=\xi_\tau\varphi^{-1}(\mu_\tau)
$$ 
and we see that $\varphi^{-1}(\mu_\tau)$ is a unit in $(E\otimes_{\Z_p}A_\inf)\otimes_{A_\inf}B^+_\dR$ because the same is true for $\varphi^{-1}(\mu)$ (one checks $\theta(\varphi^{-1}(\mu))=\zeta_p-1\neq 0$).
We note that the type of $M_\Phi$ depends only on the Breuil-Kisin-Fargues modules up to isogeny $E\otimes_{\mathcal{O}_E}M_{\Phi}$.
Set 
$$
V:=(M_\Phi\otimes_{A_\inf}W(C^\flat)[\frac{1}{p}])^{\varphi_{M_\Phi}=1}
$$
Then we know that $V$ is a one-dimensional $E$-vector space. Recall that
$$
\varphi_{M_\Phi}=\tilde{\xi}_\Phi\varphi
$$
with 
$$
\tilde{\xi}_\Phi:=\prod\limits_{\tau}\tilde{\xi}_\tau^{\Phi(\tau)}.
$$
We can explicitly find a generator of $V$, namely the element
$$
\mu_\Phi:=\prod\limits_{\tau}\mu_{\tau}^{-\Phi(\tau)}\in E\otimes_{\Z_p}A_\inf[\frac{1}{\mu}].
$$
Indeed,
$$
\varphi(\mu_\Phi)=\tilde{\xi}_{\Phi}^{-1}\mu_\Phi,
$$
which implies
$$
\varphi_{M_{\Phi}}(\mu_\Phi)=\mu_\Phi.
$$
By definition, the Breuil-Kisin-Fargues modules (up to isogeny) $(E\otimes_{\mathcal{O}_E}M_\Phi,\varphi_{M_\Phi})$ corresponds to the pair
$$
(V,\Xi:=M_\Phi\otimes_{A_\inf}B^+_\dR\subseteq V\otimes_{\Q_p}B_\dR)
$$
in (the isogeny version) of \Cref{theorem: equivalent_descriptions_of_bkf_modules}. 
Now, the $E\otimes_{\Q_p}B^+_\dR$-lattice 
$$
V\otimes_{\Q_p}B^+_\dR\subseteq V\otimes_{\Q_p}B_\dR=E\otimes_{\Q_p}B_\dR
$$ 
is generated by $\mu_{\Phi}$. But $\mu_{\Phi}$ equals, up to a unit in $E\otimes_{\Q_p}B^+_\dR$, the inverse $\xi_{\Phi}^{-1}$ of the element
$$
\xi_\Phi:=\prod\limits_{\tau}\xi_\tau^{\Phi(\tau)}=\varphi^{-1}(\tilde{\xi}_\Phi)
$$
 in $E\otimes_{\Q_p}B^+_\dR$ while the $E\otimes_{\Q_p}B^+_{\dR}$-lattice $\Xi$ is generated by $1\in E\otimes_{\Q_p}B^+_\dR$. In other words, we find that
$$
\Xi=\xi_\Phi(V\otimes_{\Q_p}B^+_\dR)
$$
which implies that $(M_\Phi,\varphi_{M_\Phi})$ has type $\Phi$ by looking at the explicit decomposition
$$
E\otimes_{\Q_p}B_\dR\cong \prod\limits_{\tau\in \mathrm{Hom}_{\Q_p}(E,C)}B_\dR
$$
under which $\xi_\Phi$ maps to the element $(\xi^{\Phi(\tau)})_\tau$.
\end{proof}

We want to finish this chapter with a concrete description of the automorphisms
$$
\mathrm{Aut}^\otimes(\omega_{\acute{e}t,\mathrm{CM}})
$$
of the fiber functor, ``the \'etale realization'',
$$
\omega_{\acute{e}t,\mathrm{CM}}\colon \mathrm{BKF}^\circ_{\mathrm{rig},\mathrm{CM}}\to \mathrm{Vec}_{\Q_p}
$$
on Breuil-Kisin-Fargues modules with CM.

Recall the pro-torus 
$$
D_{\Q_p}:=\varinjlim\limits_{\overline{\Q}_p/L/\Q_p}L^\ast
$$ 
which was introduced before \Cref{lemma-character-group-of-pro-torus}. 

\begin{proposition}
\label{proposition-tannakian-group-for-cm-bkf-modules}
The reflex norm (with respect to the ``\'etale realization'')
$$
r\colon \mathrm{BKF}^\circ_{\mathrm{rig},\mathrm{CM}}\to \mathrm{Rep}_{\Q_p}(D_{\Q_p})
$$
is an isomorphism. In particular,
$$
\mathrm{Aut}^\otimes(\omega_{\acute{e}t,\mathrm{CM}})\cong D_{\Q_p}.
$$
\end{proposition}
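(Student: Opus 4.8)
The plan is to obtain the statement as a direct application of \Cref{corollary:reflex-norm-defines-equivalence}, taken for the Tannakian category $\mathcal{T} := \mathrm{BKF}^\circ_{\mathrm{rig}}$ over $k = \Q_p$ (which is neutral via $\omega_{\acute{e}t}$), with $\omega_0 := \omega_{\acute{e}t}$ the \'etale realization of \Cref{lemma: fiber_functors_for_rigidified_bkf_modules} as the fixed fiber functor and $\omega := \omega_{\acute{e}t}\otimes_{\Q_p} C$ the filtered fiber functor of \Cref{lemma: filtered_fiber_functor_on_rigidified_bkf_modules}. Since the algebraic closure of $\Q_p$ inside $C$ is $\overline{\Q}_p$, the pro-torus $D_k$ attached to this data in \Cref{section: formal_cm_theory} is exactly the pro-torus $D_{\Q_p}$ recalled before \Cref{lemma-character-group-of-pro-torus}. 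It then remains only to verify the two hypotheses of the corollary and to identify the fiber functors.

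First I would check that the canonical functor $\mathrm{Vec}_{\Q_p}\to \mathcal{T}_0$ is an equivalence, where $\mathcal{T}_0\subseteq \mathrm{BKF}^\circ_{\mathrm{rig}}$ consists of the objects $X$ with $\mathrm{gr}^i(\omega(X)) = 0$ for $i\neq 0$. Given such an $X$ with associated pair $(V,\Xi)$, I would pick an adapted basis $e_1,\dots,e_n$ of $V\otimes_{\Q_p}B^+_\dR$ so that $\Xi = \langle \xi^{\lambda_1}e_1,\dots,\xi^{\lambda_n}e_n\rangle$ with $\lambda_1\geq\dots\geq\lambda_n$, as in the discussion following \Cref{definition:filtration-on-etale-realization}; the explicit formula there identifies $\mathrm{gr}^i(\omega(X))$ with the span of the classes $\overline{e}_j$ having $\lambda_j = -i$, so the hypothesis forces all $\lambda_j = 0$, i.e.\ $\Xi = V\otimes_{\Q_p}B^+_\dR$. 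By \Cref{theorem: rigidified_bkf_modules_versus_modifications_of_vector_bundles} this says precisely that the modification underlying $X$ is an isomorphism of bundles on $X_{\mathrm{FF}}$, so $\mathcal{F}^\prime = \mathcal{F}$ is the trivial bundle (semistable of slope $0$), whose only rigidification is the identity; hence $X$ is a direct sum of copies of the unit object. This is essentially the computation already carried out in the proof of \Cref{lemma:band-of-bkf-modules-connected}.

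Next I would check the second hypothesis: that for every finite extension $E/\Q_p$ and every embedding $\tau\colon E\to C$ there is an object of $\mathrm{BKF}^\circ_{\mathrm{rig}}$ with CM by $(E,\Phi_\tau)$. Here I would take the finite free Breuil-Kisin-Fargues module $M_{\Phi_\tau}$ of \Cref{definition-concrete-bkf-module-with-cm}: by \Cref{theorem-rigidified-bkf-modules-with-cm} it has CM by $\mathcal{O}_E$ and type $\Phi_\tau$, so its isogeny class $\Q_p\otimes_{\Z_p}M_{\Phi_\tau}$, equipped with the unique $E$-linear rigidification supplied by \Cref{lemma-rigidifications-of-cm-bkf-modules}, is an object of $\mathrm{BKF}^\circ_{\mathrm{rig}}$ with CM by $(E,\Phi_\tau)$, using that the combinatorial type of \Cref{lemma:integral-cm-bkf-modules} agrees with the type in the sense of \Cref{definition:type-of-a-cm-object} (as recorded just before \Cref{theorem-rigidified-bkf-modules-with-cm}). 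With both hypotheses verified, \Cref{corollary:reflex-norm-defines-equivalence} yields that $r\colon \mathrm{BKF}^\circ_{\mathrm{rig},\mathrm{CM}}\to \mathrm{Rep}_{\Q_p}(D_{\Q_p})$ is an equivalence of Tannakian categories.

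Finally I would note that, by its construction in \Cref{lemma-serre-group-via-reflex-fields}, the functor $r$ has underlying vector-space functor $\omega_0 = \omega_{\acute{e}t}$; that is, $r$ intertwines $\omega_{\acute{e}t,\mathrm{CM}}$ with the forgetful fiber functor on $\mathrm{Rep}_{\Q_p}(D_{\Q_p})$, whose group of tensor automorphisms is $D_{\Q_p}$ by Tannaka duality for the pro-torus $D_{\Q_p}$. Since an equivalence of neutral Tannakian categories compatible with the chosen fiber functors induces an isomorphism of the corresponding groups of tensor automorphisms, this gives $\mathrm{Aut}^\otimes(\omega_{\acute{e}t,\mathrm{CM}})\cong D_{\Q_p}$. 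I expect the only point of real substance to be the first hypothesis, namely recognising that ``trivial Hodge filtration'' is equivalent to ``no modification of the trivial bundle''; this is exactly the step where one exploits the description of $\mathrm{BKF}^\circ_{\mathrm{rig}}$ by modifications on the Fargues-Fontaine curve together with the rigidification, which is what makes that category Tannakian to begin with.
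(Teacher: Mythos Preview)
Your proposal is correct and follows essentially the same approach as the paper: both invoke \Cref{corollary:reflex-norm-defines-equivalence}, verify the first hypothesis via the computation of \Cref{lemma:band-of-bkf-modules-connected}, and verify the second via \Cref{theorem-rigidified-bkf-modules-with-cm}. Your write-up simply fills in more detail (explicitly constructing the CM object via $M_{\Phi_\tau}$ and \Cref{lemma-rigidifications-of-cm-bkf-modules}, and spelling out the ``In particular'' clause), but the architecture is identical.
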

\begin{proof}
We want to use \Cref{corollary:reflex-norm-defines-equivalence}. In \Cref{lemma:band-of-bkf-modules-connected} with have proven that every rigidified Breuil-Kisin-Fargues modules (up to isogeny) with trivial filtration on its \'etale realization is actually trivial, i.e., a direct sum of the unit object. This implies that the functor $r$ is fully faithful by \Cref{corollary:reflex-norm-defines-equivalence} and then \Cref{theorem-rigidified-bkf-modules-with-cm} implies, using again \Cref{corollary:reflex-norm-defines-equivalence}, that $r$ is essential surjective. This finishes the proof.  
\end{proof}

We remark that \Cref{corollary:reflex-norm-defines-equivalence} implies that the category $\mathrm{BKF}^\circ_{\mathrm{rig},\mathrm{CM}}$ of rigidified Breuil-Kisin-Fargues modules up to isogeny admitting CM is generated by Breuil-Kisin-Fargues modules $(M,\varphi_M)$ whose associated $B_{\dR}^+$-lattice $\Xi\subseteq T\otimes_{\Z_p}B_\dR$ is minuscule, i.e.,
$$
\xi (T\otimes_{\Z_p}B^+_\dR)\subseteq \Xi\subseteq T\otimes_{\Z_p}B^+_{\dR}.
$$ 
By \cite[Proposition 20.1.1.]{scholze_weinstein_lecture_notes_on_p_adic_geometry} these Breuil-Kisin-Fargues modules are associated to $p$-divisible groups over $\mathcal{O}_C$. In particular, we see that the category $\mathrm{BKF}^\circ_{\mathrm{rig},\mathrm{CM}}$ of CM Breuil-Kisin-Fargues modules is generated (as a tensor category) by Breuil-Kisin-Fargues modules associated with $p$-divisible groups admitting CM. This is analogous to the case of rational Hodge structures admitting CM (cf.\ \cite{abdulali_hodge_structures_of_cm_type}). 

\bibliography{/home/ja/Desktop/diverses/biblio/biblio.bib}
\bibliographystyle{plain}

\end{document}